\newcommand\footnoteref[1]{\protected@xdef\@thefnmark{\ref{#1}}\@footnotemark}
\newcommand{\imp}{\rightarrow}
\newcommand{\Pb}{\mathbb{P}}
\newcommand{\Qb}{\mathbb{Q}}
\newcommand{\Psf}{\mathsf{P}}
\newcommand{\Acal}{\mathcal{A}}
\newcommand{\Bcal}{\mathcal{B}}
\newcommand{\Ccal}{\mathcal{C}}
\newcommand{\Dcal}{\mathcal{D}}
\newcommand{\Fcal}{\mathcal{F}}
\newcommand{\Ucal}{\mathcal{U}}
\newcommand{\Vcal}{\mathcal{V}}
\newcommand{\Ical}{\mathcal{I}}
\newcommand{\Lcal}{\mathcal{L}}
\newcommand{\Mcal}{\mathcal{M}}
\newcommand{\Ncal}{\mathcal{N}}
\newcommand{\Ocal}{\mathcal{O}}
\newcommand{\Pcal}{\mathcal{P}}
\newcommand{\Scal}{\mathcal{S}}
\newcommand{\uh}{{\upharpoonright}}
\newcommand{\wck}{\omega_1^{ck}}
\newcommand{\halt}{\font\cmsy = cmsy10 at 12pt%
                                   \hbox{\cmsy \char 59}}
\def\qt#1{``#1''}%
\newcommand{\s}[1]{\ensuremath{\sf{#1}}}
\DeclareMathOperator{\rca}{\s{RCA}_0}
\DeclareMathOperator{\wkl}{\s{WKL}}
\DeclareMathOperator{\rt}{\s{RT}}
\DeclareMathOperator{\rrt}{\s{RRT}}
\DeclareMathOperator{\ts}{\s{TS}}
\DeclareMathOperator{\fs}{\s{FS}}
\newcommand{\qvdash}{\operatorname{{?}{\vdash}}}
\newcommand{\nqvdash}{\operatorname{{?}{\nvdash}}}
\newtheoremstyle{custom}
  {10pt}
  {10pt}
  {\normalfont}
  {}
  {\bfseries}
  {}
  { }
  {}
\theoremstyle{custom}
\newtheorem{theorem}{Theorem}[section]
\newtheorem{lemma}[theorem]{Lemma}
\newtheorem{corollary}[theorem]{Corollary}
\newtheorem{proposition}[theorem]{Proposition}
\newtheorem*{theoremnonumber}{Theorem}
\theoremstyle{definition}
\newtheorem{definition}[theorem]{Definition}
\theoremstyle{remark}
\newtheorem {question}[theorem]{Question}
\numberwithin{equation}{section}
\title{The weakness of the pigeonhole principle under hyperarithmetical reductions}
\author{
  Benoit Monin \and Ludovic Patey
}
\begin{document}

\begin{abstract}
The infinite pigeonhole principle for 2-partitions ($\rt^1_2$) asserts the existence, for every set $A$, of an infinite subset of $A$ or of its complement. In this paper, we study the infinite pigeonhole principle from a computability-theoretic viewpoint. We prove in particular that $\rt^1_2$ admits strong cone avoidance for arithmetical and hyperarithmetical reductions. We also prove the existence, for every $\Delta^0_n$ set, of an infinite low${}_n$ subset of it or its complement. This answers a question of Wang. For this, we design a new notion of forcing which generalizes the first and second-jump control of Cholak, Jockusch and Slaman.
\end{abstract}

\maketitle


\section{Introduction}

In this paper, we study the infinite pigeonhole principle ($\rt^1_k$) from a computability-theoretic viewpoint. The infinite pigeonhole principle asserts that every finite partition of $\omega$ admits an infinite part. More formally, $\rt^1_k$ is the problem whose instances are colorings $f : \omega \to k$. An $\rt^1_k$-solution to $f$ is an infinite set $H \subseteq \omega$ such that $|f[H]| = 1$. The general question we aim to address is the following:
\begin{question}
Does every instance of $\rt^1_k$ admit a \qt{weak} solution?
\end{question}
We consider various notions of weakness, among which the inability to bound a fixed non-zero degree for the $\Delta^0_n$, arithmetical and hyperarithmetical reduction. This property is known as \emph{strong cone avoidance}. With respect to $\Delta^0_n$ and arithmetical reductions, our main theorems are:

\begin{theorem}[Main theorem 1] \label{maintheorem1}
Fix $n \geq 0$. Let $B$ be non $\emptyset^{(n)}$-computable. Every set $A$ has an infinite subset $H \subseteq A$ or $H \subseteq \overline{A}$ such that $B$ is not $H^{(n)}$-computable.
\end{theorem}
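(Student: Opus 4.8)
The plan is to proceed by induction on $n$, using a notion of forcing whose conditions carry both a finite stem and an infinite reservoir, and controlling the first $n$ jumps of the generic object through a hierarchy of "forcing relations" of increasing complexity. For $n = 0$ the statement is the classical cone-avoidance argument for $\rt^1_2$: given $B$ noncomputable and a set $A$, one of $A$, $\overline{A}$ is infinite, and by a pairs-of-sets Mathias-style forcing (conditions $(\sigma, X)$ with $\sigma$ a finite subset of $A$ or of $\overline{A}$ and $X$ an infinite set) one meets the requirements $\Phi_e^H \neq B$ for every index $e$; the key combinatorial fact is that for a noncomputable $B$, in any $\Sigma^0_1$-definable "forcing question" about whether $\Phi_e^{\sigma \cup F} \uhr k = B \uhr k$ for some finite $F \subseteq X$, one can always find a condition deciding it the right way — this is exactly strong cone avoidance of $\rt^1_2$, which should be available from earlier in the paper.

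For the inductive step I would assume the theorem for $n$ and prove it for $n+1$. The heart is a new forcing — the paper advertises exactly this: "a new notion of forcing which generalizes the first and second-jump control of Cholak, Jockusch and Slaman." So I would build conditions of the form $(\sigma, X)$ where $X$ is no longer merely an infinite set but comes equipped with enough effective structure (e.g. $X$ is the range of a function computable from a set that avoids the cone, or $X$ is "$B^{(n)}$-encodable" / lives below an appropriate jump) so that the first $n+1$ jumps of the generic can be approximated. The requirements now are, for each $e$, that $\Phi_e^{H^{(n+1)}} \neq B$; deciding such a requirement at a condition requires a forcing question of complexity roughly $\Sigma^0_{n+2}$ relative to the reservoir, and the induction hypothesis (applied to $B^{(n)}$ or along the jump hierarchy) is what lets us keep the reservoirs inside a cone-avoiding class while still being able to answer these questions. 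Concretely: to decide "$\exists$ finite $F \subseteq X$, $\Phi_e^{(\sigma\cup F)^{(n+1)}} \uhr k = B \uhr k$" one reduces it, via the jump-control apparatus, to a $\rt^1_2$-instance-style question about $X$ that can be answered while preserving that $B$ is not computed by the relevant jump of the extension.

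The main obstacle — and where I expect the real work to lie — is the jump control itself: one must simultaneously (i) maintain a sufficiently rich reservoir so infinitely many extensions of the stem exist inside $A$ or inside $\overline{A}$, (ii) ensure the generic's $n{+}1$st jump is correctly computed from $H^{(n+1)}$ (a "true stage" / $\Pi$-outcome bookkeeping, as in Cholak–Jockusch–Slaman's second-jump control, but now iterated $n$ times), and (iii) guarantee that each decision of a requirement does not inadvertently let $H^{(n+1)}$ compute $B$. Balancing (i) against (iii) is the crux: the forcing question for the $(n{+}1)$st jump is too complex to be decided by $B^{(n)}$ directly, so one must instead decide it by passing to a subreservoir chosen by an application of the $\rt^1_2$ cone-avoidance machinery at level $n$, which is precisely why the theorem is proved by induction with a carefully engineered forcing notion rather than by a single uniform construction. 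Once the forcing and its $n{+}1$ forcing relations are set up and shown to be definable at the right levels of the arithmetical hierarchy (relative to the reservoir), a standard genericity/density argument produces the desired $H$.
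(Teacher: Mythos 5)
Your plan has the right silhouette --- Mathias-style conditions, a hierarchy of forcing relations whose definitional complexity is the crux, and a disjunctive argument at the bottom --- but it is missing the ideas that make the construction go through, and the organizing principle you propose (induction on $n$, invoking the case $n$ for $B^{(n)}$ to settle $n+1$) is not a workable route. The induction hypothesis would give you, for a set $B'$ not $\emptyset^{(n)}$-computable and a set $A'$, a subset $H$ of $A'$ or of $\overline{A'}$ with $B' \not\leq_T H^{(n)}$; but to control $H^{(n+1)}$ you must decide $\Sigma^0_{n+2}$ facts about a generic you are still in the middle of building, and there is no instance $A'$ of $\rt^1_2$ to which the level-$n$ statement could be applied that answers such a question at the right complexity. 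The paper runs a single construction for each fixed $n$, with an internal induction on formula complexity $m \le n$, not an induction on the theorem itself; Main Theorem 1 is then a two-line corollary of the statement that $B$ non-$\Sigma^0_{n+1}$ admits $H$ with $B$ not $\Sigma^0_{n+1}(H)$.

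Two concrete ingredients are absent. First, the reservoirs are not ``effectively structured'' or low-complexity sets: they are constrained to lie in a nested sequence of partition regular, $\Mcal_m$-minimal largeness classes $\langle \Ucal^{\Mcal_m}_{C_m}\rangle$ built over Scott sets $\Mcal_m$ coding $\emptyset^{(m)}$. Minimality is what makes the forcing question for a $\Sigma^0_{m+1}$ formula depend only on the stem $\sigma$ and not on the reservoir, and largeness plus compactness is what pins down its complexity --- which still comes out as $\Pi^0_1(C_m \oplus \emptyset^{(m+1)})$, one quantifier too many. Second, the repair for that excess is not ``passing to a subreservoir chosen by the level-$n$ machinery'': it is a disjunctive forcing question used only at the top level, asked of a pair of stems $\sigma^0 \subseteq \overline{A}$, $\sigma^1 \subseteq A$ simultaneously and quantifying over all $2$-covers of the shared reservoir; by compactness this is $\Sigma^0_1(\Mcal_n)$, and a Dzhafarov--Jockusch pairing argument shows one of the two generics decides every $\Sigma^0_{n+1}$ requirement. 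One must further handle the degenerate case in which one side of the partition falls out of the largeness class (the ``invalid side'') via a separate one-sided question, and argue that the side selected by the pairing argument is also generic for all levels $m < n$ so that forcing implies truth. Without these pieces the plan does not yet yield a proof.
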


\begin{theorem}[Main theorem 2] \label{maintheorem2}
Let $B$ be non arithmetical. Every set $A$ has an infinite subset $H \subseteq A$ or $H \subseteq \overline{A}$ such that $B$ is not arithmetical in $H$.
\end{theorem}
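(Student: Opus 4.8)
The plan is to reduce Theorem~\ref{maintheorem2} to a single forcing construction that controls every arithmetical jump of the generic at once. First unwind the statement: $B$ is arithmetical in $H$ exactly when $B \le_T H^{(n)}$ for some $n$. So, fixing a non-arithmetical $B$ and an arbitrary set $A$, it suffices to construct one infinite set $H$ with $H \subseteq A$ or $H \subseteq \overline{A}$ such that $B \not\le_T H^{(n)}$ for every $n$. The hypothesis is precisely what makes this feasible level by level: if $B$ were $\emptyset^{(n)}$-computable then $B$ would be $\Delta^0_{n+1}$, hence arithmetical; so $B$ is non-$\emptyset^{(n)}$-computable for every $n$, which is exactly the input needed to run the argument behind Theorem~\ref{maintheorem1} at level $n$.

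Next I would set up the forcing. Conditions are Mathias-type pairs $(\sigma, X)$ --- $\sigma$ a finite homogeneous stem, $X$ an infinite reservoir from which $H$ will be extended --- but now each condition carries a current ``working level'' $n$, with $X$ required to be suitably effective/generic over $\emptyset^{(n)}$, and the ordering arranged so that any condition at level $n$ has extensions at level $n+1$. A sufficiently generic filter then yields an infinite set $H$, homogeneous for $A$, such that for each $n$ the jump $H^{(n)}$ is decoded from (and its $\Sigma^0_1$ facts forced by) the level-$n$ data of the conditions along the filter. This is the role of the generalized jump control: controlling $H^{(n)}$ for all $n$ via one generic rather than a separate generic for each $n$.

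The heart of the matter is the density lemma: for every index $e$ and every $n$, the set of conditions forcing ``$\Phi_e^{H^{(n)}}$ is partial, or $\Phi_e^{H^{(n)}} \neq B$'' is dense. Given a condition, first extend it so that its working level is at least $n$; then apply the combinatorial core of the Theorem~\ref{maintheorem1} argument at level $n$. Either one can find some $x$ and an extension forcing $\Phi_e^{H^{(n)}}(x)\halts \neq B(x)$ on one of the two colors, or a largeness/cone-avoidance computation --- this is where $B \not\le_T \emptyset^{(n)}$ is used --- lets us thin the reservoir on one side so that $\Phi_e^{H^{(n)}}$ is forced partial there. Meeting the countably many dense sets for all pairs $(e,n)$, together with the standard genericity requirements ensuring $H$ is infinite and homogeneous, produces the desired $H$.

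The main obstacle is exactly what the new forcing is built to handle: controlling all jumps of a single generic simultaneously while only ever using a finite level of effectivity at any finite stage --- which is what keeps the hypothesis ``$B$ not arithmetical'' (rather than something stronger, like ``$B$ not hyperarithmetical'') sufficient. Concretely, one must check that the decoding of $H^{(n)}$ from a condition's level-$n$ data is preserved under extension, that forcing of $\Sigma^0_1(\emptyset^{(n)})$ statements about $H^{(n)}$ behaves correctly, and that raising the working level does not undo earlier work --- homogeneity and completed diagonalizations are finitary commitments and survive automatically, but the reservoir's effectivity must be re-established at the new level. Once the forcing and its basic forcing-relation lemmas are in place, deriving Theorem~\ref{maintheorem2} from the density lemma is routine.
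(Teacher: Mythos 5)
Your high-level reduction is correct — $B$ non-arithmetical means $B \not\leq_T \halt^{(n)}$ for every $n$, and it suffices to produce one infinite homogeneous $H$ with $B \not\leq_T H^{(n)}$ for all $n$ — but the two points you defer to "checking" are exactly where the proof lives, and both contain genuine gaps. The first is the "working level that can be raised". For the extension lemma (the analogue of \Cref{prop-hyp-forcexta}) to go through at level $n$, the reservoir must belong to the partition regular largeness class $\langle \Ucal^{\Mcal_n}_{C_n}\rangle$, and these classes are upward closed under $\supseteq$. Consequently, if a reservoir $X$ lies in the level-$n$ class but not in the level-$(n+1)$ class, then \emph{no} subset of $X$ lies in the level-$(n+1)$ class either; so a level-$n$ condition need not admit any level-$(n+1)$ extension, and the density of higher-level conditions — which your construction needs — fails. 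One cannot "re-establish the reservoir's effectivity at the new level" by thinning; it must be arranged globally from the start. This is precisely why the paper requires every reservoir to lie in a single partition regular class $\Scal \subseteq \bigcap_\beta \Ucal^{\Mcal_\beta}_{C_\beta}$ in the definition of $\Pb_{\wck}$, and why the construction of the decreasing sequence of $\Mcal_n$-minimal largeness classes with large intersection (\Cref{prop-hyp-cohesiveclassa}, \Cref{lem:decreasing-largeness-yields-pr}) is a substantial part of the machinery rather than a routine check.

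The second gap is side selection. Your density lemma diagonalizes "on one of the two colors" per pair $(e,n)$, yet you conclude with a single homogeneous $H$: if the dense set for $(e,n)$ acts on color $0$ and the one for $(e',n')$ acts on color $1$, no single generic is homogeneous and meets both. At a fixed level the paper handles this with a disjunctive forcing question and a pairing argument over pairs of indices (\Cref{thm:rt12-preservation-non-sigma2}); to follow your route one would need that pairing argument to run uniformly over all pairs of (index, level) simultaneously, which you never set up. The paper's actual proof of \Cref{maintheorem2} avoids the disjunction altogether: it fixes the side in advance as the $A^i$ belonging to the partition regular class $\Scal$, and uses the single-sided forcing question, whose complexity for a level-$\beta$ statement is $\Pi^0_1(\halt^{(\beta+3)})$ — too coarse for \Cref{maintheorem1}, but harmless here precisely because $B$ is non-arithmetical (\Cref{th-alphaconeavoid} with $\alpha=\omega$). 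So the hypothesis is not used as "$B \not\leq_T \halt^{(n)}$ feeds the level-$n$ argument of \Cref{maintheorem1}"; it is used to absorb the jump-inflated complexity of the non-disjunctive forcing question at every finite level.
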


We also study restrictions of the infinite pigeonhole principle to $\Delta^0_n$ instances. With that respect, our main theorem is:

\begin{theorem}[Main theorem 3] \label{maintheorem3}
Fix $n \geq 0$. Every $\halt^{(n+1)}$-computable set $A$ has an infinite subset $H \subseteq A$ or $H \subseteq \overline{A}$ of low${}_{n+2}$ degree.
\end{theorem}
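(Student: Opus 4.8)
\emph{Strategy.} The plan is to establish the statement via an $(n+2)$-fold jump-control forcing argument generalising the first- and second-jump control of Cholak, Jockusch and Slaman. First, non-uniformly fix $i\in\{0,1\}$ so that $A^{[i]}$ is infinite, writing $A^{[0]}:=A$ and $A^{[1]}:=\overline A$ (one of the two is infinite, and both are $\halt^{(n+1)}$-computable since $A$ is). It then suffices to build an infinite $H\subseteq A^{[i]}$ with $H^{(n+2)}\le_T\halt^{(n+2)}$; the single non-uniform bit $i$ affects none of this. I would obtain $H$ as the set attached to a sufficiently generic filter for a variant $\Pcal$ of Mathias forcing, tailored so that $\Pcal$-generics are infinite subsets of $A^{[i]}$ and so that, for each $1\le j\le n+2$, whether a condition forces a given $\Sigma^0_j$ formula about the generic is \emph{uniformly} decidable in $\halt^{(n+2)}$.

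\emph{The forcing.} Conditions of $\Pcal$ are Mathias-type pairs $(F,X)$ --- with $F$ a finite subset of $A^{[i]}$ and $X$ an infinite reservoir satisfying $\max F<\min X$ --- but carrying in addition a tower of $n+2$ pieces of auxiliary data, the $j$-th of which certifies enough about the reservoir for the relation \qt{$(F',X')\le(F,X)$ forces a given $\Sigma^0_j$ formula} to be definable at level at most $n+2$; the reservoirs themselves stay $\halt^{(n+1)}$-objects, which is available because $A^{[i]}$ is $\halt^{(n+1)}$-computable. This is the promised new notion of forcing: at the bottom two levels ($j=1,2$) it specialises to the first- and second-jump control of Cholak, Jockusch and Slaman, and the remaining $n$ levels come from nesting the same device. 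One then verifies the routine facts: $\le$ is a partial order with largest element essentially $(\emptyset,A^{[i]})$; since reservoirs are infinite, conditions admit extensions with arbitrarily large finite parts; and forcing agrees with truth along a generic, so a $\Pcal$-generic is indeed an infinite subset of $A^{[i]}$.

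\emph{Jump control and conclusion.} The core is the \emph{$(n+2)$-fold jump-control lemma}, proved by induction on $j=1,\dots,n+2$: for every condition $c$ and every $\Sigma^0_j$ formula $\psi$ about the generic, (a) some extension of $c$ decides $\psi$, and (b) the predicates \qt{$c$ has an extension forcing $\psi$} and \qt{$c$ forces $\neg\psi$} are uniformly $\halt^{(n+2)}$-decidable. Item (b) is bookkeeping once $\Pcal$ has been set up correctly; item (a) is the crux, and it will be the main obstacle. Exactly as in the arguments of Cholak, Jockusch and Slaman, when no finite extension preserving the current reservoir forces $\psi$, one must show that either $c$ already forces $\neg\psi$ or else the reservoir can be thinned to an infinite sub-reservoir forcing $\neg\psi$; and this sub-reservoir has to be produced effectively enough --- it must remain an $\halt^{(n+1)}$-object --- that the forcing relation does not climb past $\halt^{(n+2)}$. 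Carrying this out rests on a pigeonhole-flavoured largeness dichotomy for infinite $\halt^{(n+1)}$-computable sets, in the spirit of --- and partly reusing the combinatorics behind --- Theorem~\ref{maintheorem1}. Granting the lemma, I would build, $\halt^{(n+2)}$-computably, a descending sequence of conditions meeting, for each $k$, the requirement $|F|\ge k$ (possible since reservoirs are infinite) and, for each index $e$ and each $j\le n+2$, the requirement that decides the $e$-th $\Sigma^0_j$ statement about the generic (possible, and $\halt^{(n+2)}$-effective, by the lemma). The associated set $H$ is then an infinite subset of $A^{[i]}$, and because $\halt^{(n+2)}$ computes the generic filter and reads off from it the truth value of every $\Sigma^0_{n+2}(H)$ statement, $H^{(n+2)}\le_T\halt^{(n+2)}$; that is, $H$ has low${}_{n+2}$ degree.
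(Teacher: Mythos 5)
Your opening move---fixing $i$ in advance so that $A^{[i]}$ is infinite and then running a single one-sided forcing inside $A^{[i]}$---discards exactly the difficulty the paper's proof is organized around. Knowing which side is infinite is not the relevant non-uniform information: the obstacle is that for a one-sided Mathias-type forcing whose reservoirs are constrained to lie in partition regular subclasses of largeness classes, the forcing question for a $\Sigma^0_{j}$ formula is $\Pi^0_1$ in $\halt^{(j)}$ rather than $\Sigma^0_{j}$ (\Cref{prop-hyp-effectivalla}), i.e., one quantifier too complex at the top level $j = n+2$. The remedy in the paper is to build \emph{two} generics simultaneously, one inside $A$ and one inside $\overline A$, using the disjunctive forcing question of the $\Qb_n$ forcing (whose complexity is $\Sigma^0_1$ in a member of $\Mcal_n$, \Cref{lem:pb2-forcing-question-complexity}), and then to run the Dzhafarov--Jockusch pairing argument to extract, a posteriori, one side on which the filter is sufficiently generic---together with a separate ``degenerate'' forcing question for the asymmetric case in which some condition has an invalid side. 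None of this appears in your outline; your ``pigeonhole-flavoured largeness dichotomy'' is invoked only to thin reservoirs, not to arbitrate between the two sides, and the assertion that ``the single non-uniform bit $i$ affects none of this'' is precisely what fails.

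Second, the target ``$H^{(n+2)}\le_T\halt^{(n+2)}$ with every step uniformly $\halt^{(n+2)}$-decidable'' is not what the construction delivers, and for good reason. Producing the extension that witnesses a negative answer to the forcing question requires selecting a member of a nonempty $\Pi^0_1$ class (a finite cover none of whose pieces meets the relevant largeness class) and then selecting the piece whose intersection with the reservoir remains in the partition regular class $\langle\Ucal^{\Mcal_{n-1}}_{C_{n-1}}\rangle$; by \Cref{prop-hyp-forcexta-effect} this needs a degree PA over $\halt^{(n+1)}$, which $\halt^{(n+1)}$ does not compute. The paper therefore proves the intermediate \Cref{thm:rt12-delta3-PA-double-jump-solution}---$G^{(n+1)}\le_T P$ for any $P$ PA over $\emptyset^{(n+1)}$, controlling only $n+1$ jumps---and only then obtains low${}_{n+2}$ by choosing $P$ with $P'\le_T\emptyset^{(n+2)}$ via the low basis theorem, so that $G^{(n+2)}\le_T P'\le_T\emptyset^{(n+2)}$. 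Your proposal tries to control one jump too many relative to the wrong oracle and omits the low-basis step entirely; as written, the induction in your jump-control lemma stalls at $j=n+2$, and a naive patch (moving the oracle up) would only yield low${}_{n+3}$.
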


Finally our main theorem with respect to hyperarithmetic reductions is:

\begin{theorem}[Main theorem 4] \label{maintheorem4}
Let $B$ be non hyperarithmetical. Every set $A$ has an infinite subset $H \subseteq A$ or $H \subseteq \overline{A}$ such that $B$ is not hyperarithmetical in $H$, in particular with $\omega_1^H = \wck$.
\end{theorem}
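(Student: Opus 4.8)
The plan is to construct $H$ by forcing, extending the jump-control machinery behind Main Theorem~2 (which controls $H^{(n)}$ for all finite $n$) so as to control $H^{(\alpha)}$ for \emph{every} computable ordinal $\alpha$ at once. By the relativized hyperarithmetic hierarchy theorem, $B$ is hyperarithmetic in $H$ if and only if $B \leq_T H^{(\alpha)}$ for some $\alpha < \omega_1^H$; so it suffices to build an infinite $H$ with $H \subseteq A$ or $H \subseteq \overline{A}$, with $\omega_1^H = \wck$, and such that $B \not\leq_T H^{(\alpha)}$ for every \emph{computable} $\alpha$. I would set up a single notion of forcing whose generic $H$ meets: (i) the standard requirements making $H$ an infinite subset of $A$ or of $\overline{A}$; (ii) for each notation $a \in \mathcal{O}$ and each Turing functional $\Phi$, the requirement $\Phi^{H^{(|a|)}} \neq B$; and (iii) requirements ensuring $\omega_1^H = \wck$.

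The forcing is Mathias-like --- a condition is essentially a pair $(F,X)$ with $F$ finite, $X$ an infinite reservoir, and $\max F < \min X$ --- except that each condition carries, indexed by notations $a \in \mathcal{O}$, bookkeeping data allowing one to read off an approximation to $H^{(|a|)}$ effectively from a jump of the reservoir of order type $|a|$. This data, and the way it is required to cohere across successor and limit steps along $\mathcal{O}$, is the new notion of forcing: the first- and second-jump control of Cholak--Jockusch--Slaman corresponds to one and two successor steps. The reservoirs are subject to a largeness condition together with $B \not\leq_h X$ and $\omega_1^X = \wck$; this holds for the initial reservoir $\omega$, and the largeness condition is there precisely so that it survives the reservoir-shrinkings performed in the construction, in particular passing to $\emptyset^{(\alpha)}$-definable sub-reservoirs. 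The core structural lemma, proved by transfinite induction along $\mathcal{O}$, gives for each $a$ a forcing relation for $\Sigma^0_{|a|}$ facts about $H$ that, over a condition with reservoir $X$, is decidable in $X^{(|a|)}$; for a sufficiently generic filter, $H^{(\alpha)}$ is then recovered from the forced information.

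Granting the structural lemma, requirement~(ii) is handled by the usual dichotomy, transposed from a finite jump to the $|a|$-th jump. Given a condition and a pair $(a,\Phi)$, either some extension forces $\Phi^{H^{(|a|)}}(x)\halts \neq B(x)$ for some $x$, in which case we pass to it; or no extension does, and then, arguing as in the proof of Main Theorem~2 with the $|a|$-th jump in the role of a finite jump, we compute $B$ from $X^{(|a|)}$ by using the $X^{(|a|)}$-decidable forcing relation to locate the forced value of $\Phi^{H^{(|a|)}}(x)$, which must equal $B(x)$; this gives $B \leq_h X$ (as $|a|$ is computable), contradicting the reservoir condition. The ``strong'' feature --- that $A$ is an arbitrary set rather than a computable or hyperarithmetic one --- is dealt with exactly as in Main Theorem~2: one works with a form of condition whose reservoir is not committed to a single colour, and runs the disagreement-forcing dichotomy over colour-unrestricted finite subsets of the reservoir, so that the fallback computation of $B$ never queries $A$. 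Requirement~(iii) comes alongside, since the structural lemma keeps $H$ close enough to the reservoirs --- which satisfy $\omega_1^X = \wck$ throughout --- to force $\omega_1^H = \wck$.

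I expect the main obstacle to be the limit case of the structural lemma. Unlike the finite iterations underlying Main Theorems~1 and~2, the construction cannot proceed ``one level at a time up to $\wck$'': the ordinal $\wck$ is not reached by any uniform effective process, so a single forcing must carry coherent approximations to $H^{(\alpha)}$ for all computable $\alpha$ simultaneously, indexed by $\mathcal{O}$. Arranging that the finitely many stages used to pin down any initial segment of $H$ already determine enough coherent data at a limit notation $a$ --- so that the forced $\Sigma^0_{|a|}$ information is genuinely $X^{(|a|)}$-decidable and is realised by the generic --- is the delicate point, and is exactly what the design of the new notion of forcing is built to achieve.
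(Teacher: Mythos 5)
Your overall decomposition is the right one --- and is the paper's: reduce ``$B$ is hyperarithmetical in $H$'' to ``$B \leq_T H^{(\alpha)}$ for some $\alpha < \omega_1^H$'', control $H^{(\alpha)}$ for every computable $\alpha$ by a transfinite extension of the jump-control forcing, and separately force $\omega_1^H = \wck$. The transfinite iteration itself (your requirements (i)--(ii) and the structural lemma, including its limit case) is carried out essentially as you describe and, as the paper notes, raises no particular issue. The genuine gap is requirement (iii), which is exactly where the new content of this theorem lies. You claim $\omega_1^H = \wck$ ``comes alongside'' because the reservoirs satisfy $\omega_1^X = \wck$. This does not follow: the generic is assembled from finite pieces of the reservoirs and inherits no such bound, and in any case maintaining $\omega_1^X = \wck$ or $B \not\leq_h X$ for reservoirs $X \subseteq A$ with $A$ arbitrary is itself (a circular instance of) the theorem being proved. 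What is actually required is to diagonalize against every functional $\Phi$ that could code $\wck$ relative to $H$, i.e., to force, for each $\Phi$, either $(\exists n)(\forall \alpha < \wck)\, \Phi(H,n) \notin \Ocal^H_{<\alpha}$ or $(\exists \alpha < \wck)(\forall n)\, \Phi(H,n) \in \Ocal^H_{<\alpha}$. The first alternative is a $\Sigma^0_{\wck+1}$ class of \emph{reals}, and unlike for sets of integers this quantifier over $\wck$ cannot be absorbed; so one needs a forcing question strictly beyond all the levels your structural lemma provides. The paper builds it by fixing a $\Delta^1_1$-cohesive set $C$ with $\omega_1^C = \wck$ (obtained via the Gandy basis theorem), asking the $\Sigma^0_{\wck+1}$ question relative to $\Lcal_C$ and a class $\Scal$ minimal for $\Sigma^1_1(C)$ and $\Pi^1_1$ open sets, and invoking $\Sigma^1_1$-boundedness so that a negative answer collapses to a single bounded level $\beta < \wck$, yielding the second alternative. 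None of this machinery appears in your plan, and the obstacle you single out as the main one is the comparatively routine part.

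A secondary point: you propose to transpose the disjunctive, two-sided (pairing) forcing to the transfinite so that ``the fallback computation of $B$ never queries $A$''. The paper instead uses a non-disjunctive forcing here, fixing once and for all a side $i$ with $A^i$ in the partition regular class $\Scal$ and taking reservoirs inside $A^i$; the forcing question is phrased purely in terms of the stem and the hyperarithmetical largeness classes, so it never queries $A$ anyway. The pairing argument is needed in the finite-level results only to calibrate the forcing question to exactly the level $\Sigma^0_{n+1}$; for hyperarithmetical cone avoidance the question may be one jump too complex, since $B$ escapes every hyperarithmetical level, so the disjunction buys nothing and would seriously complicate the $\Sigma^0_{\wck+1}$ control described above.
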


Our motivation comes from \emph{reverse mathematics}.
Reverse mathematics is a foundational program which aims to find the weakest axioms needed to prove ordinary theorems. The early reverse mathematics showed the existence of an empirical structural phenomenon, in that most theorems are provably equivalent to one among five main systems of axioms, linearly ordered by the logical implication. See Simpson's book~\cite{Simpson2009Subsystems} for a reference on reverse mathematics. However, some natural statements escape this structural phenomenon, the most famous one being \emph{Ramsey's theorem for pairs} ($\rt^2_2$). Given a set $X$, let $[X]^n$ denote the set of unordered $n$-tuples over $X$. Ramsey's theorem for $n$-tuples and $k$-colors ($\rt^n_k$) asserts the existence, for every coloring $f : [\omega]^n \to k$, of an infinite set $H \subseteq \omega$ such that $|f[\omega]^n|  = 1$. In particular, $\rt^1_k$ is the infinite pigeonhole principle.

Ramsey's theorem for pairs and two colors received a lot of attention from the computability community as it was historically the first example of statement escaping the structural phenomenon of reverse mathematics. The study of $\rt^2_k$ revealed a deep connection between the computability-theoretic features of $\rt^2_k$ and the combinatorial features of $\rt^1_k$. More precisely, almost every proof of a statement of the form \qt{Every computable instance of $\rt^2_k$ admits a weak solution} can be obtained by a proof of the statement \qt{every (arbitrary) instance of $\rt^1_k$ admits a weak solution}, with the help of very weak computability-theoretic notion called \emph{cohesiveness}. This is in particular the case for cone avoidance~\cite{Seetapun1995strength,Dzhafarov2009Ramseys}, PA avoidance~\cite{Liu2012RT22}, constant-bound trace avoidance~\cite{Liu2015Cone}, preservation of hyperimmunity~\cite{Patey2017Iterative}, and preservation of non-c.e.\ definitions~\cite{Wang2014Definability,Patey2016weakness}, among others.
In many cases, the combinatorial features of $\rt^1_k$ and the computability-theoretic features of $\rt^2_k$ can be proven to be equivalent. See Cholak and Patey~\cite[Theorem 1.5]{Cholak2019Thin} for an equivalence in the case of cone avoidance. It therefore seems essential to obtain a good understanding of the infinite pigeonhole principle in order to better understand why Ramsey's theorem for pairs escapes the structural phenomenon of reverse mathematics.

\subsection{Strong cone avoidance}

Given a partial order $\leq_r$ on $2^\omega$ and a set $X$, we let $\deg_r(X) = \{ Y : X \equiv_r Y \}$ be the \emph{degree} of $X$, where $X \equiv_r Y$ if $X \leq_r Y$ and $Y \leq_r X$. We are in particular interested in the case where $\leq_r$ is among the $\Delta^0_n$ reduction $\leq_n$, the arithmetical reduction $\leq_{arith}$ and the hyperarithmetical reduction $\leq_{hyp}$.
Given a mathematical problem $\Psf$ formulated in terms of instances and solutions, it is natural to ask which sets are \emph{$\Psf$-encodable}. Here, we say that a set $X$ is $\Psf$-encodable if there is an instance $I$ of $\Psf$ such that for every $\Psf$-solution $Y$ to $I$, $X \leq_r Y$. Some problems are very weak with respect to the order $\leq_r$, and satisfy the following property:

\begin{definition}[Strong cone avoidance]
A problem $\Psf$ \emph{admits strong cone avoidance} for $\leq_r$ if for every pair of sets $Z$ and $C$ such that $C \not \leq_r Z$, every instance $X$ of $\Psf$ admits a solution $Y$ such that $C \not \leq_r Z \oplus Y$.
\end{definition}

Dzhafarov and Jockusch~\cite{Dzhafarov2009Ramseys} proved that $\rt^1_2$ admits strong cone avoidance of the Turing reduction. Their theorem has practical applications, and yield a simpler proof of Seetapun's theorem~\cite{Seetapun1995strength}. We prove a similar result for $\Delta^0_n$ and arithmetical reductions.

\begin{theoremnonumber}[Reformulation of Main theorem 1 (Theorem \ref{maintheorem1})]
$\rt^1_2$ admits strong cone avoidance for $\Delta^0_n$ reductions.
\end{theoremnonumber}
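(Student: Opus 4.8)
The plan is to generalize the strong cone avoidance argument of Dzhafarov and Jockusch~\cite{Dzhafarov2009Ramseys}, which is exactly the case $n = 0$, by equipping the underlying notion of forcing with control of the first $n$ jumps of the generic set. By relativizing at the end to an arbitrary oracle, it suffices to treat the unrelativized statement: given $B$ with $B \not\leq_T \emptyset^{(n)}$ and a $2$-coloring $A$, produce an infinite $H \subseteq A$ or $H \subseteq \overline{A}$ with $B \not\leq_T H^{(n)}$. As in the $n = 0$ case one uses a disjunctive Mathias-style forcing that does not commit to a color in advance: a condition consists of finite stems inside $A$ and inside $\overline{A}$, together with reservoir(s) subject to a cone-avoidance constraint appropriate to the level $n$ (for $n = 0$, this just says $B$ is not computable from the reservoir), plus additional combinatorial side data pinning down the $\Sigma^0_k(H^{(k-1)})$-behaviour of the generic for each $k \leq n$, so that the level-$k$ fragment of the forcing relation becomes effective relative to $\emptyset^{(k)}$. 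A sufficiently generic filter then yields an infinite set $H$ homogeneous for $A$.

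There are two families of requirements. The density requirements ensure that $H$ is infinite on whichever color the construction stabilizes. The requirement $\mathcal{R}_e$ asks that $\Phi_e^{H^{(n)}} \neq B$, and is met by the \emph{forcing-the-jump} dichotomy carried out at level $n$: from a condition, either some extension forces $\Phi_e^{H^{(n)}}$ to diverge somewhere --- then $\mathcal{R}_e$ is satisfied, as a functional that is somewhere undefined does not equal the total set $B$ --- or some extension forces an explicit disagreement $\Phi_e^{H^{(n)}}(x) \neq B(x)$; and if neither is available, the $n$-jump-control data lets one simulate the relevant forcing search effectively in the reservoir together with that data, hence compute $B$ within the resources forbidden by the cone-avoidance constraint on the reservoir. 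It follows that a disagreement or a divergence can always be forced. The disjunctive structure of $\rt^1_2$ --- we do not know which of $A$, $\overline{A}$ will host $H$ --- is handled exactly as by Dzhafarov and Jockusch: a combinatorial lemma shows that each requirement can be forced, or a color profitably discarded, while the cone-avoidance constraint survives on at least one side (this is also the point at which the hypothesis $B \not\leq_T \emptyset^{(n)}$ is used, to guarantee that a usable reservoir exists at all), and the search that would recover $B$ is arranged so as not to consult the coloring $A$, which would be fatal since $A$ may itself compute $B$. A standard bookkeeping over the requirements then produces the desired generic.

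The step I expect to be the main obstacle is the design and verification of the $n$-jump-controlled notion of forcing. For $n \leq 2$ the machinery needed is essentially the first- and second-jump control of Cholak, Jockusch and Slaman, but for $n \geq 3$ naively iterating their constructions is not obviously workable, and in particular it is not clear that such an iteration preserves the disjunctive largeness phenomenon underlying the $\rt^1_2$-part of the argument. The crux is to isolate a single \emph{uniform} notion of forcing such that, simultaneously: (i) for each $k \leq n$, the value of $H^{(k)}$ is decided along any generic, the level-$k$ forcing relation being definable effectively from a condition relative to $\emptyset^{(k)}$; (ii) the forcing-the-jump analysis of $\mathcal{R}_e$ goes through at the top level $n$; and (iii) extensions realizing (i)--(ii) and preserving the level-$n$ cone-avoidance constraint exist on at least one color side. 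Constructing such a forcing and verifying (i)--(iii) is where essentially all of the work lies; the remainder is the routine combination of the forcing-the-jump argument with the bookkeeping above.
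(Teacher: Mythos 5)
Your outline reproduces the paper's high-level architecture faithfully — disjunctive Mathias forcing with two stems, a forcing-the-jump dichotomy at level $n$ whose failure would let one compute $B$ from data of forbidden complexity, and the Dzhafarov--Jockusch pairing argument to pick a winning side — but the proposal explicitly defers the one thing that constitutes the proof, namely the construction of the $n$-jump-controlled forcing and the verification of your conditions (i)--(iii). You correctly diagnose that "essentially all of the work lies" there; since none of that work is carried out, this is a strategy statement rather than a proof, and the gap is exactly the paper's contribution.

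Concretely, what the paper supplies and your proposal does not: the reservoir constraint is not a cone-avoidance condition on individual reservoirs but membership in an $\Mcal_n$-minimal largeness class $\langle \Ucal^{\Mcal_n}_{C_n} \rangle$, built from a tower of Scott sets $\Mcal_0, \Mcal_1, \dots$ with $\halt^{(k)} \in \Mcal_k$ and a nested sequence of $\Mcal_k$-cohesive largeness classes; partition regularity of this class is what makes the disjunctive splitting of reservoirs survive all $n$ levels, which is precisely the point you flag as "not obviously workable" for $n \geq 3$. Moreover, your condition (i) — that the level-$k$ forcing relation be effective relative to $\emptyset^{(k)}$ — is not what the paper achieves and is not needed: the inductively defined forcing question for $\Sigma^0_{m+1}$ formulas on the single-stem forcing $\Pb_n$ is $\Pi^0_1(C_m \oplus \halt^{(m+1)})$, one quantifier too complex, and the paper's fix is a second, disjunctive forcing $\Qb_n$ plugged on top \emph{only at the top level}, whose two-sided question has genuine $\Sigma^0_1(\Mcal_n)$ complexity; the excess complexity at intermediate levels is tolerated because those questions need only be decided, not diagonalized against. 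Finally, the pairing argument alone does not suffice: one must guarantee that the side winning the level-$n$ diagonalization is also a side on which the lower-level forcing relations can be decided, which the paper handles by the valid-side dichotomy (the symmetric versus asymmetric cases and the degenerate forcing question $\qvdash^{\Ucal}$). None of these mechanisms is recoverable from the outline, so the argument as written does not establish the theorem.
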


\begin{theoremnonumber}[Reformulation of Main theorem 2 (Theorem \ref{maintheorem2})]
$\rt^1_2$ admits strong cone avoidance for arithmetical reductions.
\end{theoremnonumber}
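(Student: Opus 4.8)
The plan is to prove the relativized statement and then specialize. Given sets $Z$ and $B$ with $B \not\leq_{arith} Z$ and an arbitrary set $A$, I will construct an infinite set $H$ with $H \subseteq A$ or $H \subseteq \overline{A}$ and $B \not\leq_{arith} Z \oplus H$; the displayed reformulation is the case $Z = \emptyset$. Since the whole argument relativizes to $Z$, I suppress it and simply assume that $B$ is not arithmetical, so that $B \not\leq_T \emptyset^{(n)}$ for every $n$. It then suffices to build an infinite $H$, lying in $A$ or in $\overline{A}$, which meets for every $n \geq 0$ and every Turing functional $\Phi$ the requirement $\Rcal_{n,\Phi}$ that $\Phi^{H^{(n)}}$ is not the characteristic function of $B$; for if no such $\Phi$ and $n$ exist then $B$ is not arithmetical in $H$.

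The construction is a forcing argument with a variant of Mathias forcing. Let $\Mcal$ be the collection of arithmetical sets; it is a countable Turing ideal closed under the jump, and since $B$ is non-arithmetical, $B \not\leq_T X^{(n)}$ for every $X \in \Mcal$ and every $n$. A condition is (essentially) a triple $(E_0, E_1, X)$ with $E_0 \subseteq A$ and $E_1 \subseteq \overline{A}$ finite, $X \in \Mcal$ infinite, and $\max(E_0 \cup E_1) < \min X$; an extension grows $E_0$ inside $X \cap A$, grows $E_1$ inside $X \cap \overline{A}$, and passes to an infinite subset of $X$ lying in $\Mcal$. The generic filter yields sets $G_0 = \bigcup E_0 \subseteq A$ and $G_1 = \bigcup E_1 \subseteq \overline{A}$. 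Crucially --- and this is where the new notion of forcing generalizing the first- and second-jump control of Cholak, Jockusch and Slaman enters --- each condition carries additional combinatorial data (a family of valid trees, roughly one layer per jump level) so that for each $n$ the statement that $\Phi^{G_i^{(n)}}(x)$ halts with a given value is decided by some extension of any given condition, the decision needing only $X^{(n)}$ together with the finitary part of the condition.

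Two families of dense sets drive the construction. The first are the infinitude requirements, handled by the disjunctive device of Dzhafarov and Jockusch~\cite{Dzhafarov2009Ramseys}: a reservoir $X$ is infinite, so $X \cap A$ or $X \cap \overline{A}$ is infinite, whence at least one of the two stems can be grown past any given bound; a filter argument then forces at least one of $G_0$, $G_1$ to be infinite, and we take $H$ to be such a set. The second family consists, for each $n$, each $\Phi$, and each side $i \in \{0,1\}$, of the conditions deciding $\Rcal_{n,\Phi}$ on $G_i$. Given a condition $p$, either some extension of $p$ forces $\Phi^{G_i^{(n)}}(x){\uparrow}$ for some $x$, or forces $\Phi^{G_i^{(n)}}(x)\halts$ with a value $\neq B(x)$ for some $x$ --- and then we pass to that extension --- or, using the jump-control apparatus, every value $\Phi^{G_i^{(n)}}(x)$ is decided, necessarily to the same value by monotonicity of forcing, by some extension of $p$; this value, as a function of $x$, is then computable from $X^{(n)}$ and the finite data of $p$, so if it agreed with $B$ everywhere we would get $B \leq_T X^{(n)}$, contradicting the choice of $\Mcal$; hence it disagrees with $B$ somewhere and again some extension of $p$ forces a disagreement. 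So these sets are dense as well.

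Constructing a sufficiently generic descending sequence of conditions --- possible because the data attached to conditions keeps every relevant decision arithmetical in the reservoirs --- yields a generic pair $(G_0, G_1)$, and we let $H$ be whichever of $G_0$, $G_1$ is infinite. Then $H$ lies on one side of $A$, every $\Rcal_{n,\Phi}$ holds for it, and so $B$ is not arithmetical in $H$; reinstating $Z$ gives $B \not\leq_{arith} Z \oplus H$, i.e.\ strong cone avoidance. I expect the main obstacle to be exactly the uniform jump control: the first- and second-jump control of Cholak, Jockusch and Slaman must be pushed to all finite levels simultaneously, with a single generic object, while the reservoirs stay confined to a class that is harmless at every arithmetical level. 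Since the $\Delta^0_n$-level statement (Main theorem~\ref{maintheorem1}) does not iterate to the arithmetical one --- intersecting the homogeneous sets obtained at different levels $n$ need not leave an infinite set --- a single forcing controlling all jumps at once is genuinely required, and balancing genericity rich enough to pin down every $H^{(n)}$ against reservoirs weak enough never to compute $B$ is the technical heart of the proof.
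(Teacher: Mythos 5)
The overall shape you describe --- Mathias-style forcing with reservoirs in a countable arithmetically closed class, and a diagonalization showing that if no extension forces divergence or a wrong value of $\Phi^{H^{(n)}}$ then $B$ would be arithmetical --- is the right skeleton, but the proposal has a genuine gap exactly at the point you flag as ``the jump-control apparatus.'' The whole difficulty is that your generic must live inside $A$ or $\overline{A}$ for an \emph{arbitrary} $A$, so the question ``does some extension of $p$ with stem inside $X \cap A^i$ force $\Phi^{G^{(n)}}(x)\halts = v$?'' is not arithmetical in $X$ alone, and even if you replace it by a question about stems in $X$ (ignoring $A$), you must still guarantee (i) that a positive answer yields a witness that can actually be taken inside $A^i$, and (ii) that a negative answer yields a refinement of the reservoir whose intersection with $A^i$ remains usable for all later levels $n$. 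Mere infinitude of $X \cap A^i$ is not enough for (i) and (ii) at levels $n \geq 1$. The paper resolves this with the machinery of largeness, partition regular, $\Mcal$-cohesive and $\Mcal$-minimal classes: reservoirs are confined to $\langle \Ucal^{\Mcal_n}_{C_n}\rangle$, the forcing question is phrased as a largeness statement (which is arithmetical by Lemma~\ref{lem:largeness-class-complexity}), $\Mcal$-cohesiveness turns a positive answer into actual witnesses inside the reservoir (Lemma~\ref{prop-hyp-forcexta}), and partition regularity turns a negative answer into a viable refined reservoir. None of this structure is present in your conditions $(E_0,E_1,X)$ with $X$ merely arithmetical and infinite, and the unspecified ``family of valid trees'' does not substitute for it. A smaller but real error: two extensions of $p$ can force different values of $\Phi^{G^{(n)}}(x)$, so ``decided, necessarily to the same value by monotonicity'' is false as stated; the correct statement is that if neither divergence nor a wrong value is forcible below $p$, then the answer to the forcing question itself computes $B$.

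Separately, your disjunctive two-generic pairing route is not how the paper proves Main theorem~2, and it creates an extra problem you do not address: with infinitely many jump levels, the side that wins the pairing argument at level $n$ could a priori differ from the side that wins at level $m$, so you need a single side that is generic for \emph{all} levels simultaneously. The paper avoids this entirely in the arithmetical case: since $\bigcap_{\alpha}\Ucal^{\Mcal_\alpha}_{C_\alpha}$ (more precisely $\Scal$) is partition regular, one of $A$, $\overline{A}$ already belongs to it, and the forcing $\Pb_{\wck}$ fixes that side once and for all, accepting a forcing question of higher complexity than the formula it decides --- which is harmless for cone avoidance, since all that matters is that the set $\{n : p \qvdash \Bcal^n\}$ is arithmetical while $B$ is not (Theorem~\ref{th-alphaconeavoid}). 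The disjunctive $\Qb_n$ forcing, with its ``valid side'' analysis and degenerate forcing question, is reserved for the finite-level results where the complexity of the forcing question genuinely matters.
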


We finally prove in the last section strong cone avoidance for hyperarithmetical reductions, the main difficulty being to show that a non-computable ordinal is never $\rt^1_2$-encodable. This gives us the following theorem:

\begin{theoremnonumber}[Reformulation of Main theorem 4 (Theorem \ref{maintheorem4})]
$\rt^1_2$ admits strong cone avoidance for hyperarithmetical reductions.
\end{theoremnonumber}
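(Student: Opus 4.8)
The plan is to prove that $\rt^1_2$ admits strong cone avoidance for hyperarithmetical reductions via a forcing construction, building an infinite set $H$ inside $A$ or inside $\overline{A}$ while diagonalizing against every potential hyperarithmetical reduction of $B$ from $H$. The starting point should be Main theorem 2, that $\rt^1_2$ admits strong cone avoidance for arithmetical reductions: since a set $Y$ computes $B$ hyperarithmetically iff $B \leq_T Y^{(\alpha)}$ for some $Y$-computable ordinal $\alpha$, the obstacle is that $\alpha$ itself can depend on $Y$ and can be arbitrarily large below $\omega_1^Y$. So I would aim to prove the stronger conclusion $\omega_1^H = \wck$ together with $B \not\leq_{hyp} H$; once $\omega_1^H = \wck$, the collapse of hyperarithmetical-in-$H$ reductions to $\{H^{(\alpha)} : \alpha < \wck\}$ lets one reuse the iterated-jump control machinery uniformly in $\alpha$.

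The key steps, in order: (1) Set up the notion of forcing for $\rt^1_2$ used throughout the paper — conditions are pairs $(\sigma, X)$ (or Mathias-like conditions with a finite stem and an infinite reservoir that is sufficiently generic), with the new forcing notion the authors advertise in the abstract as generalizing Cholak–Jockusch–Slaman's first- and second-jump control to transfinitely many jumps. (2) Prove the preservation statement for a single fixed ordinal $\alpha < \wck$: by transfinite recursion on $\alpha$, if $B \not\leq_T Z^{(\alpha)}$ then one can drive the construction so that $B \not\leq_T (Z \oplus H)^{(\alpha)}$; the successor step is essentially one application of the jump-control forcing (an $\alpha$-th jump analogue of the CJS argument), and the limit step requires threading a fundamental sequence for $\alpha$ through the construction while maintaining enough genericity — this is where the "new notion of forcing" does its work, so I would isolate a single combined forcing handling all finite iterates at once and then relativize. (3) Show that the generic $H$ produced does not collapse $\wck$: one must ensure that no $H$-computable well-order of type $\geq \wck$ appears, equivalently that every $\Sigma^1_1(H)$ bounded family of ordinals is bounded below $\wck$; this follows if the forcing is sufficiently definable (arithmetically, or $\Sigma^1_1$, over the ground model) and the generic meets all the dense sets encoding "rank this linear order" — i.e. a forcing-preserves-$\wck$ lemma analogous to the one for Cohen or Sacks forcing. (4) Combine: given $B$ non-hyperarithmetical and $Z$ with $B \not\leq_{hyp} Z$, build $H \subseteq A$ or $H \subseteq \overline{A}$ generic enough that $\omega_1^{Z \oplus H} = \omega_1^Z$ and, for every $\alpha < \omega_1^{Z\oplus H}$, $B \not\leq_T (Z \oplus H)^{(\alpha)}$; since every hyperarithmetical-in-$(Z\oplus H)$ set is $(Z\oplus H)^{(\alpha)}$-computable for some such $\alpha$, this gives $B \not\leq_{hyp} Z \oplus H$.

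A standard wrinkle in $\rt^1_2$ arguments is the $A$-versus-$\overline{A}$ branching: one does not know in advance which side admits the generic, so the construction should be run as a pair of "concurrent" forcings, and at each step one must show that at least one side still has a dense set extending the condition while preserving the current requirement. For cone avoidance this is the usual largeness/disjunction lemma (if neither side can be extended to keep $B$ out of the $\alpha$-th jump, one derives a $Z^{(\alpha)}$-computable procedure for $B$, contradiction); I would need the transfinite version of that lemma, proved again by recursion on $\alpha$ simultaneously with step (2).

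The main obstacle, as flagged in the abstract, is step (2)–(3): designing a single forcing notion that simultaneously supports $\alpha$-th jump control for all $\alpha < \wck$ \emph{and} is definable enough to preserve $\wck$. Iterating the CJS second-jump control naively produces, at stage $\alpha$, a forcing whose complexity grows with $\alpha$, so by the time $\alpha$ approaches $\wck$ the forcing is no longer hyperarithmetical and the $\wck$-preservation argument breaks. The crux is to find a uniform presentation — presumably a tree of conditions indexed by the ordinal notations, with the "effective genericity" requirements organized so that the whole forcing relation stays $\Sigma^1_1$ over the ground model uniformly in $\alpha$ — and to verify that the disjunction lemma survives passage to limits. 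I expect the bulk of the paper's technical content, and the part I would spend the most care on, to be exactly this uniform transfinite jump-control forcing and the proof that a sufficiently generic filter for it preserves both non-$\leq_T$-to-$\alpha$-jumps facts and the ordinal $\wck$.
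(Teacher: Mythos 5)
Your overall decomposition is exactly the paper's: reduce hyperarithmetical cone avoidance to (i) $B \not\leq_T H^{(\alpha)}$ for every $\alpha < \wck$ and (ii) $\omega_1^H = \wck$, with (i) obtained by pushing the iterated jump control through the computable ordinals. Step (i) as you describe it goes through essentially as in the paper (its Theorem on $\alpha$-cone avoidance). One structural difference worth noting: in the transfinite the paper abandons the disjunctive/pairing argument entirely. Since the reservoirs live in a partition regular class $\Scal \subseteq \bigcap_{\alpha<\wck}\Ucal^{\Mcal_\alpha}_{C_\alpha}$ and $A^0 \cup A^1 = \omega$, one of the two sides already belongs to $\Scal$, so the side is fixed \emph{in advance} and a single (non-disjunctive) forcing is run inside it. This is available because for cone avoidance one can tolerate a forcing question whose complexity exceeds that of the formula it decides; the pairing argument is only needed for the lowness results. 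Running two concurrent forcings through $\wck$-many levels, as you propose, would force you to maintain the ``validity of both sides'' bookkeeping transfinitely for no gain.

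The genuine gap is in your step (3). Preservation of $\wck$ here is not ``analogous to Cohen or Sacks forcing'': the statement one must force, namely ``either some $\Phi(G,n)$ fails to be a $G$-code for an ordinal $<\wck$, or all of them code ordinals below a fixed $\alpha<\wck$,'' is $\Sigma^0_{\wck+1}$ as a class of \emph{reals}, and unlike $\Sigma^0_{\wck+1}$ sets of integers such classes do not collapse to $\Pi^0_{\wck}$. So one needs a new forcing question at level $\wck+1$, and the conditions' reservoirs are drawn from a class that is itself only $\Sigma^1_1$, so no off-the-shelf definability-of-the-forcing-relation lemma applies. The paper's solution, which is the missing idea: use the Gandy basis theorem to fix a $\Delta^1_1$-cohesive set $C \in \bigcap_{\alpha<\wck}\Ucal^{\Mcal_\alpha}_{C_\alpha}$ with $\omega_1^C = \wck$, take $\Scal$ to be the unique largeness subclass of $\Lcal_C$ minimal for $\Pi^1_1$ and $\Sigma^1_1(C)$ open sets, and then use $\Sigma^1_1$-boundedness (applied to total $\Pi^1_1(C)$ functions into the ordinals, which is where $\omega_1^C = \wck$ is essential) to show that largeness of a $\Pi^1_1$ or $\Sigma^1_1(C)$ open set intersected with $\Lcal_C$ is witnessed at, or refuted by, a single level $\beta < \wck$. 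This is what makes the two collapse lemmas work — given the $\Sigma^0_{\wck+1}$ question, either some condition forces a single $\Pi^0_{\wck}$ disjunct, or some condition forces the complement with the inner ordinal quantifier bounded by some $\beta<\wck$ — and it is the part of the argument your sketch does not supply.
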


%

These theorems show the combinatorial weakness of the pigeonhole principle with respect $\rt^1_2$-encodability. To prove this, we designed a new notion of forcing with an iterated jump control generalizing the first and second jump control of Cholak, Jockusch and Slaman~\cite{Cholak2001strength}.

\subsection{Lowness and hierarchies}

The computability-theoretic study of the pigeonhole principle is also motivated by questions on the strictness of hierarchies in reverse mathematics. Some consequences of Ramsey's theorem form hierarchies of statements, parameterized by the size of the colored tuples. A first example is Ramsey's theorem itself. Indeed, $\rt^{n+1}_k$ implies $\rt^n_k$ for every $n, k \geq 1$. By the work of Jockusch~\cite{Jockusch1972Ramseys}, this hierarchy collapses starting from the triples, and by Seetapun~\cite{Seetapun1995strength}, Ramsey's theorem for pairs is strictly weaker than Ramsey's theorem for triples. We therefore have
$$
\rt^1_k < \rt^2_k < \rt^3_k = \rt^4_k = \dots
$$

Some other hierarchies have been considered in reverse mathematics.
Friedman~\cite{FriedmanFom53free} introduced the free set ($\fs^n$) and thin set theorems ($\ts^n$), while Csima and Mileti~\cite{Csima2009strength} introduced and studied the rainbow Ramsey theorem ($\rrt^n_k$). These statements are all of the form $\Psf^n$: \qt{For every coloring $f : [\omega]^n \to \omega$, there is an infinite set $H \subseteq \omega$ such that $f \uh [H]^n$ avoids some set of forbidden patterns}.
The reverse mathematics of these statements were extensively studied in the literature~\cite{Cholak2001Free,Csima2009strength,Kang2014Combinatorial,PateyCombinatorial,Patey2015Somewhere,Patey2016weakness,RiceThin,WangSome,Wang2013Rainbow,Wang2014Cohesive,Wang2014Definability,Wang2014Some}. In particular, these theorems form hierarchies which are not known to be strictly increasing.

\begin{question}\label{quest:strictness-hiearchies}
Are the hierarchies of the free set, thin set, and rainbow Ramsey theorem strictly increasing?
\end{question}

Partial results were however obtained. All these statements admit lower bounds of the form “For every $n \geq 2$, there is a computable instance of $\Psf^n$ with no $\Sigma^0_n$ solution", where $\Psf^n$ denotes any of $\rt^n_k$ (Jockusch~\cite{Jockusch1972Ramseys}), $\rrt^n_k$ (Csima and Mileti~\cite{Csima2009strength}), $\fs^n$, or $\ts^n$ (Cholak, Giusto, Hirst and Jockusch~\cite{Cholak2001Free}). From the upper bound viewpoint, all these statements follow from Ramsey's theorem. Therefore, by Cholak, Jockusch and Slaman~\cite{Cholak2001strength}, every computable instance of $\Psf^1$ admits a computable solution, and every computable instance of $\Psf^2$ admits a low${}_2$ solution. These results are sufficient to show that $\Psf^1 < \Psf^2 < \Psf^3$ in reverse mathematics. This upper bound becomes too coarse for triples. Wang~\cite{Wang2014Cohesive} proved that every computable instance of $\rrt^3_k$ admits a low${}_3$ solution. The following question is still open. A positive answer would also answer positively Question~\ref{quest:strictness-hiearchies}.

\begin{question}\label{quest:instances-hierarchies-lown}
Does every computable instance of $\fs^n$, $\ts^n$, and $\rrt^n_k$ admit a low${}_n$ solution?
\end{question}

Indeed, suppose \Cref{quest:instances-hierarchies-lown} is answered positively for some $\Psf \in \{\rrt_2, \fs, \ts\}$. For every $n$, one can iterate a relativization of \Cref{quest:instances-hierarchies-lown} to build a model $\Mcal$ of $\Psf^n$ containing only sets of low${}_n$ degree. In particular, any set in $\Mcal$ is $\Sigma^0_{n+1}$ , while by the lower bounds mentioned above, there is a computable instance of $\Psf^{n+1}$ with no $\Sigma^0_{n+1}$ solution. Thus, $\Psf^{n+1}$ fails in $\Mcal$, hence $\Psf^n$ does not imply $\Psf^{n+1}$ over $\rca$.

Upper bounds to $\fs^n$, $\ts^n$, and $\rrt^n_k$,
are usually proven inductively over $n$~\cite{Wang2014Some,PateyCombinatorial,Patey2017Iterative}, starting with the infinite pigeonhole principle for $n = 1$. In this paper, we therefore prove the following theorem, which introduces the machinery that hopefully will serve to answer positively Question~\ref{quest:instances-hierarchies-lown}.

\begin{theoremnonumber}[Main theorem 3 (\Cref{maintheorem3})]
Fix $n \geq 0$. Every $\halt^{(n+1)}$-computable set $A$ has an infinite subset $H \subseteq A$ or $H \subseteq \overline{A}$ of low${}_{n+2}$ degree.
\end{theoremnonumber}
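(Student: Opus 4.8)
The plan is to prove the following relativizable and inductively-structured statement: for every set $X$ and every $X^{(n+1)}$-computable coloring $f : \omega \to 2$, there is an infinite $H$ with $H \subseteq f^{-1}(0)$ or $H \subseteq f^{-1}(1)$ such that $(X \oplus H)^{(n+2)} \leq_T X^{(n+2)}$ (lowness${}_{n+2}$ relative to $X$). Taking $X = \emptyset$ gives the theorem. The core of the argument is a notion of forcing whose conditions are Mathias-type conditions $(F, E)$ — a finite stem $F$ and an infinite reservoir $E$ — but where the reservoir is constrained to lie inside one of the two color classes, together with a bookkeeping component that controls the first $n+2$ jumps of the generic. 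This is exactly the setting where one combines the \emph{strong cone avoidance} machinery of Dzhafarov--Jockusch with an iterated jump-control apparatus; here the jump control must be pushed from the second jump (Cholak--Jockusch--Slaman) up to the $(n+2)$-nd jump, which is the new ingredient flagged in the introduction.

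First I would set up the forcing. Following the first-jump control of \cite{Cholak2001strength}, a condition should carry enough information so that $\Sigma^0_1$ facts about the generic $G$ are decided (or forced to fail generically) uniformly in the reservoir; iterating, at level $k \leq n+2$ one wants the condition to $X^{(k)}$-effectively decide $\Sigma^0_k(G)$ statements. The key structural device is that when $f$ is $X^{(n+1)}$-computable, the two color classes are $X^{(n+1)}$-computable sets, so one of them is infinite and membership in the relevant class is decidable below $X^{(n+1)}$; this lets the reservoir-shrinking steps be performed $X^{(n+1)}$-effectively, which is low enough that the $(n+2)$-nd jump of the final generic can be computed from $X^{(n+2)}$. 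One would prove a sequence of density lemmas: (i) for each $e$, the set of conditions deciding whether $\Phi_e^{G^{(k)}}(e)\halts$ is dense, for each $k \le n+1$ — this is the jump-control chain; (ii) for each $m$, the set of conditions whose stem has size $\geq m$ is dense (this is where infinitude of $H$ comes from, and where we must choose which color class to commit to — the classical $\rt^1_2$ trick: if we cannot extend inside the current class we can switch, and strong cone avoidance of $\rt^1_2$ guarantees at least one side works). Genericity through all these dense sets yields $H$, and the jump-control lemmas let us compute $(X\oplus H)^{(n+2)}$ from $X^{(n+2)}$, uniformly enough that the construction itself can be carried out below $X^{(n+2)}$, giving lowness${}_{n+2}$.

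The main obstacle, as anticipated, is the \emph{iterated} jump control: managing the interaction between the $n+2$ levels of jump so that shrinking the reservoir to decide a $\Sigma^0_{k}$ fact does not destroy previously-arranged decisions at lower levels, and so that the whole tower of decisions remains uniformly computable from $X^{(n+2)}$ rather than costing an extra jump at each level. Concretely, one needs the right bookkeeping on the conditions — e.g. attaching to a condition a finite approximation to each of the jumps together with a promise that the reservoir can realize those approximations, and proving a ``forcing equals truth'' lemma at every level $\le n+2$ by induction on $k$. The base case $k=1$ is essentially \cite{Cholak2001strength}'s first-jump control adapted to the color-constrained reservoir; the inductive step is where the novelty lies, and getting the quantifier complexity of the density arguments to stay at $X^{(k+1)}$ (not higher) is the delicate point. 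A secondary subtlety is ensuring the ``switch the color class'' move in the infinitude lemma is compatible with the jump-control bookkeeping — one wants to show that at least one of the two color-restricted subforcings admits a generic of the desired lowness, which should follow by absorbing the two cases into a single forcing whose conditions allow either reservoir and letting genericity pick the surviving side.
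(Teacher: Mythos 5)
Your sketch has the right silhouette — Mathias-style conditions, iterated jump control, a disjunctive two-sided argument — but it leaves unresolved exactly the point on which the theorem turns, and the resolution is not a routine elaboration. You flag ``getting the quantifier complexity of the density arguments to stay at $X^{(k+1)}$'' as the delicate point but offer no mechanism for it. In the paper this is the entire content of the construction: the reservoirs are not arbitrary infinite subsets of a color class but members of a tower of countable Scott sets $\Mcal_0,\dots,\Mcal_n$ (with $M_k'$ uniformly computable from $\halt^{(k+1)}$) constrained to lie in a nested sequence of $\Mcal_k$-cohesive, $\Mcal_k$-minimal partition regular largeness classes $\langle\Ucal^{\Mcal_k}_{C_k}\rangle$. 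That machinery is what makes the forcing question for $\Sigma^0_{m+1}$ formulas $\Pi^0_1(C_m\oplus\halt^{(m+1)})$ — and even then it is one quantifier too complex, which forces a second, disjunctive forcing notion $\Qb_n$ on top of $\Pb_n$ for the last level only, whose question is genuinely $\Sigma^0_1$ in a member of $\Mcal_n$. Relatedly, your plan to constrain the reservoir to one color class and ``switch when stuck,'' justified by strong cone avoidance of $\rt^1_2$, is not the right move: the reservoir must stay inside the partition regular class to keep the largeness arguments and the complexity bounds, and $X\cap A^i$ need not belong to it. The paper handles this via the validity of sides, a degenerate forcing question for the asymmetric case, and the Dzhafarov--Jockusch pairing argument in the symmetric case; partition regularity (not cone avoidance) is what guarantees at least one side survives.

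The second gap is in the final jump count. You propose to compute $H^{(n+2)}$ from $\emptyset^{(n+2)}$ by deciding $\Sigma^0_{n+2}(H)$ facts during the construction. But with any forcing question of the above kind, deciding the top-level question costs $\halt^{(n+2)}$ and \emph{finding the witnessing extension} costs a PA degree over $\halt^{(n+1)}$ (to select members of nonempty $\Pi^0_1$ classes over $\Mcal_n$); pushing the control one level higher to $\Sigma^0_{n+2}$ would require a PA degree over $\halt^{(n+2)}$, which $\emptyset^{(n+2)}$ does not compute. The paper instead controls only $n+1$ jumps, carrying out the whole construction below an arbitrary $P$ that is PA over $\emptyset^{(n+1)}$, obtaining $G^{(n+1)}\leq_T P$; it then invokes the low basis theorem to choose $P$ with $P'\leq_T\emptyset^{(n+2)}$, so that $G^{(n+2)}\leq_T P'\leq_T\emptyset^{(n+2)}$. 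Without this ``control one fewer jump from a low PA degree, then take one more jump'' step, your argument as stated would at best yield low${}_{n+3}$.
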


In particular, we fully answer two questions of Wang~\cite[Questions 6.1 and 6.2]{Wang2014Cohesive}, also asked by the second author~\cite[Question 5.4]{Patey2016Open}. The cases $n = 2$ and $n = 3$ were proven by Cholak, Jockusch and Slaman~\cite{Cholak2001strength} and by the authors~\cite{Monin2018Pigeons}, respectively.

\subsection{Definitions and notation}

A \emph{binary string} is an ordered tuple of bits $a_0, \dots, a_{n-1} \in \{0, 1\}$.
The empty string is written $\epsilon$. A \emph{binary sequence} (or a \emph{real}) is an infinite listing of bits $a_0, a_1, \dots$.
Given $s \in \omega$,
$2^s$ is the set of binary strings of length $s$ and
$2^{<s}$ is the set of binary strings of length $<s$. As well,
$2^{<\omega}$ is the set of binary strings
and $2^{\omega}$ is the set of binary sequences.
Given a string $\sigma \in 2^{<\omega}$, we use $|\sigma|$ to denote its length.
Given two strings $\sigma, \tau \in 2^{<\omega}$, $\sigma$ is a \emph{prefix}
of $\tau$ (written $\sigma \preceq \tau$) if there exists a string $\rho \in 2^{<\omega}$
such that $\sigma^\frown \rho = \tau$. Given a sequence $X$, we write $\sigma \prec X$ if
$\sigma = X \uh n$ for some $n \in \omega$.
A binary string $\sigma$ can be interpreted as a finite set $F_\sigma = \{ x < |\sigma| : \sigma(x) = 1 \}$. We write $\sigma \subseteq \tau$ for $F_\sigma \subseteq F_\tau$.
We write $\#\sigma$ for the size of $F_\sigma$. Given two strings $\sigma$ and $\tau$, we let $\sigma \cup \tau$ be the unique string $\rho$ of length $\max(|\sigma|, |\tau|)$ such that $F_\rho = F_\sigma \cup F_\tau$. 

A \emph{binary tree} is a set of binary strings $T \subseteq 2^{<\omega}$ which is closed downward under the prefix relation. A \emph{path} through $T$ is a binary sequence $P \in 2^\omega$ such that every initial segment belongs to $T$.

A \emph{Turing ideal} $\Ical$ is a collection of sets which is closed downward under the Turing reduction and closed under the effective join, that is, $(\forall X \in \Ical)(\forall Y \leq_T X) Y \in \Ical$ and $(\forall X, Y \in \Ical) X \oplus Y \in \Ical$, where $X \oplus Y = \{ 2n : n \in X \} \cup \{ 2n+1 : n \in Y \}$. A \emph{Scott set} is a Turing ideal $\Ical$ such that every infinite binary tree $T \in \Ical$ has a path in $\Ical$. In other words, a Scott set is the second-order part of an $\omega$-model of $\rca + \wkl$.
A Turing ideal $\Mcal$ is \emph{countable coded} by a set $X$
if $\Mcal = \{ X_n : n \in \omega \}$ with $X = \bigoplus_n X_n$.
A formula is $\Sigma^0_1(\Mcal)$ (resp.\ $\Pi^0_1(\Mcal)$) if it is $\Sigma^0_1(X)$ (resp.\ $\Pi^0_1(X)$) for some $X \in \Mcal$.

Given two sets $A$ and $B$, we denote by $A < B$ the formula
$(\forall x \in A)(\forall y \in B)[x < y]$.
We write $A \subseteq^{*} B$ to mean that $A - B$ is finite, that is,
$(\exists n)(\forall a \in A)(a \not \in B \imp a < n)$.
A \emph{$k$-cover} of a set $X$ is a sequence of sets $Y_0, \dots, Y_{k-1}$ such that $X \subseteq Y_0 \cup \dots \cup Y_{k-1}$.

\section{Preliminary tools}
We start by introduce the central tools used in the various forcings to come : the largeness and partition regular classes. They were introduced by the authors in~\cite{Monin2018Pigeons} to design a notion of forcing controlling the second jump of solutions to the pigeonhole principle. In this paper we push their use further, with the introduction of $\Mcal$-cohesive and $\Mcal$-minimal largeness classes, which are necessary for the third jump control and beyond.

\subsection{Largeness classes}

\begin{definition}
A \emph{largeness class} is a non-empty collection of sets $\Acal \subseteq 2^\omega$ such that
\begin{itemize}
	\item[(a)] If $X \in \Acal$ and $Y \supseteq X$, then $Y \in \Acal$
	\item[(b)] For every $k$-cover $Y_0, \dots, Y_{k-1}$ of $\omega$, there is some $j < k$ such that $Y_j \in \Acal$.
\end{itemize}
\end{definition}

For example, the collection of all the infinite sets is a largeness class. Moreover, any superclass of a largeness class is again a largeness class. 

\begin{lemma}\label{lem:decreasing-largeness-yields-largeness}
Suppose $\Acal_0 \supseteq \Acal_1 \supseteq \dots$ is a decreasing sequence of largeness classes.
Then $\bigcap_s \Acal_s$ is a largeness class.
\end{lemma}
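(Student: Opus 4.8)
The plan is to verify the two defining conditions of a largeness class for $\Acal := \bigcap_s \Acal_s$ directly. First, $\Acal$ is non-empty: since each $\Acal_s$ is a largeness class it is upward closed and non-empty, hence contains $\omega$ itself (take a $1$-cover of $\omega$, namely $\omega$, and apply condition (b)); therefore $\omega \in \bigcap_s \Acal_s$. Condition (a) is immediate: if $X \in \Acal$ and $Y \supseteq X$, then $X \in \Acal_s$ for every $s$, so $Y \in \Acal_s$ for every $s$ by upward closure of each $\Acal_s$, whence $Y \in \Acal$.

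The substance of the argument is condition (b). Fix a $k$-cover $Y_0, \dots, Y_{k-1}$ of $\omega$. For each $s$, since $\Acal_s$ is a largeness class, the set $J_s := \{ j < k : Y_j \in \Acal_s \}$ is non-empty. Moreover, because the sequence is decreasing, $\Acal_{s+1} \subseteq \Acal_s$ gives $J_{s+1} \subseteq J_s$. So $(J_s)_{s \in \omega}$ is a decreasing sequence of non-empty subsets of the finite set $\{0, \dots, k-1\}$; such a sequence is eventually constant, with non-empty limit $J_\infty := \bigcap_s J_s \neq \emptyset$. Pick any $j \in J_\infty$. Then $Y_j \in \Acal_s$ for every $s$, i.e.\ $Y_j \in \bigcap_s \Acal_s = \Acal$, which is exactly what condition (b) demands.

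The only point requiring the smallest amount of care is the observation that a decreasing sequence of non-empty subsets of a fixed finite set cannot have empty intersection — equivalently, that the cardinalities $|J_s|$ form a non-increasing sequence of positive integers and hence stabilize, so the intersection equals the eventual value and is non-empty. This is the one place the finiteness of $k$ (i.e.\ that we only quantify over $k$-covers for a \emph{fixed} $k$ in each application of condition (b)) is used; there is no genuine obstacle here, and the proof is essentially a pigeonhole-style stabilization argument. I expect no serious difficulty in the whole proof.
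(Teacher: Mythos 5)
Your proof is correct and follows essentially the same route as the paper: the paper applies the infinite pigeonhole principle to find a $j$ with $Y_j \in \Acal_s$ for infinitely many $s$ and then uses the decreasingness of the sequence, while you observe directly that the sets $J_s$ form a decreasing sequence of non-empty subsets of a finite set and hence have non-empty intersection — the same finiteness-plus-monotonicity argument in a slightly different packaging. No issues.
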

\begin{proof}
If $X \in \bigcap_s \Acal_s$ and $Y \supseteq X$, then for every $s$, since $\Acal_s$ is a largeness class, $Y \in \Acal_s$, so $Y \in \bigcap_s \Acal_s$.
Let $Y_0, \dots, Y_{k-1}$ be a $k$-cover of $\omega$. For every $s \in \omega$, there is some $j < k$
such that $Y_j \in \Acal_s$. By the infinite pigeonhole principle, there is some $j < k$ such that $Y_j \in \Acal_s$ for infinitely many $s$. Since $\Acal_0 \supseteq \Acal_1 \supseteq$ is a decreasing sequence,  $Y_j \in \bigcap_s \Acal_s$.
\end{proof}

\begin{lemma}\label{lem:largeness-class-complexity}
Let $\Acal$ be a $\Sigma^0_1$ class.
The sentence “$\Acal$ is a largeness class" is $\Pi^0_2$.
\end{lemma}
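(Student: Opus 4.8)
The plan is to analyze the two defining clauses (a) and (b) of a largeness class separately, and check that each contributes at most a $\Pi^0_2$ condition when $\Acal$ is $\Sigma^0_1$, say $\Acal = \{X : \exists n\, R(X\uh n)\}$ for a computable predicate $R$ on strings (monotone, i.e.\ closed upward under $\preceq$, without loss of generality). First I would observe that upward closure under $\supseteq$, clause (a), is actually \emph{automatic} for a $\Sigma^0_1$ class of this form once we take $R$ to be the canonical monotone predicate, or alternatively that "(a) holds" is $\Pi^0_1$: it says $\forall \sigma\, \forall \tau\, [\sigma \subseteq \tau \wedge R(\sigma) \imp R(\tau)]$, a $\Pi^0_1$ sentence. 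So clause (a) is harmless. Second, and this is the crux, I would rephrase clause (b). A priori, "$(\forall k)(\forall k\text{-cover } Y_0,\dots,Y_{k-1})(\exists j<k)\, Y_j \in \Acal$" quantifies over arbitrary sets $Y_0,\dots,Y_{k-1}$, which looks $\Pi^1_1$. The key point is that membership in a $\Sigma^0_1$ class $\Acal$ is determined by a finite initial segment: if $Y_j \in \Acal$ then there is some length $\ell$ with $R(Y_j \uh \ell)$, and then \emph{every} set extending that initial segment is in $\Acal$. Hence one can replace the set-quantifier over $k$-covers by a number-quantifier over \emph{finite partial $k$-covers}: a partition of an initial segment $\{0,\dots,m-1\}$ into $k$ pieces coded by a string $c \in k^{<\omega}$.

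The heart of the argument is therefore the following reformulation, which I would prove as the main claim: $\Acal$ satisfies clause (b) if and only if
\[
(\forall k)(\forall m)(\forall c \in k^{m})(\exists j < k)\, R(\sigma^c_j),
\]
where $\sigma^c_j \in 2^{m}$ is the string with $F_{\sigma^c_j} = \{ x < m : c(x) = j \}$, i.e.\ the $j$-th class of the finite coloring $c$. For the forward direction, given a finite coloring $c$ of $\{0,\dots,m-1\}$, extend it to a genuine $k$-cover of $\omega$ (e.g.\ throw everything $\geq m$ into class $0$); clause (b) gives some class $Y_j \in \Acal$; but actually I need the witness to lie within the first $m$ bits, so the cleaner move is: for each $j<k$ let $Y_j = F_{\sigma^c_j} \cup \{x : x \geq m\}$ if $j = 0$ and $Y_j = F_{\sigma^c_j}$ otherwise — hmm, the $Y_j$ for $j \neq 0$ are then finite and not in $\Acal$ when $\Acal$ contains only infinite-ish sets, so this needs care. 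The correct approach is a compactness/König argument: if the displayed condition \emph{fails}, there is a $k$ and, for arbitrarily large $m$, colorings $c_m \in k^m$ with no monochromatic-class witness $R(\sigma^{c_m}_j)$; since $R$ is monotone, these $c_m$ form an infinite finitely-branching tree, so by König's lemma there is an infinite $k$-coloring $g : \omega \to k$ all of whose classes $Y_j$ satisfy $\neg R(Y_j \uh \ell)$ for all $\ell$, i.e.\ $Y_j \notin \Acal$ for all $j$ — contradicting clause (b). Conversely, if the displayed condition holds, then any genuine $k$-cover $Y_0,\dots,Y_{k-1}$ of $\omega$ restricts, on each initial segment $\{0,\dots,m-1\}$, to a finite $k$-coloring (after arbitrarily breaking ties for overlaps), so some class gets a witness $R$ at length $m$; letting $m \to \infty$ and applying the pigeonhole principle to which $j$ recurs, that $Y_j \in \Acal$.

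Once the reformulation is in place, counting quantifiers finishes it: the predicate "$R(\sigma^c_j)$" is computable (uniformly in $k,m,c,j$), so "$(\exists j < k)\, R(\sigma^c_j)$" is computable (bounded quantifier), and prefixing "$(\forall k)(\forall m)(\forall c \in k^m)$" — all number quantifiers — yields a $\Pi^0_1$ sentence for clause (b). Combined with the $\Pi^0_1$ sentence for clause (a) and the $\Sigma^0_1$-ness giving non-emptiness via "$\exists \sigma\, R(\sigma)$" (which is $\Sigma^0_1$), the conjunction "$\Acal$ is a largeness class" is $\Pi^0_1 \wedge \Pi^0_1 \wedge \Sigma^0_1$, which is $\Pi^0_2$ (indeed the $\Sigma^0_1$ conjunct costs us the jump from $\Pi^0_1$ to $\Pi^0_2$). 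I expect the main obstacle to be precisely the compactness step in the reformulation — making sure that a failure of the finitary condition genuinely produces a \emph{bad infinite $k$-cover}, handling the bookkeeping of overlapping covers versus genuine partitions, and confirming $R$ can be taken monotone so that König's lemma applies; everything after that is routine quantifier counting.
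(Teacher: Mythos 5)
Your finitary reformulation of clause (b) has the quantifiers in the wrong order, and this is a genuine error, not a presentational one. You assert that largeness is equivalent to $(\forall k)(\forall m)(\forall c \in k^{m})(\exists j < k)\, R(\sigma^c_j)$, but the correct statement (and the one the paper uses) is $(\forall k)(\exists n)(\forall c \in k^{n})(\exists j<k)\, R(\sigma^c_j)$: for each $k$ there is \emph{some} level $n$ at which every finite $k$-cover of $\{0,\dots,n\}$ already exhibits a class in $\Acal$. Your universally quantified version is strictly stronger than largeness. Concretely, take $\Acal = \{X : 5 \in X\}$ with $R(\sigma)$ the (prefix-monotone, $\subseteq$-monotone) predicate ``$|\sigma|\geq 6$ and $\sigma(5)=1$'': this is a largeness class, yet for $m=3$ every coloring $c \in k^3$ is ``bad'' since no class of length $3$ can contain $5$. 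This also breaks your K\"onig's lemma step: the failure of your condition produces only a \emph{single} bad finite coloring at a \emph{single} level $m$, not bad colorings of arbitrarily large length, so the tree of bad colorings may well be finite and K\"onig's lemma does not apply. The compactness argument must be run the other way around: fix $k$, assume that for \emph{every} $n$ there is a bad cover of $\{0,\dots,n\}$, observe that bad covers are closed under restriction (here prefix-monotonicity of $R$ is what you need), and only then invoke K\"onig to get a bad infinite $k$-cover contradicting largeness; the contrapositive yields the $\exists n$.

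A consequence is that your quantifier count for clause (b) is wrong: with the corrected reformulation, clause (b) is $\forall k\, \exists n\, [\text{decidable}]$, hence genuinely $\Pi^0_2$ and not $\Pi^0_1$. Your final answer of $\Pi^0_2$ is reached only by accident, via the non-emptiness conjunct (which is in any case redundant, since clause (b) applied to the trivial $1$-cover already gives $\omega \in \Acal$). The paper's one-line proof is exactly the corrected version of your plan: $\Acal$ is large iff (a$'$) $\varphi$ is suitably monotone and (b$'$) for every $k$ there is some $n$ such that every cover $\sigma_0\cup\dots\cup\sigma_{k-1}=\{0,\dots,n\}$ has some $j$ with $\varphi(\sigma_j)$, which is $\Pi^0_2$ because of the $\forall k\,\exists n$ alternation. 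Everything else in your plan (reducing the second-order cover quantifier to a quantifier over finite colorings, the harmlessness of clause (a)) matches the paper's approach.
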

\begin{proof}
Say $\Acal = \{ X : (\exists \sigma \preceq X)\varphi(\sigma) \}$ where $\varphi$ is a $\Sigma^0_1$ formula.
By compactness, $\Acal$ is a largeness class iff for every $\sigma$ and $\tau$ such that $\sigma \subseteq \tau$ and $\varphi(\sigma)$ holds, $\varphi(\tau)$ holds, and for every $k$, there is some $n \in \omega$ such that for every $\sigma_0 \cup \dots \cup \sigma_{k-1} = \{0, \dots, n\}$, there is some $j < k$ such that $\varphi(\sigma_j)$ holds.
\end{proof}

\subsection{Partition regular classes}

\begin{definition}
A \emph{partition regular class} is a collection of sets $\Lcal \subseteq 2^\omega$ such that
\begin{itemize}
	\item[(a)] $\Lcal$ is a largeness class
	\item[(b)] For every $X \in \Lcal$ and $Y_0 \cup \dots \cup Y_{k-1} \supseteq X$, there is some $j < k$ such that $Y_j \in \Lcal$.
\end{itemize}
\end{definition}

In particular, the class of all infinite sets is partition regular.

\begin{lemma}\label{lem:decreasing-largeness-yields-pr}
Suppose $\Acal_0 \supseteq \Acal_1 \supseteq \dots$ is a decreasing sequence of partition regular classes.
Then $\bigcap_s \Acal_s$ is a partition regular class.
\end{lemma}
\begin{proof}
The proof is easy, similar to the one of \Cref{lem:decreasing-largeness-yields-largeness} and left to the reader.
\end{proof}

\begin{definition}
Let $\Acal$ be a largeness class. Define
$$
\Lcal(\Acal) = \{ X \in 2^\omega : \forall k\ \forall X_0 \cup \dots \cup X_{k-1} \supseteq X\ \exists i < k\ X_i \in \Acal \}
$$
\end{definition}

Note that a superset of a partition regular class need not to be partition regular, it is however always a largeness class. Note also that if $\Ucal$ is a $\Sigma^0_1(X)$ class, then by compactness $\Lcal(\Ucal)$ is a $\Pi^0_2(X)$ class.

\begin{lemma}\label{lem:lcal-of-largeness-is-partition-regular}
Let $\Acal$ be a largeness class. Then $\Lcal(\Acal)$ is the largest partition regular subclass of $\Acal$.
\end{lemma}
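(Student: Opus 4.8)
The statement packages three claims: that $\Lcal(\Acal) \subseteq \Acal$, that $\Lcal(\Acal)$ is itself partition regular, and that $\Lcal(\Acal)$ contains every partition regular subclass of $\Acal$. I would verify them in that order, since the first two set up the third.

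For $\Lcal(\Acal) \subseteq \Acal$: given $X \in \Lcal(\Acal)$, apply the defining condition with $k = 1$ and the one-piece cover $X_0 = X$; the condition then directly yields $X \in \Acal$.

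For partition regularity of $\Lcal(\Acal)$: upward closure is immediate, since if $X \in \Lcal(\Acal)$ and $Y \supseteq X$, any finite cover of $Y$ is in particular a cover of $X$ and hence has a piece in $\Acal$. The substantive point — and the only place where we use that $\Acal$ is a largeness class, not merely upward closed — is an amalgamation-of-covers argument. First note $\omega \in \Lcal(\Acal)$, because every finite cover of $\omega$ has a piece in $\Acal$ by largeness of $\Acal$. Now suppose $X \in \Lcal(\Acal)$ and $X \subseteq Y_0 \cup \dots \cup Y_{k-1}$, and assume for contradiction that no $Y_j$ lies in $\Lcal(\Acal)$. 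Then for each $j < k$ there is a finite cover $Y_j \subseteq Z^j_0 \cup \dots \cup Z^j_{m_j-1}$ none of whose pieces lies in $\Acal$. Concatenating these covers over all $j$ produces a finite cover of $X$ with no piece in $\Acal$, contradicting $X \in \Lcal(\Acal)$. Hence some $Y_j \in \Lcal(\Acal)$; this is exactly clause (b) of partition regularity, and specializing to $X = \omega$ gives clause (b) of the largeness-class definition.

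For maximality: let $\Lcal'$ be a partition regular class with $\Lcal' \subseteq \Acal$, and take $X \in \Lcal'$. For any finite cover $X \subseteq X_0 \cup \dots \cup X_{k-1}$, partition regularity of $\Lcal'$ gives some $X_i \in \Lcal' \subseteq \Acal$; since the cover was arbitrary, $X \in \Lcal(\Acal)$. Thus $\Lcal' \subseteq \Lcal(\Acal)$. The only genuinely load-bearing step is the amalgamation of covers; the rest is a direct unwinding of the definitions.
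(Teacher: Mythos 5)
Your proof is correct and follows essentially the same route as the paper's: the trivial one-piece cover for $\Lcal(\Acal) \subseteq \Acal$, the amalgamation-of-covers contradiction for partition regularity, and the direct unwinding of definitions for maximality. Your treatment is in fact slightly more complete than the paper's, since you explicitly verify upward closure and non-emptiness of $\Lcal(\Acal)$, which the paper leaves implicit.
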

\begin{proof}
We first prove that $\Lcal(\Acal)$ is a partition regular subclass of $\Acal$.
By definition of $\Acal$ being a largeness class, $\omega \in \Lcal(\Acal)$.
Let $X \in \Lcal(\Acal)$ and $X_0 \cup \dots \cup X_{k-1} \supseteq X$. Suppose for the sake of absurd that $X_i \not \in \Lcal(\Acal)$ for every $i < k$. Then for every $i < k$, there is some $k_i \in \omega$ and some $Y^0_i \cup \dots \cup Y^{k_i-1}_i \supseteq X_i$ such that $Y^j_i \not \in \Acal$ for every $j < k_i$. Then $\{Y^j_i : i < k, j < k_i \}$ is a cover of $X$ contradicting $X \in \Lcal(\Acal)$. Therefore $\Lcal(\Acal)$ is a partition regular class.
Moreover, $\Lcal(\Acal) \subseteq \Acal$ as witnessed by taking the trivial cover of $X$ by $X$ itself.

We now prove that $\Lcal(\Acal)$ is the largest partition regular subclass of $\Acal$.
Indeed, let $\Bcal$ be a partition regular subclass of $\Acal$. Then for every $X \in \Bcal$, every $X_0 \cup \dots \cup X_{k-1} \supseteq X$, there is some $j < k$ such that $X_j \in \Bcal \subseteq \Acal$. Thus $X \in \Lcal(\Acal)$, so $\Bcal \subseteq \Lcal(\Acal)$.
\end{proof}

\subsection{$\Mcal$-cohesive classes}

We now introduce the notion of $\Mcal$-cohesive largeness classes for a countable Scott set $\Mcal$. One would ideally need $\Mcal$-minimal largeness classes instead for the upcoming forcing  (see \Cref{def_minmalclasses}). Unfortunately these classes are definitionally too complex for us. We use instead $\Mcal$-cohesive largeness classes, which are definitionally simpler and can be seen as a way to ``almost'' build a minimal largeness class. The key property of these classes lies in \Cref{lem:cohesive-largeness-compatibility}, which is later used to show that an $\Mcal$-cohesive largeness class contains a \emph{unique} $\Mcal$-minimal largeness class.

Given an infinite set $X$, we let $\Lcal_X$ be the $\Pi^0_2(X)$ largeness class of all sets having an infinite intersection with~$X$.

\begin{definition}
A class $\Acal$ is \emph{$\Mcal$-cohesive}
if for every $X \in \Mcal$, either $\Acal \subseteq \Lcal_X$ or $\Acal \subseteq \Lcal_{\overline{X}}$.
\end{definition}

In what follows, fix an effective enumeration $\Ucal^Z_0, \Ucal^Z_1, \dots$ of all the $\Sigma^{0,Z}_1$ classes upward-closed under the superset relation, that is, if $X \in \Ucal^Z_e$ and $Y \supseteq X$, then $Y \in \Ucal^Z_e$. Fix also a Scott set $\Mcal = \{X_0, X_1, \dots \}$ countable coded by a set $M$. Given a set $C \subseteq \omega^2$, we write
$$
\Ucal^\Mcal_C = \bigcap_{\langle e, i \rangle \in C} \Ucal^{X_i}_e
$$

\begin{lemma}\label{lem:cohesive-largeness-compatibility}
Let $\Ucal^{\Mcal}_C$ be an $\Mcal$-cohesive class.
Let $\Ucal^{\Mcal}_D$ and $\Vcal^{\Mcal}_E$ be such that $\Ucal^{\Mcal}_C \cap \Ucal^{\Mcal}_D$ and $\Ucal^{\Mcal}_C \cap \Ucal^{\Mcal}_E$ are both largeness classes. Then $\Ucal^{\Mcal}_C \cap \Ucal^{\Mcal}_D \cap \Ucal^{\Mcal}_E$ is a largeness class.
\end{lemma}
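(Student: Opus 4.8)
The plan is to argue by contradiction, exploiting the fact that a $\Sigma^0_1$-type class fails to be a largeness class in a compact, finitary way (as recorded in \Cref{lem:largeness-class-complexity}). Suppose $\Ucal^{\Mcal}_C \cap \Ucal^{\Mcal}_D \cap \Ucal^{\Mcal}_E$ is not a largeness class. Since each of these classes is upward closed (being an intersection of upward-closed $\Sigma^0_1$ classes), upward closure of the intersection is automatic, so the failure must be of type (b): there is a $k$ and a $k$-cover $Y_0, \dots, Y_{k-1}$ of $\omega$ with $Y_j \notin \Ucal^{\Mcal}_C \cap \Ucal^{\Mcal}_D \cap \Ucal^{\Mcal}_E$ for every $j < k$. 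The goal is to refine this cover into one witnessing that $\Ucal^{\Mcal}_C \cap \Ucal^{\Mcal}_D$ or $\Ucal^{\Mcal}_C \cap \Ucal^{\Mcal}_E$ fails to be a largeness class, contradicting the hypothesis.

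The key idea is to use $\Mcal$-cohesiveness of $\Ucal^{\Mcal}_C$ to split each $Y_j$. Here is where I expect the crux to lie. For each $j$, since $Y_j$ is not in the triple intersection, it fails one of the three memberships; if it already fails $\Ucal^{\Mcal}_C$ or $\Ucal^{\Mcal}_D$, it witnesses failure of $\Ucal^{\Mcal}_C \cap \Ucal^{\Mcal}_D$ directly (as a trivial one-element subcover of itself), and similarly for $E$. So the problematic case is when $Y_j \in \Ucal^{\Mcal}_C$, $Y_j \in \Ucal^{\Mcal}_D$, $Y_j \in \Ucal^{\Mcal}_E$ is false only because, say, it meets some finitary obstruction — but that cannot happen for a single set. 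The real subtlety is that $Y_j \notin \Ucal^{\Mcal}_D \cap \Ucal^{\Mcal}_E$ does not by itself say $Y_j \notin \Ucal^{\Mcal}_D$ or $Y_j \notin \Ucal^{\Mcal}_E$ when we want a uniform finite cover; the failure of the \emph{class} to be large is finitary, but membership of an individual set is not where the finitary obstruction lives. Instead I would pass to the genuinely combinatorial form: consider, for each $j$, a $2$-cover $Y_j \cap X$ and $Y_j \cap \overline{X}$ for the relevant $X \in \Mcal$; by $\Mcal$-cohesiveness, $\Ucal^{\Mcal}_C$ lies inside $\Lcal_X$ or $\Lcal_{\overline X}$, which pins down which half is ``relevant'' for staying in $\Ucal^{\Mcal}_C$.

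Concretely, I would proceed as follows. Fix the failing $k$-cover $Y_0,\dots,Y_{k-1}$. For each $j$, since $Y_j \notin \Ucal^{\Mcal}_C \cap \Ucal^{\Mcal}_D \cap \Ucal^{\Mcal}_E$ and the latter is $\Sigma^0_1$ relative to $M$, there is, by compactness applied to the definitions via \Cref{lem:largeness-class-complexity}, a finite initial segment of $Y_j$ already ruling it out of one of the three classes. Consider the finitely many classes $\Ucal^{X_i}_e$ with $\langle e,i\rangle$ ranging over the finitely many pairs in $C$, $D$, $E$ that are activated by these initial segments. We obtain a finite set $F \subseteq \Mcal$ of oracles and a uniform bound; now form the common refinement of $Y_0,\dots,Y_{k-1}$ together with the partitions $X_i, \overline{X_i}$ for $X_i \in F$. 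This yields a new finite cover $Z_0,\dots,Z_{m-1}$ of $\omega$. For each $Z_\ell$: it is contained in some $Y_j$, and it lies entirely inside some $X_i$ or $\overline{X_i}$ for each activated $X_i$. Using $\Mcal$-cohesiveness: $\Ucal^{\Mcal}_C$ is contained in $\Lcal_{X_i}$ (say); hence if $Z_\ell$ misses $X_i$ entirely then $Z_\ell \notin \Ucal^{\Mcal}_C$, and we're done for that block. Otherwise $Z_\ell$ survives in the ``$X_i$-side'' for every activated $X_i$, and then the finitary obstruction inside $Y_j$ must have come from $D$ or $E$: so $Z_\ell \notin \Ucal^{\Mcal}_D$ or $Z_\ell \notin \Ucal^{\Mcal}_E$. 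Partition the index set $\{0,\dots,m-1\}$ into those $\ell$ with $Z_\ell \notin \Ucal^{\Mcal}_C \cap \Ucal^{\Mcal}_D$ and the rest (which are then $\notin \Ucal^{\Mcal}_C \cap \Ucal^{\Mcal}_E$). Grouping the $Z_\ell$ of the first kind into one cover of their union $W_D$ and the second kind into a cover of $W_C$... no — rather, observe that $\{Z_\ell\}_\ell$ is a single finite cover of $\omega$ in which every block lies outside $\Ucal^{\Mcal}_C \cap \Ucal^{\Mcal}_D$ or outside $\Ucal^{\Mcal}_C \cap \Ucal^{\Mcal}_E$. Since $\Ucal^{\Mcal}_C \cap \Ucal^{\Mcal}_D$ and $\Ucal^{\Mcal}_C \cap \Ucal^{\Mcal}_E$ are largeness classes, each is a superclass of the partition-regular class $\Lcal$ of infinite sets; more to the point, for a $k$-cover of $\omega$ one of the blocks must lie in the given largeness class. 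Applying this to $\Ucal^{\Mcal}_C \cap \Ucal^{\Mcal}_D$ we get some $Z_\ell \in \Ucal^{\Mcal}_C \cap \Ucal^{\Mcal}_D$, which must then be one of the ``$E$-type'' blocks, so $Z_\ell \in \Ucal^{\Mcal}_C$; applying largeness of $\Ucal^{\Mcal}_C \cap \Ucal^{\Mcal}_E$ to the sub-cover of $\omega$ by the $E$-type blocks together with $\overline{W_E}$ where $W_E$ is their union — here one uses that $\Ucal^{\Mcal}_C \subseteq \Lcal_{X_i}$ forces the complementary block out of $\Ucal^{\Mcal}_C$, so it contributes nothing — we obtain a block in $\Ucal^{\Mcal}_C \cap \Ucal^{\Mcal}_E$, contradicting that all $E$-type blocks avoid it. This contradiction completes the proof.

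The main obstacle, as flagged above, is bookkeeping the interaction between the finitary nature of ``not a largeness class'' and the genuinely infinitary nature of ``a given set is not in $\Ucal^{\Mcal}_D$''; the device that makes it work is refining the cover along the finitely many $\Mcal$-partitions that $\Mcal$-cohesiveness makes relevant, so that each refined block has its membership in $\Ucal^{\Mcal}_C$ decided, and then routing the combinatorics through the two given largeness classes. I would expect the write-up to be a page or so, with the compactness step quoted from \Cref{lem:largeness-class-complexity} and the cover-refinement done explicitly.
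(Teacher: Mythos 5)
Your plan has the right contradiction setup, but two of its load-bearing steps fail. First, you claim that since $Y_j \notin \Ucal^{\Mcal}_C \cap \Ucal^{\Mcal}_D \cap \Ucal^{\Mcal}_E$, a finite initial segment of $Y_j$ already rules it out of one of the three classes. These classes are (intersections of) upward-closed $\Sigma^0_1$ classes: membership of a single set is certified by a finite initial segment, but non-membership is a closed condition with no finite witness. So there are no ``activated pairs'' to extract, and the refinement along $X_i, \overline{X_i}$ has nothing to hang on: knowing that a refined block $Z_\ell$ is contained in the cohesive side of some $X_i \in \Mcal$ tells you nothing about whether $Z_\ell \in \Ucal^{\Mcal}_C$ (only disjointness from the cohesive side rules a set \emph{out}), so you cannot conclude that the obstruction ``must have come from $D$ or $E$.'' The case you cannot handle is exactly $Y_j \notin \Ucal^{\Mcal}_C$ with $Y_j \in \Ucal^{\Mcal}_D \cap \Ucal^{\Mcal}_E$. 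Your closing step also breaks: the cover by the $E$-type blocks together with $\overline{W_E} = W_D$ does not witness non-largeness of $\Ucal^{\Mcal}_C \cap \Ucal^{\Mcal}_E$, because $W_D$ contains a block belonging to $\Ucal^{\Mcal}_C \cap \Ucal^{\Mcal}_D$ and hence lies in $\Ucal^{\Mcal}_C$ by upward closure.

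The missing idea is to get the bad cover itself inside $\Mcal$, and to take it to be a \emph{partition}. The paper first passes to finite $C_1 \subseteq C$, $D_1 \subseteq D$, $E_1 \subseteq E$ with $\Ucal^{\Mcal}_{C_1} \cap \Ucal^{\Mcal}_{D_1} \cap \Ucal^{\Mcal}_{E_1}$ not large (via \Cref{lem:decreasing-largeness-yields-largeness}), so that the collection of bad partitions $Y_0 \sqcup \dots \sqcup Y_{k-1} = \omega$ is a non-empty $\Pi^0_1(\Mcal)$ class; since $\Mcal$ is a Scott set, it contains such a partition. Now $\Mcal$-cohesiveness applies to the pieces $Y_i$ themselves: since $\Ucal^{\Mcal}_C$ is large, some piece satisfies $\Ucal^{\Mcal}_C \subseteq \Lcal_{Y_i}$, and every other piece, being disjoint from $Y_i$, falls outside $\Lcal_{Y_i} \supseteq \Ucal^{\Mcal}_C$. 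That single piece $Y_i$ lies in $\Ucal^{\Mcal}_C$ but not in the triple intersection, so it fails $\Ucal^{\Mcal}_D$ (say), and then the very same partition witnesses that $\Ucal^{\Mcal}_C \cap \Ucal^{\Mcal}_D$ is not large --- the desired contradiction. Without routing through this Scott-set basis step you have no set in $\Mcal$ to which cohesiveness applies, which is where your argument stalls.
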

\begin{proof}
Suppose for contradiction that $\Ucal^{\Mcal}_C \cap \Ucal^{\Mcal}_D \cap \Ucal^{\Mcal}_E$ is not a largeness class. Then by Lemma~\ref{lem:decreasing-largeness-yields-largeness}, there is some finite $C_1 \subseteq C$, $D_1 \subseteq D$ and $E_1 \subseteq E$ such that  $\Ucal^{\Mcal}_{C_1} \cap \Ucal^{\Mcal}_{D_1} \cap \Ucal^{\Mcal}_{E_1}$ is not a largeness class. Since $\Ucal^{\Mcal}_{C_1} \cap \Ucal^{\Mcal}_{D_1} \cap \Ucal^{\Mcal}_{E_1}$ is $\Sigma^0_1(\Mcal)$, the collection $\Ccal$ of all sets $Y_0 \oplus \dots \oplus Y_{k-1}$ such that $Y_0 \sqcup \dots \sqcup Y_{k-1} = \omega$ and for every $i < k$, $Y_i \not \in \Ucal^{\Mcal}_{C_1} \cap \Ucal^{\Mcal}_{D_1} \cap \Ucal^{\Mcal}_{E_1} \supseteq \Ucal^{\Mcal}_{C} \cap \Ucal^{\Mcal}_{D} \cap \Ucal^{\Mcal}_{E}$, is a non-empty $\Pi^0_1(\Mcal)$ class. 
Since $\Mcal$ is a Scott set, $\Ccal \cap \Mcal \neq \emptyset$, so fix such a set $Y_0 \oplus \dots \oplus Y_{k-1} \in \Ccal \cap \Mcal$.
	Since $\Ucal^{\Mcal}_C$ is $\Mcal$-cohesive, there must be some $i < k$ such that $\Ucal^{\Mcal}_C \subseteq \Lcal_{Y_i}$. In particular, $Y_i \in \Ucal^{\Mcal}_C$, so $Y_i \not \in \Ucal^{\Mcal}_{D}$ or $Y_i \not \in \Ucal^{\Mcal}_{E}$. Suppose $Y_i \not \in \Ucal^{\Mcal}_{D}$, as the other case is symmetric. Since $Y_j \cap Y_i = \emptyset$ for every $j \neq i$, then $Y_j \not \in \Ucal^{\Mcal}_C \subseteq \Lcal_{Y_i}$ for every $j \neq i$. It follows that $Y_0, \dots, Y_{k-1}$ witnesses that $\Ucal^{\Mcal}_C \cap \Ucal^{\Mcal}_{D}$ is not a largeness class. Contradiction.
\end{proof}

\subsection{$\Mcal$-minimal classes}

\begin{definition} \label{def_minmalclasses}
A class $\Acal$ is \emph{$\Mcal$-minimal} if for every $X \in \Mcal$ and $e \in \omega$, either $\Acal \subseteq \Ucal^X_e$ or $\Acal \cap \Ucal^X_e$ is not a largeness class.
\end{definition}

The following is a corollary of \cref{lem:cohesive-largeness-compatibility} and informally says that an $\Mcal$-cohesive largeness class contains a unique $\Mcal$-minimal largeness class, which can be build with a greedy algorithm.

\begin{lemma}
Given an $\Mcal$-cohesive largeness class $\Ucal^{\Mcal}_C$,
the collection of sets
$$
\langle \Ucal^{\Mcal}_C \rangle = \bigcap_ {e \in \omega, X \in \Mcal} \{ \Ucal_e^X : \Ucal^{\Mcal}_C \cap \Ucal_e^X \mbox{ is a largeness class}\}
$$
is an $\Mcal$-minimal largeness class contained in $\Ucal^{\Mcal}_C$.
\end{lemma}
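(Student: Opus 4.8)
The plan is to verify the three assertions in turn: that $\langle \Ucal^{\Mcal}_C \rangle \subseteq \Ucal^{\Mcal}_C$, that $\langle \Ucal^{\Mcal}_C \rangle$ is a largeness class, and that it is $\Mcal$-minimal. Containment is immediate from the definitions: writing $\Ucal^{\Mcal}_C = \bigcap_{\langle e, i \rangle \in C} \Ucal^{X_i}_e$, every factor $\Ucal^{X_i}_e$ with $\langle e, i \rangle \in C$ satisfies $\Ucal^{\Mcal}_C \cap \Ucal^{X_i}_e = \Ucal^{\Mcal}_C$, which is a largeness class by hypothesis, so each such $\Ucal^{X_i}_e$ is one of the classes intersected in the definition of $\langle \Ucal^{\Mcal}_C \rangle$; hence $\langle \Ucal^{\Mcal}_C \rangle \subseteq \bigcap_{\langle e, i \rangle \in C} \Ucal^{X_i}_e = \Ucal^{\Mcal}_C$.

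For largeness, I would enumerate, with infinite repetitions, the classes $\Ucal^{X}_{e}$ (for $X \in \Mcal$, $e \in \omega$) such that $\Ucal^{\Mcal}_C \cap \Ucal^{X}_{e}$ is a largeness class as $\Vcal_0, \Vcal_1, \dots$, so that $\langle \Ucal^{\Mcal}_C \rangle = \bigcap_s \Vcal_s$. Put $\Acal_0 = \Ucal^{\Mcal}_C$ and $\Acal_{s+1} = \Acal_s \cap \Vcal_s$, so that $\Acal_0 \supseteq \Acal_1 \supseteq \dots$ and $\bigcap_s \Acal_s = \Ucal^{\Mcal}_C \cap \langle \Ucal^{\Mcal}_C \rangle = \langle \Ucal^{\Mcal}_C \rangle$ by the containment just shown. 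I would then prove by induction on $s$ that each $\Acal_s$ is a largeness class. The base case is the hypothesis that $\Ucal^{\Mcal}_C$ is a largeness class. For the inductive step, note that $\Acal_s$ has the form $\Ucal^{\Mcal}_C \cap \Ucal^{\Mcal}_D$, where $D$ is the finite set of indices corresponding to $\Vcal_0, \dots, \Vcal_{s-1}$, and $\Vcal_s = \Ucal^{\Mcal}_E$ for a singleton $E$; by the induction hypothesis $\Ucal^{\Mcal}_C \cap \Ucal^{\Mcal}_D = \Acal_s$ is a largeness class, and by the choice of the enumeration $\Ucal^{\Mcal}_C \cap \Ucal^{\Mcal}_E = \Ucal^{\Mcal}_C \cap \Vcal_s$ is a largeness class, so \Cref{lem:cohesive-largeness-compatibility}, using that $\Ucal^{\Mcal}_C$ is $\Mcal$-cohesive, yields that $\Acal_{s+1} = \Ucal^{\Mcal}_C \cap \Ucal^{\Mcal}_D \cap \Ucal^{\Mcal}_E$ is a largeness class. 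Applying \Cref{lem:decreasing-largeness-yields-largeness} to the sequence $(\Acal_s)_s$ then shows that $\langle \Ucal^{\Mcal}_C \rangle$ is a largeness class.

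For $\Mcal$-minimality, fix $X \in \Mcal$ and $e \in \omega$. If $\Ucal^{\Mcal}_C \cap \Ucal^X_e$ is a largeness class, then $\Ucal^X_e$ is among the classes defining $\langle \Ucal^{\Mcal}_C \rangle$, so $\langle \Ucal^{\Mcal}_C \rangle \subseteq \Ucal^X_e$. If it is not, then since $\langle \Ucal^{\Mcal}_C \rangle \subseteq \Ucal^{\Mcal}_C$ we have $\langle \Ucal^{\Mcal}_C \rangle \cap \Ucal^X_e \subseteq \Ucal^{\Mcal}_C \cap \Ucal^X_e$, and since a superclass of a largeness class is a largeness class, $\langle \Ucal^{\Mcal}_C \rangle \cap \Ucal^X_e$ cannot be a largeness class either. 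I expect the one genuinely substantive step to be the inductive largeness argument, and within it the only real point is that \Cref{lem:cohesive-largeness-compatibility} is exactly the tool that lets one add one more $\Sigma^0_1(\Mcal)$ constraint while preserving largeness; the rest is bookkeeping. Since the $\Vcal_s$ are incorporated one at a time, no difficulty arises from the possibly infinite number of constraints, which is absorbed by \Cref{lem:decreasing-largeness-yields-largeness}.
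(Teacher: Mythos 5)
Your proposal is correct and follows essentially the same route as the paper: enumerate the pairs $(e,X)$ for which $\Ucal^{\Mcal}_C \cap \Ucal^X_e$ is a largeness class, show by induction via \Cref{lem:cohesive-largeness-compatibility} that the finite partial intersections (together with $\Ucal^{\Mcal}_C$) remain largeness classes, and conclude with \Cref{lem:decreasing-largeness-yields-largeness}. You merely spell out the containment $\langle \Ucal^{\Mcal}_C \rangle \subseteq \Ucal^{\Mcal}_C$ and the $\Mcal$-minimality check, which the paper dispatches as immediate from the construction.
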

\begin{proof}
We first prove that $\langle \Ucal^{\Mcal}_C \rangle$ is a largeness class.
Let $e_0, e_1, \dots$ and $X_0, X_1, \dots$ be an enumeration of all pairs $(e, X) \in \omega \times \Mcal$ such that $\Ucal^{\Mcal}_C \cap \Ucal_e^X$ is a largeness class.
By induction on $n$ using Lemma~\ref{lem:cohesive-largeness-compatibility},
$\bigcap_{i < n} \Ucal_{e_i}^{X_i}$ is a largeness class for every $n \in \omega$. Thus, by Lemma~\ref{lem:decreasing-largeness-yields-largeness}, $\langle \Ucal^{\Mcal}_C \rangle = \bigcap_i \Ucal_{e_i}^{X_i}$ is a largeness class. By construction $\langle \Ucal^{\Mcal}_C \rangle$ is $\Mcal$-minimal.
\end{proof}

Note that we clearly have $\langle \Ucal^{\Mcal}_C \rangle \subseteq \Ucal^{\Mcal}_C$. The notation $\langle \Ucal^{\Mcal}_C \rangle$ for an $\Mcal$-cohesive largeness class will be used all along this document. Note that $\langle \Ucal^{\Mcal}_C \rangle = \Ucal^{\Mcal}_D$ where $D$ is the set of all $\langle e, i\rangle$ such that $\Ucal^{\Mcal}_C \cap \Ucal^{X_i}_e$ is a largeness class.

\begin{lemma}
Let $\Ucal^{\Mcal}_C$ be a largeness class. Then $\Lcal(\Ucal^{\Mcal}_C) = \Ucal^{\Mcal}_D$ for some $D \subseteq \omega^2$. Furthermore $D$ is computable from $C$.
\end{lemma}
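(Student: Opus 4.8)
The plan is to show that $\Lcal(\Ucal^{\Mcal}_C)$ is itself a $\Sigma^0_1(\Mcal)$ class of the form $\Ucal^{\Mcal}_D$, and that the index set $D$ can be read off effectively from $C$. Recall that by definition $\Lcal(\Acal) = \{ X : \forall k\ \forall X_0 \cup \dots \cup X_{k-1} \supseteq X\ \exists i < k\ X_i \in \Acal \}$; the remark preceding \Cref{lem:lcal-of-largeness-is-partition-regular} already notes that when $\Ucal$ is $\Sigma^0_1(Z)$, $\Lcal(\Ucal)$ is a $\Pi^0_2(Z)$ class, but here we need the sharper conclusion that it is actually of the special $\Sigma^0_1(\Mcal)$-shape $\Ucal^{\Mcal}_D$. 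The key observation is that $\Lcal(\Ucal^{\Mcal}_C)$ is upward-closed under the superset relation (if $X \in \Lcal(\Ucal^{\Mcal}_C)$ and $Y \supseteq X$, then any cover of $Y$ is a cover of $X$, so it meets $\Ucal^{\Mcal}_C$), and more importantly that it can be rewritten so that the dangerous universal quantifier over covers is absorbed.

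First I would write $C$ as an increasing union of its finite subsets $C_0 \subseteq C_1 \subseteq \dots$. For each finite $C_s$, the class $\Ucal^{\Mcal}_{C_s}$ is $\Sigma^0_1(X_{i_1} \oplus \dots \oplus X_{i_m})$ for the finitely many second coordinates appearing in $C_s$, hence $\Sigma^0_1(\Mcal)$; moreover it is upward-closed. I claim $\Lcal(\Ucal^{\Mcal}_C) = \bigcap_s \Lcal(\Ucal^{\Mcal}_{C_s})$: the inclusion $\subseteq$ is immediate since $\Ucal^{\Mcal}_C \subseteq \Ucal^{\Mcal}_{C_s}$ forces $\Lcal(\Ucal^{\Mcal}_C) \subseteq \Lcal(\Ucal^{\Mcal}_{C_s})$; for $\supseteq$, if $X$ lies in every $\Lcal(\Ucal^{\Mcal}_{C_s})$ and $X_0 \cup \dots \cup X_{k-1} \supseteq X$, then for each $s$ some $X_{i_s} \in \Ucal^{\Mcal}_{C_s}$, and by the infinite pigeonhole principle one index $i$ works for infinitely many $s$, whence (since the $\Ucal^{\Mcal}_{C_s}$ are decreasing in $s$, being intersections over growing sets) $X_i \in \bigcap_s \Ucal^{\Mcal}_{C_s} = \Ucal^{\Mcal}_C$. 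So it suffices to handle each $\Lcal(\Ucal^{\Mcal}_{C_s})$ and then close up under intersection using the machinery already in place: $\Lcal(\Ucal^{\Mcal}_{C_s})$ will be seen to be $\Pi^0_2(\Mcal)$ and partition regular by \Cref{lem:lcal-of-largeness-is-partition-regular}, but that still is not of the form $\Ucal^{\Mcal}_D$.

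The real content is therefore: for a finite $C_s$, exhibit $\Lcal(\Ucal^{\Mcal}_{C_s})$ as a single $\Sigma^0_1$ upward-closed class relative to a fixed member of $\Mcal$. Here I would use compactness exactly as in the proof of \Cref{lem:largeness-class-complexity}: writing $\Ucal^{\Mcal}_{C_s} = \{ X : (\exists \sigma \preceq X)\varphi(\sigma) \}$ with $\varphi$ a $\Sigma^0_1(Z)$ formula ($Z \in \Mcal$), the condition $X \in \Lcal(\Ucal^{\Mcal}_{C_s})$ is equivalent, by König's lemma applied to the tree of finite attempts to cover an initial segment of $X$ while avoiding $\varphi$, to the statement that $X$ has an initial segment $\tau$ such that \emph{every} partition of $F_\tau$ into finitely many pieces places one piece inside $\{ \rho : \varphi(\rho) \}$ — but the bound on the number of pieces and on the length needed is itself definable, so this reduces to a $\Sigma^0_1(Z)$ predicate of $\tau$ (quantify: there exists $\tau \preceq X$ and $n$ such that for all $k$-covers of $F_\tau$ with $k \le |\tau|$...). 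Since the whole quantifier block after "$\exists \tau$" ranges over finite objects bounded in terms of $\tau$ and asks a $\Sigma^0_1(Z)$ question, the matrix is $\Sigma^0_1(Z)$, giving $\Lcal(\Ucal^{\Mcal}_{C_s})$ the form $\Ucal^{X}_{e(s)}$ for an index $e(s)$ computable from $C_s$ and $Z$. The main obstacle is precisely this compactness step — making sure the "for all covers" really can be bounded and folded into a $\Sigma^0_1$ matrix rather than leaving a genuine $\Pi^0_1$ layer; this is the point where one must be careful that $\Ucal^{\Mcal}_{C_s}$ is upward-closed (so that enlarging pieces of a cover only helps) and genuinely $\Sigma^0_1$ over a single real. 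Finally, setting $D = \{ \langle e(s), j_s \rangle : s \in \omega \}$ where $j_s$ indexes the relevant member of $\Mcal$, we get $\Lcal(\Ucal^{\Mcal}_C) = \bigcap_s \Ucal^{X_{j_s}}_{e(s)} = \Ucal^{\Mcal}_D$, and the map $C \mapsto (C_s)_s \mapsto (e(s), j_s)_s \mapsto D$ is computable in $C$ (together with the fixed code $M$ of $\Mcal$, which is a background parameter), completing the proof.
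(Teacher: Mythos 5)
Your first reduction is sound: writing $C$ as an increasing union of finite sets $C_s$ and proving $\Lcal(\Ucal^{\Mcal}_C) = \bigcap_s \Lcal(\Ucal^{\Mcal}_{C_s})$ via pigeonhole on the \emph{decreasing} sequence $\Ucal^{\Mcal}_{C_0} \supseteq \Ucal^{\Mcal}_{C_1} \supseteq \dots$ is correct. The gap is exactly at the step you flag as ``the main obstacle'': the claim that for finite $C_s$ the class $\Lcal(\Ucal^{\Mcal}_{C_s})$ is a \emph{single} upward-closed $\Sigma^0_1(Z)$ class. This is false, because the quantifier ``for all $k$'' over the number of pieces of the cover cannot be bounded in terms of $\tau$. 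Concretely, take $\Ucal = \{X : |X| \geq 2\}$, an upward-closed $\Sigma^0_1$ largeness class of the form $\Ucal^{X_i}_e$. Then $\Lcal(\Ucal) = \{X : X \mbox{ is infinite}\}$: a set of size $m$ is covered by $m$ singletons, none in $\Ucal$, while any finite cover of an infinite set has an infinite piece. This class contains no finite set, hence contains no cylinder $\{Y : Y \supseteq F_\tau\}$, hence is not equal to any non-empty $\Ucal^Z_{e'}$; it is properly $\Pi^0_2$. Your proposed equivalence ``$X \in \Lcal(\Ucal)$ iff some $\tau \prec X$ is such that every finite partition of $F_\tau$ has a piece satisfying $\varphi$'' fails here in the worst way: every finite $F_\tau$ can be partitioned into singletons, so your criterion is never met, yet $\Lcal(\Ucal) \neq \emptyset$. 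K\"onig's lemma on the finitely branching tree of $k$-covers only works for each \emph{fixed} $k$: it shows that $\Lcal_k(\Ucal) = \{X : \mbox{every } k\mbox{-cover of } X \mbox{ has a piece in } \Ucal\}$ is an upward-closed $\Sigma^0_1(Z)$ class, uniformly in $k$; it does not let you absorb the quantifier over $k$.

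Fortunately the lemma does not require a single $\Sigma^0_1$ class: $\Ucal^{\Mcal}_D$ is by definition a countable intersection of upward-closed $\Sigma^0_1$ classes over members of $\Mcal$. So the repair is local and preserves your architecture: keep $\Lcal(\Ucal^{\Mcal}_{C_s}) = \bigcap_k \Lcal_k(\Ucal^{\Mcal}_{C_s})$, let $e(s,k)$ be an index, computable from $C_s$, $k$ and $M$, for the upward-closed $\Sigma^0_1(X_{j_s})$ class $\Lcal_k(\Ucal^{\Mcal}_{C_s})$, and set $D = \{\langle e(s,k), j_s\rangle : s, k \in \omega\}$. With this change your argument is complete and matches the paper's, which likewise exhibits $\Lcal(\Ucal^{\Mcal}_C)$ as an intersection of $\Pi^0_2$ classes of the form $\Lcal(\Ucal^{X_i}_e)$, each presented uniformly as an intersection of upward-closed $\Sigma^0_1$ classes.
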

\begin{proof}
Let $\Ucal^{\Mcal}_C$ be a largeness class. Note that $\Lcal(\Ucal^{\Mcal}_C) \subseteq \bigcap_{\langle e, i \rangle}\Lcal(\Ucal^{X_i}_e)$. By \cref{lem:decreasing-largeness-yields-pr} the class $\bigcap_{\langle e, i \rangle}\Lcal(\Ucal^{X_i}_e)$ is partition regular. By \cref{lem:lcal-of-largeness-is-partition-regular} we then must have $\Lcal(\Ucal^{\Mcal}_C) = \bigcap_{\langle e, i \rangle}\Lcal(\Ucal^{X_i}_e)$. Also we have by definition of $\Lcal(\Ucal)$ for a class $\Ucal$ that $\Lcal(\Ucal^{X_i}_e)$ is a $\Pi^0_2(X_i)$ class whose indices are computable uniformly in $e$.

Thus we have that $\Lcal(\Ucal^{\Mcal}_C) = \Ucal^{\Mcal}_D$ for some $D \subseteq \omega^2$. Furthermore $D$ is computable from $C$.
\end{proof}

\begin{corollary}\label{lem:minimal-is-partition-regular}
Suppose $\Ucal^{\Mcal}_C$ is an $\Mcal$-minimal largeness class. Then $\Ucal^{\Mcal}_C$ is partition regular.
\end{corollary}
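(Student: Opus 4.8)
The plan is to show that an $\Mcal$-minimal largeness class coincides with its largest partition regular subclass, and then to conclude by \cref{lem:lcal-of-largeness-is-partition-regular}. So let $\Ucal^{\Mcal}_C$ be an $\Mcal$-minimal largeness class. By the preceding lemma, $\Lcal(\Ucal^{\Mcal}_C) = \Ucal^{\Mcal}_D$ for some $D \subseteq \omega^2$, that is, $\Lcal(\Ucal^{\Mcal}_C) = \bigcap_{\langle e, i \rangle \in D} \Ucal^{X_i}_e$. By \cref{lem:lcal-of-largeness-is-partition-regular}, this class is partition regular, hence in particular a largeness class, and it is contained in $\Ucal^{\Mcal}_C$.

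The key step is then to show that $\Ucal^{\Mcal}_C \subseteq \Ucal^{X_i}_e$ for every $\langle e, i \rangle \in D$. Fix such a pair; note that $X_i \in \Mcal$ and $e \in \omega$, so $\Ucal^{X_i}_e$ is exactly a class of the form appearing in the definition of $\Mcal$-minimality. Since $\langle e, i \rangle \in D$, we have $\Lcal(\Ucal^{\Mcal}_C) \subseteq \Ucal^{X_i}_e$, and also $\Lcal(\Ucal^{\Mcal}_C) \subseteq \Ucal^{\Mcal}_C$, hence $\Lcal(\Ucal^{\Mcal}_C) \subseteq \Ucal^{\Mcal}_C \cap \Ucal^{X_i}_e$. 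As $\Lcal(\Ucal^{\Mcal}_C)$ is a largeness class and every superclass of a largeness class is a largeness class, $\Ucal^{\Mcal}_C \cap \Ucal^{X_i}_e$ is a largeness class. Applying $\Mcal$-minimality of $\Ucal^{\Mcal}_C$ to $X_i$ and $e$, the disjunct "$\Ucal^{\Mcal}_C \cap \Ucal^{X_i}_e$ is not a largeness class" fails, so we must have $\Ucal^{\Mcal}_C \subseteq \Ucal^{X_i}_e$.

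Intersecting over all $\langle e, i \rangle \in D$ gives $\Ucal^{\Mcal}_C \subseteq \bigcap_{\langle e, i \rangle \in D} \Ucal^{X_i}_e = \Ucal^{\Mcal}_D = \Lcal(\Ucal^{\Mcal}_C)$. Together with the reverse inclusion $\Lcal(\Ucal^{\Mcal}_C) \subseteq \Ucal^{\Mcal}_C$, this yields $\Ucal^{\Mcal}_C = \Lcal(\Ucal^{\Mcal}_C)$, which is partition regular by \cref{lem:lcal-of-largeness-is-partition-regular}. I do not expect a genuine obstacle here; the only point requiring a little care is the observation that the classes $\Ucal^{X_i}_e$ with $\langle e, i \rangle \in D$ are precisely of the shape to which the definition of $\Mcal$-minimality applies, so that minimality can be used one index at a time.
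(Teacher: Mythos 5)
Your proof is correct and follows the same route as the paper: write $\Lcal(\Ucal^{\Mcal}_C) = \Ucal^{\Mcal}_D$, use \cref{lem:lcal-of-largeness-is-partition-regular} for the inclusion $\Ucal^{\Mcal}_D \subseteq \Ucal^{\Mcal}_C$, and use $\Mcal$-minimality for the reverse inclusion. The only difference is that you spell out the minimality step index by index (checking that each $\Ucal^{\Mcal}_C \cap \Ucal^{X_i}_e$ for $\langle e,i\rangle \in D$ is large because it contains the largeness class $\Lcal(\Ucal^{\Mcal}_C)$), which the paper compresses into a single sentence; this is a faithful, slightly more detailed version of the same argument.
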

\begin{proof}
Let $D$ be such that $\Ucal^{\Mcal}_D = \Lcal(\Ucal^{\Mcal}_C)$.
By Lemma~\ref{lem:lcal-of-largeness-is-partition-regular}, $\Ucal^{\Mcal}_D \subseteq \Ucal^{\Mcal}_C$. By $\Mcal$-minimality of $\Ucal^{\Mcal}_C$, $\Ucal^{\Mcal}_C \subseteq \Ucal^{\Mcal}_D$. It follows that $\Ucal^{\Mcal}_C = \Ucal^{\Mcal}_D$.  Since $\Ucal^{\Mcal}_D$ is partition regular, then so is $\Ucal^{\Mcal}_C$.
\end{proof}

It follows that if $\Ucal^{\Mcal}_C$ is an $\Mcal$-cohesive largeness class, then the $\Mcal$-minimal class $\langle \Ucal^{\Mcal}_C \rangle$ is a partition regular class.

\subsection{The framework}

We now build a sequence of sets $\{\Ucal^{\Mcal_n}_{C_n}\}_{n \in \omega}$ which will be used for the forcing in the next section.

\begin{proposition} \label{prop-hyp-scott}
There is a sequence of sets $\{M_n\}_{n < \omega}$ such that:
\begin{enumerate}
\item $M_n$ codes for a countable Scott set $\Mcal_n$
\item $\halt^{(n)}$ is uniformly coded by an element of $\Mcal_n$
\item Each $M_n'$ is uniformly computable in $\halt^{(n+1)}$
\end{enumerate}
\end{proposition}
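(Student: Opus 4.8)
The plan is to reduce \Cref{prop-hyp-scott} to a single uniform statement about low Scott sets and then iterate it along the jumps of $\emptyset$. The statement I would isolate is:

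\emph{$(\star)$ There are Turing functionals $\Lambda$ and $\Gamma$ such that for every set $A$, the set $M_A := \Lambda^{A'}$ is total, its columns $\{(M_A)_i : i \in \omega\}$ form a countable Scott set with $(M_A)_0 = A$, and $(M_A)' = \Gamma^{A'}$.} Granting $(\star)$, put $M_n := M_{\halt^{(n)}}$ and let $\Mcal_n = \{(M_n)_i : i \in \omega\}$ be the Scott set it codes. Item~(1) is then immediate; item~(2) holds because $\halt^{(n)}$ is the $0$-th coded element of $\Mcal_n$, the index $0$ being independent of $n$; and for item~(3), since $(\halt^{(n)})' = \halt^{(n+1)}$ we get $M_n' = \Gamma^{\halt^{(n+1)}}$, so $M_n'$ is computed from $\halt^{(n+1)}$ by the single functional $\Gamma$, uniformly in $n$.

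To prove $(\star)$ I would first fix a nonempty $\Pi^0_1$ class $\mathcal{C} \subseteq 2^\omega$. Identify $D \in 2^\omega$ with the sequence $(D_i)_{i \in \omega}$ of its columns (so $D = \bigoplus_i D_i$), and fix an injective coding of column indices in which column $0$ is free and each of the columns $\langle 1,i,e\rangle$, $\langle 2,i\rangle$, $\langle 3,i,j\rangle$ has index strictly larger than $i$ and than $j$, so that no column is constrained in terms of itself. Declare $D \in \mathcal{C}$ iff for all $i,j,e$:
\begin{itemize}
\item $D_{\langle 3,i,j\rangle} = D_i \oplus D_j$ (a clopen condition);
\item for all $x,s$, if $\Phi_e^{D_i,s}(x)$ converges to $0$ or $1$, then $D_{\langle 1,i,e\rangle}(x) = \Phi_e^{D_i,s}(x)$ (a $\Pi^0_1$ condition, always satisfiable, which forces $D_{\langle 1,i,e\rangle} = \Phi_e^{D_i}$ whenever the latter is a set);
\item writing $\hat{T}_i = \{\sigma : (\forall \tau \preceq \sigma)\, D_i(\tau) = 1\}$ for the largest tree coded inside $D_i$, one has $D_{\langle 2,i\rangle}\uh s \in \hat{T}_i$ for every $s$ with $\hat{T}_i \cap 2^s \neq \emptyset$ (again $\Pi^0_1$ and always satisfiable, which forces $D_{\langle 2,i\rangle}$ to be a path of $\hat{T}_i$ whenever $\hat{T}_i$ is infinite).
\end{itemize}
An ordinary recursion on the column index builds an element of $\mathcal{C}$ — for the third clause use König's lemma to pick a path of $\hat{T}_i$ when it is infinite, and a string of maximal length in $\hat{T}_i$ otherwise — so $\mathcal{C} \neq \emptyset$.

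Next, for any $D \in \mathcal{C}$ the family $\{D_i : i \in \omega\}$ is a Turing ideal: it is closed under $\oplus$ by the first clause, and if $Y \leq_T D_i$, say $Y = \Phi_e^{D_i}$, then $Y = D_{\langle 1,i,e\rangle}$ by the second clause; moreover it is a Scott set, since if some $D_i$ is an infinite binary tree $T$ then $T$ is downward closed, so $\hat{T}_i = T$, and by the third clause $D_{\langle 2,i\rangle}$ is a path of $T$ lying in $\{D_i : i \in \omega\}$. Now for a given $A$ put $\mathcal{C}_A := \mathcal{C} \cap \{D : D_0 = A\}$, a nonempty $\Pi^0_1(A)$ class whose index as a $\Pi^0_1$ class relative to $A$ does not depend on $A$. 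By the uniform relativized low basis theorem there is $D \in \mathcal{C}_A$ such that $D$ and $(A \oplus D)'$ are computed from $A'$ by functionals independent of $A$; since $A$ is the $0$-th column of $D$ we have $A \leq_T D$ uniformly, hence $D \equiv_T A \oplus D$ and $D' \leq_T A'$ via fixed reductions, so $D$ and $D'$ are both computed from $A'$ uniformly. Taking $\Lambda^{A'} := D$ gives $(\star)$, and with $A = \halt^{(n)}$ we are done.

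The delicate point — and the crux of the argument — is to package the closure requirements of a Scott set into a single $\Pi^0_1$ class: since ``$\Phi_e^{D_i}$ is total'' and ``$\hat{T}_i$ is infinite'' are $\Pi^0_2$, one cannot demand outright that the columns of $D$ enumerate the downward closure of $\{D_i\}$ under $\leq_T$, nor paths through all infinite coded trees, which would cost an extra jump. Replacing these by the ``partial agreement'' and ``path as far as the tree extends'' conditions above circumvents the difficulty, at the price only of the elementary index bookkeeping needed to avoid self-referential constraints; everything else is the standard uniform low basis theorem.
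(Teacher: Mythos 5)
Your proof is correct and follows essentially the same route as the paper: a single uniform nonempty $\Pi^0_1(X)$ class of codes of Scott sets whose first column is $X$, followed by the uniform relativized low basis theorem, instantiated at $X = \halt^{(n)}$. The only difference is presentational — the paper defines the class via a uniform enumeration of all nonempty $\Pi^0_1$ classes relative to finite joins of earlier columns, whereas you write out the closure and path requirements as explicit always-satisfiable $\Pi^0_1$ constraints.
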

\begin{proof}
Let us show the following: there is a functional $\Phi : 2^\omega \rightarrow 2^\omega$ such that for any oracle $X$, we have that $M' = \Phi(X')$ is such that $M = \oplus_{n \in \omega} X_n$ codes for a Scott set $\Mcal$ with $X_0 = X$.

Fix a uniformly computable enumeration $\Ccal^Y_0, \Ccal^Y_1, \dots$ of all non-empty $\Pi^0_1(Y)$ classes.
Let $\Dcal_X$ be the $\Pi^0_1(X)$ class of all $\bigoplus_n Y_n$ such that $Y_0 = X$ and for every $n = \langle a, b \rangle \in \omega$, $Y_{n+1} \in \Ccal_a^{\bigoplus_{j \leq b} Y_j}$. Note that this $\Pi^0_1(X)$ class is uniform in $X$ and any member of $\Dcal_X$ is a code of a Scott set whose first element is~$X$. Using the Low basis theorem~\cite{Jockusch197201}, there is a Turing functional $\Phi$ such that $\Phi(X')$ is the jump of a member of $\Dcal_X$ for any $X$.

Using this function $\Phi$, it is clear that uniformly in $\halt^{(n+1)}$ one can compute the jump of a set $M_n$ coding for a Scott set $\Mcal_n$ and containing $\halt^{(n)}$ as its first element.
\end{proof}

Let us assume that $\{\Mcal_n\}_{n < \omega}$ is a sequence which verifies \Cref{prop-hyp-scott}. Recall the notation $\langle \Ucal^\Mcal_C \rangle$ : the unique minimal largeness subclass of an $\Mcal$-cohesive largeness class.

\begin{proposition} \label{prop-hyp-cohesiveclassa}
There is a sequence of sets $\{C_n\}_{n \in \omega}$ such that:
\begin{enumerate}
\item $\Ucal_{C_n}^{\Mcal_{n}}$ is an $\Mcal_n$-cohesive largeness class
\item $\Ucal_{C_{n+1}}^{\Mcal_{n+1}} \subseteq \langle \Ucal_{C_n}^{\Mcal_{n}} \rangle$
\item Each $C_n$ is coded by an element of $\Mcal_{n + 1}$ uniformly in $n$ and $M_{n + 1}$.
\end{enumerate}
\end{proposition}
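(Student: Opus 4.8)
The plan is to build $\{C_n\}_{n\in\omega}$ by recursion on $n$, the work being concentrated in two auxiliary facts. The first is a \textbf{splitting lemma}: \emph{if $\Acal$ is a largeness class with $\Acal\subseteq\Lcal_\omega$, then for every set $X$ one of $\Acal\cap\Lcal_X$, $\Acal\cap\Lcal_{\overline X}$ is a largeness class.} If $X$ or $\overline X$ is finite this is trivial, since then $\Lcal_X$ or $\Lcal_{\overline X}$ equals $\Lcal_\omega\supseteq\Acal$. Otherwise, suppose $\Acal\cap\Lcal_X$ is not a largeness class, witnessed by a cover $Y_0\cup\dots\cup Y_{k-1}=\omega$ with no $Y_j\in\Acal\cap\Lcal_X$. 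Given an arbitrary cover $Z_0\cup\dots\cup Z_{l-1}=\omega$, the family $\{Y_j\cap Z_i : j<k,\ i<l\}$ covers $\omega$, so by largeness of $\Acal$ some $Y_j\cap Z_i\in\Acal$; then $Y_j\in\Acal$ by upward closure, so $Y_j\notin\Lcal_X$, i.e.\ $Y_j\cap X$ is finite; since $Y_j\cap Z_i\in\Acal\subseteq\Lcal_\omega$ is infinite, $Y_j\cap Z_i\cap\overline X$ is infinite, hence $Y_j\cap Z_i\in\Acal\cap\Lcal_{\overline X}$ and therefore $Z_i\in\Acal\cap\Lcal_{\overline X}$. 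Together with $\omega\in\Acal\cap\Lcal_{\overline X}$ and upward closure, this shows $\Acal\cap\Lcal_{\overline X}$ is a largeness class.

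The second fact is a \textbf{cohesive closure lemma}: \emph{given a countable Scott set $\Mcal$ coded by $M$ and an index for a set $E$ with $\Ucal^\Mcal_E$ a largeness class contained in $\Lcal_\omega$, there is $C\supseteq E$ with $\Ucal^\Mcal_C$ an $\Mcal$-cohesive largeness class contained in $\Lcal_\omega$ and $C\leq_T(E\oplus M)''$, uniformly.} Enumerate $\Mcal=\{X_0,X_1,\dots\}$ and build a decreasing sequence of largeness classes $\Acal_0=\Ucal^\Mcal_E\supseteq\Acal_1\supseteq\dots$, each contained in $\Lcal_\omega$: at stage $s$, set $\Acal_{s+1}=\Acal_s\cap\Lcal_{X_s}$ if this is a largeness class and $\Acal_{s+1}=\Acal_s\cap\Lcal_{\overline{X_s}}$ otherwise. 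The splitting lemma guarantees $\Acal_{s+1}$ is a largeness class, and $\Acal_{s+1}\subseteq\Lcal_\omega$ since $\Lcal_{X_s},\Lcal_{\overline{X_s}}\subseteq\Lcal_\omega$. Each of $\Lcal_{X_s}$, $\Lcal_{\overline{X_s}}$ is $\Ucal^\Mcal_G$ for a set $G$ computable uniformly in $s$, so $\Acal_s=\Ucal^\Mcal_{C^{(s)}}$ for a $C^{(s)}$ computable from $E$, $M$ and the finitely many choices made before stage $s$; by \Cref{lem:decreasing-largeness-yields-largeness} (reducing the largeness test to finite subcodes, using that superclasses of largeness classes are largeness classes) together with \Cref{lem:largeness-class-complexity} relativized to $M$, the statement ``$\Acal_s\cap\Lcal_{X_s}$ is a largeness class'' is $\Pi^0_2(C^{(s)}\oplus M)$, hence decidable by $(E\oplus M)''$ uniformly. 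Taking $C$ to be $E$ together with the code chosen at each stage, $\Ucal^\Mcal_C=\bigcap_s\Acal_s$ is a largeness class by \Cref{lem:decreasing-largeness-yields-largeness}, is contained in $\Lcal_\omega$, and is $\Mcal$-cohesive because for $X=X_s\in\Mcal$ we have $\bigcap_t\Acal_t\subseteq\Acal_{s+1}$, which lies in $\Lcal_X$ or in $\Lcal_{\overline X}$.

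Now the recursion. For $n=0$, choose a computable $E_0$ with $\Ucal^{\Mcal_0}_{E_0}=\Lcal_\omega$ and apply the cohesive closure lemma over $\Mcal_0$ to get $C_0\leq_T M_0''$; since $\halt^{(1)}\in\Mcal_1$ and $M_0'\leq_T\halt^{(1)}$ by \Cref{prop-hyp-scott}, $M_0\in\Mcal_1$, hence $M_0''\in\Mcal_1$ and $C_0$ is coded in $\Mcal_1$. For the inductive step, assume $\Ucal^{\Mcal_n}_{C_n}$ is an $\Mcal_n$-cohesive largeness class contained in $\Lcal_\omega$ with $C_n\in\Mcal_{n+1}$. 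Form its $\Mcal_n$-minimal subclass $\langle\Ucal^{\Mcal_n}_{C_n}\rangle=\Ucal^{\Mcal_n}_{D_n}$, a partition regular (in particular largeness) class contained in $\Lcal_\omega$, with $D_n\leq_T(C_n\oplus M_n)''$. As above $M_n\in\Mcal_{n+1}$, so $M_n=X^{(n+1)}_{j_0}$ for some $j_0$ and every $\Sigma^0_1$ class relative to an element of $\Mcal_n$ is $\Sigma^0_1(M_n)$; converting indices and locating $j_0$ (a step absorbed by a finite jump of $M_n\oplus M_{n+1}$) yields $D'_n$ with $\Ucal^{\Mcal_{n+1}}_{D'_n}=\langle\Ucal^{\Mcal_n}_{C_n}\rangle$. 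Apply the cohesive closure lemma over $\Mcal_{n+1}$ with $E=D'_n$ to obtain $C_{n+1}\supseteq D'_n$ with $C_{n+1}\leq_T(D'_n\oplus M_{n+1})''$. Then (1) is immediate; (2) holds since $C_{n+1}\supseteq D'_n$ gives $\Ucal^{\Mcal_{n+1}}_{C_{n+1}}\subseteq\Ucal^{\Mcal_{n+1}}_{D'_n}=\langle\Ucal^{\Mcal_n}_{C_n}\rangle$; and for (3), since $C_n$ and $M_n$ are both $\leq_T M_{n+1}$, all of $D_n$, $D'_n$, $C_{n+1}$ are computable from a fixed finite jump of $M_{n+1}$, and $M_{n+1}\in\Mcal_{n+2}$ again by \Cref{prop-hyp-scott}, so $C_{n+1}$ is coded in $\Mcal_{n+2}$; all the reductions and searches involved are uniform.

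The conceptual crux is the splitting lemma — the point that working inside $\Lcal_\omega$ is exactly what forces the two-sided restriction to preserve largeness, so that the greedy ``cohesive'' choice never kills the class. After that, the expected main obstacle is bookkeeping: tracking the effective indices when a class defined over $\Mcal_n$ is re-expressed over $\Mcal_{n+1}$ (via $M_n\in\Mcal_{n+1}$), and verifying at each stage that the $(E\oplus M)''$ bound coming from the cohesive closure lemma keeps $C_n$ inside the appropriate Scott set.
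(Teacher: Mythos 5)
Your splitting lemma is correct and is a genuinely nicer combinatorial route than the paper's: the paper only ever splits along $X$ versus $\overline{X}$ for \emph{partition regular} classes (where $X\in\Lcal$ or $\overline X\in\Lcal$, and membership then yields largeness of $\Lcal\cap\Lcal_X$), whereas your argument works for an arbitrary largeness class contained in $\Lcal_\omega$. Items (1) and (2) of the proposition do follow from your construction.

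The gap is in item (3), and it is exactly the point the proposition is designed to control. Your cohesive closure lemma answers, at each stage $s$, the question ``is $\Acal_s\cap\Lcal_{X_s}$ a largeness class?'', which is $\Pi^0_2$ relative to $M$; this is why you land at $C\leq_T(E\oplus M)''$. You then conclude membership in the next Scott set from ``$M_{n+1}\in\Mcal_{n+2}$, hence a fixed finite jump of $M_{n+1}$ computes $C_{n+1}$, hence $C_{n+1}$ is coded in $\Mcal_{n+2}$.'' But Turing ideals are not closed under the jump, and here the numbers genuinely do not work out. Already in your base case: $M_0\geq_T\emptyset$ gives $M_0''\geq_T\halt^{(2)}$, while every element of $\Mcal_1$ is $\leq_T M_1<_T M_1'\leq_T\halt^{(2)}$ by \Cref{prop-hyp-scott}(3); so $M_0''\notin\Mcal_1$ and your claim ``$M_0''\in\Mcal_1$'' is false. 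Likewise in the inductive step, $(D_n'\oplus M_{n+1})''$ sits at the level of $\halt^{(n+4)}$, whereas $\Mcal_{n+2}$ only reaches up to (strictly below) $\halt^{(n+3)}$. The paper avoids this by splitting the work asymmetrically: the expensive, $\Pi^0_2$-question-answering step is the \emph{minimalization} $\langle\Ucal^{\Mcal_n}_{C_n}\rangle$ (\Cref{lem-hyp-cohesiveclass1}), applied two Scott-set levels below where the answer must live, so $M_n''\equiv_T\halt^{(n+2)}$ lands inside $\Mcal_{n+2}$; the \emph{cohesivization} step (\Cref{lem-hyp-cohesiveclass2}) is then done with no further jumps, by using partition regularity to replace each largeness question by a membership question ``$X_\sigma\in\Ucal^{\Mcal}_C$'', coding the consistent choices as paths of a tree computable from $M'\oplus C$, and invoking $\wkl$ inside the Scott set $\Ncal$. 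Your construction can be repaired in the same spirit — the sequences of choices for which every finite intersection $\Acal_s$ is large form a nonempty $\Pi^0_1(M'\oplus E)$ class (nonempty by your splitting lemma), so a Scott set containing $M'$ and $E$ contains such a choice sequence — but as written, the jump count in (3) is off by two and the proof does not go through.
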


In order to prove \Cref{prop-hyp-cohesiveclassa} we use the two following uniformity lemmas, which will also be helpful later to continue the sequence of \Cref{prop-hyp-cohesiveclassa} through the computable ordinals (see \Cref{prop-hyp-cohesiveclass}).

\begin{lemma} \label{lem-hyp-cohesiveclass1}
There is a functional $\Phi : 2^\omega \times \omega \rightarrow 2^\omega$ such that for any set $M$ coding for a Scott set $\Mcal$, for any $e$ such that $C = \Phi_e(M'')$ is such that $\Ucal_C^{\Mcal}$ is an $\Mcal$-cohesive largeness class, $D = \Phi(M'', e)$ is such that $C \subseteq D$ and $\Ucal_D^{\Mcal} = \langle \Ucal_C^{\Mcal} \rangle$.
\end{lemma}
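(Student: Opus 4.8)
The plan is to let $\Phi(M'',e)$ output the set
\[
D \;=\; \bigl\{\,\langle e',i'\rangle : \Ucal^{\Mcal}_C \cap \Ucal^{X_{i'}}_{e'} \text{ is a largeness class}\,\bigr\},
\]
where $C = \Phi_e(M'')$. Under the hypothesis that $\Ucal^{\Mcal}_C$ is an $\Mcal$-cohesive largeness class, this $D$ is exactly the one identified in the remark following the construction of $\langle \Ucal^{\Mcal}_C \rangle$, so that $\Ucal^{\Mcal}_D = \langle \Ucal^{\Mcal}_C \rangle$. Moreover $C \subseteq D$: if $\langle e',i'\rangle \in C$ then $\Ucal^{\Mcal}_C \subseteq \Ucal^{X_{i'}}_{e'}$, hence $\Ucal^{\Mcal}_C \cap \Ucal^{X_{i'}}_{e'} = \Ucal^{\Mcal}_C$, which is a largeness class by hypothesis. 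So the whole content of the lemma reduces to the claim that this $D$ is computable from $M''$, uniformly in the index~$e$.

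To compute $D$ I would first trade the quantification over the (possibly infinite) set $C$ for one over its finite subsets: $\Ucal^{\Mcal}_C \cap \Ucal^{X_{i'}}_{e'}$ is a largeness class if and only if $\Ucal^{\Mcal}_{C_1} \cap \Ucal^{X_{i'}}_{e'}$ is a largeness class for every finite $C_1 \subseteq C$. One implication is immediate, since every superclass of a largeness class is a largeness class. For the other, enumerate $C$ as the union of an increasing chain $C_1^{(0)} \subseteq C_1^{(1)} \subseteq \cdots$ of finite sets; then the classes $\Ucal^{\Mcal}_{C_1^{(s)}} \cap \Ucal^{X_{i'}}_{e'}$ form a decreasing sequence of largeness classes with intersection $\Ucal^{\Mcal}_C \cap \Ucal^{X_{i'}}_{e'}$, and \Cref{lem:decreasing-largeness-yields-largeness} applies.

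Next comes a complexity count. For a finite set $C_1$, the class $\Ucal^{\Mcal}_{C_1} \cap \Ucal^{X_{i'}}_{e'}$ is an upward-closed finite intersection of $\Sigma^0_1$ classes relativized to elements of $\Mcal$, hence a $\Sigma^0_1(M)$ class an index for which is obtained uniformly from $C_1, e', i'$ (using that the $X_j$ are uniformly computable from $M$). By \Cref{lem:largeness-class-complexity} relativized to $M$, the assertion that it is a largeness class is then $\Pi^0_2(M)$ uniformly in these parameters, and in particular decidable from $M''$ uniformly; equivalently, its negation is $\Sigma^0_2(M)$, uniformly. Since $C = \Phi_e(M'')$ is total, the initial segments $C \cap \{0,\dots,n\}$ are uniformly computable from $M''$, and the predicate "$\Ucal^{\Mcal}_{C_1} \cap \Ucal^{X_{i'}}_{e'}$ is not a largeness class" is monotone in $C_1$; combining this with the reduction of the previous paragraph, $\Ucal^{\Mcal}_C \cap \Ucal^{X_{i'}}_{e'}$ is a largeness class if and only if $\Ucal^{\Mcal}_{C \cap \{0,\dots,n\}} \cap \Ucal^{X_{i'}}_{e'}$ is a largeness class for every $n$. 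A routine bookkeeping of these bounds then shows that $D$ is computable from $M''$, uniformly in $e$, and this is the reduction we take for $\Phi$.

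The main obstacle is exactly that last bookkeeping step: one has to check that the two layers of universal quantification at play — the one over finite covers hidden inside \Cref{lem:largeness-class-complexity}, and the one over the finite subsets of $C$ — collapse so that $D$ ends up precisely at the level of $M''$, and this is where the monotonicity of the largeness predicate in $C_1$ and the fact that $C$ is available below $M''$ are used. Everything else — pinning down $D$, and checking $\Ucal^{\Mcal}_D = \langle \Ucal^{\Mcal}_C \rangle$ and $C \subseteq D$ — is immediate from the definitions and the remarks established before the statement.
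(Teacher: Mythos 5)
Your identification of the set $D$ is the same as the paper's: the paper's proof runs a greedy stage-by-stage construction, adding $\langle e_{t+1},i_{t+1}\rangle$ exactly when $\Ucal_{e_{t+1}}^{X_{i_{t+1}}}\cap\Ucal_{D^t}^{\Mcal}\cap\Ucal_C^{\Mcal}$ is a largeness class, and by \Cref{lem:cohesive-largeness-compatibility} this admits $\langle e',i'\rangle$ precisely when $\Ucal^{\Mcal}_C\cap\Ucal^{X_{i'}}_{e'}$ is a largeness class --- i.e.\ it computes exactly the set you write down directly. Your verification that $C\subseteq D$ and that $\Ucal^{\Mcal}_D=\langle\Ucal^{\Mcal}_C\rangle$ is fine, as is the reduction of the largeness of $\Ucal^{\Mcal}_C\cap\Ucal^{X_{i'}}_{e'}$ to the largeness of all its finite sub-intersections and the count that each finite instance is $\Pi^0_2(M)$, hence $M''$-decidable, uniformly.

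The gap is the step you yourself single out as ``the main obstacle'' and then dismiss as routine bookkeeping. After your reduction, membership of $\langle e',i'\rangle$ in $D$ has the form $\forall n\, R(n)$ where $R(n)$ is the ($M''$-decidable, antitone in $n$) predicate ``$\Ucal^{\Mcal}_{C\cap\{0,\dots,n\}}\cap\Ucal^{X_{i'}}_{e'}$ is a largeness class''. The ingredients you cite --- monotonicity of $R$ and the fact that $C\leq_T M''$ --- do not make this decidable from $M''$: an infinite conjunction of uniformly $M''$-decidable statements is in general properly $\Pi^0_1(M'')$, and antitonicity only says the conjunction equals ``$R(n)$ holds for all $n\leq n_0$'' for some $n_0\in\omega\cup\{\infty\}$, where deciding whether $n_0=\infty$ is exactly the $\Pi^0_1(M'')$ question you cannot answer with $M''$. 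So as written your argument only yields that $D$ is $\Pi^0_1(M'')$, one jump short of the lemma. Notice that your complexity analysis never uses the hypothesis that $\Ucal^{\Mcal}_C$ is $\Mcal$-cohesive (you use it only to identify $D$ with $\langle\Ucal^{\Mcal}_C\rangle$); whatever collapses the outer quantifier must exploit that hypothesis, for instance through the structural consequences of \Cref{lem:cohesive-largeness-compatibility}, and that is the missing content. To be fair, the paper's own proof is equally terse at this exact point --- it simply ``asks $M''$'' a question that on its face involves the full infinite intersection $\Ucal_C^{\Mcal}$ --- so you have not overlooked an argument the paper spells out; but since your write-up explicitly unfolds the quantifier structure and then asserts a collapse, the burden of justifying that collapse sits squarely in your proof and is not discharged.
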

\begin{proof}
Say $\Mcal = \{X_0, X_1, \dots \}$ with $M = \bigoplus_i X_i$.
Let $\{\langle e_t, i_t \rangle\}_{t \in \omega}$ be an enumeration of $\omega \times \omega$. Suppose that at stage $t$ a finite set $D^t \subseteq \{\langle e_0, i_0 \rangle, \dots, \langle e_t, i_t \rangle\}$ has been defined such that $\Ucal_{D^t}^{\Mcal} \cap \Ucal_C^{\Mcal}$ is a largeness class and such that for any $s \leq t$, $\langle e_s, i_s \rangle \notin D^t$ implies that $\Ucal_{e_{s}}^{X_{i_{s}}} \cap \Ucal_{D^t}^{\Mcal} \cap \Ucal_C^{\Mcal}$ is not a largeness class.

Then at stage $t+1$, we ask $M''$ if $\Ucal_{e_{t+1}}^{X_{i_{t+1}}} \cap  \Ucal_{D^t}^{\Mcal} \cap \Ucal_C^{\Mcal}$ is a largeness class. If so we define $D^{t+1} = D^t \cup \{\langle e_{t+1}, i_{t+1} \rangle\}$. Otherwise we define $D^{t+1} = D^t$. Then $D = C \cup \bigcup_t D^t$ is uniformly $M''$-computable and $\Ucal_D^{\Mcal}$ equals $\langle \Ucal_C^{\Mcal} \rangle$.
\end{proof}

\begin{lemma} \label{lem-hyp-cohesiveclass2}
There is a functional $\Phi : 2^\omega \times \omega \times \omega \rightarrow \omega$ such that for any set $M$ coding for a Scott set $\Mcal$, for any set $N$ coding for a Scott set $\Ncal$ such that $M' \in \Ncal$ with $N$-index $i_M$, for any $C \in \Ncal$ with $N$-index $i_C$, such that $\Ucal_C^{\Mcal}$ is a partition regular class, $\Phi(N, i_M, i_C)$ is an $N$-index for $D \supseteq C$ such that $\Ucal_D^{\Mcal}$ is an $\Mcal$-cohesive largeness class.
\end{lemma}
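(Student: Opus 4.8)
The plan is to obtain $D$ as a set computable from a member of a $\Pi^0_1$ class that $\Ncal$, being a Scott set, is forced to meet. Write $\Mcal = \{X_0, X_1, \dots\}$ with $M = \bigoplus_i X_i$. The first point is that each $\Lcal_X$, and the analogously defined $\Lcal_{\overline X} = \bigcap_n \{ Z : \#(Z \cap \overline X) \geq n \}$, is of the form $\Ucal^\Mcal_{(\cdot)}$: fixing indices $\gamma_n, \delta_n$ with $\Ucal^Y_{\gamma_n} = \{ Z : \#(Z \cap Y) \geq n \}$ and $\Ucal^Y_{\delta_n} = \{ Z : \#(Z \cap \overline Y) \geq n \}$ (these do not depend on $Y$), we have $\Lcal_{X_i} = \Ucal^\Mcal_{\{ \langle \gamma_n, i \rangle : n \in \omega \}}$ and $\Lcal_{\overline{X_i}} = \Ucal^\Mcal_{\{ \langle \delta_n, i \rangle : n \in \omega \}}$. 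For a binary string $\beta$ and $i < |\beta|$, put $Y^\beta_i = X_i$ if $\beta(i) = 0$ and $Y^\beta_i = \overline{X_i}$ otherwise, and let $E_\beta \subseteq \omega^2$ be the (purely $\beta$-computable) union, over $i < |\beta|$, of the corresponding index sets, so that $\Ucal^\Mcal_{C \cup E_\beta} = \Ucal^\Mcal_C \cap \bigcap_{i < |\beta|} \Lcal_{Y^\beta_i}$. Let $T$ be the set of all $\beta$ for which $\Ucal^\Mcal_{C \cup E_\beta}$ is a largeness class; since $\beta' \preceq \beta$ implies $\Ucal^\Mcal_{C \cup E_\beta} \subseteq \Ucal^\Mcal_{C \cup E_{\beta'}}$ and a superclass of a largeness class is a largeness class, $T$ is a tree.

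First I would show $T$ is infinite, by showing every $\beta \in T$ has an extension in $T$; this is the combinatorial core. Using that every member of $\Ucal^\Mcal_C$ is infinite, i.e.\ $\Ucal^\Mcal_C \subseteq \Lcal_\omega$, which holds for the classes to which the lemma is applied, every $\Ucal^\Mcal_{C \cup E_\beta}$ consists of infinite sets, so it suffices to prove: if $\Lcal$ is a largeness class all of whose members are infinite, then for every set $Y$ at least one of $\Lcal \cap \Lcal_Y$, $\Lcal \cap \Lcal_{\overline Y}$ is a largeness class. If not, fix partitions $\omega = P_0 \sqcup \dots \sqcup P_{k-1}$ and $\omega = Q_0 \sqcup \dots \sqcup Q_{m-1}$ witnessing the two failures: for each $j$, $P_j \notin \Lcal$ or $P_j \cap Y$ is finite; for each $i$, $Q_i \notin \Lcal$ or $Q_i \cap \overline Y$ is finite. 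The common refinement is a partition of $\omega$, so some $P_{j_0} \cap Q_{i_0}$ lies in $\Lcal$; by upward closure so do $P_{j_0}$ and $Q_{i_0}$, forcing $P_{j_0} \cap Y$ and $Q_{i_0} \cap \overline Y$ to be finite, hence $P_{j_0} \cap Q_{i_0}$ is finite, contradicting that it is an infinite member of $\Lcal$. Applied with $\Lcal = \Ucal^\Mcal_{C \cup E_\beta}$ and $Y = X_{|\beta|}$ this gives $\beta 0 \in T$ or $\beta 1 \in T$; and $\epsilon \in T$ because $\Ucal^\Mcal_C$ is partition regular, hence a largeness class.

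Next I would pin down the complexity of $T$. By \Cref{lem:decreasing-largeness-yields-largeness}, $\beta \notin T$ iff some finite $S \subseteq C \cup E_\beta$ has $\Ucal^\Mcal_S$ not a largeness class; for finite $S$ the class $\Ucal^\Mcal_S$ is $\Sigma^0_1(M)$ uniformly in $S$, so by \Cref{lem:largeness-class-complexity} the statement "$\Ucal^\Mcal_S$ is a largeness class" is $\Pi^0_2(M)$, hence $\Pi^0_1(M')$, and its negation is c.e.\ in $M'$ uniformly in $S$; also "$S \subseteq C \cup E_\beta$" is decidable from $C \oplus \beta$. Thus "$\beta \notin T$" is c.e.\ in $C \oplus M'$ uniformly in $\beta$, whence the path class $[T]$ is the set of paths of a $(C \oplus M')$-computable tree. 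Since $C \oplus M' \in \Ncal$, $\Ncal$ is a Scott set, and $T$ is an infinite binary tree, $[T]$ has a member $D^* \in \Ncal$, and from the uniform way $N$ codes $\Ncal$ (as in the proof of \Cref{prop-hyp-scott}) one computes from $i_C, i_M$ an $N$-index for such a $D^*$. Set $D = C \cup E_{D^*} = \bigcup_j (C \cup E_{D^* \uh j})$. Then $D \supseteq C$; $D \leq_T C \oplus D^* \in \Ncal$, so $D \in \Ncal$ with an $N$-index computable from $i_C, i_M$, which we take to be $\Phi(N, i_M, i_C)$. Since every initial segment of $D^*$ is in $T$, the classes $\Ucal^\Mcal_{C \cup E_{D^* \uh j}}$ are largeness classes decreasing to $\Ucal^\Mcal_D$, so $\Ucal^\Mcal_D$ is a largeness class by \Cref{lem:decreasing-largeness-yields-largeness}; and for every $X_i \in \Mcal$ we have $\Ucal^\Mcal_D \subseteq \Lcal_{Y^{D^*}_i} \in \{\Lcal_{X_i}, \Lcal_{\overline{X_i}}\}$, so $\Ucal^\Mcal_D$ is $\Mcal$-cohesive.

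The step I expect to be the main obstacle is not the combinatorics but the complexity bookkeeping of the previous paragraph: one must verify that $T$ is $\Pi^0_1$ \emph{relative to a member of $\Ncal$} — which is exactly why one passes through $\Pi^0_2(M) \subseteq \Pi^0_1(M')$ and uses the hypothesis $M' \in \Ncal$, rather than the cruder observation that largeness of a $\Sigma^0_1(M)$ class is $\Pi^0_2(M)$ and hence merely $M''$-decidable with $M''$ possibly outside $\Ncal$ — and that every index involved (for $C \oplus M'$, for $T$ as a co-c.e.-in-$(C \oplus M')$ tree, for the path $D^*$, and for $D$) is produced uniformly from $i_C$ and $i_M$ via the coding of $\Ncal$ by $N$.
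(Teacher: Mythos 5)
Your proof is correct and follows essentially the same route as the paper's: both constructions build a binary tree branching at level $i$ on the choice $X_i$ versus $\overline{X_i}$, extract a path in $\Ncal$ using the Scott set property together with the fact that the tree is (co-)computable from $C \oplus M'$, and take $D$ to be $C$ augmented by indices for the classes $\Lcal_{Y_i}$ read off along that path. The remaining differences are cosmetic — the paper keeps its tree $(C\oplus M')$-computable by tracking membership of the cells $X_\sigma$ in a finite restriction of $\Ucal^{\Mcal}_C$ and descends via partition regularity, whereas you track largeness of $\Ucal^{\Mcal}_{C\cup E_\beta}$ directly (a co-c.e.\ condition) and justify the branching step by a refinement-of-covers argument; both arguments, as you correctly flag, tacitly use that $\Ucal^{\Mcal}_C$ contains only infinite sets, which holds in every application of the lemma.
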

\begin{proof}
The functional $\Phi$ does the following : It looks for $M'$ at index $i_M$ inside $\Ncal$. From $M'$ it computes $M = \oplus_{n} X_n$. It then computes with $M' + C$ the tree $T$ containing all the elements $\sigma$ such that
$$\left(\bigcap_{\sigma(i) = 0} 2^\omega - X_i\right) \cap \left(\bigcap_{\sigma(i) = 1} X_i \right) \in \bigcap_{\langle e, j \rangle \in C \upharpoonright {|\sigma|}} \Ucal_e^{X_j}$$

Clearly $[T]$ is not empty. The functional $\Phi$ then finds an $N$-index for an element $Y \in [T]$. For $\sigma \prec Y$ let $X_\sigma = (\bigcap_{\sigma(i) = 0} (2^\omega - X_i)) \cap (\bigcap_{\sigma(i) = 0} X_i)$. We must have for every $\sigma \prec Y$ that $X_\sigma \in \Ucal_C^{\Mcal}$. It follows as $\Ucal_C^{\Mcal}$ is partition regular, that for every $\sigma \prec Y$, $\L_{X_\sigma} \cap \Ucal_C^{\Mcal}$ is a largeness class. Thus $\bigcap_{\sigma \prec Y} \L_{X_\sigma} \cap \Ucal_C^{\Mcal}$ is an $\Mcal$-cohesive largeness class. Also $M \oplus Y \oplus C$ uniformly computes a set $D$ such that $\Ucal_D^{\Mcal} = \bigcap_{\sigma \prec Y} \L_{X_\sigma} \cap \Ucal_C^{\Mcal}$. The function $\Phi$ then returns an $N$-index for $D$.
\end{proof}

\begin{proof}[Proof of \Cref{prop-hyp-cohesiveclassa}]
Suppose that stage $n$ we have defined $C_n$ verifying $(1) (2)$ and $(3)$. Let us define $C_{n+1}$.

Note that the set $C_{n}$ is coded by an element of $\Mcal_{n + 1}$, and thus that $C_{n}$ is computable in $\halt^{(n+2)}$ and then computable in $M_n''$. Using \Cref{lem-hyp-cohesiveclass1} we define $D_n \supseteq C_n$ to be such that $\Ucal_{D_n}^{\Mcal_n} = \langle \Ucal_{C_n}^{\Mcal_n} \rangle$ and such that $D_n$ is uniformly $M_{n}''$-computable. We define $E_{n+1}$ to be the transfer of the $M_n$-indices constituting $D_n$ into $M_{n+1}$-indices, using that $M_{n}$ is an element of $M_{n+1}$. So we have $\Ucal_{E_{n+1}}^{\Mcal_{n+1}} = \Ucal_{D_n}^{\Mcal_{n}}$.

Note that as $E_{n+1}$ is computable in $M_{n}'' \oplus M_{n+1}$ and thus in $\halt^{((n+1)+1)}$. It is then coded by an element of $\Mcal_{(n+1)+1}$. Note also that $\Ucal_{E_{n+1}}^{\Mcal_{n+1}}$ is partition regular as it equals $\langle \Ucal_{C_n}^{\Mcal_n} \rangle$. Using \Cref{lem-hyp-cohesiveclass2} we uniformly find an $\Mcal_{(n+1)+1}$-index of $C_{n+1} \supseteq E_{n+1}$ to be such that $\Ucal_{C_{n+1}}^{\Mcal_{n+1}}$ is an $\Mcal_{n+1}$-cohesive largeness class.
\end{proof}

\section{Generalized Pigeonhole forcing}

The notion of forcing used to build solutions to the pigeonhole principle while controlling the first jump is a variant of Mathias forcing. In this section, we extend Mathias forcing to a more general notion of forcing while controlling iterated jumps, that is while tightly controlling the truth of $\Sigma^0_n$ and $\Pi^0_n$ formulas.

Let $\Mcal_0, \Mcal_1, \dots, \Mcal_n$ be countable Scott sets coded by sets $M_0, M_1, \dots, M_n$, respectively, satisfying (1)(2) and (3) of \cref{prop-hyp-scott}. Let $C_0, C_1, C_2$ be sequence of sets satisfying (1)(2) and (3) of \cref{prop-hyp-cohesiveclassa}, that is, $\Ucal^{\Mcal_n}_{C_n}$ is an $\Mcal$-cohesive largeness class, $\Ucal^{\Mcal_{n+1}}_{C_{n+1}} \subseteq \langle \Ucal^{\Mcal_n}_{C_n} \rangle$ and each $C_n$ is coded by an element of $\Mcal_{n+1}$.

\subsection{The forcing conditions}

\begin{definition}
For each $n \geq 0$ let $\Pb_n$ be the set of pairs $(\sigma, X)$ such that
\begin{itemize}
	\item[(a)] $X \cap \{ 0, \dots, |\sigma|\} = \emptyset$
	\item[(b)] $X \in \langle \Ucal^{\Mcal_{n}}_{C_{n}} \rangle$
\end{itemize}
\end{definition}

Note that $X$ is infinite for $(\sigma, X) \in \Pb_n$ since $\Ucal^{\Mcal_{n}}_{C_{n}}$ contains only infinite sets. Mathias forcing builds a single object $G$ by approximations (conditions) which consist in an initial segment $\sigma$ of $G$, and an infinite reservoir of integers. The purpose of the reservoir is to restrict the set of elements we are allowed to add to the initial segment. The reservoir therefore enriches the standard Cohen forcing by adding an infinitary negative restrain.

\begin{definition}
The partial order on $\Pb_n$ is defined by $(\tau, Y) \leq (\sigma, X)$
if $\sigma \preceq \tau$, $Y \subseteq X$ and $\tau - \sigma \subseteq X$.
\end{definition}

Given a collection $\Fcal \subseteq \Pb_n$, we let $G_\Fcal = \bigcup \{ \sigma : (\sigma, X) \in \Fcal \}$.

\subsection{The forcing question}

We now define what we call ``the forcing question'' : a relation between forcing conditions $p \in \Pb_n$ and $\Sigma^0_{m+1}$ formulas for $m \leq n$. The goal of the forcing question is to be definitionally not too complex, while being able to find extensions of conditions forcing formulas or their negation. The forcing question will also be used in the definition of the forcing relation, which is why it is introduced first.

\begin{definition} \label{def-hyp-forcingqua}
Let $\sigma \in 2^{<\omega}$. Let $(\exists x) \Phi_e(G, x)$ be a $\Sigma^0_1$ formula. Let $\sigma \qvdash (\exists x) \Phi(G, x)$ holds if
$$\{Y\ :\ (\exists \tau \subseteq Y - \{0, \dots, |\sigma|\})(\exists x) \Phi_e(\sigma \cup \tau, x)\} \cap \Ucal_{C_{0}}^{\Mcal_{0}}$$
is a largeness class. Then inductively, given a $\Sigma^0_{m+1}$ formula $(\exists x) \Phi_e(G, x)$ with free variable $x$ for $1 \leq m < \omega$, we let $\sigma \qvdash (\exists x) \Phi_e(G, x)$ holds if
$$\{Y\ :\ (\exists \tau \subseteq Y - \{0, \dots, |\sigma|\}) (\exists x) \sigma \cup \tau \nqvdash \neg \Phi_e(G, x)\} \cap \Ucal_{C_{m}}^{\Mcal_{m}}$$
is a largeness class.

For a condition $p = (\sigma, X) \in \Pb_{n}$ for some $n < \omega$ and a $\Sigma^0_{m+1}$ formula $(\exists x) \Phi_e(G, x)$ with free variable $x$ for some $m \leq n$, we write $p \qvdash (\exists x) \Phi_e(G, x)$ if $\sigma \qvdash (\exists x) \Phi_e(G, x)$.
\end{definition}

\begin{proposition} \label{prop-hyp-effectivalla}
Let $\sigma \in 2^{<\omega}$. Let $(\exists x)\Phi_e(G, x)$ be a $\Sigma^0_{m+1}$ formula  for $m \geq 0$
\begin{enumerate}
\item The set
$$\{Y\ :\ (\exists \tau \subseteq Y - \{0, \dots, |\sigma|\})(\exists x) \Phi_e(\sigma \cup \tau, x)\}$$
is an upward-closed $\Sigma^0_1$ open set if $m = 0$. The set 
$$\{Y\ :\ (\exists \tau \subseteq Y - \{0, \dots, |\sigma|\}) (\exists x) \sigma \cup \tau \nqvdash \neg \Phi_e(G, x)\}$$
is an upward-closed $\Sigma^0_1(C_{m-1} \oplus \halt^{(m)})$ open set if $m>0$.
\item The relation $\sigma \qvdash (\exists x) \Phi_e(G, x)$ is $\Pi^0_1(C_{m} \oplus \halt^{(m+1)})$.
\end{enumerate}
This is uniform in $\sigma$ and $e$.
\end{proposition}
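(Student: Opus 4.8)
The plan is to prove both items simultaneously by induction on $m$, since item (2) for index $m$ is defined in terms of item (1) for index $m$, and item (1) for index $m > 0$ refers back to the forcing question $\nqvdash$ at level $m-1$, whose complexity is controlled by item (2) at level $m-1$. The base case $m = 0$ is essentially a definitional unwinding: the set $\{Y : (\exists \tau \subseteq Y - \{0,\dots,|\sigma|\})(\exists x)\Phi_e(\sigma \cup \tau, x)\}$ is visibly upward-closed (enlarging $Y$ only adds candidate $\tau$'s) and $\Sigma^0_1$, since $\Phi_e$ is $\Sigma^0_1$ and the quantifiers over $\tau$ and $x$ are existential number quantifiers (with $\tau$ ranging over finite strings). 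For item (2) at $m = 0$, recall from \Cref{lem:largeness-class-complexity} that ``$\Acal$ is a largeness class'' is $\Pi^0_2$ for a $\Sigma^0_1$ class $\Acal$; here $\Acal$ is the intersection of the above $\Sigma^0_1$ set with $\Ucal^{\Mcal_0}_{C_0}$, which is $\Sigma^0_1(C_0 \oplus M_0) = \Sigma^0_1(C_0)$ since $M_0$ codes $\halt^{(0)}$-computable data and $C_0$ is at worst what is needed; being careful with parameters, ``$\Acal$ is a largeness class'' is $\Pi^0_2$ relative to the $\Sigma^0_1$-index parameter, hence $\Pi^0_1$ relative to the jump of that parameter, i.e.\ $\Pi^0_1(C_0 \oplus \halt')$. (One should double check the exact parameter bookkeeping: the claim is $\Pi^0_1(C_0 \oplus \halt^{(1)})$, matching $m=0$.)

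For the inductive step, assume items (1) and (2) hold for all $m' < m$, with $m \geq 1$. By the induction hypothesis, item (2) at level $m-1$ gives that $\sigma' \qvdash \Psi$ is $\Pi^0_1(C_{m-1} \oplus \halt^{(m)})$ uniformly in $\sigma'$ and the index, hence $\sigma' \nqvdash \Psi$ is $\Sigma^0_1(C_{m-1} \oplus \halt^{(m)})$ uniformly. Therefore the set
$$\{Y : (\exists \tau \subseteq Y - \{0,\dots,|\sigma|\})(\exists x)\ \sigma \cup \tau \nqvdash \neg \Phi_e(G,x)\}$$
is obtained by prefixing two existential number quantifiers to a $\Sigma^0_1(C_{m-1} \oplus \halt^{(m)})$ matrix, so it is $\Sigma^0_1(C_{m-1} \oplus \halt^{(m)})$; and it is upward-closed in $Y$ for the same trivial reason as the base case. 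This proves item (1) at level $m$. For item (2) at level $m$: by definition $\sigma \qvdash (\exists x)\Phi_e(G,x)$ asserts that the intersection of this $\Sigma^0_1(C_{m-1} \oplus \halt^{(m)})$ set with $\Ucal^{\Mcal_m}_{C_m}$ is a largeness class. The intersection is $\Sigma^0_1$ in the parameter $C_{m-1} \oplus \halt^{(m)} \oplus C_m \oplus M_m$; by \Cref{prop-hyp-scott}(2,3) and \Cref{prop-hyp-cohesiveclassa}(3) this parameter is itself computable from (a fixed set at the level of) $C_m \oplus \halt^{(m)}$, so again by \Cref{lem:largeness-class-complexity} the largeness assertion is $\Pi^0_2$ relative to this parameter, hence $\Pi^0_1$ relative to its jump, which is $\Pi^0_1(C_m \oplus \halt^{(m+1)})$. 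Uniformity in $\sigma$ and $e$ is inherited at each stage because every ingredient---the $\Sigma^0_1$-index of the open set, the complexity bound from \Cref{lem:largeness-class-complexity}, and the coding data---is produced uniformly.

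The main obstacle I anticipate is not conceptual but bookkeeping: tracking exactly which oracle each $\Sigma^0_1$ or $\Pi^0_1$ predicate is relative to, and verifying that the parameters $C_{m-1}, C_m, M_m$ collapse (up to Turing equivalence, uniformly) into the claimed $C_m \oplus \halt^{(m+1)}$. This hinges on \Cref{prop-hyp-scott}(3) (``each $M_n'$ is uniformly computable in $\halt^{(n+1)}$'') together with \Cref{prop-hyp-cohesiveclassa}(3) (``each $C_n$ is coded by an element of $\Mcal_{n+1}$''), so that $C_{m-1}$, being coded in $\Mcal_m$, is computable from $M_m$ and hence from $\halt^{(m)}$; one must also confirm that applying \Cref{lem:largeness-class-complexity} costs exactly one jump (turning $\Pi^0_2$ of a $\Sigma^0_1$-index into $\Pi^0_1$ of that index's jump), which is where the ``$+1$'' in $\halt^{(m+1)}$ comes from. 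Once the parameter arithmetic is pinned down, the proof is a routine induction.
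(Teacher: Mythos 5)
Your induction scheme (items (1) and (2) proved simultaneously by induction on $m$, with (2) at level $m-1$ feeding the complexity of $\nqvdash$ into (1) at level $m$) is exactly the paper's, and your treatment of item (1) and of the base case's open set is fine. But there is a genuine gap in how you establish item (2), and it sits precisely in the spot you wave off as ``parameter bookkeeping.'' You argue: the class $\Ucal(e,\sigma)\cap\Ucal^{\Mcal_m}_{C_m}$ is $\Sigma^0_1$ in some parameter containing $C_m$, so by \Cref{lem:largeness-class-complexity} largeness is $\Pi^0_2$ in that parameter, hence $\Pi^0_1$ in its jump. The jump of a parameter of the form $C_m\oplus\halt^{(m)}\oplus M_m$ is $(C_m\oplus\halt^{(m)}\oplus M_m)'$, which computes $C_m'$; this is \emph{not} the claimed oracle $C_m\oplus\halt^{(m+1)}$ and is in general strictly stronger, since $C_m$ is only guaranteed to be coded in $\Mcal_{m+1}$ (so $C_m'$ can sit at the level of $\halt^{(m+3)}$ while $C_m\oplus\halt^{(m+1)}$ stays at $\halt^{(m+2)}$). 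Your shortcut therefore proves a weaker bound than the proposition asserts, and the sharper bound is what is actually used downstream (e.g.\ in \Cref{lem:pb2-forcing-question-complexity} and \Cref{thm:rt12-preservation-non-sigma2}, where one needs the forcing question to be decidable without jumping over $C_{m}$).

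The missing idea is the paper's reduction to finite subsets of $C_m$: since $\Ucal^{\Mcal_m}_{C_m}$ is a decreasing intersection of the $\Sigma^0_1(\Mcal_m)$ classes $\Ucal^{\Mcal_m}_F$ for finite $F\subseteq C_m$, \Cref{lem:decreasing-largeness-yields-largeness} (together with the trivial upward monotonicity of largeness) shows that $\Ucal(e,\sigma)\cap\Ucal^{\Mcal_m}_{C_m}$ is large iff $\Ucal(e,\sigma)\cap\Ucal^{\Mcal_m}_F$ is large for \emph{every finite} $F\subseteq C_m$. Each such finite statement mentions only finitely many indices, so by \Cref{lem:largeness-class-complexity} it is $\Pi^0_2(M_m)$ uniformly in (a canonical index for) $F$, hence $\Pi^0_1(M_m')$ and, by \Cref{prop-hyp-scott}(3), $\Pi^0_1(\halt^{(m+1)})$ uniformly in $F$. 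The set $C_m$ then enters only through the outer universal quantifier ``for all finite $F\subseteq C_m$,'' which costs only $C_m$ as an un-jumped oracle, yielding the claimed $\Pi^0_1(C_m\oplus\halt^{(m+1)})$. Without this compactness decomposition the stated complexity bound does not follow.
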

\begin{proof}
This is done by induction on $m$. We start with $m = 0$. Let $(\exists x)\Phi_e(G, x)$ be a $\Sigma^0_{1}$ formula and $\sigma \in 2^{<\omega}$. It is clear that
$$\Ucal(e, \sigma) = \{Y\ :\ (\exists \tau \subseteq Y - \{0, \dots, |\sigma|\})(\exists x) \Phi_e(\sigma \cup \tau, x)\}$$
is an upward closed $\Sigma^0_1$ class. Then $\sigma \qvdash (\exists x) \Phi_e(G, x)$ iff $\Ucal(e, \sigma) \cap \Ucal_{C_{0}}^{\Mcal_{0}}$ is a largeness class, that is, iff for every finite set $F \subseteq C_{0}$, the class $\Ucal(e, \sigma) \cap \Ucal_{F}^{\Mcal_{0}}$ is a largeness class. By \Cref{lem:largeness-class-complexity}, for each $F \subseteq C_0$, the statement is $\Pi^0_2(M_0)$ uniformly in $F$, and thus $\Pi^0_1(M_0')$ uniformly in $F$. It is then $\Pi^0_1(\halt')$ uniformly in $F$. Thus the whole statement is $\Pi^0_1(C_0 \oplus \halt')$.

Suppose (1) and (2) are true for $m-1$, every $\Sigma^0_{m}$ formula and every $\sigma$. Let $\sigma \in 2^{<\omega}$ and let $(\exists x)\Phi_e(G, x)$ be a $\Sigma^0_{m+1}$ formula. Let
$$\Ucal(e, \sigma) = \{Y\ :\ (\exists \tau \subseteq Y - \{0, \dots, |\sigma|\}) (\exists x) \sigma \cup \tau \nqvdash \neg \Phi_e(G, x)\}$$

Let us show (1). For each $x \in \omega$, the formula $\neg \Phi_e(G, x)$ is $\Sigma^0_{m}$ uniformly in $x$ and in $e$. By induction hypothesis, the relation $\sigma \cup \tau \nqvdash \neg \Phi_e(G, x)$ is $\Sigma^0_1(C_{m-1} \oplus \halt^{(m)})$ uniformly in $\sigma \cup \tau$ in $x$ and in $e$. It follows that $\Ucal(e, \sigma)$ is an upward closed $\Sigma^0_1(C_{m-1} \oplus \halt^{(m)})$ class.

Let us now show (2). We have $\sigma \qvdash (\exists x) \Phi_e(G, x)$ iff $\Ucal(e, \sigma) \cap \Ucal_{C_{m}}^{\Mcal_{m}}$ is a largeness class. Also $\Ucal(e, \sigma) \cap \Ucal_{C_{m}}^{\Mcal_{m}}$ is a largeness class if for all $F \subseteq C_{m}$, the class $\Ucal(e, \sigma) \cap \Ucal_{F}^{\Mcal_{m}}$ is a largeness class. By \Cref{lem:largeness-class-complexity}, it is a $\Pi^0_2(M_{m})$ statement uniformly in $F$ and then a $\Pi^0_1(M_{m}')$ statement uniformly in $F$ and then a $\Pi^0_1(\halt^{(m+1)})$ statement uniformly in $F$. It follows that the statement \qt{$\Ucal(e, \sigma) \cap \Ucal_{C_{m}}^{\Mcal_{m}}$ is a largeness class} is $\Pi^0_1(C_{m} \oplus \halt^{(m+1)})$.
\end{proof}

\subsection{The forcing relation}

The relation $\qvdash$ is now used to define the forcing relation. 

\begin{definition} \label{def:qb2-forcing-relation}
Let $n \in \omega$. Let $p = (\sigma, X) \in \Pb_n$.
Let $(\exists x)\Phi_e(G, x)$ be a $\Sigma^0_1$ formula. We define
\begin{itemize}
	\item[(a)] $p \Vdash (\exists x)\Phi_e(G, x)$ if $(\exists x)\Phi_e(\sigma, x)$
	\item[(b)] $p \Vdash (\forall x)\Phi_e(G, x)$ if $(\forall \tau \subseteq X)(\forall x)\Phi_e(\sigma \cup \tau, x)$
\end{itemize}
Then inductively for $1 \leq m \leq n$. Let $(\exists x)\Phi_e(G, x)$ be a $\Sigma_{m+1}$ formula. We define
\begin{itemize}
	\item[(a)] $p \Vdash (\exists x) \Phi_e(G, x)$ if there is some $x \in \omega$ such that $p \Vdash \Phi_e(G, x)$

	\item[(b)] $p \Vdash (\forall x) \neg \Phi_e(G, x)$ if for every $\tau \subseteq X$ and every $x \in \omega$, $\sigma \cup \tau \qvdash \neg \Phi_e(G, x)$
\end{itemize}
\end{definition}

\begin{lemma} \label{lem-hyp-forcepia}
Fix $0 \leq m \leq n$. Let $p \in \Pb_{n}$. Let $(\exists x)\Phi_e(G, x)$ be a $\Sigma^0_{m+1}$ formula. Then $p \Vdash (\forall x) \neg \Phi_e(G, x)$ iff $q \qvdash \neg \Phi_e(G, x)$ for every $x \in \omega$ and every $q \leq p$.
\end{lemma}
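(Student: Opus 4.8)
The statement to prove is: for $p=(\sigma,X)\in\Pb_n$ and a $\Sigma^0_{m+1}$ formula $(\exists x)\Phi_e(G,x)$ with $0\le m\le n$, we have $p\Vdash(\forall x)\neg\Phi_e(G,x)$ iff $q\nqvdash\neg\Phi_e(G,x)$ — wait, let me recheck the statement. It says $p \Vdash (\forall x) \neg \Phi_e(G, x)$ iff $q \qvdash \neg \Phi_e(G, x)$ for every $x \in \omega$ and every $q \leq p$. Let me think about the proof.

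The statement is essentially an unwinding of Definitions~\ref{def-hyp-forcingqua} and~\ref{def:qb2-forcing-relation}; the only substantive point is to identify the stems of the extensions of a given condition. First I would record the following observation about $\langle \Ucal^{\Mcal_n}_{C_n} \rangle$. Since $\Ucal^{\Mcal_n}_{C_n}$ is $\Mcal_n$-cohesive, the class $\langle \Ucal^{\Mcal_n}_{C_n}\rangle$ is an $\Mcal_n$-minimal largeness class (the lemma following \Cref{def_minmalclasses}), hence partition regular by \Cref{lem:minimal-is-partition-regular}; moreover it is contained in $\Lcal_\omega$, so it contains only infinite sets. Consequently it is closed under removing finitely many elements: if $X \in \langle \Ucal^{\Mcal_n}_{C_n}\rangle$ and $F$ is finite, then $X \subseteq (X \setminus F) \cup F$, so partition regularity puts $X\setminus F$ in the class since $F$, being finite, is not. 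It follows that for a condition $p = (\sigma, X) \in \Pb_n$, the set of stems of conditions $q \leq p$ is exactly $\{\, \sigma \cup \tau : \tau \subseteq X \,\}$: any $q = (\tau', Y) \leq p$ satisfies $\sigma \preceq \tau'$ and $\tau' - \sigma \subseteq X$, so (using clause (a) of $\Pb_n$) $\tau' = \sigma \cup \tau$ with $\tau = \tau' - \sigma \subseteq X$; conversely, for any $\tau \subseteq X$, a routine check using clause (a) shows $(\sigma \cup \tau,\ X \setminus \{0,\dots,|\sigma\cup\tau|\})$ lies in $\Pb_n$ and extends $p$, and its stem is $\sigma \cup \tau$.

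Granting this, both implications are immediate for $m \geq 1$. By clause (b) of \Cref{def:qb2-forcing-relation}, $p \Vdash (\forall x)\neg\Phi_e(G,x)$ means precisely that $\sigma \cup \tau \qvdash \neg\Phi_e(G,x)$ for all $\tau \subseteq X$ and all $x$; and by the last sentence of \Cref{def-hyp-forcingqua}, whether $q \qvdash \psi$ holds depends only on the stem of $q$. So if $p \Vdash (\forall x)\neg\Phi_e(G,x)$, then for every $q \leq p$, writing its stem as $\sigma \cup \tau$ with $\tau \subseteq X$, we get $q \qvdash \neg\Phi_e(G,x)$ for every $x$. Conversely, if $q \qvdash \neg\Phi_e(G,x)$ for every $x$ and every $q \leq p$, then for each $\tau \subseteq X$ we may apply this to the extension $q = (\sigma\cup\tau,\ X\setminus\{0,\dots,|\sigma\cup\tau|\})$ to get $\sigma\cup\tau \qvdash \neg\Phi_e(G,x)$ for all $x$; since $\tau$ is arbitrary, $p \Vdash (\forall x)\neg\Phi_e(G,x)$ by the definition again.

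For the base case $m = 0$ the forcing relation is defined directly on strings (clause (b) of \Cref{def:qb2-forcing-relation} for $\Sigma^0_1$ formulas), as $(\forall \tau \subseteq X)(\forall x)\neg\Phi_e(\sigma\cup\tau,x)$, so here one additionally checks that for a stem $\tau'$ the $\Sigma^0_1$ forcing question $\tau' \qvdash \neg\Phi_e(G,x)$ is equivalent to $\neg\Phi_e(\tau',x)$: since $\Phi_e(\cdot,x)$ is monotone on strings (as already used in clause (a) of \Cref{def:qb2-forcing-relation}), the set $\{Y : (\exists \rho \subseteq Y - \{0,\dots,|\tau'|\})\,\neg\Phi_e(\tau'\cup\rho,x)\}$ is all of $2^\omega$ when $\neg\Phi_e(\tau',x)$ holds and empty otherwise, while $\Ucal^{\Mcal_0}_{C_0}$ is itself a largeness class; hence the intersection is a largeness class iff $\neg\Phi_e(\tau',x)$. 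Feeding this into the same stem-matching argument as above yields the equivalence for $m = 0$ as well.

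The argument is routine; the only thing needing any care is the base case $m=0$, where one must reconcile the fact that \Cref{def:qb2-forcing-relation} defines $\Pi^0_1$-forcing directly on strings rather than through $\qvdash$, together with the bookkeeping between strings $\sigma\cup\tau$, their underlying finite sets, and clause (b) of $\Pb_n$. I do not expect any genuine obstacle beyond this.
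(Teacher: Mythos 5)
Your proof is correct and follows essentially the same route as the paper's: the forward direction is immediate because the forcing question depends only on the stem, and the backward direction uses the canonical extension $(\sigma\cup\tau,\ X\setminus\{0,\dots,|\sigma\cup\tau|\})$, whose validity rests on $\langle \Ucal^{\Mcal_n}_{C_n}\rangle$ being partition regular and containing only infinite sets. Your explicit reconciliation of the $m=0$ case (where the forcing relation is defined directly on strings rather than via $\qvdash$) is a detail the paper glosses over, but it is the same argument, not a different approach.
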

\begin{proof}
Suppose $p \Vdash (\forall x) \neg \Phi_e(G, x)$ with $p = (\sigma, X)$. By definition of the forcing relation and forcing extensions it is clear that $q \qvdash \neg \Phi_e(G, x)$ for every $x$ and every $q \leq p$. Suppose now $q \qvdash \neg \Phi_e(G, x)$ for every $x$ and every $q \leq p$. Given any $\tau \subseteq X$ we have that $(\sigma \cup \tau, X - \{0, \dots, |\sigma \cup \tau|\})$ is a valid extension of $p$ for which we have $\sigma \cup \tau \qvdash \neg \Phi_e(G, x)$ for every $x$. It follows that $p \Vdash (\forall x) \neg \Phi_e(G, x)$.
\end{proof}

\begin{lemma}\label{lem:qb2-forcing-closed-under-extension}
Fix $0 \leq m \leq n$. Let $(\exists x)\Phi_e(G, x)$ be a $\Sigma^0_{m+1}$ formula. Let $p, q \in \Pb_n$ be such that $q \leq p$.
\begin{itemize}
	\item[(a)] If $p \Vdash (\exists x)\Phi_e(G, x)$ then so does $q$.
	\item[(b)] If $p \Vdash (\forall x)\neg \Phi_e(G, x)$ then so does $q$.
\end{itemize}
\end{lemma}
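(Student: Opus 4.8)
The plan is to read off both statements from \Cref{def:qb2-forcing-relation} and \Cref{lem-hyp-forcepia}, using only two elementary facts about the forcing notion: first, that $\leq$ is transitive on $\Pb_n$ (it is a partial order), so $r \leq q \leq p$ implies $r \leq p$; and second, that if $q = (\sigma', X') \leq p = (\sigma, X)$ then $\sigma \preceq \sigma'$, $X' \subseteq X$, and every string $\sigma' \cup \tau$ with $\tau \subseteq X'$ is of the form $\sigma \cup \rho$ with $\rho \subseteq X$ --- take $\rho = (\sigma' - \sigma) \cup \tau$, which lies in $X$ because $\sigma' - \sigma \subseteq X$ and $\tau \subseteq X' \subseteq X$. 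Item (b) will then be essentially immediate, and item (a) will follow by a short induction on $m$.

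For item (b), suppose $p \Vdash (\forall x)\neg\Phi_e(G,x)$ and let $q \leq p$. Applying \Cref{lem-hyp-forcepia} to $p$ gives $r \qvdash \neg\Phi_e(G,x)$ for every $x \in \omega$ and every $r \leq p$. Since every $r \leq q$ also satisfies $r \leq p$ by transitivity, we get $r \qvdash \neg\Phi_e(G,x)$ for all $x$ and all $r \leq q$; applying the converse direction of \Cref{lem-hyp-forcepia}, now to $q$, yields $q \Vdash (\forall x)\neg\Phi_e(G,x)$. This argument does not depend on $m$.

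For item (a) I argue by induction on $m$. If $m = 0$, then $(\exists x)\Phi_e(G,x)$ is $\Sigma^0_1$ and $p \Vdash (\exists x)\Phi_e(G,x)$ means $(\exists x)\Phi_e(\sigma,x)$ for $p = (\sigma,X)$; writing $q = (\sigma', X')$ we have $\sigma \preceq \sigma'$, and a $\Sigma^0_1$ fact witnessed by the finite oracle $\sigma$ is still witnessed by any extension $\sigma'$, so $(\exists x)\Phi_e(\sigma',x)$ and hence $q \Vdash (\exists x)\Phi_e(G,x)$. If $m \geq 1$, pick $x$ with $p \Vdash \Phi_e(G,x)$. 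Since $(\exists x)\Phi_e(G,x)$ is $\Sigma^0_{m+1}$, the formula $\Phi_e(G,x)$ is $\Pi^0_m$, hence of the form $(\forall y)\neg\Psi_{e'}(G,\langle x,y\rangle)$ where $(\exists z)\Psi_{e'}(G,z)$ is $\Sigma^0_m$, and by \Cref{def:qb2-forcing-relation} the statement $p \Vdash \Phi_e(G,x)$ unwinds to $p \Vdash (\forall y)\neg\Psi_{e'}(G,\langle x,y\rangle)$. By item (b) at level $m-1$ (which is $\leq n$), $q \Vdash (\forall y)\neg\Psi_{e'}(G,\langle x,y\rangle)$, i.e. $q \Vdash \Phi_e(G,x)$, so $q \Vdash (\exists x)\Phi_e(G,x)$ with the same witness.

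The argument is almost entirely bookkeeping, so I do not expect a real obstacle; the one place that needs a little care is the reduction in the $m \geq 1$ case of (a), namely verifying that $p \Vdash \Phi_e(G,x)$ genuinely unfolds to the universally quantified statement covered by item (b) one level down, so that \Cref{lem-hyp-forcepia} can be invoked there. Everything else is forced by the definitions of $\Pb_n$, of its partial order, and of $\Vdash$.
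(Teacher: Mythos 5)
Your proof is correct and follows essentially the same route as the paper's: induction on $m$, with (a) reduced to the induction hypothesis (item (b) one level down) applied to the $\Pi^0_m$ subformula $\Phi_e(G,x)$, and (b) obtained from \Cref{lem-hyp-forcepia} together with transitivity of $\leq$. The only difference is presentational — you spell out the base case and the unfolding of the $\Pi^0_m$ clause that the paper leaves implicit.
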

\begin{proof}
We proceed by induction on $m$. It is clear for $\Sigma^0_1$ formulas. For $m > 0$ let $(\exists x)\Phi_e(G, x)$ be a $\Sigma^0_{m+1}$ formula.

For (a), by definition, there is some $x \in \omega$ such that $p \Vdash \Phi_e(G, x)$. As $\Phi_e(G, x)$ is a $\Pi^0_{m}$ formula, by induction hypothesis, $q \Vdash \Phi_e(G, x)$ and thus $q \Vdash (\exists x)\Phi_e(G, x)$.

For (b), by \Cref{lem-hyp-forcepia}, for all $x \in \omega$ and all $r \leq p$,  $r \qvdash \neg \Phi_e(G, x)$. Thus if $q \leq p$, also for all $x$ and all $r \leq q$, $r \qvdash \neg \Phi_e(G, x)$. It follows still by \Cref{lem-hyp-forcepia} that $q \Vdash (\forall x)\neg \Phi_e(G, x)$.
\end{proof}


\subsection{The core lemmas}

We now show the core lemmas. The first one shows how to find extensions to force formulas, while the second one is the classic ``forcing imply truth'' whenever we work with generic enough filters.

\begin{lemma} \label{prop-hyp-forcexta}
Let $p \in \Pb_{n}$ with $p = (\sigma, X)$. Let $(\exists x)\Phi_e(G, x)$ be a $\Sigma^0_{m+1}$ formula for $0 \leq m \leq n$.
\begin{enumerate}
\item Suppose $p \qvdash (\exists x)\Phi_e(G, x)$. Then there exists $q \leq p$ with $q \in \Pb_n$ such that $q \Vdash (\exists x)\Phi(G, x)$.

\item Suppose $p \nqvdash (\exists x)\Phi_e(G, x)$. Then there exists $q \leq p$ with $q \in \Pb_n$ such that $q \Vdash (\forall x)\neg \Phi(G, x)$.
\end{enumerate}
\end{lemma}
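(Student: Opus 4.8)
The statement to prove is \Cref{prop-hyp-forcexta}: given $p = (\sigma, X) \in \Pb_n$ and a $\Sigma^0_{m+1}$ formula $(\exists x)\Phi_e(G,x)$ with $m \le n$, if $p \qvdash (\exists x)\Phi_e(G,x)$ then some $q \le p$ forces it, and if $p \nqvdash (\exists x)\Phi_e(G,x)$ then some $q \le p$ forces its negation. The plan is to proceed by induction on $m$, and within each case to exploit the fact that the ``largeness class" appearing in the definition of $\qvdash$ can be intersected with $\langle \Ucal^{\Mcal_m}_{C_m} \rangle$ and still be a largeness class, via \Cref{lem:cohesive-largeness-compatibility} and the partition-regularity of the minimal class (\Cref{lem:minimal-is-partition-regular}).

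\emph{Base case $m = 0$.} Suppose first $\sigma \qvdash (\exists x)\Phi_e(G,x)$, i.e.\ the $\Sigma^0_1$ class $\Ucal(e,\sigma) = \{Y : (\exists \tau \subseteq Y - \{0,\dots,|\sigma|\})(\exists x)\Phi_e(\sigma \cup \tau, x)\}$ intersected with $\Ucal^{\Mcal_0}_{C_0}$ is a largeness class. Since $X \in \langle \Ucal^{\Mcal_0}_{C_0} \rangle$ and the minimal class is partition regular, $\Lcal_X \cap \Ucal^{\Mcal_0}_{C_0}$ (where $\Lcal_X$ is the largeness class of sets meeting $X$ infinitely) is a largeness class; by \Cref{lem:cohesive-largeness-compatibility}, $\Ucal(e,\sigma) \cap \Lcal_X \cap \Ucal^{\Mcal_0}_{C_0}$ is a largeness class, hence nonempty, so pick $Y$ in it with $Y \subseteq X$ infinite. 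Then there is $\tau \subseteq Y - \{0,\dots,|\sigma|\} \subseteq X$ and $x$ with $\Phi_e(\sigma \cup \tau, x)$; the condition $q = (\sigma \cup \tau, X - \{0,\dots,|\sigma \cup \tau|\})$ is a valid extension in $\Pb_n$ (its reservoir is a superset-shrinking of $X$, still in the upward-closed class) and $q \Vdash (\exists x)\Phi_e(G,x)$ by clause (a) of \Cref{def:qb2-forcing-relation}. Conversely, if $\sigma \nqvdash (\exists x)\Phi_e(G,x)$, take $q = p$ itself: I claim $p \Vdash (\forall x)\neg\Phi_e(G,x)$, i.e.\ $(\forall \tau \subseteq X)(\forall x)\neg\Phi_e(\sigma \cup \tau, x)$. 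If this failed, some $\tau \subseteq X$ and $x$ would give $\Phi_e(\sigma \cup \tau, x)$, so every superset of $\tau \cup \{0,\dots,|\sigma|\}$ lies in $\Ucal(e,\sigma)$; since $X \in \langle \Ucal^{\Mcal_0}_{C_0}\rangle$ is partition regular, this forces $\Ucal(e,\sigma) \cap \Ucal^{\Mcal_0}_{C_0}$ to be a largeness class, contradicting $\sigma \nqvdash \cdots$. (I need to check here that containing one fixed finite set as a subset already makes a $\Sigma^0_1$ upward-closed class meet the partition-regular reservoir class in a largeness class — this uses that $X$, being in a partition regular class, has the property that every cover of $X$ has a large piece, and the piece containing $\tau$ works.)

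\emph{Inductive step $m > 0$.} This is the same argument one level up, replacing $\Phi_e(\sigma \cup \tau, x)$ by the relation $\sigma \cup \tau \nqvdash \neg\Phi_e(G,x)$ (a $\Sigma^0_1(C_{m-1} \oplus \halt^{(m)})$ condition by \Cref{prop-hyp-effectivalla}) and $\Ucal^{\Mcal_0}_{C_0}$ by $\Ucal^{\Mcal_m}_{C_m}$. For (1): from $\sigma \qvdash (\exists x)\Phi_e(G,x)$, intersect the relevant $\Sigma^0_1(\cdots)$ upward-closed class with $\Lcal_X$ and $\Ucal^{\Mcal_m}_{C_m}$ (largeness preserved by \Cref{lem:cohesive-largeness-compatibility} since $X \in \langle \Ucal^{\Mcal_m}_{C_m}\rangle \subseteq \Ucal^{\Mcal_m}_{C_m}$ and the latter is $\Mcal_m$-cohesive), get $Y \subseteq X$ in it, extract $\tau \subseteq X$ and $x$ with $\sigma \cup \tau \nqvdash \neg\Phi_e(G,x)$; then by the induction hypothesis applied to the $\Sigma^0_m$ formula $\neg\Phi_e(G,x)$ at the condition $(\sigma\cup\tau, X')$, part (2) gives an extension $q$ with $q \Vdash \neg\neg\Phi_e(G,x)$, i.e.\ $q \Vdash \Phi_e(G,x)$, whence $q \Vdash (\exists x)\Phi_e(G,x)$. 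For (2): if $\sigma \nqvdash (\exists x)\Phi_e(G,x)$, I claim $p \Vdash (\forall x)\neg\Phi_e(G,x)$, which by \Cref{lem-hyp-forcepia} amounts to $\sigma \cup \tau \qvdash \neg\Phi_e(G,x)$ for all $\tau \subseteq X$ and all $x$; if some $\sigma \cup \tau \nqvdash \neg\Phi_e(G,x)$, then as in the base case the fixed finite $\tau$ witnesses that the defining class of $\sigma \qvdash (\exists x)\Phi_e(G,x)$ meets $\Ucal^{\Mcal_m}_{C_m}$ in a largeness class (using partition regularity of $\langle \Ucal^{\Mcal_m}_{C_m}\rangle \ni X$), a contradiction.

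\emph{Main obstacle.} The delicate point is the ``one fixed finite set suffices" step in both directions of part (2): I must argue that if every superset of a fixed finite set $F$ (here $F = F_\tau \cup \{0,\dots,|\sigma|\}$) lies in an upward-closed $\Sigma^0_1(\cdot)$ class $\Ucal$, then $\Ucal \cap \Ucal^{\Mcal_m}_{C_m}$ is a largeness class, given that $X \subseteq$ (that finite set's complement region) lies in the partition regular minimal subclass. Concretely: given any $k$-cover $Y_0,\dots,Y_{k-1}$ of $\omega$, intersecting with the partition-regular class applied to $X$ picks out a piece $Y_j$ with $Y_j \in \langle \Ucal^{\Mcal_m}_{C_m}\rangle$; but I actually want $Y_j \in \Ucal$, which needs the cover to be refined so that some piece contains $F$ — this is the standard trick of adding $F$ to one of the cover pieces, and the monotonicity/upward-closure of everything in sight. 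Getting this bookkeeping exactly right, and making sure the reservoir $X - \{0,\dots,|\sigma\cup\tau|\}$ of the extension remains in $\langle \Ucal^{\Mcal_n}_{C_n}\rangle$ (it does, being a cofinite subset of $X$, and $\langle \Ucal^{\Mcal_n}_{C_n}\rangle$ is partition regular hence closed under removing finitely many elements in the sense needed), is the bulk of the work; everything else is a routine unwinding of the definitions.
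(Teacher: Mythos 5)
Your part (1) is essentially the paper's argument (modulo an unnecessary detour through $\Lcal_X$ and \Cref{lem:cohesive-largeness-compatibility}, which does not literally apply since $X$ need not belong to $\Mcal_0$; the paper instead uses $\Mcal_0$-cohesiveness to get $\langle \Ucal^{\Mcal_0}_{C_0}\rangle \subseteq \Ucal(e,\sigma)$ and then simply notes $X \in \langle \Ucal^{\Mcal_0}_{C_0}\rangle$). But part (2) contains a genuine, fatal gap. You claim that $p \nqvdash (\exists x)\Phi_e(G,x)$ already yields $p \Vdash (\forall x)\neg\Phi_e(G,x)$, i.e.\ that $q = p$ works. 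This is false: the forcing question depends only on $\sigma$, not on $X$, and a negative answer does not rule out witnesses inside $X$. Concretely, take $\Phi_e(G,x)$ to say $5 \in G$ and suppose $5 \in X$. Then $\Ucal(e,\sigma) = \{Y : 5 \in Y\}$, and the cover $\omega \setminus \{5\},\ \{5\}$ shows $\Ucal(e,\sigma) \cap \Ucal^{\Mcal_0}_{C_0}$ is not a largeness class (the second piece is finite), so $\sigma \nqvdash (\exists x)\Phi_e(G,x)$; yet $\tau = \{5\} \subseteq X$ witnesses $\Phi_e(\sigma\cup\tau,x)$, so $(\sigma,X) \not\Vdash (\forall x)\neg\Phi_e(G,x)$. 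The same counterexample kills the step you isolate as the ``main obstacle'': it is simply not true that an upward-closed class containing all supersets of a fixed nonempty finite set $F$ meets $\Ucal^{\Mcal_m}_{C_m}$ in a largeness class, because largeness quantifies over \emph{all} covers, including the one that isolates $F$ in a finite piece; you are not allowed to ``add $F$ to one of the cover pieces'' of a given cover.

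What is actually needed for part (2) — and what the paper does — is to \emph{shrink the reservoir}. From the failure of largeness one extracts (via compactness, reducing to a finite $F \subseteq C_m$, and the Scott set $\Mcal_m$) a finite cover $Y_0 \cup \dots \cup Y_k \supseteq \omega$ with every $Y_i \notin \Ucal(e,\sigma) \cap \Ucal^{\Mcal_m}_F$, with $Y_0\oplus\dots\oplus Y_k \in \Mcal_m$. Partition regularity of $\langle \Ucal^{\Mcal_n}_{C_n}\rangle \ni X$ then gives some $i$ with $Y_i \cap X \in \langle \Ucal^{\Mcal_n}_{C_n}\rangle \subseteq \Ucal^{\Mcal_m}_{C_m}$, forcing $Y_i \notin \Ucal(e,\sigma)$; hence no $\tau \subseteq Y_i \cap X$ can witness the relevant $\Sigma^0_1$ (resp.\ $\nqvdash$) event, and the extension $q = (\sigma, Y_i \cap X)$ forces $(\forall x)\neg\Phi_e(G,x)$. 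This reservoir-shrinking is the content of the lemma; without it the statement you are trying to prove in part (2) is simply a different (and false) statement.
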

\begin{proof}
Let $p \in \Pb_{n}$. We start with $m = 0$. Suppose $p \qvdash (\exists x) \Phi_e(G, x)$. Let
$$\Ucal(e, \sigma) = \{Y\ :\ (\exists \tau \subseteq Y - \{0, \dots, |\sigma|\})(\exists x) \Phi_e(\sigma \cup \tau, x)\}$$
The class $\Ucal(e, \sigma) \cap \Ucal_{C_0}^{\Mcal_0}$ is a largeness class. As $\Ucal_{C_0}^{\Mcal_0}$ is $\Mcal_0$-cohesive, then $\langle  \Ucal_{C_0}^{\Mcal_0} \rangle \subseteq \Ucal(e, \sigma)$. As $X \in \langle \Ucal_{C_{n}}^{\Mcal_{n}}\rangle \subseteq \langle \Ucal_{C_0}^{\Mcal_0} \rangle \subseteq \Ucal(e, \sigma)$, there is $\tau \subseteq X$ such that $(\exists x)\Phi_e(\sigma \cup \tau, x)$ holds. As $\langle \Ucal_{C_{n}}^{\Mcal_{n}}\rangle$ contains only infinite sets and is partition regular, $X - \{0, \dots, |\sigma \cup \tau|\} \in \langle \Ucal_{C_{n}}^{\Mcal_{n}}\rangle$. Then $(\sigma \cup \tau, X - \{0, \dots, \sigma \cup \tau\})$ is a valid extension of $(\sigma, X)$ such that $(\sigma \cup \tau, X - \{0, \dots, |\sigma \cup \tau|\}) \Vdash (\exists x) \Phi_e(G, x)$.

Suppose now $p \nqvdash (\exists x) \Phi_e(G, x)$. Then the class $\Ucal(e, \sigma) \cap \Ucal_{C_0}^{\Mcal_0}$ is not a largeness class. It follows that there is a finite set $F \subseteq C_0$ such that $\Ucal(e, \sigma) \cap \Ucal_{F}^{\Mcal_0}$ is not a largeness class. For $k$ let $\Pcal_k$ be the $\Pi^0_1(Z)$ class for some $Z \in \Mcal_0$ of covers $Y_0 \cup \dots \cup Y_{k} \supseteq \omega$ such that $Y_i \notin \Ucal(e, \sigma) \cap \Ucal_{F}^{\Mcal_0}$ for each $i \leq k$. As $\Ucal(e, \sigma) \cap \Ucal_{F}^{\Mcal_0}$ is not a largeness class there must be some $k$ such that $\Pcal_k$ is not empty. Then there are sets $Y_0 \oplus \dots \oplus Y_{k} \in \Mcal_0 \cap \Pcal_k$. As $\langle \Ucal_{C_{n}}^{\Mcal_{n}}\rangle$ is partition regular and as $X \in \langle \Ucal_{C_{n}}^{\Mcal_{n}}\rangle$ we have some $i \leq k$ such that $Y_i \cap X \in \langle \Ucal_{C_{n}}^{\Mcal_{n}}\rangle \subseteq \Ucal_{C_0}^{\Mcal_0}$. Thus $(\sigma, Y_i \cap X)$ is a valid extension of $(\sigma, X)$ for which $(\sigma, Y_i \cap X) \Vdash (\forall x) \neg \Phi(G, x)$.

Suppose now $m > 0$. Suppose $p \qvdash (\exists x) \Phi_e(G, x)$. Let
$$\Ucal(e, \sigma) = \{Y\ :\ (\exists \tau \subseteq Y - \{0, \dots, |\sigma|\}) (\exists x) \sigma \cup \tau \nqvdash \neg \Phi_e(G, x)\}$$
By definition, the class $\Ucal(e, \sigma) \cap \Ucal_{C_m}^{\Mcal_m}$ is a largeness class. As $\Ucal_{C_m}^{\Mcal_m}$ is $\Mcal_m$-cohesive and as, by \Cref{prop-hyp-effectivalla}, the set $\Ucal(e, \sigma)$ is a $\Sigma^0_1(Y)$ for some $Y \in \Mcal_m$, then $\langle \Ucal_{C_m}^{\Mcal_m} \rangle \subseteq \Ucal(e, \sigma)$. As $X \in \langle \Ucal_{C_{n}}^{\Mcal_{n}}\rangle \subseteq \langle \Ucal_{C_m}^{\Mcal_m} \rangle \subseteq \Ucal(e, \sigma)$, there is $\tau \subseteq X$ such that $\sigma \cup \tau \nqvdash \neg \Phi_e(G, x)$ for some $x$. Note that as $\langle \Ucal_{C_{n}}^{\Mcal_{n}}\rangle$ contains only infinite sets and is partition regular we have $X - \{0, \dots, |\sigma \cup \tau|\} \in \langle \Ucal_{C_{n}}^{\Mcal_{n}}\rangle$. Also $(\sigma \cup \tau, X - \{0, \dots, |\sigma \cup \tau|\})$ is a valid extension of $(\sigma, X)$ such that $(\sigma \cup \tau, X - \{0, \dots, |\sigma \cup \tau|\}) \nqvdash \neg \Phi_e(G, x)$. Now by induction hypothesis we have some $Y \in \Mcal_m$ with $(\sigma \cup \tau, X \cap Y) \leq (\sigma, X)$ and such that $(\sigma \cup \tau, X \cap Y) \Vdash \Phi_e(G, x)$. It follows that $(\sigma \cup \tau, X \cap Y) \Vdash (\exists x)\Phi_e(G, x)$.

Suppose now $p \nqvdash (\exists x)\Phi_e(G, x)$. Then $\Ucal(e, \sigma) \cap \Ucal_{C_m}^{\Mcal_m}$ is not a largeness class. It follows that there is a finite set $F \subseteq C_0$ such that $\Ucal(e, \sigma) \cap \Ucal_{F}^{\Mcal_m}$ is not a largeness class. For $k$ let $\Pcal_k$ be the $\Pi^0_1(Z)$ class for some $Z \in \Mcal_m$ of covers $Y_0 \cup \dots \cup Y_{k} \supseteq \omega$ such that $Y_i \notin \Ucal(e, \sigma) \cap \Ucal_{F}^{\Mcal_m}$ for each $i \leq k$. As $\Ucal(e, \sigma) \cap \Ucal_{F}^{\Mcal_m}$ is not a largeness class there must be some $k$ such that $\Pcal_k$ is not empty. There are sets $Y_0 \oplus \dots \oplus Y_{k} \in \Mcal_m \cap \Pcal_k$. As $\langle \Ucal_{C_{n}}^{\Mcal_{n}}\rangle$ is partition regular and as $X \in \langle \Ucal_{C_{n}}^{\Mcal_{n}}\rangle$, there is some $i \leq k$ such that $Y_i \cap X \in \langle \Ucal_{C_{n}}^{\Mcal_{n}}\rangle \subseteq \Ucal_{C_m}^{\Mcal_m}$. It follows that $Y_i \cap X \notin \Ucal(e, \sigma)$. It means that for every $\tau \subseteq Y_i \cap X$ and every $x \in \omega$, $\sigma \cup \tau \qvdash \neg\Phi_e(G, x)$. It follows that $(\sigma, Y_i \cap X) \Vdash (\forall x)\neg\Phi_e(G, x)$.
\end{proof}

We now sow that forcing implies truth. We define first for that the precise level of genericity that we need.

\begin{definition} \label{def:genericenough}
Let $\Fcal \subseteq \Pb_n$ be a filter. The set $\Fcal$ is $m$-generic if for every $k \leq m$ and every $\Sigma^0_{k+1}$ formula $(\exists x) \Phi_e(G, x)$ there is a condition $p \in \Fcal$ such that $p \Vdash (\exists x) \Phi_e(G, x)$ or $p \Vdash (\forall x) \neg \Phi_e(G, x)$.
\end{definition}

Note that if a filter is $n$-generic, then it is $m$-generic for every  $m < n$.

\begin{lemma} \label{lem:forcing-implytruth}
Let $\Fcal \subseteq \Pb_{n}$ be an $n-1$-generic filter. Let $p \in \Fcal$. Let $(\exists x)\Phi_e(G, x)$ be a $\Sigma^0_{m+1}$ class for $0 \leq m \leq n$. 
\begin{itemize}
\item[(a)] Suppose $p \Vdash (\exists x) \Phi_e(G, x)$. Then $(\exists x) \Phi_e(G_\Fcal, x)$ holds. 
\item[(b)] Suppose $p \Vdash (\forall x) \neg \Phi_e(G, x)$. Then $(\forall x) \neg \Phi_e(G_\Fcal, x)$ holds.
\end{itemize}
\end{lemma}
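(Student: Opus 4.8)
The plan is to prove \Cref{lem:forcing-implytruth} by induction on $m$, mirroring the structure of the forcing relation in \Cref{def:qb2-forcing-relation} and using the characterization of $\Pi$-forcing from \Cref{lem-hyp-forcepia}. Throughout, write $G = G_\Fcal$.

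\textbf{Base case $m = 0$.} For (a), if $p = (\sigma, X) \Vdash (\exists x)\Phi_e(G,x)$ then by definition $(\exists x)\Phi_e(\sigma, x)$ holds; since $\sigma \prec G$ and $\Phi_e$ is $\Sigma^0_0$ (use-monotone), $(\exists x)\Phi_e(G,x)$ holds. For (b), if $p \Vdash (\forall x)\Phi_e(G,x)$, i.e.\ $(\forall \tau \subseteq X)(\forall x)\Phi_e(\sigma \cup \tau, x)$, then since every initial segment of $G$ below some condition $q = (\tau', Y) \leq p$ in $\Fcal$ is of the form $\sigma \cup \tau$ with $\tau \subseteq X$, every initial segment of $G$ satisfies $\Phi_e$, hence $(\forall x)\Phi_e(G,x)$ holds.

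\textbf{Inductive step.} Assume the lemma for $m-1$ (for $n-1$-generic filters). Let $(\exists x)\Phi_e(G,x)$ be $\Sigma^0_{m+1}$. For (a): if $p \Vdash (\exists x)\Phi_e(G,x)$ then there is $x$ with $p \Vdash \Phi_e(G,x)$, and $\Phi_e(G,x)$ is $\Pi^0_m$; apply the induction hypothesis (case (b) for the $\Sigma^0_m$ formula $\neg\Phi_e$) to conclude $\Phi_e(G,x)$ holds, hence $(\exists x)\Phi_e(G,x)$. For (b): suppose $p \Vdash (\forall x)\neg\Phi_e(G,x)$ and, for contradiction, that $\Phi_e(G,x_0)$ holds for some $x_0$. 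Since $\Fcal$ is $n-1$-generic and hence $m-1$-generic, there is a condition $q \in \Fcal$ deciding the $\Sigma^0_m$ formula $(\exists y)\Psi(G,y)$ equivalent to $\Phi_e(G,x_0) = (\exists y)\neg\Phi_{e'}(G,y)$ (unfolding one quantifier); by genericity and the induction hypothesis, since $\Phi_e(G,x_0)$ holds, $q$ cannot force the universal side, so $q \Vdash (\exists y)\Psi(G,y)$, which by the induction hypothesis (a) is consistent. Now take a common extension $r = (\rho, Z) \leq p, q$ in $\Fcal$ (using that $\Fcal$ is a filter). On one hand $r \leq p$ gives, via \Cref{lem-hyp-forcepia}, that $s \qvdash \neg\Phi_e(G, x_0)$ for every $s \leq r$ and in particular $\rho \cup \tau \qvdash \neg\Phi_e(G,x_0)$ for every $\tau \subseteq Z$. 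On the other hand, from $q \Vdash (\exists y)\Psi(G,y)$ and \Cref{lem:qb2-forcing-closed-under-extension}(a) we get $r \Vdash (\exists y)\Psi(G,y)$, i.e.\ $r \Vdash \Phi_{e'}\text{-witness}$; unfolding the definition of the forcing relation at level $m$ this provides a $\tau \subseteq Z$ and a witness with $\rho \cup \tau \nqvdash \neg\Phi_e(G,x_0)$ (this is exactly the shape of clause (a) in \Cref{def:qb2-forcing-relation} combined with the definition of $\qvdash$ at level $m$), contradicting the previous line.

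\textbf{Main obstacle.} The delicate point is case (b) of the inductive step: reconciling the two ways the formula $\Phi_e(G,x_0)$ is being analyzed — once as a $\Pi^0_m$ formula forced false along $\Fcal$ (giving negative information through $\qvdash \neg\Phi_e$ by \Cref{lem-hyp-forcepia}), and once as something whose truth at $G$ must be realized through a genuine extension in $\Fcal$ by $n-1$-genericity. The argument must carefully track that the witness produced by the generic extension is precisely of the form that makes $\sigma \cup \tau \nqvdash \neg\Phi_e(G,x_0)$, which is the negation of what $p \Vdash (\forall x)\neg\Phi_e(G,x)$ guarantees via \Cref{lem-hyp-forcepia}; getting the quantifier bookkeeping between \Cref{def-hyp-forcingqua} and \Cref{def:qb2-forcing-relation} to line up at the same level $m$ is where care is needed. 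The rest is routine monotonicity of uses and unwinding definitions.
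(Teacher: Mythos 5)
Your base case and part (a) of the inductive step match the paper's argument. For part (b), however, the paper proceeds directly and more simply: from $p \Vdash (\forall x)\neg\Phi_e(G,x)$ and \Cref{lem-hyp-forcepia}, every $q \leq p$ satisfies $q \qvdash \neg\Phi_e(G,x)$; by \Cref{prop-hyp-forcexta} the set $\{r : r \Vdash \neg\Phi_e(G,x)\}$ is therefore dense below $p$, so genericity provides such an $r$ in $\Fcal$, and the induction hypothesis (a), applied to the $\Sigma^0_m$ formula $\neg\Phi_e(G,x)$, yields $\neg\Phi_e(G_\Fcal,x)$ for each $x$. Your by-contradiction variant could be repaired to give the same conclusion, but as written its pivotal step is broken.

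Concretely: in the paper's convention $\Phi_e(G,x_0)$ is $\Pi^0_m$, say $(\forall y)\neg\Psi(G,y)$, yet you parse it as an existential $(\exists y)\neg\Phi_{e'}(G,y)$, and this sign error propagates through the case analysis (the ``universal side'' that genericity must reject and the side your hypothesis forbids get interchanged). More seriously, the witness you need --- some $\tau \subseteq Z$ together with a failure of $\qvdash$ --- cannot be obtained by ``unfolding the definition of the forcing relation'' from $r \Vdash (\exists y)\Psi(G,y)$: clause (a) of \Cref{def:qb2-forcing-relation} only asserts the existence of some $y$ with $r \Vdash \Psi(G,y)$ and produces no $\nqvdash$ statement. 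The extraction of $\tau$ has to come from the other side of your contradiction, namely from the largeness asserted by $r \qvdash \neg\Phi_e(G,x_0)$ together with the fact that the reservoir $Z$ lies in $\langle \Ucal^{\Mcal_{m-1}}_{C_{m-1}} \rangle$ and hence inside the relevant $\Sigma^0_1$ open set by $\Mcal$-cohesiveness --- this is exactly the first half of the proof of \Cref{prop-hyp-forcexta}, which your step (b) never invokes. The contradiction then lives one quantifier level down, between $\rho\cup\tau \nqvdash \neg\Psi(G,y)$ and $\rho\cup\tau \qvdash \neg\Psi(G,y)$ (the latter coming from $r \Vdash \Phi_e(G,x_0)$ via \Cref{lem-hyp-forcepia}), not at the level of $\neg\Phi_e(G,x_0)$ as you claim. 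You correctly identified the quantifier bookkeeping as the delicate point, but the writeup does not carry it out; the paper's density argument sidesteps the issue entirely.
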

\begin{proof}
The proof is done by induction on $m$. Let $p \in \Pb_{n}$ with $p = (\sigma, X)$. The result is clear and well-known for $m=0$. Suppose now $m>0$ and let $(\exists x)\Phi_e(G, x)$ be a $\Sigma^0_{m+1}$ formula. Suppose $p \Vdash (\exists x) \Phi_e(G, x)$. Then there exists $x$ such that $p \Vdash \Phi(G, x)$. By induction hypothesis $\Phi(G_\Fcal, x)$ hods and then $\exists x\ \Phi(G_\Fcal, x)$ holds.

Suppose $p \Vdash (\forall x) \neg \Phi_e(G, x)$. Then by \Cref{lem-hyp-forcepia} for every $x$ and every $q \leq p$, $q \qvdash \neg \Phi_e(G, x)$. From \Cref{prop-hyp-forcexta}, for every $x \in \omega$ and every $q \leq p$, there is some $r \leq q$ such that $r \Vdash \neg \Phi_e(G, x)$. It follows that for every $x$, the set $\{r \in \Pb_n\ :\ r \Vdash \neg \Phi_e(G, x)\}$ is dense below $p$. As $\Fcal$ is $n-1$-generic and $p \in \Fcal$ there must be for every $x$ some $q \in \Fcal$ such that $q \Vdash \neg \Phi_{e}(G, x)$. By induction hypothesis $\neg \Phi_e(G_{\Fcal}, x)$ for every $x$ and then $(\forall x) \neg \Phi_e(G_{\Fcal}, x)$ holds.
\end{proof}

\section{Cone avoidance under $\Delta^0_n$ reductions}

We show in this section the first and third theorems of the introduction --- \Cref{maintheorem1} and \Cref{maintheorem3}. The proof of \Cref{maintheorem2} will be postponed to the next section, where it will be achieved together with hyperarithmetic cone avoidance. We fix  a set $A^0 \sqcup A^1 = \omega$. We sometimes write $A$ for $A^0$ (with then $\omega - A = A^1$).

Unfortunately the above forcing is definitionally a bit too complex : the forcing question for $\Sigma^0_{m+1}$ statements is $\Pi^0_{m+2}$, whereas we would need it to be $\Sigma^0_{m+1}$.

For this reason, we need to plug upon the previous forcing another forcing notion, used only for ``the last step'' in formula induction. The drawbacks of this other forcing notions is that we are compelled to build two generic objects : one inside $A^0$ and one inside $A^1$. We then used the pairing argument first designed by Dzhafarov and Jockusch \cite{Dzhafarov2009Ramseys} to show that one of the object we build is sufficiently generic in the sense of \Cref{def:genericenough}.

\subsection{Another forcing on the top}

%

\begin{definition}\label{def:pb2-forcing-conditions}
Fix $n \geq 0$. Let $\Qb_n$ denote the set of conditions $(\sigma^0, \sigma^1, X)$ such that
\begin{itemize}
	\item[(a)] $\sigma^i \subseteq A^i$ for every $i < 2$
	\item[(b)] $X \cap \{ 0, \dots, \max_i |\sigma^i|\} = \emptyset$
	\item[(c)] $X \in \Mcal_n$
	\item[(d)] $X$ is infinite if $n = 0$ and $X \in \langle \Ucal^{\Mcal_{n-1}}_{C_{n-1}} \rangle$ if $n \geq 1$.
\end{itemize}
A forcing condition $(\sigma^0, \sigma^1, X) \in \Qb_n$ is \emph{valid for side} $i$ if $X \cap A^i \in \langle \Ucal^{\Mcal_{n-1}}_{C_{n-1}} \rangle$ for $n>0$ and if $X \cap A^i$ is infinite for $n = 0$.
\end{definition}

By definition of a Turing ideal $\Mcal$ countable coded by a set $M$, then $\Mcal$ can be written as $\{Z_0, Z_1, \dots \}$ with $M = \bigoplus_i Z_i$.
We then say that $i$ is an \emph{$M$-index} of $Z_i$. Thanks to the notion of index, any $\Qb_n$-condition can be finitely presented as follows. An \emph{index} of a $\Qb_n$-condition $c = (\sigma^0, \sigma^1, X)$ is a tuple $(\sigma^0, \sigma^1, a)$ where $a$ is an $M_n$-index for $X$.

\begin{definition}
The partial order on $\Qb_n$ is defined by
$$
(\tau^0, \tau^1, Y) \leq (\sigma^0, \sigma^1, X)
$$
if for every $i < 2$, $(\tau^i, Y) \leq (\sigma^i, X)$.
\end{definition}

Given a condition $c = (\sigma^0, \sigma^1, X)$ and $i < 2$, we write $c^{[i]} = (\sigma^i, X)$.
Each $\Qb_n$-condition $c$ represents two $\Pb_{n-1}$-conditions $c^{[0]}$ and $c^{[1]}$.


We now design a disjunctive forcing question which builds upon the forcing question of $\Pb_n$ conditions. The difference is that it is only used at the last step of the induction of formulas.

\begin{definition}
Let $c = (\sigma^0, \sigma^1, X) \in \Qb_0$ and let $(\exists x) \Phi_{e_0}(G, x)$ and $(\exists x) \Phi_{e_1}(G, x)$ be two $\Sigma_1$ formulas. Define the relation
$$
c \qvdash (\exists x)\Phi_{e_0}(G^0, x) \vee (\exists x)\Phi_{e_1}(G^1, x)
$$
to hold if for every 2-cover $Z^0 \cup Z^1 = X$, there is some side $i < 2$, some finite set $\rho \subseteq Z^i$ and some $x \in \omega$ such that $\Phi_{e_i}(\sigma^i \cup \rho, x)$ holds.

Let $n>0$. Let $c = (\sigma^0, \sigma^1, X) \in \Qb_n$ and let $(\exists x) \Phi_{e_0}(G, x)$ and $(\exists x) \Phi_{e_1}(G, x)$ be two $\Sigma^0_{n+1}$ formulas. Define the relation
$$
c \qvdash (\exists x)\Phi_{e_0}(G^0, x) \vee (\exists x)\Phi_{e_1}(G^1, x)
$$
to hold if for every 2-cover $Z^0 \cup Z^1 = X$, there is some side $i < 2$, some finite set $\rho \subseteq Z^i$ and some $x \in \omega$ such that $\sigma^i \cup \rho \nqvdash \neg \Phi_{e_i}(G, x)$ holds.
\end{definition}

\subsection{The complexity aspects of the $\Qb_n$ forcing}

This new forcing question now has the right definitional complexity

\begin{lemma}\label{lem:pb2-forcing-question-complexity}
Let $n \in \omega$. Let $c \in \Qb_n$ and let $(\exists x) \Phi_{e_0}(G, x)$ and $(\exists x) \Phi_{e_1}(G, x)$ be two $\Sigma^0_{n+1}$ formulas. The relation
$$
c \qvdash (\exists x)\Phi_{e_0}(G^0, x) \vee (\exists x)\Phi_{e_1}(G^1, x)
$$
is $\Sigma^0_1(Y)$ for some $Y \in \Mcal_n$. Moreover an $M_n$-index for $Y$ can be found uniformly in an index for $c$.
\end{lemma}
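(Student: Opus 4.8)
The plan is to rewrite the disjunctive forcing question as a single $\Sigma^0_1$ statement over an oracle lying in $\Mcal_n$, by a compactness argument in the spirit of the forcing question for $\Pb_n$. Throughout, let $\Psi$ denote the disjunctive formula $(\exists x)\Phi_{e_0}(G^0, x) \vee (\exists x)\Phi_{e_1}(G^1, x)$.

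First I would unwind the definition. For each side $i < 2$ set
$$\Ucal_i = \{\, Y : (\exists \rho \subseteq Y - \{0,\dots,|\sigma^i|\})(\exists x)\, \theta_i(\rho,x) \,\},$$
where $\theta_i(\rho,x)$ denotes ``$\Phi_{e_i}(\sigma^i \cup \rho, x)$ holds'' when $n = 0$ and ``$\sigma^i \cup \rho \nqvdash \neg\Phi_{e_i}(G, x)$'' when $n > 0$. By \Cref{prop-hyp-effectivalla}(1), each $\Ucal_i$ is an upward-closed $\Sigma^0_1(W_n)$ open set, uniformly in $\sigma^i$ and $e_i$, where $W_0 = \emptyset$ and $W_n = C_{n-1} \oplus \halt^{(n)}$ for $n > 0$. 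Note $W_n \in \Mcal_n$, with an $M_n$-index obtainable uniformly: $\halt^{(n)}$ is uniformly coded in $\Mcal_n$ by \Cref{prop-hyp-scott}(2), $C_{n-1}$ is uniformly coded in $\Mcal_n$ by \Cref{prop-hyp-cohesiveclassa}(3), and $\Mcal_n$ is closed under $\oplus$ effectively. Using clause (b) of \Cref{def:pb2-forcing-conditions} (so that subsets of $X$ are automatically disjoint from $\{0,\dots,|\sigma^i|\}$), the definition reads: $c \qvdash \Psi$ holds exactly when for every $2$-cover $Z^0 \cup Z^1 = X$ there is a side $i < 2$ with $Z^i \in \Ucal_i$. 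Finally, fix for each $i$ a $W_n$-c.e. set $R_i$ of (codes of) finite sets such that $Y \in \Ucal_i$ iff some $\rho \in R_i$ satisfies $\rho \subseteq Y$ --- possible since $\Ucal_i$ is upward-closed and $\Sigma^0_1(W_n)$ --- and write $R_i[s]$ for its stage-$s$ approximation.

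The core of the proof is the compactness equivalence: $c \qvdash \Psi$ holds if and only if
$$(\exists s)(\exists\, F \subseteq X \text{ finite})(\forall g : F \to 2)(\exists i < 2)(\exists \rho \subseteq g^{-1}(i))\ \rho \in R_i[s].$$
From right to left: given such $s, F$ and any $2$-cover $Z^0 \cup Z^1 = X$, define $g : F \to 2$ by $g(y) = 0$ if $y \in Z^0$ and $g(y) = 1$ otherwise; then $g^{-1}(i) \subseteq Z^i$, so the resulting $\rho \subseteq g^{-1}(i) \subseteq Z^i$ lies in $R_i$ and $Z^i \in \Ucal_i$. For the converse I argue contrapositively: assume that for every $s$ and every finite $F \subseteq X$ some $2$-coloring $g$ of $F$ has, for no $i < 2$, a finite $\rho \subseteq g^{-1}(i)$ with $\rho \in R_i[s]$. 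Enumerate $X = \{x_0 < x_1 < \cdots\}$ and let $T$ be the tree of all $2$-colorings of initial segments $\{x_0, \dots, x_{k-1}\}$ that are \emph{witness-free}, i.e. for which there is no $i < 2$ and no finite $\rho \subseteq g^{-1}(i)$ with $\rho \in R_i$. Then $T$ is downward closed, and it has a node at every level $k$: otherwise every $2$-coloring of $\{x_0, \dots, x_{k-1}\}$ has a witness in some $R_i$, and since there are only finitely many such colorings and finitely many candidate sets $\rho$, all the relevant witnesses appear by a common stage $s$, so every such coloring has a witness in $R_i[s]$, contradicting the assumption for this $F$ and $s$. Hence $T$ is infinite, and König's lemma gives a path $g \in 2^\omega$; the $2$-cover $Z^i = g^{-1}(i)$ of $X$ satisfies $Z^i \notin \Ucal_i$ for both $i$ (every finite subset of $g^{-1}(i)$ sits inside a witness-free node, hence outside $R_i$), witnessing $c \nqvdash \Psi$.

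It remains to read off the complexity. In the right-hand side of the equivalence, the matrix ``$F \subseteq X$ and $(\forall g : F \to 2)(\exists i < 2)(\exists \rho \subseteq g^{-1}(i))\ \rho \in R_i[s]$'' is decidable from $W_n \oplus X$ uniformly in the finite data $(F, s)$: the quantifiers over $g$, $i$, $\rho$ are bounded, and membership in $R_i[s]$ is decided using $W_n$. Hence the right-hand side is $\Sigma^0_1(W_n \oplus X)$. Since $X \in \Mcal_n$ by clause (c) of \Cref{def:pb2-forcing-conditions} and $W_n \in \Mcal_n$, the set $Y := W_n \oplus X$ lies in $\Mcal_n$, and an $M_n$-index for it is computed uniformly from an index $(\sigma^0, \sigma^1, a)$ of $c$ (with $a$ an $M_n$-index for $X$) and $e_0, e_1$, by combining $a$ with the uniform $M_n$-indices for $C_{n-1}$ and $\halt^{(n)}$ found above. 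The main obstacle is the compactness step: one must verify $T$ is downward closed and, crucially, exploit the finiteness of $F$ to collapse the unbounded search over approximation stages into the single existential quantifier over $s$ that makes the statement $\Sigma^0_1$.
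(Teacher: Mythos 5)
Your proof is correct and follows essentially the same route as the paper's: both reduce the universal quantification over $2$-covers of $X$ to the existence of a finite set $E \subseteq X$ (equivalently your pair $(F,s)$) all of whose $2$-partitions admit a witness, and then invoke \Cref{prop-hyp-effectivalla} to see this is $\Sigma^0_1(X \oplus C_{n-1} \oplus \halt^{(n)})$, hence $\Sigma^0_1(Y)$ for some $Y \in \Mcal_n$. The paper dispatches the equivalence with the single word ``compactness,'' whereas you spell it out via K\"onig's lemma on the tree of witness-free colorings; this is a faithful elaboration, not a different argument.
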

\begin{proof}
By compactness, for $n = 0$ the relation holds if there is a finite set $E \subseteq X$ such that for every $E_0 \cup E_1 = E$, there is some $i < 2$, some $\rho \subseteq E_i$ and $x_n \in \omega$ such that $\Phi_{e_i}(\sigma^i \cup \rho, x)$ holds, which is a $\Sigma^0_1(X)$ event for $X \in \Mcal_0$.

For $n > 0$ the relation holds if there is a finite set $E \subseteq X$ such that for every $E_0 \cup E_1 = E$, there is some $i < 2$, some $\rho \subseteq E_i$ and $x_n \in \omega$ such that $\sigma^i \cup \rho \nqvdash \neg \Phi_{e_i}(G, x)$ holds.
By \Cref{prop-hyp-effectivalla}, this statement is $\Sigma^0_1(X \oplus C_{n-1} \oplus \halt^{(n)})$ and then $\Sigma^0_1(Y)$ for some $Y \in \Mcal_n$.
\end{proof}

Before we continue, we need to study the effectivness of \Cref{prop-hyp-forcexta} about the forcing question for the $\Pb_n$ forcing.

\begin{lemma} \label{prop-hyp-forcexta-effect}
Let $n>0$. Let $c \in \Qb_{n}$ with $c = (\sigma_0, \sigma_1, X)$. Let $(\exists x)\Phi_e(G, x)$ be a $\Sigma^0_{m+1}$ formula for $0 \leq m < n$. Let $p = c^{[i]}$ for some $i < 2$ with $p = (\sigma, X)$.
\begin{enumerate}
\item Suppose $p \qvdash (\exists x)\Phi_e(G, x)$. The forcing condition $q \leq p$ of \Cref{prop-hyp-forcexta} which forces $(\exists x)\Phi_e(G, x)$ can always be of the form $(\sigma \cup \tau, X \cap Y)$ for $Y \in \Mcal_m$ where $\tau$ and an $M_m$-index for $Y$ can be found uniformly in any PA over $\halt^{(n+1)}$. If furthermore $c$ is valid on side $i$ one can ensure $\tau \subseteq A^i \cap X$ uniformly in $A \oplus P$ for any $P$ which is PA over $\halt^{(n+1)}$.

\item Suppose $p \nqvdash (\exists x)\Phi_e(G, x)$. The forcing condition $q \leq p$ of \Cref{prop-hyp-forcexta} which forces $(\forall x) \neg \Phi_e(G, x)$ can always be of the form $(\sigma, X \cap Y)$ for $Y \in \Mcal_m$ where an $M_m$ index for $Y$ can be found uniformly in any PA over $\halt^{(n+1)}$.
\end{enumerate}
\end{lemma}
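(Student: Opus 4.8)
The plan is to trace through the proof of \Cref{prop-hyp-forcexta} and observe that every step is effective relative to the right oracles, paying attention to where we need a PA degree over $\halt^{(n+1)}$. Recall that a condition $p = c^{[i]} = (\sigma, X)$ with $X \in \Mcal_n$ (and, when $n>0$, $X \in \langle \Ucal^{\Mcal_{n-1}}_{C_{n-1}} \rangle$), and that we are looking at a $\Sigma^0_{m+1}$ formula $(\exists x)\Phi_e(G,x)$ for $m<n$. The proof of \Cref{prop-hyp-forcexta} branches on whether $p \qvdash (\exists x)\Phi_e(G,x)$ holds, and within each branch on whether $m = 0$ or $m > 0$; I would handle the four cases in parallel, emphasizing the uniformities.

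First, for item (1): when $p \qvdash (\exists x)\Phi_e(G,x)$, the proof of \Cref{prop-hyp-forcexta} shows that $\langle \Ucal^{\Mcal_m}_{C_m} \rangle \subseteq \Ucal(e,\sigma)$ (using $\Mcal_m$-cohesiveness together with the fact, from \Cref{prop-hyp-effectivalla}, that $\Ucal(e,\sigma)$ is $\Sigma^0_1(Z)$ for some $Z \in \Mcal_m$), hence $X \in \Ucal(e,\sigma)$, so a suitable $\tau \subseteq X$ exists. The point is that $\Ucal(e,\sigma)$ is a $\Sigma^0_1(C_{m-1} \oplus \halt^{(m)})$ open set by \Cref{prop-hyp-effectivalla}, with $C_{m-1} \oplus \halt^{(m)} \leq_T \halt^{(n+1)}$ since $m < n$; thus searching for a witnessing $\tau$ inside a fixed infinite set is done effectively in $\halt^{(n+1)}$ once we have that fixed set. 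To actually produce an $M_m$-index for the reservoir $Y$ with $X \cap Y \in \langle \Ucal^{\Mcal_n}_{C_n} \rangle$ we iterate the $m$-many lower-level constructions, each of which by induction hypothesis (on $m$, mirroring the induction in \Cref{prop-hyp-forcexta}) requires choosing a member of some non-empty $\Pi^0_1$ class over $\halt^{(n+1)}$; this is exactly where any PA degree over $\halt^{(n+1)}$ suffices. For the "furthermore" clause, when $c$ is valid on side $i$ we have $X \cap A^i \in \langle \Ucal^{\Mcal_{n-1}}_{C_{n-1}} \rangle$; since this class is partition regular and contained in all the lower $\langle \Ucal^{\Mcal_m}_{C_m} \rangle$, the witness $\tau$ can be taken inside $A^i \cap X$, and locating it now needs $A$ in the oracle alongside $P$.

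For item (2): when $p \nqvdash (\exists x)\Phi_e(G,x)$, the class $\Ucal(e,\sigma) \cap \Ucal^{\Mcal_m}_{C_m}$ fails to be a largeness class, so — again using \Cref{lem:decreasing-largeness-yields-largeness} / compactness — there is a finite $F \subseteq C_m$ and some $k$ with a non-empty $\Pi^0_1(Z)$ class $\Pcal_k$, $Z \in \Mcal_m$, of $k$-covers avoiding $\Ucal(e,\sigma) \cap \Ucal^{\Mcal_m}_F$. We pick a member $Y_0 \oplus \dots \oplus Y_k \in \Mcal_m$ of $\Pcal_k$ and, by partition regularity of $\langle \Ucal^{\Mcal_n}_{C_n} \rangle$, some $i \leq k$ with $Y_i \cap X \in \langle \Ucal^{\Mcal_n}_{C_n} \rangle$. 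Then $q = (\sigma, X \cap Y_i)$ works. Finding the right $k$ and the finite $F$ is a $\halt^{(n+1)}$-computable search (the relevant largeness statements are $\Pi^0_1(C_m \oplus \halt^{(m+1)})$ and $m+1 \leq n+1$), choosing a member of $\Pcal_k$ needs a PA degree over $\halt^{(n+1)}$, and checking $Y_i \cap X \in \langle \Ucal^{\Mcal_n}_{C_n} \rangle$ — equivalently finding which $i$ works — is again $\halt^{(n+1)}$-decidable since membership in $\langle \Ucal^{\Mcal_n}_{C_n} \rangle$ for sets given by $M_n$-indices reduces to the largeness questions bounded by $\halt^{(n+1)}$.

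The main obstacle is bookkeeping the uniformities through the nested induction: in the $m>0$ case of \Cref{prop-hyp-forcexta} one first descends to a $\Sigma^0_m$ subproblem and only afterwards invokes the induction hypothesis to get the reservoir, so the effective version must carry, at each level, an $M_m$-index for the current reservoir computed from $A \oplus P$ (resp.\ $P$) and show the indices compose correctly — in particular that passing from an $M_{m}$-index at level $m$ up to an $M_{m+1}$-index (via the embedding $\Mcal_m \in \Mcal_{m+1}$, cf.\ the proof of \Cref{prop-hyp-cohesiveclassa}) stays uniform. Once one observes that all the decision procedures involved are bounded by $\halt^{(n+1)}$ and every genuinely non-effective choice is a basis choice in a $\Pi^0_1(\halt^{(n+1)})$ class, a single PA degree over $\halt^{(n+1)}$ handles all of them simultaneously, and the statement follows.
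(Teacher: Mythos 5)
Your proposal is correct and follows essentially the same route as the paper: re-run the proof of \Cref{prop-hyp-forcexta}, bound the complexity of each search (a $\Sigma^0_1(X \oplus C_{m-1}\oplus\halt^{(m)})$ event for the witness $\tau$, a $\Sigma^0_1(\halt^{(m+2)})$ search for the finite $F\subseteq C_m$ and the cover size $k$), and observe that the remaining non-effective steps are selections inside non-empty $\Pi^0_1$ classes over $\halt^{(n+1)}$, which any PA degree over $\halt^{(n+1)}$ performs uniformly. Two cosmetic quibbles that do not affect the conclusion: selecting the $Y_i$ with $Y_i\cap X\in\langle\Ucal^{\Mcal_{n-1}}_{C_{n-1}}\rangle$ is not $\halt^{(n+1)}$-decidable as you assert mid-argument, but is itself exactly one of these $\Pi^0_1(\halt^{(n+1)})$ selections among finitely many alternatives (as your closing paragraph correctly frames it); and the recursion in part (1) has depth one --- case (1) at level $m$ invokes case (2) at level $m-1$, which terminates without further recursion --- rather than $m$ nested calls.
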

\begin{proof}
Suppose $p \qvdash (\exists x) \Phi_e(G, x)$. By the proof of \Cref{prop-hyp-forcexta} there is $\tau \subseteq X$ such that $(\exists x)\Phi_e(\sigma \cup \tau, x)$ holds if $m=0$ and such that $\sigma \cup \tau \nqvdash \neg \Phi_e(G, x)$ for some $x$ if $m>0$. Note that if $X \cap A^i \in \langle \Ucal_{C_{n}}^{\Mcal_{n}}\rangle$, still refering to the proof of \Cref{prop-hyp-forcexta} we can ensure $\tau \subseteq X \cap A^i$. Also finding $\tau$ is a $\Sigma^0_1(X)$ event if $m=0$ and a $\Sigma^0_1(X \oplus C_{m-1} \oplus \halt^{m})$ if $m>0$ (resp. a $\Sigma^0_1(X\oplus A)$ if $m=0$ and a $\Sigma^0_1(A \oplus X \oplus C_{m-1} \oplus \halt^{m})$ if $m>0$). As $X \in \Mcal_{n}$ we can then find $\tau$ uniformly in $\halt^{n+1}$ (resp. in $\halt^{n+1} \oplus A$).

Suppose now $p \nqvdash (\exists x)\Phi_e(G, x)$. By the proof of \Cref{prop-hyp-forcexta} there is a finite set $F \subseteq C_{m}$ such that $\Ucal(e, \sigma) \cap \Ucal_{F}^{\Mcal_0}$ is not a largeness class. Note that finding $F$ is a $\Sigma^0_1(\halt^{(m+2)})$ event. It can then be found uniformly in $\halt^{(m+2)}$ and then uniformly in $\halt^{(n+1)}$. Still by the proof of \Cref{prop-hyp-forcexta} there must be some $k$ such that $\Pcal_k$ is not empty where $\Pcal_k$ is the $\Pi^0_1(Z)$ class for some $Z \in \Mcal_m$ of covers $Y_0 \cup \dots \cup Y_{k} \supseteq \omega$ such that $Y_i \notin \Ucal(e, \sigma) \cap \Ucal_{F}^{\Mcal_m}$ for each $i \leq k$. Searching for the first such $k$ is a $\Sigma^0_1(\halt^{m+1})$ event. Once found, one also compute uniformly in $M_m$ an index for $Y_0 \oplus \dots \oplus Y_{k} \in \Mcal_m \cap \Pcal_k$. As $\langle \Ucal_{C_{n-1}}^{\Mcal_{n-1}}\rangle$ is partition regular and as $X \in \langle \Ucal_{C_{n-1}}^{\Mcal_{n-1}}\rangle$, there is some $i \leq k$ such that $Y_i \cap X \in \langle \Ucal_{C_{n-1}}^{\Mcal_{n-1}}\rangle \subseteq \Ucal_{C_m}^{\Mcal_m}$. Finding the right $Y_i$ for $i \leq k$ is a $\Pi^0_1(C_{n-1} \oplus (Y_i \cap X \oplus M_{n-1})')$ event. As $X \in \Mcal_{n}$ it can then be found in any PA over $\halt^{(n+1)}$.
\end{proof}

We shall now show the extension of \Cref{prop-hyp-forcexta} for the $\Qb_n$ forcing conditions.

\begin{lemma}\label{lem:pb2-forcing-question-spec}
Let $n \in \omega$. Let $c \in \Qb_n$ and let $(\exists x)\Phi_{e_0}(G, x)$ and $(\exists x)\Phi_{e_1}(G, x)$ be two $\Sigma^0_{n+1}$ formulas.
\begin{itemize}
	\item[(a)] If $c \qvdash (\exists x)\Phi_{e_0}(G^0, x) \vee (\exists x)\Phi_{e_1}(G^1, x)$, then there is some $d \leq c$ and some $i < 2$ such that $$d^{[i]} \Vdash (\exists x)\Phi_{e_i}(G^i, x)$$
	\item[(b)] If $c \nqvdash (\exists x)\Phi_{e_0}(G^0, x) \vee (\exists x)\Phi_{e_1}(G^1, x)$, then there is some $d \leq c$ and some $i < 2$ such that $$d^{[i]} \Vdash (\forall x)\neg \Phi_{e_i}(G^i, x)$$
\end{itemize}
Moreover an index of $d$ can be found in $A \oplus P$ for any set $P$ which is PA over $\halt^{(n+1)}$ uniformly in an index of $c$, $e_0$ and $e_1$.
\end{lemma}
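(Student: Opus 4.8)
The statement is the $\Qb_n$-analogue of \Cref{prop-hyp-forcexta}, now for the disjunctive forcing question, and the natural strategy is a case analysis on the definition of $c \qvdash (\exists x)\Phi_{e_0}(G^0, x) \vee (\exists x)\Phi_{e_1}(G^1, x)$, followed by an appeal to the single-side extension lemmas \Cref{prop-hyp-forcexta} and its effective version \Cref{prop-hyp-forcexta-effect}. Write $c = (\sigma^0, \sigma^1, X)$. First I would treat (a). Suppose the disjunctive forcing question holds. Recall that $X \in \langle \Ucal^{\Mcal_{n-1}}_{C_{n-1}}\rangle$ (or $X$ infinite if $n=0$), and this class is partition regular by \Cref{lem:minimal-is-partition-regular}. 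Consider, as in the proof of \Cref{lem:cohesive-largeness-compatibility}, the $\Pi^0_1(\Mcal_n)$ class of all $2$-covers $Z^0 \sqcup Z^1 = X$ such that neither side witnesses the relevant $\Sigma^0_1$/forcing-question event. Because the disjunctive question holds, by compactness this class is empty; equivalently, for every $2$-cover $Z^0 \sqcup Z^1 = X$ one of the two sides works. Now use partition regularity of $\langle \Ucal^{\Mcal_{n-1}}_{C_{n-1}}\rangle$ to split $X$ into two halves $X \cap A^0$ and $X \cap A^1$: at least one, say side $i$, lies in $\langle \Ucal^{\Mcal_{n-1}}_{C_{n-1}}\rangle$ (this is exactly validity for a side, which every condition inherits for at least one $i$). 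Apply the disjunctive question to a cover refining this split; this produces, on some side $i$ with $X \cap A^i$ large, a finite $\rho \subseteq X \cap A^i$ and an $x$ with $\Phi_{e_i}(\sigma^i \cup \rho, x)$ (for $n=0$) or $\sigma^i \cup \rho \nqvdash \neg \Phi_{e_i}(G, x)$ (for $n>0$). In the first case $(\sigma^i \cup \rho, X - \{0,\dots,|\sigma^i\cup\rho|\})$ already forces $(\exists x)\Phi_{e_i}(G^i,x)$; in the second case I feed $\sigma^i \cup \rho \nqvdash \neg \Phi_{e_i}(G,x)$ into \Cref{prop-hyp-forcexta}(1), which is legitimate since $\neg\Phi_{e_i}$ is $\Sigma^0_n$ and $n-1 < n$, obtaining $q \leq c^{[i]}$ of the form $(\sigma^i\cup\rho, X\cap Y)$ with $Y \in \Mcal_{n-1}$ and $q \Vdash \Phi_{e_i}(G^i,x)$, hence $q \Vdash (\exists x)\Phi_{e_i}(G^i,x)$; this $q$ extends to a $\Qb_n$-condition $d = (\tau^0,\tau^1,X\cap Y)$ with $d^{[i]} = q$ by leaving the other coordinate $\sigma^{1-i}$ unchanged (the reservoir shrinks, so the other side remains a valid $\Pb_{n-1}$-condition and clauses (a)--(d) of \Cref{def:pb2-forcing-conditions} are preserved).

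For (b), suppose the disjunctive question fails. Then there is a $2$-cover $Z^0 \sqcup Z^1 = X$ such that for each side $i$, every finite $\rho \subseteq Z^i$ and every $x$ fail the side-$i$ event, i.e.\ $\sigma^i \cup \rho \qvdash \neg\Phi_{e_i}(G,x)$ for all such $\rho, x$ (reading $\qvdash \neg\Phi_{e_i}$ as ``$\sigma^i\cup\rho\not\qvdash\neg$'' being false; for $n=0$ this says $\Phi_{e_i}(\sigma^i\cup\rho,x)$ never holds). By partition regularity of $\langle \Ucal^{\Mcal_{n-1}}_{C_{n-1}}\rangle$ and $X$ lying in it, some $j < 2$ has $Z^j \cap X = Z^j \in \langle \Ucal^{\Mcal_{n-1}}_{C_{n-1}}\rangle$; fix such a $j$ and on the remaining coordinate keep $\sigma^{1-j}$. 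Then $(\sigma^j, Z^j)$ is a $\Pb_{n-1}$-condition extending $c^{[j]}$, and for every $\tau \subseteq Z^j$ and every $x$ we have $\sigma^j \cup \tau \qvdash \neg\Phi_{e_j}(G,x)$ (for $n=0$: $\Phi_{e_j}(\sigma^j\cup\tau,x)$ is always false, so again $(\sigma^j, Z^j) \Vdash (\forall x)\neg\Phi_{e_j}(G^j,x)$ by \Cref{def:qb2-forcing-relation}(b)). By \Cref{lem-hyp-forcepia} this is exactly $(\sigma^j, Z^j) \Vdash (\forall x)\neg\Phi_{e_j}(G^j,x)$, and completing to $d \in \Qb_n$ by padding the other side gives $d^{[j]} \Vdash (\forall x)\neg\Phi_{e_j}(G^j,x)$.

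\textbf{The uniformity (the main obstacle).}
The serious part is the ``moreover'' clause: producing an index of $d$ in $A \oplus P$ for $P$ any PA degree over $\halt^{(n+1)}$. By \Cref{lem:pb2-forcing-question-complexity} the disjunctive question is $\Sigma^0_1(Y)$ for $Y \in \Mcal_n$, with an $M_n$-index computable from an index of $c$, so $\halt^{(n+1)}$ can decide it, and this splits us into cases (a)/(b) uniformly. In case (b), finding the bad cover $Z^0 \sqcup Z^1$ and the side $j$ with $Z^j$ large is, by the same analysis as in \Cref{prop-hyp-forcexta-effect}(2), a matter of searching a $\Pi^0_1(\Mcal_n)$ class and then deciding a $\Pi^0_1(C_{n-1}\oplus(\cdots)')$ membership, which a PA degree over $\halt^{(n+1)}$ handles; the $M_n$-index of $Z^j$ is then computable from $M_n$. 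In case (a), once the side $i$ and the finite $\rho$ are found --- a $\Sigma^0_1(Y)$ search, so uniformly in $\halt^{(n+1)}$, and in $A \oplus \halt^{(n+1)}$ if we insist $\rho \subseteq A^i$ using validity of $c$ on side $i$, exactly as in \Cref{prop-hyp-forcexta-effect}(1) --- the further passage through \Cref{prop-hyp-forcexta}(1) is governed precisely by \Cref{prop-hyp-forcexta-effect}(1), which already asserts that $\tau$ and the $M_{n-1}$-index of $Y$ can be found uniformly in $A \oplus P$ for $P$ PA over $\halt^{(n+1)}$. Assembling: in both cases the data $(\tau^0,\tau^1)$ and an $M_n$-index of the new reservoir (obtained from the $M_{n-1}$- or $M_n$-index via the fact that $M_{n-1}$ is coded inside $\Mcal_n$, \Cref{prop-hyp-scott}) are computed from $A \oplus P$ uniformly in an index of $c$, $e_0$, $e_1$. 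The one delicate point to get right is that when we restrict $\rho$ to side $i$ we must already know $c$ is valid on side $i$; but for an arbitrary $c$ at least one side is valid, and which one is a $\Pi^0_1(\Mcal_n)$-type question decidable by $P$, so we may branch on it --- or, as is done downstream, simply carry the side as part of the pairing argument and only invoke this lemma on conditions already known to be valid on the relevant side.
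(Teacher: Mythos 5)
Your proposal is correct and follows essentially the same route as the paper: in case (a) one instantiates the disjunctive question on the cover $Z^0 = X \cap A^0$, $Z^1 = X \cap A^1$ to get a side $i$, a witness $\rho \subseteq X \cap A^i$ and an $x$ with $\sigma^i \cup \rho \nqvdash \neg\Phi_{e_i}(G,x)$, then invokes \Cref{prop-hyp-forcexta-effect} to force $\Phi_{e_i}(G,x)$ (note this is clause (2), the $\nqvdash$ clause, applied to the $\Sigma^0_n$ formula $\neg\Phi_{e_i}(G,x)$, not clause (1) as you cite); in case (b) one picks a bad cover from the $\Pi^0_1(\Mcal_n)$ class $\Dcal$ inside $\Mcal_n$ and uses partition regularity of $\langle \Ucal^{\Mcal_{n-1}}_{C_{n-1}}\rangle$ to select the side whose part stays large, exactly as in the paper. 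The only other (harmless) deviation is your claim in (a) that the witness necessarily falls on a side where $X \cap A^i$ is large --- the forcing question gives no such guarantee and none is needed, since all that matters is $\rho \subseteq X \cap A^i \subseteq A^i$ so that $d$ remains a legal $\Qb_n$-condition.
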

\begin{proof}
Say $c = (\sigma^0, \sigma^1, X)$. Both (a) and (b) are trivial in the case $n=0$. We treat the case $n > 0$.

(a) Let $Z^0 = X \cap A^0$ and $Z^1 = X \cap A^1$. Unfolding the definition of the forcing question, there is some $i < 2$, some $\rho \subseteq Z^i$ and $x \in \omega$ such that $\sigma_i \cup \rho \nqvdash \neg \Phi_{e_i}(G^i, x)$. By \Cref{prop-hyp-forcexta-effect} we have a set $Y \in \Mcal_{n}$ such that $(\sigma_i \cup \rho, X \cap Y) \leq (\sigma_i \cup \rho, X)$ and $(\sigma_i \cup \rho, X \cap Y) \Vdash (\exists x)\Phi_{e_i}(G^i, x)$. Note that $d = (\sigma_i \cup \rho, \sigma_{i-1}, X \cap Y)$ is a valid extension of $c$. From \Cref{prop-hyp-effectivalla} finding $\rho$ is a $\Sigma^0_1(A \oplus X \oplus C_{n-1} \oplus \halt^{(n)})$ event. From \Cref{prop-hyp-forcexta-effect} one can then find and $M_n$-index of $Y$ in any set $P$ which is PA over $\halt^{(n+1)}$. Overall an index for $d$ can be found in $A \oplus P$ for any set $P$ which is PA over $\halt^{(n+1)}$, uniformly in an index of $c$, $e_0$ and $e_1$.

(b) Let $\Dcal$ be the $\Pi^0_1(\Mcal_n)$ class of all $Z^0 \oplus Z^1$ with $Z^0 \cup Z^1 = X$, such that for every $i < 2$, every $\rho \subseteq Z^i$, and every $x \in \omega$ we have $\sigma \cup \rho \qvdash \neg \Phi_{e_i}(G^i, x)$. Let $Z^0 \oplus Z^1 \in \Dcal$ such that $Z^0 \oplus Z^1 \in \Mcal_n$. Since $\langle \Ucal^{\Mcal_{n-1}}_{C_{n-1}} \rangle$ is a partition regular class containing $X$, there is some $i < 2$ such that $Z^i \in \langle \Ucal^{\Mcal_{n-1}}_{C_{n-1}} \rangle$. Define the $\Qb_n$-condition $d = (\sigma^0, \sigma^1, Z^i)$. Then $d^{[i]} \Vdash (\forall x)\neg \Phi_{e_i}(G, x)$. Finding the right $Z_i$ is a $\Pi^0_1(C_{n-1} \oplus (X \oplus Z_i \oplus M_{n-1})')$ event. It can the be found uniformly in any set $P$ which is PA over $\halt^{(n+1)}$. This completes the proof of the lemma.
\end{proof}

\subsection{The degenerate forcing question}

The forcing question will be used with a disjunctive argument. Doing so we will build two generics, one in $A^0$ and one in $A^1$. Possibly only one of them will force every $\Sigma^0_n$ statement or their negation. The challenge is to ensure in the same time that the same generic also forces every $\Sigma^0_m$ statement or their negation for $m < n$, so that we can then apply \Cref{lem:forcing-implytruth} saying that forcing implies truth. It is only possible to do so on side $i$ under the assumption that our current forcing condition is valid on side $i$:

\begin{lemma}
Let $n \geq 0$. Let $c \in \Qb_n$ be valid for side $i$. Let $m < n$ and let $(\exists x) \Phi_e(G, x)$ be a $\Sigma^0_{m+1}$ formula. Then one can find uniformly in $A \oplus P$ for any $P$ which is PA over $\halt^{(n+1)}$, a condition $d \leq c$ such that $d^{[i]} \Vdash (\exists x) \Phi_e(G, x)$ or $d^{[i]} \Vdash (\forall x) \neg \Phi_e(G, x)$
\end{lemma}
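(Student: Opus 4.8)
The plan is to reduce this statement to the already-proved \Cref{prop-hyp-forcexta} via the effectivity refinement \Cref{prop-hyp-forcexta-effect}. The point is that $c \in \Qb_n$ being valid for side $i$ means $X \cap A^i \in \langle \Ucal^{\Mcal_{n-1}}_{C_{n-1}} \rangle$, so $c^{[i]} = (\sigma^i, X)$ can be sharpened to a genuine $\Pb_{n-1}$-condition $p = (\sigma^i, X \cap A^i)$ sitting below it on side $i$ — indeed $X \cap A^i \in \langle \Ucal^{\Mcal_{n-1}}_{C_{n-1}} \rangle$ is exactly condition (b) in the definition of $\Pb_{n-1}$ (modulo intersecting with the complement of an initial segment, which is harmless by partition regularity). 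Since $m < n$, the formula $(\exists x)\Phi_e(G,x)$ is $\Sigma^0_{m+1}$ with $m \leq n-1$, so the $\Pb_{n-1}$-forcing question $p \qvdash (\exists x)\Phi_e(G,x)$ is defined and decidable.

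**The main steps.** First I would pass from $c$ to $p = (\sigma^i, X \cap A^i)$ as above, noting this is a valid $\Pb_{n-1}$-condition because $c$ is valid for side $i$; an $M_{n-1}$-index for $X \cap A^i$ is computable from $A$ together with an $M_{n-1}$-index for $X$ (which is available since $X \in \Mcal_n$ and $M_n$ codes $\Mcal_{n-1}$), hence from $A \oplus \halt^{(n+1)}$ at worst. Second, decide whether $p \qvdash (\exists x)\Phi_e(G,x)$; by \Cref{prop-hyp-effectivalla} this relation is $\Pi^0_1(C_m \oplus \halt^{(m+1)})$, hence decidable from $\halt^{(m+2)} \leq_T \halt^{(n+1)}$. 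Third, in the positive case apply \Cref{prop-hyp-forcexta}(1) and \Cref{prop-hyp-forcexta-effect}(1): because $c$ is valid for side $i$ we get $\tau \subseteq X \cap A^i$ and $Y \in \Mcal_m$ with $q = (\sigma^i \cup \tau, X \cap A^i \cap Y) \Vdash (\exists x)\Phi_e(G,x)$, where $\tau$ is found uniformly in $A \oplus P$ and an $M_m$-index (hence an $M_n$-index) for $Y$ is found uniformly in any $P$ which is PA over $\halt^{(n+1)}$. In the negative case apply \Cref{prop-hyp-forcexta}(2) and \Cref{prop-hyp-forcexta-effect}(2) to get $Y \in \Mcal_m$ with $q = (\sigma^i, X \cap A^i \cap Y) \Vdash (\forall x)\neg\Phi_e(G,x)$. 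Finally, set $d = (\sigma^i \cup \tau, \sigma^{1-i}, X \cap A^i \cap Y)$ (or $d = (\sigma^i, \sigma^{1-i}, X \cap A^i \cap Y)$ in the negative case); one checks $d \in \Qb_n$ — conditions (a),(b) are immediate, (c) holds since $\Mcal_n$ is closed under join and intersection with members, and (d) holds since $X \cap A^i \cap Y \subseteq X \cap A^i \in \langle \Ucal^{\Mcal_{n-1}}_{C_{n-1}} \rangle$ which is a partition regular, hence downward-absorbing in the relevant sense, class — and $d \leq c$, $d^{[i]} = q$. The resulting index for $d$ is computed from $A \oplus P$ for any $P$ which is PA over $\halt^{(n+1)}$, uniformly in an index for $c$ and in $e$.

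**Where the difficulty lies.** Most of this is bookkeeping; the one genuinely delicate point is the verification that $q$ really is of the stated form and below $c^{[i]}$ while living in $\Qb_n$ rather than merely in $\Pb_{n-1}$. Concretely, \Cref{prop-hyp-forcexta-effect} is phrased for $p = c^{[i]}$ with reservoir $X$, whereas here I want the reservoir $X \cap A^i$; I must check that the proof of \Cref{prop-hyp-forcexta-effect} (which already contains the clause "if furthermore $c$ is valid on side $i$ one can ensure $\tau \subseteq A^i \cap X$") in fact delivers everything relative to the smaller reservoir, and that the new reservoir $X \cap A^i \cap Y$ still lies in $\langle \Ucal^{\Mcal_{n-1}}_{C_{n-1}} \rangle$ — this uses partition regularity exactly as in the proof of \Cref{prop-hyp-forcexta}. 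The second subtlety is purely complexity-theoretic: one must confirm that deciding the $\qvdash$ relation ($\Pi^0_1(C_m \oplus \halt^{(m+1)})$, so $\leq_T \halt^{(m+2)} \leq_T \halt^{(n+1)}$ since $m+1 < n+1$) and searching for $\tau$ (a $\Sigma^0_1(A \oplus X \oplus C_{m-1} \oplus \halt^{(m)})$ event, absorbed into $A \oplus \halt^{(n+1)}$) and for the $M_m$-index of $Y$ (found in any PA over $\halt^{(n+1)}$, via \Cref{prop-hyp-forcexta-effect}) all fit within the claimed $A \oplus P$ bound. No step requires more than what the cited lemmas already provide, so I expect no real obstacle — only care in matching reservoirs and tracking which oracle does which search.
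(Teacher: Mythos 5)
Your approach is essentially the paper's: ask the $\Pb_{n-1}$ forcing question for the side-$i$ component (which, by \Cref{def-hyp-forcingqua}, depends only on the stem $\sigma^i$, so it is immaterial whether you phrase it for reservoir $X$ or $X\cap A^i$), decide it from $\halt^{(m+2)}\leq_T\halt^{(n+1)}$, and invoke \Cref{prop-hyp-forcexta-effect} to produce the extension, using validity on side $i$ to keep the new stem bits inside $A^i$. The one point where your verification goes wrong is the reservoir: you place $X\cap A^i\cap Y$ into the final $\Qb_n$-condition and justify clause (c) of \Cref{def:pb2-forcing-conditions} by closure of $\Mcal_n$ under intersection with members --- but $A^i$ is not a member of $\Mcal_n$ ($A$ is an arbitrary set in the main application), so $X\cap A^i\cap Y\in\Mcal_n$ fails, as does your earlier claim that an $M_{n-1}$-index for $X\cap A^i$ is computable from $A$ and an index for $X$. (Note that the intermediate object $(\sigma^i, X\cap A^i)$ is still a legitimate $\Pb_{n-1}$-condition, since $\Pb_{n-1}$ imposes no membership of the reservoir in a Scott set; the problem is only with clause (c) of $\Qb_n$.) The repair is immediate and is exactly what \Cref{prop-hyp-forcexta-effect} provides: keep the reservoir $X\cap Y\in\Mcal_n$ and require only the finite increment $\tau$ to lie in $X\cap A^i$; this suffices for $d\in\Qb_n$, for $d\leq c$, and for the eventual generic to be contained in $A^i$, and validity on side $i$ is preserved for $X\cap Y$ by partition regularity. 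The paper's own write-up also displays the reservoir as $Y\cap X\cap A^i$, so the abuse is shared, but your explicit justification of (c) should be corrected rather than reproduced.
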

\begin{proof}
We ask if $(\sigma_i, X) \qvdash (\exists x)\Phi_{e}(G, x)$. From \Cref{prop-hyp-forcexta-effect} if the answer is yes there is a $\tau \subseteq X \cap A^i$ and a set $Y \in \Mcal_n$ such that $d = (\sigma_i \cup \tau, Y \cap X \cap A^i) \qvdash (\exists x) \Phi_{e}(G, x)$. If no then there is $Y \in \Mcal_n$ such that $d = (\sigma_i \cup \tau, Y \cap X \cap A^i) \qvdash (\forall x) \neg \Phi_{e}(G, x)$.

In any case from \Cref{prop-hyp-forcexta-effect} an index for $d$ can be found uniformly in $A \oplus P$ for any $P$ which is PA over $\halt^{(n+1)}$.
\end{proof}

The difficulty is now to make sure that the side $i$ which turns out to be the right one, is also always a valid one. To do so we need a ``degenerate forcing question''.

\begin{definition}
Let $n > 0$. Let $c \in \Qb_n$. Let $\Ucal$ be $\Sigma^0_1(\halt^{(n)})$ large open set. Let $(\exists x)\Phi_{e}(G, x)$ be a $\Sigma^0_{n+1}$ formula. We define 
$$
c \qvdash^\Ucal (\exists x)\Phi_{e}(G, x)
$$
to hold if for every $Z^0 \cup Z^1 = X$, there exists $i<2$ such that $Z^i \in \Ucal$ and such that there is some $\rho \subseteq Z^i$ and $x_n \in \omega$ for which $\sigma^i \cup \rho \nqvdash \neg \Phi_{e}(G, x)$ holds.
\end{definition}

\begin{lemma} \label{noideaeffploufplouf}
Let $n > 0$. Let $(\sigma_0, \sigma_1, X) \in \Qb_n$. Let $\Ucal \supseteq \langle \Ucal^{\Mcal_{n-1}}_{C_{n-1}} \rangle$ be $\Sigma^0_1(\halt^{(n)})$ large open set such that $X \cap A^{i-1} \notin \Ucal$. Let $(\exists x)\Phi_e(G,x)$ be a $\Sigma^0_{n+1}$ formula. The statement 
$$
c \qvdash^\Ucal (\exists x)\Phi_{e}(G, x)
$$
is $\Sigma^0_1(Y)$ for some $Y \in \Mcal_n$.  Moreover an $M_n$-index for $Y$ can be found uniformly in an index for $c$.
\end{lemma}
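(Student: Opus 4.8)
The plan is to adapt the complexity computation of \Cref{lem:pb2-forcing-question-complexity}: the relation $c \qvdash^\Ucal (\exists x)\Phi_e(G,x)$ differs from the ordinary $\Qb_n$-forcing question only by the extra conjunct ``$Z^i \in \Ucal$'', and since $\Ucal$ is a $\Sigma^0_1(\halt^{(n)})$ large \emph{open} set, hence closed under supersets, this conjunct is finitary (witnessed by a finite subset of $Z^i$) and does not raise the complexity.

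First I would record the complexity of the two ingredients. Since $(\exists x)\Phi_e(G,x)$ is $\Sigma^0_{n+1}$, the formula $\neg\Phi_e(G,x)$ is $\Sigma^0_n$, so by \Cref{prop-hyp-effectivalla}(2) the relation $\tau \qvdash \neg\Phi_e(G,x)$ is $\Pi^0_1(C_{n-1}\oplus\halt^{(n)})$ uniformly in $\tau,x,e$; hence $\tau \nqvdash \neg\Phi_e(G,x)$ is $\Sigma^0_1(C_{n-1}\oplus\halt^{(n)})$. And since $\Ucal$ is open and upward closed, ``$Z\in\Ucal$'' holds iff some finite $F\subseteq Z$ is enumerated into $\Ucal$, which is a $\Sigma^0_1(\halt^{(n)})$ event depending only on $F$.

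Then I would run a compactness argument as in the proof of \Cref{lem:pb2-forcing-question-complexity}. First note that quantifying over $2$-covers $Z^0\cup Z^1 = X$ may be replaced by quantifying over $2$-partitions (using that $\Ucal$ and the set of admissible $\rho$'s are preserved under enlarging a side). Then, pruning the tree of finite $2$-colorings of $X$ as soon as the inner disjunction is witnessed — the pruning being legitimate because both ``$Z^i\in\Ucal$'' and ``$\exists\rho\subseteq Z^i$'' only become easier as $Z^i$ grows — König's lemma gives that $c \qvdash^\Ucal (\exists x)\Phi_e(G,x)$ holds iff there is a finite $E\subseteq X$ such that for every $E_0\cup E_1 = E$ there are a side $i<2$, a finite $F\subseteq E_i$ with $F\in\Ucal$, a finite $\rho\subseteq E_i$, and some $x\in\omega$ with $\sigma^i\cup\rho \nqvdash \neg\Phi_e(G,x)$. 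In this last form the outer quantifier ``$\exists$ finite $E\subseteq X$'' is $\Sigma^0_1(X)$, the quantifiers over $E_0,E_1,i,F,\rho$ are bounded, and the matrix is $\Sigma^0_1(C_{n-1}\oplus\halt^{(n)})$ by the previous paragraph, so the relation is $\Sigma^0_1(X\oplus C_{n-1}\oplus\halt^{(n)})$.

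Finally, exactly as at the end of the proof of \Cref{lem:pb2-forcing-question-complexity}, one uses that $X\in\Mcal_n$ (by \Cref{def:pb2-forcing-conditions}(c)), that $\halt^{(n)}$ is uniformly coded in $\Mcal_n$ (by \Cref{prop-hyp-scott}(2)), and that $C_{n-1}$ is uniformly coded in $\Mcal_n$ (by \Cref{prop-hyp-cohesiveclassa}(3)), together with $\Mcal_n$ being a Scott set, to conclude that the relation is $\Sigma^0_1(Y)$ for some $Y\in\Mcal_n$ whose $M_n$-index is obtained uniformly from an index for $c$. I expect the only slightly delicate point to be the compactness step, namely checking that the pruned tree of finite $2$-colorings is closed under initial segments, which rests precisely on $\Ucal$ being upward closed; note that the hypotheses $\Ucal\supseteq\langle\Ucal^{\Mcal_{n-1}}_{C_{n-1}}\rangle$ and $X\cap A^{i-1}\notin\Ucal$ play no role in this complexity bound (they will matter only when producing extensions).
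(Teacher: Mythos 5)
Your proposal is correct and follows the same route as the paper: the paper's proof is exactly the compactness reduction to ``there is a finite $E \subseteq X$ such that every $2$-partition $E_0 \cup E_1 = E$ has a side $i$ with $[E_i] \subseteq \Ucal$ and some $\rho \subseteq E_i$, $x$ with $\sigma^i \cup \rho \nqvdash \neg\Phi_e(G,x)$'', followed by the appeal to \Cref{prop-hyp-effectivalla} to get $\Sigma^0_1(X \oplus C_{n-1} \oplus \halt^{(n)})$ and hence $\Sigma^0_1(Y)$ for $Y \in \Mcal_n$. The only difference is that you spell out the König's-lemma step and the role of upward-closedness of $\Ucal$, which the paper leaves implicit.
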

\begin{proof}
The relation holds if there is a finite set $E \subseteq X$ such that for every $E_0 \cup E_1 = E$, there is some $i < 2$, some $\rho \subseteq E_i$ and $x_n \in \omega$ such that $[E_i] \subseteq \Ucal$ and $\sigma^i \cup \rho \nqvdash \neg \Phi_{e_i}(G, x)$ holds.
By \Cref{prop-hyp-effectivalla}, this statement is $\Sigma^0_1(X \oplus C_{n-1} \oplus \halt^{(n)})$ and then $\Sigma^0_1(Y)$ for some $Y \in \Mcal_n$.
\end{proof}

\begin{lemma}  \label{noideaeffploufplouf2}
Let $n > 0$. Let $(\sigma_0, \sigma_1, X) \in \Qb_n$. Let $\Ucal \supseteq \langle \Ucal^{\Mcal_{n-1}}_{C_{n-1}} \rangle$ be $\Sigma^0_1(\halt^{(n)})$ large open set such that $X \cap A^{i-1} \notin \Ucal$. Let $(\exists x)\Phi_e(G,x)$ be a $\Sigma^0_{n+1}$ formula.
\begin{itemize}
\item[(a)] Suppose $(\sigma_0, \sigma_1, X) \qvdash^{\Ucal} (\exists x)\Phi_{e}(G, x)$. 

\noindent Then there exists $d \leq c$ such that $d^{[i]} \Vdash (\exists x)\Phi_{e}(G, x)$
\item[(b)] Suppose $(\sigma_0, \sigma_1, X) \nqvdash^{\Ucal} (\exists x)\Phi_{e}(G, x)$. 

\noindent Then there exists $d \leq c$ such that $d^{[i]} \Vdash (\forall x) \neg \Phi_{e}(G, x)$
\end{itemize}
Furthermore an index for $d$ can be found in $A \oplus P$ for any set $P$ which is PA over $\halt^{(n+1)}$, uniformly in an index for $c$.
\end{lemma}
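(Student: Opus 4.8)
The plan is to reduce both items to the extension machinery already available for $\Pb_n$-conditions, namely \Cref{prop-hyp-forcexta} and its effective refinement \Cref{prop-hyp-forcexta-effect}, applied ``one level down'' to the $\Sigma^0_n$ formula $\neg\Phi_e(G,x)$; the only genuinely new work is a bookkeeping argument ensuring that the side along which we act is the prescribed side $i$. A key preliminary remark is that the hypotheses on $\Ucal$ already force $c$ to be valid for side $i$: since $\langle\Ucal^{\Mcal_{n-1}}_{C_{n-1}}\rangle\subseteq\Ucal$ and $X\cap A^{i-1}\notin\Ucal$, we have $X\cap A^{i-1}\notin\langle\Ucal^{\Mcal_{n-1}}_{C_{n-1}}\rangle$; as $c\in\Qb_n$ yields $X\in\langle\Ucal^{\Mcal_{n-1}}_{C_{n-1}}\rangle$ and this class is partition regular (\Cref{lem:minimal-is-partition-regular}), the decomposition $X=(X\cap A^i)\cup(X\cap A^{i-1})$ forces $X\cap A^i\in\langle\Ucal^{\Mcal_{n-1}}_{C_{n-1}}\rangle$. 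I will use this, together with upward-closure and partition regularity of $\langle\Ucal^{\Mcal_{n-1}}_{C_{n-1}}\rangle$, both to pin down the side and to check that the reservoirs produced are legal $\Qb_n$-reservoirs.

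For (a): apply the hypothesis $c\qvdash^{\Ucal}(\exists x)\Phi_e(G,x)$ to the particular $2$-cover $Z^i:=X\cap A^i$, $Z^{i-1}:=X\cap A^{i-1}$ of $X$. Since $X\cap A^{i-1}\notin\Ucal$, the side produced by the degenerate forcing question cannot be $i-1$, so it is $i$: we obtain a finite $\rho\subseteq X\cap A^i$ and an $x\in\omega$ with $\sigma^i\cup\rho\nqvdash\neg\Phi_e(G,x)$. Now $\neg\Phi_e(G,x)$ is a $\Sigma^0_n$ formula uniformly in $x,e$, i.e. a $\Sigma^0_{(n-1)+1}$ formula with $n-1<n$. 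Form the $\Qb_n$-condition $c':=(\sigma^i\cup\rho,\sigma^{i-1},X')$ with $X':=X\setminus\{0,\dots,|\sigma^i\cup\rho|\}$; this is legal because $\sigma^i\cup\rho\subseteq A^i$, $X'\in\Mcal_n$, and $X'\in\langle\Ucal^{\Mcal_{n-1}}_{C_{n-1}}\rangle$ by partition regularity and infinitude, and $(c')^{[i]}=(\sigma^i\cup\rho,X')\nqvdash\neg\Phi_e(G,x)$. Applying \Cref{prop-hyp-forcexta-effect}(2) to $c'$, to side $i$, and to the $\Sigma^0_n$ formula $\neg\Phi_e(G,x)$ gives a set $Y\in\Mcal_{n-1}\subseteq\Mcal_n$ with $(\sigma^i\cup\rho,X'\cap Y)\Vdash\Phi_e(G,x)$, so $d:=(\sigma^i\cup\rho,\sigma^{i-1},X'\cap Y)\leq c$ satisfies $d^{[i]}\Vdash(\exists x)\Phi_e(G^i,x)$. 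Finding $\rho$ and $x$ is a $\Sigma^0_1(A\oplus C_{n-1}\oplus\halt^{(n)})$ search by \Cref{prop-hyp-effectivalla}, hence doable in $A\oplus\halt^{(n)}$, and the $M_n$-index of $Y$ is produced uniformly in any $P$ which is PA over $\halt^{(n+1)}$ by \Cref{prop-hyp-forcexta-effect}; thus an index of $d$ is obtained in $A\oplus P$, uniformly in an index of $c$.

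For (b): unfolding $c\nqvdash^{\Ucal}(\exists x)\Phi_e(G,x)$ together with compactness yields a $2$-cover $W^0\cup W^1=X$ such that, for each side $j$, either $W^j\notin\Ucal$ or $\sigma^j\cup\rho\qvdash\neg\Phi_e(G,x)$ for every finite $\rho\subseteq W^j$ and every $x$ (the latter being exactly the statement that $W^j$ lies outside the open set $\{Y:(\exists\rho\subseteq Y\setminus\{0,\dots,|\sigma^j|\})(\exists x)\ \sigma^j\cup\rho\nqvdash\neg\Phi_e(G,x)\}$). By \Cref{prop-hyp-effectivalla} the collection of such ``bad'' covers is a non-empty $\Pi^0_1$ class relative to a set coded in $\Mcal_n$, so, as in the proof of \Cref{lem:pb2-forcing-question-spec}(b), one extracts such $W^0\oplus W^1\in\Mcal_n$; moreover one arranges that the extracted cover is not one for which the side lying in $\langle\Ucal^{\Mcal_{n-1}}_{C_{n-1}}\rangle$ is $i-1$, using $X\cap A^{i-1}\notin\langle\Ucal^{\Mcal_{n-1}}_{C_{n-1}}\rangle$ and $X\cap A^i\in\langle\Ucal^{\Mcal_{n-1}}_{C_{n-1}}\rangle$ from the preliminary remark, so that partition regularity forces $W^i\in\langle\Ucal^{\Mcal_{n-1}}_{C_{n-1}}\rangle\subseteq\Ucal$. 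Then the first alternative fails on side $i$, so $\sigma^i\cup\rho\qvdash\neg\Phi_e(G,x)$ for every finite $\rho\subseteq W^i$ and every $x$; since also $W^i\in\Mcal_n$, the condition $d:=(\sigma^0,\sigma^1,W^i)\leq c$ lies in $\Qb_n$ and $d^{[i]}\Vdash(\forall x)\neg\Phi_e(G^i,x)$. The only non-uniform step—selecting, inside the partition-regular class, the part that remains in $\langle\Ucal^{\Mcal_{n-1}}_{C_{n-1}}\rangle$—is resolved, exactly as in \Cref{prop-hyp-forcexta-effect} and \Cref{lem:pb2-forcing-question-spec}, by any $P$ which is PA over $\halt^{(n+1)}$, so an index for $d$ is again found in $A\oplus P$, uniformly in an index for $c$.

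The hard part is item (b): one must guarantee that the side along which the reservoir of $d$ is cut down to a member of $\langle\Ucal^{\Mcal_{n-1}}_{C_{n-1}}\rangle$ is $i$ and not $i-1$—this is needed so that \Cref{lem:forcing-implytruth} can later be invoked on the valid side—while simultaneously keeping that reservoir inside $\Mcal_n$. Both hypotheses on $\Ucal$ enter here: $\langle\Ucal^{\Mcal_{n-1}}_{C_{n-1}}\rangle\subseteq\Ucal$ turns membership of $W^i$ in $\langle\Ucal^{\Mcal_{n-1}}_{C_{n-1}}\rangle$ into membership in $\Ucal$, which activates the relevant clause of ``badness'' on side $i$, while $X\cap A^{i-1}\notin\Ucal$ is what makes side $i-1$ ineligible to be the large side of any bad cover we retain. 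The delicate verification, going beyond \Cref{lem:pb2-forcing-question-spec}, is precisely that the restricted family of bad covers—those for which side $i-1$ is forced out of $\langle\Ucal^{\Mcal_{n-1}}_{C_{n-1}}\rangle$—remains non-empty under the hypothesis $c\nqvdash^{\Ucal}(\exists x)\Phi_e(G,x)$.
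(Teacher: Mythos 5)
Part (a) of your argument is correct and is essentially the paper's proof: you specialize the degenerate forcing question to the cover $X\cap A^0,\ X\cap A^1$, use $X\cap A^{i-1}\notin\Ucal$ to push the witness onto side $i$, and then invoke \Cref{prop-hyp-forcexta-effect} for the $\Sigma^0_n$ formula $\neg\Phi_e(G,x)$; the effectiveness bookkeeping also matches the paper's.

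Part (b) has a genuine gap, and you have in fact located it yourself without closing it. After extracting a bad cover $W^0\oplus W^1\in\Mcal_n$ from the non-empty $\Pi^0_1(\Mcal_n)$ class witnessing $c\nqvdash^{\Ucal}(\exists x)\Phi_e(G,x)$, you write that one ``arranges that the extracted cover is not one for which the side lying in $\langle\Ucal^{\Mcal_{n-1}}_{C_{n-1}}\rangle$ is $i-1$.'' Nothing in your argument supports this: a bad cover is an arbitrary $2$-cover of $X$ with no relation to the partition $A^0, A^1$, so the facts $X\cap A^{i-1}\notin\langle\Ucal^{\Mcal_{n-1}}_{C_{n-1}}\rangle$ and $X\cap A^i\in\langle\Ucal^{\Mcal_{n-1}}_{C_{n-1}}\rangle$ say nothing about which of $W^0,W^1$ lands in the partition regular class; partition regularity only guarantees that \emph{some} side does, and the badness clause it activates is tied to $\sigma^j$ for that side $j$, not to $\sigma^i$. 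Your closing paragraph explicitly concedes that the non-emptiness of the restricted family of bad covers ``remains'' to be verified under $c\nqvdash^{\Ucal}(\exists x)\Phi_e(G,x)$ --- but that verification \emph{is} the step, and it is absent. For comparison, the paper's own proof of (b) does not perform any such restriction: it takes an arbitrary $Z^0\oplus Z^1\in\Dcal\cap\Mcal_n$, lets $j$ be a side with $Z^j\in\langle\Ucal^{\Mcal_{n-1}}_{C_{n-1}}\rangle$ (hence $Z^j\in\Ucal$, which rules out the first disjunct of badness and yields $\sigma^j\cup\rho\qvdash\neg\Phi_e(G,x)$ for all $\rho\subseteq Z^j$ and all $x$), and sets $d=(\sigma^0,\sigma^1,Z^j)$, concluding the forcing on that side $j$. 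If, as your reading of the statement and its use in the asymmetric cases of \Cref{thm:rt12-preservation-non-sigma2} and \Cref{thm:rt12-delta3-PA-double-jump-solution} require, the conclusion must land on the prescribed side $i$, then the argument matching $j$ with $i$ must actually be supplied rather than asserted; as written, your part (b) only establishes the conclusion for some side of the extracted cover.
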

\begin{proof}
Say $c = (\sigma^0, \sigma^1, X)$. 

(a) Let $Z^0 = X \cap A^0$ and $Z^1 = X \cap A^1$. Unfolding the definition of the forcing question, there is some $j < 2$ such that $Z_j \in \Ucal$ and such that for some $\rho \subseteq Z^j$ and $x \in \omega$ we have $\sigma_j \cup \rho \nqvdash \neg \Phi_{e_i}(G^i, x)$. By hypothesis $Z^{i-1} \notin \Ucal$. Thus $i=j$ and by \Cref{prop-hyp-forcexta-effect} we have a set $Y \in \Mcal_{n}$ such that $(\sigma_i \cup \rho, X \cap Y) \leq (\sigma_i \cup \rho, X)$ and $(\sigma_i \cup \rho, X \cap Y) \Vdash (\exists x)\Phi_{e_i}(G^i, x)$. Note that $(\sigma_i \cup \rho, \sigma_{i-1}, X \cap Y)$ is a valid extension of $(\sigma_0, \sigma_1, X)$. From \Cref{prop-hyp-effectivalla} finding $\rho$ is a $\Sigma^0_1(A \oplus X \oplus C_{n-1} \oplus \halt^{(n)})$ event. From \Cref{prop-hyp-effectivalla} finding $\rho$ is a $\Sigma^0_1(A \oplus X \oplus C_{n-1} \oplus \halt^{(n)})$ event. From \Cref{prop-hyp-forcexta-effect} one can then find and $M_n$-index of $Y$ in any set $P$ which is PA over $\halt^{(n+1)}$. Overall an index for $d$ can be found in $A \oplus P$ for any set $P$ which is PA over $\halt^{(n+1)}$, uniformly in an index of $c$, $e_0$ and $e_1$.

(b) Let $\Dcal$ be the $\Pi^0_1(\Mcal_n)$ class of all $Z^0 \oplus Z^1$ with $Z^0 \cup Z^1 = X$, such that for every $i < 2$, $Z^i \notin \Ucal$ or for every $\rho \subseteq Z^i$, and every $x \in \omega$ we have $\sigma \cup \rho \qvdash \neg \Phi_{e}(G, x)$. Let $Z^0 \oplus Z^1 \in \Dcal$ be such that $Z^0 \oplus Z^1 \in \Mcal_n$. Since $\langle \Ucal^{\Mcal_{n-1}}_{C_{n-1}} \rangle$ is a partition regular class containing $X$, there is some $i < 2$ such that $Z^i \in \langle \Ucal^{\Mcal_{n-1}}_{C_{n-1}} \rangle$. Since $\Ucal \supseteq \langle \Ucal^{\Mcal_{n-1}}_{C_{n-1}} \rangle$ we must have $Z^i \in \Ucal$ and thus $d = (\sigma^0, \sigma^1, Z^i)$ is a $\Qb_n$ extension of $(\sigma_0, \sigma_1, X)$ such that $d^{[i]} \Vdash (\forall x)\neg \Phi_{e_i}(G, x)$. Finding the right $Z_i$ is a $\Pi^0_1(C_{n-1} \oplus (X \oplus Z_i \oplus M_{n-1})')$ event. It can the be found uniformly in any set $P$ which is PA over $\halt^{(n+1)}$. This completes the proof of the lemma.
\end{proof}

We are now ready to derive our main theorems

\subsection{Preservation of non-$\Sigma^0_n$ definitions}

Our first application shows the existence, for every instance of the pigeonhole principle, of a solution which does not collapse the definition of a non-$\Sigma^0_n$ set into a $\Sigma^0_n$ one. This corresponds to preservation of one non-$\Sigma^0_n$ definition, following the terminology of Wang who showed that given $A$ non $\Sigma^0_n$, any non-empty $\Pi^0_1$ class contains an element $X$ such that $A$ is not $\Sigma^0_n(X)$ ~\cite{Wang2014Definability}.

\begin{theorem}\label{thm:rt12-preservation-non-sigma2}
Fix $n \geq 0$ and let $B$ be a non-$\Sigma^0_{n+1}$ set. For every set $A$, there is an infinite set $G \subseteq A$ or $G \subseteq \overline{A}$ such that $B$ is not $\Sigma^{0}_{n+1}(G)$.
\end{theorem}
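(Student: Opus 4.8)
The plan is to run the Dzhafarov--Jockusch pairing argument through the $\Qb_n$ forcing: we build two generics $G^0 \subseteq A^0$ and $G^1 \subseteq A^1$ along a $\preceq$-decreasing sequence of $\Qb_n$-conditions, and arrange that one \emph{valid} side $i$ yields an infinite $G^i$ with $B \notin \Sigma^0_{n+1}(G^i)$. We take the Scott-set tower $\Mcal_0, \dots, \Mcal_n$ and the cohesive classes $C_0, \dots, C_n$ as in \Cref{prop-hyp-scott} and \Cref{prop-hyp-cohesiveclassa}, but built with the extra invariant that $B$ is not $\Sigma^0_1(Z)$ for any $Z \in \Mcal_n$; since $B$ is not $\Sigma^0_1(\halt^{(n)})$ and $\halt^{(n)} \in \Mcal_n$, this invariant survives the (relativized) construction of the tower, because descending into a nonempty $\Pi^0_1(\halt^{(n)})$ class by a Wang-style low basis argument preserves non-$\Sigma^0_1$-definability of $B$. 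Fix an enumeration of all $\Sigma^0_{n+1}$ formulas $\psi_e(G, x) = (\exists y)\Phi_e(G, x, y)$ ($\Phi_e$ being $\Pi^0_n$) with distinguished free variable $x$, so that $W_e(G) := \{x : \psi_e(G, x)\}$, $e \in \omega$, enumerate the $\Sigma^0_{n+1}(G)$ sets. It then suffices to meet, for every pair $\langle e_0, e_1\rangle$, the requirement $\Rcal_{\langle e_0, e_1\rangle}$: $W_{e_0}(G^0) \neq B$ or $W_{e_1}(G^1) \neq B$, together with the requirements that on the valid side $i$ the stem $\sigma^i$ grows unboundedly and every $\Sigma^0_{m+1}$ formula with $m < n$ is decided. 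The latter two families are dispatched by the valid-side lemma (the one preceding \Cref{noideaeffploufplouf}) and the degenerate forcing question; we concentrate on $\Rcal_{\langle e_0,e_1\rangle}$.

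Given $c = (\sigma^0, \sigma^1, X) \in \Qb_n$, set
$$
U_c \; := \; \{\, x \;:\; c \qvdash \psi_{e_0}(G^0, x) \,\vee\, \psi_{e_1}(G^1, x) \,\}.
$$
By \Cref{lem:pb2-forcing-question-complexity} $U_c$ is $\Sigma^0_1(Y)$ for some $Y \in \Mcal_n$, so by the choice of the tower $U_c \neq B$; fix $x$ in the symmetric difference. If $x \in U_c \setminus B$, then $c \qvdash \psi_{e_0}(G^0, x) \vee \psi_{e_1}(G^1, x)$, and \Cref{lem:pb2-forcing-question-spec}(a) provides $d \leq c$ and $i < 2$ with $d^{[i]} \Vdash (\exists y)\Phi_{e_i}(G^i, x, y)$; since $x \notin B$ this forces $x \in W_{e_i}(G^i) \setminus B$, hence $W_{e_i}(G^i) \neq B$. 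If $x \in B \setminus U_c$, then $c \nqvdash \psi_{e_0}(G^0, x) \vee \psi_{e_1}(G^1, x)$, and \Cref{lem:pb2-forcing-question-spec}(b) provides $d \leq c$ and $i < 2$ with $d^{[i]} \Vdash (\forall y)\neg\Phi_{e_i}(G^i, x, y)$; since $x \in B$ this forces $x \in B \setminus W_{e_i}(G^i)$, hence again $W_{e_i}(G^i) \neq B$. So $\Rcal_{\langle e_0,e_1\rangle}$ can be forced on a single side $i$, and once the filter along the relevant side is $(n-1)$-generic, \Cref{lem:forcing-implytruth} turns the forced inequality into a true one.

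The delicate point is that the side $i$ above is not known in advance and the extension may stop being valid for the other side. We keep the invariant that the current condition is valid for at least one side, and say the construction \emph{commits} to side $i$ the first time one of its steps yields a condition no longer valid for $1-i$. Until commitment both sides remain valid and both stems keep growing; after committing to $i$ we run a one-sided construction on side $i$ — meeting every remaining requirement and, re-read as a one-sided requirement $W_e(G^i) \neq B$ (with $e = e_i$), every earlier $\Rcal_{\langle e_0, e_1\rangle}$ — using the degenerate forcing question of \Cref{noideaeffploufplouf} and \Cref{noideaeffploufplouf2} together with the valid-side lemma, all while preserving validity of side $i$. If the construction never commits, both $G^0, G^1$ are infinite and, by the valid-side lemma applied on each side, both filters are $(n-1)$-generic; then were $W_{e_0}(G^0) = B = W_{e_1}(G^1)$ for some $e_0, e_1$, the step handling $\Rcal_{\langle e_0, e_1\rangle}$ forced $W_{e_j}(G^j) \neq B$ for some $j$, and \Cref{lem:forcing-implytruth} would give $W_{e_j}(G^j) \neq B$, contradicting $W_{e_j}(G^j) = B$. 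In every case we obtain a side $i^\ast$ for which $G^{i^\ast}$ is an infinite subset of $A^{i^\ast}$, the filter is $(n-1)$-generic, and $W_e(G^{i^\ast}) \neq B$ for all $e$, i.e.\ $B$ is not $\Sigma^0_{n+1}(G^{i^\ast})$; take $G = G^{i^\ast}$.

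The main obstacle is exactly this last bookkeeping: weaving the disjunctive diagonalization of \Cref{lem:pb2-forcing-question-spec}, the commitment mechanism, the lower-level ($\Sigma^0_{m+1}$, $m<n$) genericity, and the preservation of validity into one $\preceq$-decreasing sequence of $\Qb_n$-conditions, which is precisely what the degenerate forcing question is built for — and keeping the forcing question $U_c$ at the low complexity $\Sigma^0_1(\Mcal_n)$ (so that $U_c \neq B$) rather than one jump higher, which is why the $\Qb_n$ layer sits above $\Pb_n$.
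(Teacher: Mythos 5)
Your proposal is correct and follows essentially the same route as the paper: the $\Qb_n$ forcing with the disjunctive forcing question and \Cref{lem:pb2-forcing-question-spec} for the two-sided diagonalization, the degenerate question of \Cref{noideaeffploufplouf} and \Cref{noideaeffploufplouf2} once a side becomes invalid, the valid-side lemma for deciding the lower-level $\Sigma^0_{m+1}$ formulas, and the reduction via Wang's theorem to $B$ not being $\Sigma^0_1(\Mcal_n)$. The only difference is organizational: the paper resolves the validity dichotomy up front (either some $\Qb_n$-condition is invalid for a side, in which case the whole construction runs below it in the asymmetric mode, or every condition is valid for both sides and the pure pairing argument applies), whereas you commit dynamically mid-construction — which is legitimate, since invalidity of a side is inherited by all extensions, so your post-commitment phase coincides with the paper's asymmetric case.
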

\begin{proof}
We let $A^0 = \overline{A}$ and $A^1 = A$. We work with the $\Qb_n$ forcing. By Wang~\cite[Theorem 3.6.]{Wang2014Definability}, we can also assume that $B$ is not $\Sigma^0_1(\Mcal_n)$. We also suppose $n>0$, the case $n=0$ was proved by Dzhafarov and Jockusch \cite{Dzhafarov2009Ramseys}.

\textbf{The asymmetric case:}
Suppose first there exists a $\Qb_n$-condition $b = (\sigma_0, \sigma_1, X)$ which is invalid for some side $i-1 < 2$. Let $\Ucal \subseteq \langle \Ucal^{\Mcal_{n-1}}_{C_{n-1}} \rangle$ be a $\Sigma^0_1(\halt^{(n)})$ largeness class such that $X \cap A^i \notin \Ucal$. Note that every condition $c \leq b$ must be valid for side $i$ as otherwise we would have $Y \cap A^0$ and $Y \cap A^1$ both not in $\langle \Ucal^{\Mcal_{n-1}}_{C_{n-1}} \rangle$ for some $Y \in \langle \Ucal^{\Mcal_{n-1}}_{C_{n-1}} \rangle$ which would contradict that $\langle \Ucal^{\Mcal_{n-1}}_{C_{n-1}} \rangle$ is partition regular.

We then work below $b = (\sigma_0, \sigma_1, X)$. Given $c \leq b$ and a $\Sigma^0_{n+1}$ statement $(\exists x) \Phi_e(G, x, y)$ with one free variable $y$, the set $\{y\ :\ c \qvdash^\Ucal (\exists x) \Phi_e(G, x, y)\}$ is $\Sigma^0_1(M_n)$ from \Cref{noideaeffploufplouf}. As $B$ is not $\Sigma^0_1(M_n)$ there exists $y \in B$ such that $d \nqvdash^\Ucal (\exists x) \Phi_e(G, x, y)$ or there exists $y \notin B$ such that $c \qvdash^\Ucal (\exists x) \Phi_e(G, x, y)$. In the first case using \Cref{noideaeffploufplouf2} we find an extension $d \leq c$ such that $d^{[i]} \Vdash (\forall x) \neg \Phi_e(G, x, y)$ and in the second case an extension $d \leq c$ such that $d^{[i]} \Vdash (\exists x) \Phi_e(G, x, y)$.

Now given $c \leq b$ and a $\Sigma^0_{m+1}$ statement $(\exists x) \Phi_e(G, x)$ for $m < n$ we ask if $c^{[i]} \qvdash (\exists x) \Phi_e(G, x)$. Using \Cref{prop-hyp-forcexta-effect} if the answer is positive we find an extension $d \leq c$ such that $d^{[i]} \Vdash (\exists x) \Phi_e(G, x)$ and otherwise we find an extension $d \leq c$ such that $d^{[i]} \Vdash (\forall x) \neg \Phi_e(G, x)$. 

In the end we build a $n-1$-generic filter $\Fcal \subseteq \Pb_{n-1}$ such that $G_\Fcal \subseteq A^i$ with in addition that some $p$ forces $B \neq \{y\ :\ (\exists x) \Phi_e(G, x, y)\}$ for every $\Sigma^0_{n+1}$ statement $(\exists x) \Phi_e(G, x, y)$. By \Cref{lem:forcing-implytruth} we then have that $B \neq \{y\ :\ (\exists x) \Phi_e(G_\Fcal, x, y)\}$ for every $\Sigma^0_{n+1}$ statement $(\exists x) \Phi_e(G, x, y)$. Thus $B$ is not $\Sigma^{0}_{n+1}(G_\Fcal)$.\\

\textbf{The symmetric case:}
Suppose now that every  $\Qb_n$-condition $c = (\sigma_0, \sigma_1, X)$ is valid for both sides. Given a condition $c$ and two $\Sigma^0_{n+1}$ statement $(\exists x) \Phi_{e_0}(G, x, y), (\exists x) \Phi_{e_1}(G, x, y)$ with one free variable $y$, the set $\{y\ :\ c \qvdash (\exists x) \Phi_{e_0}(G, x, y) \vee (\exists x) \Phi_{e_1}(G, x, y)\}$ is $\Sigma^0_1(M_n)$ from \Cref{lem:pb2-forcing-question-complexity}. As $B$ is not $\Sigma^0_1(M_n)$ there exists $y \in B$ such that $c \nqvdash (\exists x) \Phi_{e_0}(G, x, y) \vee (\exists x) \Phi_{e_1}(G, x, y)$ or there exists $y \notin B$ such that $c \qvdash (\exists x) \Phi_{e_0}(G, x, y) \vee (\exists x) \Phi_{e_1}(G, x, y)$. In the first case using \Cref{lem:pb2-forcing-question-spec} we find an extension $d \leq c$ such that $d^{[i]} \Vdash (\forall x) \neg \Phi_{e_i}(G, x, y)$ for some $i < 2$ and in the second case an extension $d \leq c$ such that $d^{[i]} \Vdash (\exists x) \Phi_{e_i}(G, x, y)$ for some $i < 2$.

Now given $c$ and a $\Sigma^0_{m+1}$ statement $(\exists x) \Phi_e(G, x)$ for $m < n$ we find using \Cref{prop-hyp-forcexta-effect} an extension $d \leq c$ such that $d^{[i]} \Vdash (\forall x) \Phi_e(G, x)$ or $d^{[i]} \Vdash (\forall x) \neg \Phi_e(G, x)$ for both $i=0$ and $i=1$.

In the end we have one filter $\Fcal \subseteq \Qb_n$ giving two filters $\Fcal_0, \Fcal_1 \subseteq \Pb_{n-1}$ corresponding to side $0$ and $1$, which are both $n-1$-generic and such that $G_{\Fcal_0} \subseteq A^0$ and  $G_{\Fcal_1} \subseteq A^1$. Also for every $\Sigma^0_{n+1}$ formulas $(\exists x) \Phi_{e_0}(G, x, y), (\exists x) \Phi_{e_1}(G, x, y)$ we have $d \in \Fcal$ such that $d^{[0]}$ forces $B \neq \{y\ :\ (\exists x) \Phi_{e_0}(G, x, y)\}$ or $d^{[1]}$ forces $B \neq \{y\ :\ (\exists x) \Phi_{e_1}(G, x, y)\}$. By a usual pairing argument, there must be $i<2$ such that for every $\Sigma^0_{n+1}$ formula $(\exists x) \Phi_{e}(G, x, y)$ we have $d \in \Fcal$ such that $d^{[i]}$ forces $B \neq \{y\ :\ (\exists x) \Phi_{e}(G, x, y)\}$. By \Cref{lem:forcing-implytruth} we then have that $B \neq \{y\ :\ (\exists x) \Phi_{e}(G_{\Fcal_i}, x, y)\}$ for every such formula and then that $B$ is not $\Sigma^{0}_{n+1}(G_\Fcal)$.
\end{proof}

The following corollary would correspond to strong iterated jump cone avoidance of $\rt^1_2$,
following the terminology of Wang~\cite{Wang2014Some}.

\begin{theoremnonumber}[Main Theorem 1 (Theorem \ref{maintheorem1})]
Fix $n \geq 0$. Let $B$ be non $\emptyset^{(n)}$-computable. Every set $A$ has an infinite subset $H \subseteq A$ or $H \subseteq \overline{A}$ such that $B$ is not $H^{(n)}$-computable.
\end{theoremnonumber}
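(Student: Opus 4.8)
The plan is to obtain this statement as an immediate corollary of \Cref{thm:rt12-preservation-non-sigma2}, using the classical trick of replacing a set by the join with its complement in order to turn a $\Delta$-hypothesis into a $\Sigma$-hypothesis. Given $B$ non $\emptyset^{(n)}$-computable, I set $\widetilde{B} = B \oplus \overline{B}$.

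First I would record the routine observation, which is nothing more than the relativized form of Post's theorem: for every set $X$, the set $\widetilde{B}$ is $\Sigma^0_{n+1}(X)$ if and only if $B$ is $\Delta^0_{n+1}(X)$, equivalently if and only if $B \leq_T X^{(n)}$. Indeed $B$ and $\overline{B}$ are computable preimages of $\widetilde{B}$, and conversely $\widetilde{B}$ is a computable combination of $B$ and $\overline{B}$, so $\widetilde{B}$ is $\Sigma^0_{n+1}(X)$ iff both $B$ and $\overline{B}$ are, i.e.\ iff $B$ is $\Delta^0_{n+1}(X)$.

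Applying this observation with $X = \emptyset$ and using the hypothesis that $B$ is not $\emptyset^{(n)}$-computable shows that $\widetilde{B}$ is not $\Sigma^0_{n+1}$. I would then invoke \Cref{thm:rt12-preservation-non-sigma2} with the non-$\Sigma^0_{n+1}$ set $\widetilde{B}$ and the set $A$: it yields an infinite set $H \subseteq A$ or $H \subseteq \overline{A}$ such that $\widetilde{B}$ is not $\Sigma^0_{n+1}(H)$. Finally, applying the observation a second time with $X = H$, I conclude that $B$ is not $\Delta^0_{n+1}(H)$, hence not $H^{(n)}$-computable, which is exactly the desired conclusion.

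In this argument there is no real obstacle left: the whole difficulty --- the iterated jump control forcing and the disjunctive pairing argument --- has already been absorbed into \Cref{thm:rt12-preservation-non-sigma2}. The only points requiring a little care are the degenerate case $n = 0$, where the statement reduces to the strong cone avoidance of $\rt^1_2$ for Turing reducibility of Dzhafarov and Jockusch and the coding trick still applies verbatim (with the convention $\emptyset^{(0)} = \emptyset$), and the bookkeeping identity that $H^{(n)}$-computability of a set coincides with its $\Delta^0_{n+1}(H)$-definability.
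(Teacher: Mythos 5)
Your proof is correct and takes essentially the same route as the paper: both derive the theorem as an immediate corollary of \Cref{thm:rt12-preservation-non-sigma2} via the observation that $H^{(n)}$-computability of $B$ is equivalent to both $B$ and $\overline{B}$ being $\Sigma^0_{n+1}(H)$. The only (cosmetic) difference is that the paper splits into cases, applying the theorem to whichever of $B$, $\overline{B}$ fails to be $\Sigma^0_{n+1}$, whereas you package the two into the single set $B \oplus \overline{B}$ and apply the theorem once.
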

\begin{proof}
Given a set $B$ which is not $\emptyset^{(n)}$-computable, either $B$ or $\overline{B}$ is not $\Sigma^0_{n+1}$. By Theorem~\ref{thm:rt12-preservation-non-sigma2}, for every set $A$, there is an infinite set $H \subseteq A$ or $H \subseteq \overline{A}$ such that either $B$ or $\overline{B}$ is not $\Sigma^{0}_{n+1}(H)$, hence such that $B$ is not $H^{(n)}$-computable.
\end{proof}

\subsection{Preservation of $\Delta^0_n$ hyperimmunities}

Our second application concerns the ability to prevent solutions from computing fast-growing functions. Recall the definition of hyperimmunity.

\begin{definition}
A function $f$ \emph{dominates} a function $g$ if $f(x) \geq g(x)$
for every $x$. A function $f$ is \emph{$X$-hyperimmune} if it is not dominated by any $X$-computable function.
\end{definition}

The following lemma is proven by Downey et al.~\cite[Lemma 3.3]{Downey2019Relationships}.

\begin{lemma}[\cite{Downey2019Relationships}]\label{lem:k-non-ce-to-k-hyperimmune-preservation-1}
For every $k \le \omega$ and every $Z$, for any nondecreasing functions $(f_i)_{i < k}$ which are $Z$-hyperimmune, there is a $G$ and sets $(A_i)_{i < k}$ such that none of the $A_i$ is $\Sigma^0_1(Z\oplus G)$, but for any $i$ and any function $h$ dominating $f_i$, $A_i$ is $\Sigma^0_1(Z\oplus G \oplus h)$.
\end{lemma}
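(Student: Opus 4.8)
Relativizing everything to $Z$ is routine, so I would take $Z=\emptyset$ for readability and write $f_i$ for the given nondecreasing hyperimmune functions. The plan is to build $G$ by a finite-condition forcing so that $G$ uniformly computes, for each $i<k$, a $G$-computable sequence of finite approximations $A_i[0],A_i[1],\dots$, and to set $A_i:=\lim_s A_i[s]$; throughout I would preserve the \emph{modulus condition} that $A_i[s](x)$ has settled (equals $A_i(x)$) once $s\ge f_i(x)$. Concretely, for each $i$ and $x$ reserve a ``row'' $R_{i,x}\subseteq\omega$ coded inside $G$, require $R_{i,x}$ to be empty or a singleton $\{p\}$ with $p\le f_i(x)$, and declare $A_i[s](x)=1$ iff $R_{i,x}\cap\{0,\dots,s\}=\emptyset$ (so $x\in A_i$ iff $R_{i,x}=\emptyset$), with distinct diagonalization requirements assigned pairwise disjoint batches of rows. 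The easy half is then immediate: if $h$ dominates $f_i$ then $A_i(x)=A_i[h(x)](x)$ since the approximation has settled by stage $f_i(x)\le h(x)$ and $s\mapsto A_i[s]$ is $G$-computable, so $A_i$ is $\Delta^0_1(G\oplus h)$, in particular $\Sigma^0_1(Z\oplus G\oplus h)$, and the index depends only on $i$ and the coding, hence is uniform in $h$.

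For the hard half I must meet, for all $i<k$ and all $e$, the requirement $N_{i,e}\colon A_i\ne W_e^{Z\oplus G}$, which forces $A_i\notin\Sigma^0_1(Z\oplus G)$. I would handle $N_{i,e}$ with a $\emptyset$-computable increasing batch of fresh witnesses $x_0<x_1<\cdots$, watching the enumeration of $W_e^{Z\oplus G}$ as $G$ is built. If some $x_j$ is never enumerated into $W_e^{Z\oplus G}$, then leaving $R_{i,x_j}$ empty already diagonalizes: $x_j\in A_i\setminus W_e^{Z\oplus G}$. Otherwise, at the stage $x_j$ enters $W_e^{Z\oplus G}$, say with use $u_j$, and provided $f_i(x_j)>u_j$, I set $R_{i,x_j}:=\{u_j+1\}$: the computation witnessing $x_j\in W_e^{Z\oplus G}$ reads only the first $u_j$ bits of $G$ and so is undisturbed, the modulus condition survives since $u_j+1\le f_i(x_j)$, and now $x_j\notin A_i$, so again $A_i(x_j)\ne W_e^{Z\oplus G}(x_j)$.

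The strategy can only fail in the ``bad'' case where every $x_j$ eventually enters $W_e^{Z\oplus G}$ but with $u_j\ge f_i(x_j)$ for all $j$ --- and this is exactly where hyperimmunity is used. In that case $j\mapsto u_j$ is a total $\emptyset$-computable function (the construction being effective with stable use-regions; see below), so, using that the $x_j$ are $\emptyset$-computably increasing and $f_i$ nondecreasing, $g(x):=u_j$ for the least $j$ with $x_j>x$ is a $\emptyset$-computable function with $g(x)\ge f_i(x_j)\ge f_i(x)$, contradicting $\emptyset$-hyperimmunity of $f_i$. Hence the bad case never occurs and every $N_{i,e}$ is satisfied. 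Meeting all the requirements for $i<k\le\omega$ and $e\in\omega$ is then a routine dovetailing, and reinstating $Z$ throughout yields the stated relativized form.

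The delicate point --- and the main obstacle --- is arranging the construction so that $j\mapsto u_j$ in the bad case is genuinely $Z$-computable rather than merely $Z\oplus G$-computable: one must keep the row-batches of distinct requirements disjoint, only ``accept'' an enumeration $x_j\in W_e^{Z\oplus G}$ once the initial segment of $G$ up to its use is frozen forever (so the observed use is final), and always place the coding-ones far enough out that they never land inside an already-accepted use. It is this bookkeeping, more than the diagonalization idea, that carries the proof; the details are in~\cite{Downey2019Relationships}.
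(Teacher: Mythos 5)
The paper does not actually prove this lemma --- it imports it verbatim from Downey et al.\ --- so your proposal has to stand on its own. Its architecture is the right one and matches the known proof in outline: rows $R_{i,x}$ coded into $G$, markers bounded by $f_i(x)$ so that the approximation $A_i[s](x)$ settles by stage $f_i(x)$ (which correctly gives the easy half, indeed $\Delta^0_1(G\oplus h)$), and extraction of a witness from $A_i$ by a change beyond the use of a convergent computation.

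The gap is in the hard half, at exactly the point you flag and then defer to the citation: the claim that in the bad case $j\mapsto u_j$ is a total $Z$-computable function. As you set things up, $u_j$ is the use of the computation $x_j\in W_e^{Z\oplus G}$ observed \emph{along the actual $G$}, and the actual $G$ is not $Z$-computable: the construction must consult the non-computable functions $f_{i'}$ both to place the coding markers (which must lie below $f_{i'}(x')$) and to decide whether each requirement may act (the test $f_{i'}(x_{j'})>u_{j'}$). So watching the enumeration of $W_e^{Z\oplus G}$ as $G$ is built yields only a $Z\oplus G$-computable function, which is useless against $Z$-hyperimmunity, and no amount of freezing accepted uses or separating row-batches removes the dependence of $G\uh u_j$ on the $f_{i'}$. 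The repair is to invert the quantifiers: from the current condition $\sigma$, define $g(x)$ as the least $u$ such that \emph{some permissible finite extension} $\tau\succeq\sigma$ of length $u$ puts $x$ into $W_e^{Z\oplus\tau}$, where ``permissible'' is a $Z$-decidable constraint satisfied by every extension the construction might actually make. This $g$ is $Z$-partial computable with no reference to the actual $G$; if it is non-total on the witness block you keep a witness in $A_i\setminus W_e^{Z\oplus G}$, and if it is total, $Z$-hyperimmunity of $f_i$ supplies a witness $x$ with $f_i(x)>g(x)$, whereupon you \emph{choose} to route $G$ through the corresponding $\tau$ and then extract $x$. Designing ``permissible'' so that it is $Z$-decidable yet compatible with all other requirements is the real content of the proof, and it is precisely what your write-up leaves to the reference.
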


\begin{theorem}\label{thm:rt12-preservation-delta2-hyperimmunity}
Fix a $\emptyset^{(n)}$-hyperimmune function $f$. For every set $A$, there is an infinite set $H \subseteq A$
or $H \subseteq \overline{A}$ such that $f$ is $H^{(n)}$-hyperimmune.
\end{theorem}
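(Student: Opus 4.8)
The plan is to reduce Theorem~\ref{thm:rt12-preservation-delta2-hyperimmunity} to Theorem~\ref{thm:rt12-preservation-non-sigma2} via \Cref{lem:k-non-ce-to-k-hyperimmune-preservation-1}, exactly as one reduces preservation of hyperimmunity to preservation of non-c.e.\ definitions in the Turing-degree setting. First I would apply \Cref{lem:k-non-ce-to-k-hyperimmune-preservation-1} with $k = 1$, $Z = \emptyset^{(n)}$, and the single nondecreasing (we may assume $f$ nondecreasing by replacing $f$ with $x \mapsto \max_{y \le x} f(y)$, which is still $\emptyset^{(n)}$-hyperimmune) function $f_0 = f$, to obtain a set $G_0$ and a set $A_0$ such that $A_0$ is not $\Sigma^0_1(\emptyset^{(n)} \oplus G_0)$, yet $A_0$ is $\Sigma^0_1(\emptyset^{(n)} \oplus G_0 \oplus h)$ for every $h$ dominating $f$. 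Relativising Theorem~\ref{thm:rt12-preservation-non-sigma2} to $G_0$: since $A_0$ is not $\Sigma^0_{n+1}(G_0)$, for every set $A$ there is an infinite $H \subseteq A$ or $H \subseteq \overline{A}$ such that $A_0$ is not $\Sigma^0_{n+1}(G_0 \oplus H)$.

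The second step is to convert "$A_0$ is not $\Sigma^0_{n+1}(G_0 \oplus H)$" into "$f$ is $H^{(n)}$-hyperimmune". Suppose toward a contradiction that $f$ is not $H^{(n)}$-hyperimmune, so there is an $H^{(n)}$-computable function $h$ dominating $f$. Since $H^{(n)} \equiv_T H \oplus \emptyset^{(n)}$ up to the usual join (more precisely $H^{(n)}$ computes $\emptyset^{(n)}$ and $H$), the function $h$ is computable from $G_0 \oplus H \oplus \emptyset^{(n)}$. By the second conclusion of \Cref{lem:k-non-ce-to-k-hyperimmune-preservation-1}, $A_0$ is then $\Sigma^0_1(\emptyset^{(n)} \oplus G_0 \oplus h)$, and since $h \le_T G_0 \oplus H \oplus \emptyset^{(n)}$ this gives that $A_0$ is $\Sigma^0_1(\emptyset^{(n)} \oplus G_0 \oplus H)$, i.e.\ $\Sigma^0_{n+1}(G_0 \oplus H)$, contradicting the choice of $H$. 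Hence $f$ is $H^{(n)}$-hyperimmune, which is the desired conclusion.

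One technical point to be careful about is the relativisation: Theorem~\ref{thm:rt12-preservation-non-sigma2} is stated for a non-$\Sigma^0_{n+1}$ set $B$, and I am applying it with $B = A_0$ but over the base $G_0$, so I should either note that the whole development of the paper relativises to an arbitrary oracle (the forcing machinery, the Scott sets $\Mcal_n$, the classes $\langle \Ucal^{\Mcal_n}_{C_n}\rangle$, etc.\ all relativise), or invoke that the statement "for every $B$ non-$\Sigma^0_{n+1}(X)$ and every $A$, there is an infinite $H \subseteq A$ or $H \subseteq \overline{A}$ with $B$ not $\Sigma^0_{n+1}(X \oplus H)$" is what the proof of Theorem~\ref{thm:rt12-preservation-non-sigma2} actually establishes. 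Another small point: $A_0 \oplus H$ versus $G_0 \oplus H$—one must make sure the $G_0$ used to build the Scott sets is fed in as the base oracle throughout, which is exactly why one relativises rather than applies the unrelativised theorem.

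The main obstacle is essentially bookkeeping rather than a genuine difficulty: ensuring that the relativisation of Theorem~\ref{thm:rt12-preservation-non-sigma2} to $G_0$ is legitimate and that the degree-theoretic identifications ($H^{(n)}$ computes exactly $H \oplus \emptyset^{(n)}$'s worth of information in the relevant uniform way, and $\Sigma^0_1(X \oplus \emptyset^{(n)}) = \Sigma^0_{n+1}(X)$) are applied correctly. Since \Cref{lem:k-non-ce-to-k-hyperimmune-preservation-1} is quoted in the general form with parameter $Z$ and arbitrary $k \le \omega$, the argument goes through verbatim; indeed the same proof with $k = \omega$ and a countable sequence of $\emptyset^{(n)}$-hyperimmune functions would yield the stronger statement preserving all of them simultaneously, but for the present theorem $k = 1$ suffices.
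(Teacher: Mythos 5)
Your overall strategy is the same as the paper's (reduce to Theorem~\ref{thm:rt12-preservation-non-sigma2} via \Cref{lem:k-non-ce-to-k-hyperimmune-preservation-1}), but there is a genuine gap at the step where you claim ``since $A_0$ is not $\Sigma^0_{n+1}(G_0)$''. The lemma only gives you that $A_0$ is not $\Sigma^0_1(\emptyset^{(n)} \oplus G_0)$, and $\Sigma^0_1(\emptyset^{(n)} \oplus G_0)$ is in general a \emph{proper} subclass of $\Sigma^0_{n+1}(G_0) = \Sigma^0_1(G_0^{(n)})$, because $\emptyset^{(n)} \oplus G_0 \leq_T G_0^{(n)}$ can be strict (you have no control over the jumps of the set $G_0$ produced by the lemma). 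So $A_0$ could well be $\Sigma^0_{n+1}(G_0)$, and the hypothesis of the relativized Theorem~\ref{thm:rt12-preservation-non-sigma2} is not verified. The same confusion reappears in your assertion that $H^{(n)} \equiv_T H \oplus \emptyset^{(n)}$, which is false in general (only $\geq_T$ holds); that particular slip is harmless for your final contradiction, since $h \leq_T H^{(n)} \leq_T (G_0 \oplus H)^{(n)}$ suffices, but the first one is not.

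The missing idea is the (Friedberg) jump inversion theorem: since $\emptyset^{(n)} \oplus G_0 \geq_T \emptyset^{(n)}$, there is a set $Q$ with $Q^{(n)} \equiv_T \emptyset^{(n)} \oplus G_0$. Then ``$A_0$ is not $\Sigma^0_1(\emptyset^{(n)} \oplus G_0)$'' becomes exactly ``$A_0$ is not $\Sigma^0_{n+1}(Q)$'', and you may apply Theorem~\ref{thm:rt12-preservation-non-sigma2} relativized to $Q$ to get $H$ with $A_0$ not $\Sigma^0_{n+1}(H \oplus Q)$, hence not $\Sigma^0_1(H^{(n)} \oplus G_0)$ because $(H \oplus Q)^{(n)} \geq_T H^{(n)} \oplus Q^{(n)} \equiv_T H^{(n)} \oplus G_0$. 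If $h \leq_T H^{(n)}$ dominated $f$, then $A_0$ would be $\Sigma^0_1(\emptyset^{(n)} \oplus G_0 \oplus h) \subseteq \Sigma^0_1(H^{(n)} \oplus G_0)$, a contradiction. With this repair your argument coincides with the paper's proof; your concluding remarks about relativization and about the $k = \omega$ version of the lemma are fine.
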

\begin{proof}
By Lemma~\ref{lem:k-non-ce-to-k-hyperimmune-preservation-1}, letting $Z = \emptyset^{(n)}$, there is a set $G$ and a set $B$ such that $B$ is not $\Sigma^0_1(\emptyset^{(n)} \oplus G)$ but for any function $h$ dominating $f$, $B$ is $\Sigma^0_1(\emptyset^{(n)} \oplus G \oplus h)$.
By the jump inversion theorem, there is a set $Q$ such that $Q^{(n)} \equiv_T \emptyset^{(n)} \oplus G$.
In particular, $B$ is not $\Sigma^0_1(Q^{(n)})$, so it is not $\Sigma^0_{n+1}(Q)$.
By Theorem~\ref{thm:rt12-preservation-non-sigma2}, there is an infinite set $H \subseteq H$ or $H \subseteq \overline{A}$ such that $B$ is not $\Sigma^0_{n+1}(H \oplus Q)$. In particular $B$ is not $\Sigma^0_1((H \oplus Q)^{(n)})$ and therefore not $\Sigma^0_1(H^{(n)} \oplus G)$. Suppose for the contradiction that $f$ is dominated by an $H^{(n)}$-computable function $h$. Then $B$ is $\Sigma^0_1(\emptyset^{(n)} \oplus G \oplus h)$, hence $B$ is $\Sigma^0_1(H^{(n)} \oplus G)$. Contradiction.
\end{proof}

\subsection{Low${}_n$ solutions}

An effectivization of the forcing construction enables us to obtain lowness results for the infinite pigeonhole principle. The existence of low${}_2$ solutions for $\Delta^0_2$ sets, and of low${}_2$ cohesive sets for computable sequences of sets, was proven by Cholak, Jockusch and Slaman~\cite[sections 4.1 and 4.2]{Cholak2001strength}. The existence of low${}_3$ cohesive sets for $\Delta^0_2$ sequences of sets was proven by Wang~\cite[Theorem 3.4]{Wang2014Cohesive}. Wang~\cite[Questions 6.1 and 6.2]{Wang2014Cohesive} and the second author~\cite[Question 5.4]{Patey2016Open} asked whether such results can be generalized for every $\Delta^0_{n+1}$ instances of the pigeonhole and every $\Delta^0_n$ instances of cohesiveness. We answer positively both questions.

\begin{theorem}\label{thm:rt12-delta3-PA-double-jump-solution}
Let $n \geq 0$. For every $\halt^{(n+1)}$-computable set $A$ and every $P$ PA over $\emptyset^{(n+1)}$, there is an infinite set $G \subseteq A$ or $G \subseteq \overline{A}$ such that $G^{(n+1)} \leq_T P$.
\end{theorem}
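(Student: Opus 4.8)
The plan is to effectivize the forcing construction from the proof of \Cref{thm:rt12-preservation-non-sigma2}, building a sufficiently generic filter for the $\Qb_n$ forcing computably in $A \oplus P$ and then reading $G^{(n+1)}$ off the record of forcing decisions. First observe that since $P$ is PA over $\emptyset^{(n+1)} = \halt^{(n+1)}$, it computes $\halt^{(n+1)}$, which computes $A$ by hypothesis and which, by \Cref{prop-hyp-scott} and \Cref{prop-hyp-cohesiveclassa}, computes $M_0, \dots, M_n$ and $C_0, \dots, C_{n-1}$ together with the relevant indices; so the greatest condition of $\Qb_n$ and the whole apparatus are available from $P$. The crucial point is that every forcing question used below is decidable by $\halt^{(n+1)}$: the $\Qb_n$-forcing question of \Cref{lem:pb2-forcing-question-complexity} and the degenerate one of \Cref{noideaeffploufplouf} are $\Sigma^0_1(Y)$ for some $Y \in \Mcal_n$, and since $M_n' \leq_T \halt^{(n+1)}$ we have $Y' \leq_T \halt^{(n+1)}$ uniformly in an $M_n$-index for $Y$, while the lower-level $\Pb_{n-1}$-forcing questions used at levels $m < n$ are $\Pi^0_1(C_m \oplus \halt^{(m+1)})$ by \Cref{prop-hyp-effectivalla}, hence also $\halt^{(n+1)}$-decidable. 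Moreover, once the answer is known, the corresponding extension $d$ is found from $A \oplus P$ with $P$ PA over $\halt^{(n+1)}$ by the effectivity clauses of \Cref{prop-hyp-forcexta-effect}, \Cref{lem:pb2-forcing-question-spec} and \Cref{noideaeffploufplouf2}. Thus the entire descending sequence of conditions, and the induced record of decisions, can be built computably in $P$. Throughout we identify $G^{(n+1)}$ with the $\leq_T$-equivalent set $\{e : (\exists x)\Phi_e(G,x) \text{ holds}\}$ for a suitable effective listing of the $\Sigma^0_{n+1}$ formulas.

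We split, non-uniformly, into the same two cases as in \Cref{thm:rt12-preservation-non-sigma2}. \emph{Asymmetric case:} if some $\Qb_n$-condition $b$ is invalid for a side $i-1$, fix such a $b$ and a $\Sigma^0_1(\halt^{(n)})$ largeness class $\Ucal \supseteq \langle \Ucal^{\Mcal_{n-1}}_{C_{n-1}} \rangle$ with $X_b \cap A^{i-1} \notin \Ucal$ (as in that proof); then every extension of $b$ is valid for side $i$ and keeps the reservoir disjoint from $\Ucal$ on side $i-1$. Working below $b$, build a $P$-computable descending sequence meeting successively: (i) for each $k$, a requirement forcing $|\sigma^i| > k$; (ii) for each $m < n$ and each $\Sigma^0_{m+1}$ formula, a requirement deciding it on side $i$ (using \Cref{prop-hyp-forcexta-effect} and the side-fixing argument for levels $m<n$); (iii) for each $\Sigma^0_{n+1}$ formula $(\exists x)\Phi_e(G,x)$, a requirement deciding it on side $i$ by asking whether the current condition $b'$ satisfies $b' \qvdash^{\Ucal} (\exists x)\Phi_e(G,x)$ and extending via \Cref{noideaeffploufplouf2}. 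Recording the answers to the type-(iii) requirements gives a $P$-computable function; the resulting filter yields a sufficiently generic $\Fcal_i$ with $G_{\Fcal_i} \subseteq A^i$ infinite, and by forcing-implies-truth (\Cref{lem:forcing-implytruth}) the recorded answer for $(\exists x)\Phi_e(G,x)$ is its truth value at $G_{\Fcal_i}$, so $G_{\Fcal_i}^{(n+1)} \leq_T P$.

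\emph{Symmetric case:} if every $\Qb_n$-condition is valid for both sides, so is every extension we ever produce, so type-(i) and type-(ii) requirements can be processed on either side as needed. For the $\Sigma^0_{n+1}$ level, enumerate all pairs of $\Sigma^0_{n+1}$ formulas $(\Phi_{e_0},\Phi_{e_1})$ and, at the step for $(\Phi_{e_0},\Phi_{e_1})$ with current condition $c$, decide the disjunctive forcing question $c \qvdash (\exists x)\Phi_{e_0}(G^0,x) \vee (\exists x)\Phi_{e_1}(G^1,x)$ and pass to the extension $d$ of \Cref{lem:pb2-forcing-question-spec}, recording the side $i<2$ on which it resolved and whether $\Phi_{e_i}(G^i)$ was forced true or false. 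The pairing argument is then: if side $0$ fails to decide some $\Sigma^0_{n+1}$ formula $\Phi_{e_0^*}$ and side $1$ fails to decide some $\Phi_{e_1^*}$, then the step for the pair $(\Phi_{e_0^*},\Phi_{e_1^*})$ must nonetheless resolve on one of the two sides, a contradiction; hence there is a side $i^*$ on which every $\Sigma^0_{n+1}$ formula is eventually decided. Interleaving the type-(i) and type-(ii) requirements makes $\Fcal_{i^*}$ sufficiently generic with $G_{\Fcal_{i^*}} \subseteq A^{i^*}$ infinite, and by forcing-implies-truth the value of $(\exists x)\Phi_e(G^{i^*},x)$ at $G_{\Fcal_{i^*}}$ equals the value recorded at the first step whose side-$i^*$ formula is $\Phi_e$ and which resolved on side $i^*$; this search terminates precisely because $i^*$ decides every formula, and since the whole record is $P$-computable, $G_{\Fcal_{i^*}}^{(n+1)} \leq_T P$.

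The main obstacle is the symmetric case: the winning side $i^*$ is identified only a posteriori, so the construction must be arranged so that the jump of the eventual winning side is still $P$-computable — this is exactly what the termination-of-the-search observation provides, together with the fact that the conclusion is purely existential, so neither $i^*$ nor the choice of case needs to be made effectively. The remaining work is routine bookkeeping: checking that all parameters are $P$-computable, that $Y' \leq_T \halt^{(n+1)}$ uniformly for $Y \in \Mcal_n$ so that $\halt^{(n+1)} \leq_T P$ decides each forcing question posed, and that interleaving the three families of requirements preserves validity for the relevant side(s) at every step. The case $n=0$ — where the $\Qb_0$-forcing question is already $\Sigma^0_1(\Mcal_0)$, no type-(ii) requirements occur, and the conclusion $G' \leq_T P$ already follows from the classical existence of infinite low subsets of infinite $\Delta^0_2$ sets — is the base case of the same argument.
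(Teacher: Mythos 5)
Your proposal is correct and follows essentially the same route as the paper's proof: effectivize the $\Qb_n$ forcing in $A \oplus P$, split into the asymmetric case (using the degenerate forcing question below an invalid condition) and the symmetric case (using the disjunctive forcing question plus the pairing argument to identify a winning side a posteriori), and read $G^{(n+1)}$ off the $P$-computable record of decisions via forcing-implies-truth. The only difference is expository: you spell out the pairing argument and the jump read-off in more detail than the paper does.
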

\begin{proof}
The case $n = 0$ is proven by Cholak, Jockusch and Slaman~\cite[sections 4.1 and 4.2]{Cholak2001strength}. Suppose $n > 0$. Fix $P$ and $A$, and let $A^0 = \overline{A}$ and $A^1 = A$. We work with the $\Qb_n$ forcing. We again have two constructions, based on whether every condition have both valid sides or not.\\

\textbf{Asymmetric case:} 
Suppose first there exists a $\Qb_n$-condition $b = (\sigma_0, \sigma_1, X)$ which is invalid for some side $i-1 < 2$. Let $\Ucal \subseteq \langle \Ucal^{\Mcal_{n-1}}_{C_{n-1}} \rangle$ be a $\Sigma^0_1(\halt^{(n)})$ largeness class such that $X \cap A^i \notin \Ucal$. Note that every condition $c \leq b$ must be valid for side $i$ as otherwise we would have $Y \cap A^0$ and $Y \cap A^1$ both not in $\langle \Ucal^{\Mcal_{n-1}}_{C_{n-1}} \rangle$ for some $Y \in \langle \Ucal^{\Mcal_{n-1}}_{C_{n-1}} \rangle$ which would contradict that $\langle \Ucal^{\Mcal_{n-1}}_{C_{n-1}} \rangle$ is partition regular. We then work below $b = (\sigma_0, \sigma_1, X)$. 

Given $c \leq b$ and a $\Sigma^0_{n+1}$ statement $(\exists x) \Phi_e(G, x, y)$ with one free variable $y$, we ask if $c \qvdash^\Ucal (\exists x) \Phi_e(G, x, y)$. From \Cref{noideaeffploufplouf} we obtain the answer uniformly in $\halt^{(n+1)}$ and thus uniformly in $P$. If the answer is yes, from \Cref{noideaeffploufplouf2} we find uniformly in $P$ an extension $d \leq c$ such that $d^{[i]} \Vdash (\forall x) \neg \Phi_e(G, x, y)$ and in the second case an extension $d \leq c$ such that $d^{[i]} \Vdash (\exists x) \Phi_e(G, x, y)$.

Now given $c \leq b$ and a $\Sigma^0_{m+1}$ statement $(\exists x) \Phi_e(G, x)$ for $m < n$ we ask if $c^{[i]} \qvdash (\exists x) \Phi_e(G, x)$. From \Cref{prop-hyp-effectivalla} we obtain the answer uniformly in $\halt^{(n+1)}$ and thus uniformly in $P$. Using \Cref{prop-hyp-forcexta-effect} if the answer is positive we find uniformly in $P$ an extension $d \leq c$ such that $d^{[i]} \Vdash (\exists x) \Phi_e(G, x)$ and otherwise we find an extension $d \leq c$ such that $d^{[i]} \Vdash (\forall x) \neg \Phi_e(G, x)$. 

In the end we build effectively in $P$ a $n$-generic filter $\Fcal \subseteq \Pb_{n-1}$ such that $G_\Fcal \subseteq A^i$. Using \cref{lem:forcing-implytruth} and by construction, $P$ can also decide every $\Sigma^0_{n+1}(G_\Fcal)$ statement. Thus $(G_\Fcal)^{(n+1)} \leq_T P$.

\textbf{The symmetric case:}
Suppose now that every  $\Qb_n$-condition $c = (\sigma_0, \sigma_1, X)$ is valid for both sides. Given a condition $c$ and two $\Sigma^0_{n+1}$ statement $(\exists x) \Phi_{e_0}(G, x, y), (\exists x) \Phi_{e_1}(G, x, y)$ with one free variable $y$, we ask if $c \qvdash (\exists x) \Phi_{e_0}(G, x, y) \vee (\exists x) \Phi_{e_1}(G, x, y)$. From \Cref{lem:pb2-forcing-question-complexity} we obtain the answer uniformly in $\halt^{(n+1)}$ and then uniformly in $P$. Using \Cref{lem:pb2-forcing-question-spec} we find uniformly in $P$ an extension $d \leq c$ such that $d^{[i]} \Vdash (\exists x) \Phi_{e_i}(G, x, y)$ or $d^{[i]} \Vdash (\forall x) \neg \Phi_{e_i}(G, x, y)$ for some $i<2$.

Now given $c$ and a $\Sigma^0_{m+1}$ statement $(\exists x) \Phi_e(G, x)$ for $m < n$ we ask if $c^{[0]} \qvdash (\exists x) \Phi_e(G, x)$. From \Cref{prop-hyp-effectivalla} we obtain the answer uniformly in $\halt^{(n+1)}$ and thus uniformly in $P$. Using \Cref{prop-hyp-forcexta-effect} if the answer is positive we find uniformly in $P$ an extension $d \leq c$ such that $d^{[0]} \Vdash (\exists x) \Phi_e(G, x)$ and otherwise we find an extension $d \leq c$ such that $d^{[0]} \Vdash (\forall x) \neg \Phi_e(G, x)$. We then ask whether $d^{[1]} \qvdash (\exists x) \Phi_e(G, x)$ and find similarly an extension $h \leq d$ such that $h^{[1]} \Vdash (\forall x) \neg \Phi_e(G, x)$ or $h^{[1]} \qvdash (\exists x) \Phi_e(G, x)$.

In the end we build effectively in $P$ a filter $\Fcal \subseteq \Qb_n$ giving two filters $\Fcal_0, \Fcal_1 \subseteq \Pb_{n-1}$ corresponding to side $0$ and $1$, which are both $n-1$-generic and such that $G_{\Fcal_0} \subseteq A^0$ and  $G_{\Fcal_1} \subseteq A^1$. By a pairing argument there must be $i<2$ such that $\Fcal_i$ is $n$-generic. Using \cref{lem:forcing-implytruth} and by construction, $P$ can decide every $\Sigma^0_{n+1}(G_{\Fcal_i})$ statement. Thus $(G_{\Fcal_i})^{(n+1)} \leq_T P$.
\end{proof}

\begin{theoremnonumber}[Main theorem 3 (Theorem \ref{maintheorem3})]
Fix $n \geq 0$. Every $\halt^{(n+1)}$-computable set $A$ has an infinite subset $H \subseteq A$ or $H \subseteq \overline{A}$ of low${}_{n+2}$ degree.
\end{theoremnonumber}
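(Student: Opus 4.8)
The plan is to derive this statement immediately from \Cref{thm:rt12-delta3-PA-double-jump-solution} by a judicious choice of the PA set $P$, so that essentially no new combinatorial work is needed. Recall that a degree $\mathbf{d}$ is low${}_{n+2}$ exactly when $\mathbf{d}^{(n+2)} = \mathbf{0}^{(n+2)}$, and that for any set $H$ the reduction $\emptyset^{(n+2)} \leq_T H^{(n+2)}$ is automatic, so it suffices to produce an infinite $H \subseteq A$ or $H \subseteq \overline{A}$ with $H^{(n+2)} \leq_T \emptyset^{(n+2)}$.

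First I would invoke the low basis theorem of Jockusch and Soare~\cite{Jockusch197201}, relativized to $\emptyset^{(n+1)}$, applied to the non-empty $\Pi^0_1(\emptyset^{(n+1)})$ class of (say) complete consistent extensions of Peano arithmetic relative to $\emptyset^{(n+1)}$. This yields a set $P$ which is PA over $\emptyset^{(n+1)}$ and satisfies $(P \oplus \emptyset^{(n+1)})' \leq_T (\emptyset^{(n+1)})' = \emptyset^{(n+2)}$; in particular $P' \leq_T \emptyset^{(n+2)}$, since $P \leq_T P \oplus \emptyset^{(n+1)}$.

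Next, given a $\halt^{(n+1)}$-computable set $A$, I would apply \Cref{thm:rt12-delta3-PA-double-jump-solution} with this $P$ to obtain an infinite set $H \subseteq A$ or $H \subseteq \overline{A}$ with $H^{(n+1)} \leq_T P$. Taking jumps gives $H^{(n+2)} = (H^{(n+1)})' \leq_T P' \leq_T \emptyset^{(n+2)}$, so $H^{(n+2)} \equiv_T \emptyset^{(n+2)}$ and $H$ has low${}_{n+2}$ degree, as desired.

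There is no real obstacle here: all the difficulty has already been absorbed into \Cref{thm:rt12-delta3-PA-double-jump-solution} and the iterated-jump forcing underlying it. The only point that requires a moment's care is that the relativized low basis theorem controls the jump of $P$ \emph{joined with the oracle} $\emptyset^{(n+1)}$ rather than $P$ in isolation, but this is precisely the form needed to push $P'$ below $\emptyset^{(n+2)}$ and hence to conclude lowness${}_{n+2}$ of the solution.
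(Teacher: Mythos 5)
Your proposal is correct and is essentially identical to the paper's own proof: both apply the low basis theorem relativized to $\emptyset^{(n+1)}$ to obtain a PA set $P$ over $\emptyset^{(n+1)}$ with $P' \leq_T \emptyset^{(n+2)}$, then invoke Theorem~\ref{thm:rt12-delta3-PA-double-jump-solution} and take one more jump. No further comment is needed.
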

\begin{proof}
 By the relativized low basis theorem~\cite{Jockusch197201}, there is some $P$ PA over $\emptyset^{(n+1)}$ such that $P' \leq_T \emptyset^{(n+2)}$. By Theorem~\ref{thm:rt12-delta3-PA-double-jump-solution}, there is an infinite set $G \subseteq A$ or $G \subseteq \overline{A}$ such that $G^{(n+1)} \leq_T P$. In particular, $G^{(n+2)} \leq_T P' \leq_T \emptyset^{(n+2)}$. Thus $G$ is of low${}_{n+2}$ degree.
\end{proof}

\section{Arithmetic and Hyperarithmetic cone avoidance}\label{sect:hypreduction}
In this section, we extend the jump control of solutions to the pigeonhole principle to ordinal iterations of the jump. We then derive a proof of strong cone avoidance for arithmetic and hyperarithmetic reductions. We prove in the mean time cone avoidance for arithmetical reductions. The reader already familiar with higher recursion theory may jump directly to \cref{sec_strategy_coneavoid} where we give the general strategy which will be used to show hyperarithmetic cone avoidance.

\subsection{Background on higher recursion theory}

\subsubsection{Computable ordinals}

We let $\wck$ denote the first non-computable ordinal. There is a $\Pi^1_1$ set $\Ocal_1 \subseteq \omega$ such that each $o \in \Ocal_1$ codes for an ordinal $\alpha < \wck$ and each ordinal $\alpha < \wck$ has a unique code in $\Ocal_1$. Furthermore given that $o \in \Ocal_1$, one can computably recognize if $o$ codes for $0$, if $o$ codes for a successor ordinal $\alpha+1$, in which case we can uniformly and computably produce a code in $\Ocal_1$ for $\alpha$, and if $o$ codes for a limit ordinal $\sup_n \beta_n$, in which case we can uniformly and computably produce for each $n$ codes in $\Ocal_1$ for $\beta_n$. See \cite{Sacks1990Higher} for more details about $\Ocal_1$. In this section, we manipulate each ordinal $\alpha < \wck$ via its respective code in $\Ocal_1$. To simplify the reading, we use the notation $\alpha$ instead of the code for $\alpha$.

\subsubsection{The effective Borel sets}

We also use codes for effective Borel subsets of $\omega$ or of $2^\omega$ : For $\alpha < \wck$ a code for a $\Sigma^0_{\alpha+1}$ set $\Bcal = \bigcup_{n < \omega} \Bcal_{n}$ is the code of a function that effectively enumerate codes for each $\Pi^0_{\alpha}$ set $\Bcal_{n}$. A code for a $\Pi^0_{\alpha+1}$ set $\Bcal = \bigcap_{n < \omega} \Bcal_{n}$ is the code of a function that effectively enumerate codes for each $\Sigma^0_{\alpha}$ set $\Bcal_{n}$. For $\alpha = \sup_n \beta_n$ limit a code of a $\Sigma^0_{\alpha}$ set $\Bcal = \bigcup_{n < \omega} \Bcal_{\beta_n}$ is the code of a function that effectively enumerate codes for each $\Pi^0_{\beta_n}$ set $\Bcal_{\beta_n}$ with $\sup_n \beta_n = \alpha$. The code of a $\Pi^0_{\alpha}$ set $\Bcal = \bigcap_{n < \omega} \Bcal_{\beta_n}$ is the code of a function that effectively enumerate codes for each $\Sigma^0_{\beta_n}$ set $\Bcal_{\beta_n}$ with $\sup_n \beta_n = \alpha$. We also assume the codes for effective Borel sets include some information so that we can computably distinguish $\Pi^0_\alpha$ from $\Sigma^0_\alpha$ codes as well as distinguish if $\alpha=1$, if $\alpha$ is successor or if it is limit.

\subsubsection{The iterated jumps}

We use such codes to iterate the jump through the ordinals:
\begin{enumerate}
\item $\halt^{(0)} = \emptyset$
\item $\halt^{(\alpha+1)} = (\halt^{(\alpha)})'$
\item $\halt^{(\sup_n \alpha_n)} = \oplus_{n \in \omega} \halt^{(\alpha_n)}$
\end{enumerate}

Note that for $n < \omega$ the set $\halt^{(n)}$ is $\Sigma^0_n$ and complete for $\Sigma^0_n$ questions. Above the first limit ordinal the situation is slightly different : $\halt^{(\omega)}$ is $\Delta^0_{\omega}$ and not $\Sigma^0_{\omega}$. Also given $\alpha \geq \omega$ we have that $\halt^{(\alpha+1)}$ is $\Sigma^0_{\alpha}$ and complete for $\Sigma^0_{\alpha}$ questions.

\begin{proposition} \label{lemma-hyp-eff4}
Let $n \in \omega$.
\begin{enumerate}
\item Let $m > 0$. The set $\{X\ :\ n \in X^{(m)}\}$ is a $\Sigma^0_{m}$ class.
\item Let $\alpha$ be limit. The set $\{X\ :\ n \in X^{(\alpha)}\}$ is a $\Delta^0_{\beta}$ class for some $\beta < \alpha$.
\item Let $\alpha = \beta + 1$ with $\beta \geq \omega$. The set $\{X\ :\ n \in X^{(\alpha)}\}$ is a $\Sigma^0_{\beta}$ class.
\end{enumerate}
\end{proposition}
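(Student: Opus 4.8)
The plan is to prove all three statements simultaneously by a single transfinite induction on the ordinal, using the recursive definitions of $\halt^{(\alpha)}$ and the definition of the codes for effective Borel sets. The base case is essentially the classical fact recalled in the text: for finite $m \geq 1$, membership $n \in X^{(m)}$ is a $\Sigma^0_m(X)$ predicate, uniformly in $n$, which unfolds the iterated jump one step at a time; this gives (1). For (1) one simply notes that $\{X : n \in X^{(m)}\}$ is the class defined by the standard $\Sigma^0_m$ formula expressing ``$\Phi_n^{X^{(m-1)}}(n)\!\halts$'', with $X^{(m-1)}$ itself replaced recursively by its $\Sigma^0_{m-1}$ description. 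I would phrase this carefully as: there is a $\Sigma^0_m$ formula $\psi_m(X,n)$, with code computable from $m$, such that $n \in X^{(m)} \iff \psi_m(X,n)$.

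For (2), suppose $\alpha = \sup_k \alpha_k$ is a limit ordinal with the $\alpha_k$ an increasing sequence of codes produced effectively from the code of $\alpha$. By definition $X^{(\alpha)} = \bigoplus_k X^{(\alpha_k)}$, so $n \in X^{(\alpha)}$ decodes to a statement of the form ``$n = \langle k, j\rangle$ and $j \in X^{(\alpha_k)}$'' for the unique pair $\langle k,j\rangle = n$. Thus membership in $\{X : n \in X^{(\alpha)}\}$ is governed entirely by the single class $\{X : j \in X^{(\alpha_k)}\}$ for that fixed $k$. By the induction hypothesis applied to $\alpha_k < \alpha$ — using clause (1) if $\alpha_k$ is finite, clause (3) if $\alpha_k$ is an infinite successor, or clause (2) again if $\alpha_k$ is limit — this class is $\Delta^0_{\beta'}$ for some $\beta' \leq \alpha_k < \alpha$ (in the finite or infinite-successor cases it is actually $\Sigma^0_{\beta'}$, hence $\Delta^0_{\beta'+1}$ with $\beta'+1 < \alpha$ since $\alpha$ is limit). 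Taking $\beta = \beta'+1 < \alpha$ in all cases gives a $\Delta^0_\beta$ class, establishing (2). The point is just that a single column of $X^{(\alpha)}$ lives at a bounded level, and the code for that level is produced effectively from $n$ and the code of $\alpha$ via the effective enumeration $\alpha_k$.

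For (3), suppose $\alpha = \beta+1$ with $\beta \geq \omega$. Then $X^{(\alpha)} = (X^{(\beta)})'$, so $n \in X^{(\alpha)} \iff \Phi_n^{X^{(\beta)}}(n)\!\halts$, which is the $\Sigma^0_1(X^{(\beta)})$ statement $(\exists s)[\Phi_{n,s}^{X^{(\beta)}\uhr s}(n)\!\halts]$. Since $\beta \geq \omega$ is itself a limit ordinal (as every ordinal $\geq \omega$ that is not a successor — but here $\beta$ may itself be a successor, so I should split: if $\beta$ is limit, use clause (2) to see each ``$\sigma \prec X^{(\beta)}$'' question is $\Delta^0_{\gamma}$ for a $\gamma < \beta$ uniformly in the finitely many bits queried; if $\beta = \gamma+1$ is an infinite successor, use clause (3) recursively so each such bit-question is $\Sigma^0_\gamma$ with $\gamma \geq \omega$). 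In either subcase, the finitely many oracle queries to $X^{(\beta)}$ made by the computation at stage $s$ are each decided by a $\Sigma^0_{<\beta}$ (hence $\Delta^0_\beta$) or $\Sigma^0_\beta$-in-$X$ predicate, uniformly; existentially quantifying over $s$ and the finite use keeps us at $\Sigma^0_\beta$. The uniformity is guaranteed by the fact, recalled in the text, that from a code for $\alpha$ one can computably recognize whether $\alpha$ is $1$, a successor, or a limit, and in the successor/limit cases extract the relevant predecessor code or the approximating sequence.

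The main obstacle I anticipate is purely bookkeeping rather than conceptual: keeping the levels and the oracles straight when a limit ordinal sits just below a successor, and making the \emph{uniformity in $n$ and in the code of $\alpha$} explicit at every stage, since the later lemmas that invoke \Cref{lemma-hyp-eff4} will need that uniform version. I would therefore state and carry through the induction in the strengthened form ``there is a code, computable from $n$ and the code of $\alpha$, for an effective Borel set of the asserted complexity,'' which makes the limit-case juggling of the sequence $\alpha_k$ and the successor-case peeling of one jump entirely mechanical.
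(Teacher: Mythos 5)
Your proposal is correct and takes essentially the same route as the paper: a transfinite induction on the ordinal code, unfolding one jump at the successor step as a countable union over finite oracle uses of finite Boolean combinations of membership conditions at the previous level, and reducing the limit step to a single column $\{X : j \in X^{(\alpha_k)}\}$ of bounded complexity. The paper's proof is just a terser version of the same argument, writing the successor case explicitly as $\bigcup_{\{\sigma : \Phi_n(\sigma,n)\downarrow\}} \bigcap_{\{i:\sigma(i)=0\}}\{X : i \notin X^{(\beta)}\} \cap \bigcap_{\{i:\sigma(i)=1\}}\{X : i \in X^{(\beta)}\}$, with the uniformity you rightly insist on left implicit.
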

\begin{proof}
The set $\{X\ :\ n \in X'\}$ is clearly $\Sigma^0_1$. Let $m > 1$. the set $\{X\ :\ n \in X^{(m)}\}$ equals
$$\bigcup_{\{\sigma\ :\ \Phi_n(\sigma, n)\downarrow\}} \bigcap_{\{i\ :\ \sigma(i) = 0\}} \{X\ :\ i \notin X^{(m-1)}\} \cap \bigcap_{\{i\ :\ \sigma(i) = 1\}} \{X\ :\ i \in X^{(m-1)}\}$$
This is by induction a $\Sigma^0_{m}$ set.

Let $\alpha$ be limit. Let $p_1,p_2$ be projections of the pairing function, that is, $x = \langle p_1(x), p_2(x)\rangle$. Then $\{X\ :\ n \in X^{(\alpha)}\}$ equals $\{X\ :\ p_1(n) \in X^{(p_2(n))}\}$, which is a $\Delta^0_\beta$ set for $\beta < \alpha$.

Let $\alpha = \beta+1$. The set $\{X\ :\ n \in X^{(\beta+1)}\}$ equals
$$\bigcup_{\{\sigma\ :\ \Phi_n(\sigma, n)\downarrow\}} \bigcap_{\{i\ :\ \sigma(i) = 0\}} \{X\ :\ i \notin X^{(\beta)}\} \cap \bigcap_{\{i\ :\ \sigma(i) = 1\}} \{X\ :\ i \in X^{(\beta)}\}$$
This is by induction a $\Sigma^0_{\beta}$ class.
\end{proof}

\begin{proposition} \label{lemma-hyp-eff3}
Let $\Phi$ be a functional. Let $n,i \in \omega$.
\begin{enumerate}
\item Let $m > 0$. The set $\{X\ :\ \exists t\ \Phi(X^{(m)}, n)[t]\downarrow = i\}$ is a $\Sigma^0_{m+1}$ class.
\item Let $\alpha \geq \omega$. The set $\{X\ :\ \exists t\ \Phi(X^{(\alpha)}, n)[t]\downarrow = i\}$ is a $\Sigma^0_{\alpha}$ class.
\end{enumerate}
\end{proposition}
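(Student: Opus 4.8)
The plan is to exploit the fact that a halting computation $\Phi(Y, n)[t]\downarrow = i$ queries only an initial segment of its oracle $Y$. So $\exists t\ \Phi(X^{(m)}, n)[t]\downarrow = i$ holds if and only if there is a string $\sigma \in 2^{<\omega}$ with $\sigma \prec X^{(m)}$ such that, with oracle $\sigma$, the computation $\Phi$ halts on input $n$ with output $i$ querying only bits below $|\sigma|$ --- a $\Sigma^0_1$ condition on $\sigma$. Letting $\Gamma$ be the computably enumerable set of such strings, I would write
$$
\{X\ :\ \exists t\ \Phi(X^{(m)}, n)[t]\downarrow = i\}\ =\ \bigcup_{\sigma \in \Gamma}\ \{X\ :\ \sigma \prec X^{(m)}\},
$$
and decompose each $\{X\ :\ \sigma \prec X^{(m)}\} = \bigcap_{\sigma(j) = 1}\{X\ :\ j \in X^{(m)}\} \cap \bigcap_{\sigma(j) = 0}\{X\ :\ j \notin X^{(m)}\}$ as a finite Boolean combination of the classes $\{X\ :\ j \in X^{(m)}\}$ for $j < |\sigma|$. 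The same two displays work verbatim for (2), with $\alpha$ in place of $m$.

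For (1), I would invoke \Cref{lemma-hyp-eff4}(1): each $\{X\ :\ j \in X^{(m)}\}$ is $\Sigma^0_m$, so for a fixed $\sigma$ the class $\{X\ :\ \sigma \prec X^{(m)}\}$ is a finite intersection of $\Sigma^0_m$ and $\Pi^0_m$ classes, hence $\Delta^0_{m+1}$ and in particular $\Sigma^0_{m+1}$, uniformly in $\sigma$; since $\Sigma^0_{m+1}$ is closed under countable unions, the whole set is $\Sigma^0_{m+1}$.

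For (2), I would split on $\alpha$. If $\alpha = \gamma + 1$ is a successor with $\gamma \geq \omega$, then by \Cref{lemma-hyp-eff4}(3) each $\{X\ :\ j \in X^{(\alpha)}\}$ is $\Sigma^0_\gamma$, so $\{X\ :\ \sigma \prec X^{(\alpha)}\}$ is a finite Boolean combination of $\Sigma^0_\gamma$ classes, hence $\Delta^0_{\gamma+1} = \Delta^0_\alpha \subseteq \Sigma^0_\alpha$, and closure of $\Sigma^0_\alpha$ under countable unions finishes the argument. If $\alpha$ is a limit, then by \Cref{lemma-hyp-eff4}(2) each $\{X\ :\ j \in X^{(\alpha)}\}$ is $\Delta^0_{\beta_j}$ for some $\beta_j < \alpha$; for a fixed $\sigma$, set $\beta(\sigma) = \max_{j < |\sigma|} \beta_j$, which is still $< \alpha$, so $\{X\ :\ \sigma \prec X^{(\alpha)}\}$ is $\Delta^0_{\beta(\sigma)}$, hence $\Pi^0_{\beta(\sigma)+1}$ with $\beta(\sigma) + 1 < \alpha$. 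Thus the whole set is a countable union of classes each of level $< \alpha$, which is $\Sigma^0_\alpha$ by monotonicity of the effective Borel hierarchy together with the fact that $\alpha$ is limit (pad the levels up to a cofinal sequence of $\Pi^0$ codes). All of this is uniform in $n$, $i$, and an index for $\Phi$.

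The hard part here is bookkeeping rather than any genuine difficulty: one only has to check that a finite Boolean combination of effective Borel classes of levels $< \alpha$ again sits at some level $< \alpha$, and hence inside $\Sigma^0_\alpha$, treating the successor and limit cases separately. The one step that needs mild care is the limit case, where I would use that the maximum of finitely many ordinals below $\alpha$ is again below $\alpha$, and that $\Sigma^0_\alpha$ absorbs any countable union $\bigcup_k \Pi^0_{\gamma_k}$ with all $\gamma_k < \alpha$.
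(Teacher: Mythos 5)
Your proof is correct and is exactly the argument the paper intends: its own proof consists of the single line ``Trivial using Proposition~\ref{lemma-hyp-eff4}'', and your unpacking via the use principle (writing the class as a countable union over oracle strings $\sigma$ of finite Boolean combinations of the classes $\{X : j \in X^{(m)}\}$, resp.\ $\{X : j \in X^{(\alpha)}\}$) is the standard way to make that one-liner precise. The care you take in the limit case, including the padding to a cofinal sequence of levels to match the paper's coding of $\Sigma^0_\alpha$ sets, is appropriate and introduces no gap.
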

\begin{proof}
Trivial using \Cref{lemma-hyp-eff4}
\end{proof}

\subsubsection{$\Pi^1_1$ and $\Sigma^1_1$ sets of integers}

We previously mentioned a $\Pi^1_1$ set $\Ocal_1$ of unique notations for ordinals. This set is included in Kleene's $\Ocal$, the set of all the constructible codes for the computable ordinals. Given an ordinal $\alpha < \wck$, let $\Ocal_{< \alpha}$ denote the elements of $\Ocal$ which code for an ordinal strictly smaller than $\alpha$. Each $\Ocal_{<\alpha}$ is $\Delta^1_1$ uniformly in $\alpha$ (it actually is always a $\Sigma^0_{\alpha+1}$ set \cite{monin2014higherthesis}). It is well-known that $\Ocal$ is a $\Pi^1_1$-complete set \cite{Sacks1990Higher}, that is, for any $\Pi^1_1$ set $B \subseteq \omega$ there is a computable function $f:\omega \rightarrow \omega$ such that $n \in B \leftrightarrow f(n) \in \Ocal$. Let us define $B_\alpha = \{n\ :\ f(n) \in \Ocal_{<\alpha}\}$. In particular, each $B_{\alpha}$ is $\Delta^1_1$ uniformly in $\alpha$ and $B = \bigcup_{\alpha < \wck} B_\alpha$. In particular $B$ is a $\Sigma^0_{\wck}$ set. Note that contrary to $\Sigma^0_\alpha$ sets for $\alpha < \wck$, the $\Sigma^0_{\wck}$ are not described with a computable code, but rather with a $\Pi^1_1$ set of codes for all the $\Pi^0_{\alpha}$ that constitutes the $\Sigma^0_{\wck}$ set $B$. With a little hack, we can even make sure that at most one new element appears in each $B_\alpha$. For this reason, we often see $\Pi^1_1$ sets as enumerable along the computable ordinals.

By complementation a $\Sigma^1_1$ set $B \subseteq \omega$ can be seen as co-enumerable along the computable ordinals and we have $B = \bigcap_{\alpha < \wck} B_\alpha$ where each $B_\alpha$ is $\Delta^1_1$ uniformly in $\alpha$. We also say in this case that $B$ is $\Pi^0_{\wck}$.

\subsubsection{$\Sigma^1_1$-boundedness}

A central theorem when working with $\Sigma^1_1$ and $\Pi^1_1$ sets is $\Sigma^1_1$-boundedness:

\begin{theorem}[$\Sigma^1_1$-boundedness \cite{spector1955recursive}]
Let $B$ be a $\Sigma^1_1$ set of codes for ordinals, then the supremum of the ordinals coded by elements of $B$ is strictly smaller than $\wck$.
\end{theorem}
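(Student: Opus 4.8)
The plan is to establish $\Sigma^1_1$-boundedness by contradiction, using two facts recorded above: $\Ocal$ is $\Pi^1_1$-complete, hence not $\Sigma^1_1$ (were it $\Sigma^1_1$, it would be $\Delta^1_1$ and $\Pi^1_1$-complete, forcing every $\Pi^1_1$ set to be $\Delta^1_1$, against $\Delta^1_1 \subsetneq \Pi^1_1$); and $\Ocal_{<\alpha}$ is $\Delta^1_1$ uniformly in a notation for $\alpha$. So let $B$ be a $\Sigma^1_1$ set of codes for ordinals — thus $B \subseteq \Ocal_1 \subseteq \Ocal$ — and suppose for contradiction that $\sup\{|b| : b \in B\} = \wck$ (it cannot be larger, since every notation in $\Ocal$ codes an ordinal below $\wck$). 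We will deduce that $\Ocal$ itself is $\Sigma^1_1$, a contradiction.

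The heart of the argument is to write $\Ocal$ as a ``$\Sigma^1_1$-indexed union'' of the initial segments $\Ocal_{\le |b|}$, $b \in B$. By the uniform $\Delta^1_1$-ness of initial segments there is a computable function $g$ such that, whenever $b \in \Ocal$, $g(b)$ is a $\Sigma^1_1$-index for $\Ocal_{\le |b|} = \Ocal_{<|b|+1}$ (a notation for $|b|+1$ being computable from $b$). Fixing a universal $\Sigma^1_1$ set $(U_e)_{e \in \omega}$, let $P(o,b)$ be the $\Sigma^1_1$ predicate ``$o \in U_{g(b)}$''; then for every $b \in \Ocal$, $P(o,b)$ holds if and only if $o \in \Ocal$ and $|o| \le |b|$. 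Set $D = \{ o : (\exists b)[\, b \in B \text{ and } P(o,b) \,] \}$. As $B$ and $P$ are both $\Sigma^1_1$, so is $D$. But $D = \Ocal$: any $o \in D$ lies in some $\Ocal_{\le |b|} \subseteq \Ocal$; conversely, given $o \in \Ocal$ we have $|o| < \wck = \sup\{|b| : b \in B\}$, so some $b \in B$ satisfies $|o| \le |b|$, and then (since $b \in \Ocal$) $P(o,b)$ holds, i.e.\ $o \in D$. Hence $\Ocal$ is $\Sigma^1_1$, a contradiction, and therefore $\sup\{|b| : b \in B\} < \wck$.

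The main obstacle, and the only non-routine point, is keeping $D$ inside $\Sigma^1_1$. Written naively as $\{o : (\exists b)[b \in B \text{ and } o \le_\Ocal b]\}$, the matrix is a conjunction of a $\Sigma^1_1$ and a $\Pi^1_1$ condition under a numerical existential quantifier, which only places $D$ in $\Sigma^1_2$ a priori. It is exactly the uniform $\Delta^1_1$-ness of $\Ocal_{<\alpha}$ — equivalently, the fact that ``$o \le_\Ocal b$'' becomes $\Delta^1_1$ once $b$ is known to be a genuine notation — that licenses replacing the $\Pi^1_1$ conjunct by the $\Sigma^1_1$ predicate $P$ on the relevant domain. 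Everything else (reducing an arbitrary $\Sigma^1_1$ set of codes to a subset of $\Ocal$, selecting a universal $\Sigma^1_1$ set, and extracting $g$ from the uniformity statement) is bookkeeping.
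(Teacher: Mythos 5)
Your argument is correct: it is the standard proof of $\Sigma^1_1$-boundedness (reduce to the case $B \subseteq \Ocal$, suppose the notations in $B$ are cofinal in $\wck$, use the uniform $\Delta^1_1$-ness of the initial segments $\Ocal_{<\alpha}$ to express $\Ocal$ as a $\Sigma^1_1$ set, and contradict the $\Pi^1_1$-completeness of $\Ocal$), and you correctly isolate the one delicate point, namely replacing the $\Pi^1_1$ conjunct ``$o \le_{\Ocal} b$'' by a $\Sigma^1_1$ predicate that agrees with it on $\Ocal$. The paper itself gives no proof of this theorem --- it is quoted from Spector --- so there is nothing to compare beyond noting that your proof is the classical one.
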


We mostly here use the following corollary:

\begin{corollary}
Let $f:\omega \to \wck$ be a total $\Pi^1_1$ function. Then $\sup_n f(n) = \alpha < \wck$.
\end{corollary}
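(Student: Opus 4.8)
The plan is to deduce the statement directly from the $\Sigma^1_1$-boundedness theorem quoted above. The only subtle point is that a $\Pi^1_1$ set of codes for ordinals below $\wck$ need not be bounded below $\wck$ --- $\Ocal_1$ itself is such a set --- so it does not suffice to project the $\Pi^1_1$ graph of $f$; we must first observe that the \emph{range} of $f$ is in fact $\Sigma^1_1$.

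First I would record the elementary fact that a \emph{total} function with $\Pi^1_1$ graph has $\Delta^1_1$ graph. Indeed, writing $G = \{ (n, o) : f(n) = o \}$ for the graph of $f$, the hypothesis gives that $G$ is $\Pi^1_1$, while the complement of $G$ is $\{ (n, o) : (\exists o' \neq o)\, (n, o') \in G \}$ by totality (together with the fact that each $n$ is assigned a unique code), which is a number-existential quantification of a $\Pi^1_1$ relation and hence again $\Pi^1_1$, since the lightface $\Pi^1_1$ sets are closed under number quantifiers. Thus $G$ is $\Delta^1_1$, and in particular $\Sigma^1_1$.

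It follows that the range $B = \{ o : (\exists n)\, (n, o) \in G \}$ is $\Sigma^1_1$, being a number-existential quantification of a $\Sigma^1_1$ relation. By the conventions of this section, $f(n)$ is always an element of $\Ocal_1$, so $B \subseteq \Ocal_1$ is a $\Sigma^1_1$ set of codes for ordinals below $\wck$. The $\Sigma^1_1$-boundedness theorem then yields an ordinal $\alpha < \wck$ bounding all the ordinals coded by members of $B$; since that supremum is precisely $\sup_n f(n)$, this gives $\sup_n f(n) = \alpha < \wck$. I do not anticipate any real obstacle here: the single point to get right is that totality upgrades the $\Pi^1_1$ graph of $f$ to a $\Delta^1_1$ one, which is exactly what makes the range $\Sigma^1_1$ rather than merely $\Pi^1_1$ and hence amenable to $\Sigma^1_1$-boundedness.
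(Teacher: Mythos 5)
Your proposal is correct and follows exactly the argument the paper sketches: totality upgrades the $\Pi^1_1$ graph to a $\Delta^1_1$ one, making the range a $\Sigma^1_1$ set of ordinal codes, to which $\Sigma^1_1$-boundedness applies. You have merely spelled out the complementation step ($(n,o)\notin G$ iff $(\exists o'\neq o)\,(n,o')\in G$) that the paper leaves implicit.
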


Note that $f:\omega \to \wck$ means the range of $f$ is a subset of $\Ocal_1$. The corollary comes from the fact that if $f$ is total, then it becomes $\Delta^1_1$ and its range is then a $\Sigma^1_1$ set of codes for ordinals. As an example we apply here $\Sigma^1_1$-boundedness to show a simple fact that will be needed later : adding an $\omega$-bounded quantifier to a $\Sigma^0_{\wck}$ or a $\Pi^0_{\wck}$ set does not change its complexity.

\begin{lemma}
Every $\Sigma^0_{\wck + 1}$ set of integers is $\Pi^0_{\wck}$.
\end{lemma}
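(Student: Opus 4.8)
The plan is to unpack the definitions and reduce the statement to an instance of $\Sigma^1_1$-boundedness, exactly as the excerpt announces just before the lemma. Recall that a $\Sigma^0_{\wck+1}$ set $B \subseteq \omega$ is, by the conventions set up in the section, given as $B = \bigcup_{\alpha < \wck} B_\alpha$ where each $B_\alpha$ is a $\Sigma^0_{\alpha+1}$ set (hence $\Delta^1_1$ uniformly in $\alpha$) and the union is taken along the computable ordinals; concretely a $\Sigma^0_{\wck+1}$ code is a $\Pi^1_1$ set of codes for the $\Pi^0_{\alpha}$ pieces making up $B$. Thus membership $n \in B$ is witnessed by the existence of some $\alpha < \wck$ with $n \in B_\alpha$, and this is a $\Sigma^1_1$ predicate of $n$ (existential real quantifier over a witnessing code for $\alpha$, then $\Delta^1_1$).

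Next I would define, for each $n \in B$, the ordinal $g(n) = $ the least $\alpha < \wck$ such that $n \in B_\alpha$ (using the hack mentioned in the excerpt that at most one new element enters at each level, one may even take $g(n)$ to be the unique such $\alpha$, but leastness suffices). I claim $g$ is a $\Pi^1_1$ function on $B$: its graph $\{(n,\alpha) : n \in B_\alpha \text{ and } \forall \beta < \alpha\ n \notin B_\beta\}$ is arithmetical in the $\Delta^1_1$ presentation once $\alpha$ is fixed, and the $B_\beta$ for $\beta < \alpha$ are uniformly $\Delta^1_1$; the only genuinely unbounded quantifier, "$\forall \beta < \alpha$", ranges over codes below $\alpha$ in $\Ocal$, which is a $\Pi^0_{\alpha+1}$ — in particular $\Delta^1_1$ — set, so the graph of $g$ is $\Pi^1_1$ (in fact $\Delta^1_1$ relative to the presentation, but $\Pi^1_1$ is all we need). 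To invoke the corollary of $\Sigma^1_1$-boundedness in the form stated in the excerpt, I would extend $g$ to a total function $\hat g : \omega \to \wck$ by setting $\hat g(n) = 0$ for $n \notin B$; totality is fine because "$n \notin B$" is $\Pi^1_1$ and the range of $\hat g$ lands in $\Ocal_1$. Then $\hat g$ is a total $\Pi^1_1$ function, so by the corollary $\sup_n \hat g(n) = \gamma$ for some $\gamma < \wck$.

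Finally I would conclude: for every $n$, if $n \in B$ then $n \in B_{g(n)} \subseteq B_\gamma$ (using that the $B_\alpha$ are increasing in $\alpha$, which follows from the enumeration-along-ordinals convention — $B_\alpha = \bigcup_{\beta < \alpha} \cdots$ in the limit case and $B_{\alpha+1} \supseteq B_\alpha$ throughout). Hence $B = B_\gamma$. Since $B_\gamma$ is a $\Sigma^0_{\gamma+1}$ set with $\gamma + 1 \le \wck$, it is in particular $\Pi^0_{\wck}$ (every $\Sigma^0_{\alpha}$ or $\Pi^0_\alpha$ set for $\alpha < \wck$ is trivially $\Pi^0_{\wck}$, being a single $\Delta^1_1$ piece in the $\bigcap_{\alpha<\wck}$ presentation). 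Therefore $B$ is $\Pi^0_{\wck}$, as claimed.

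The main obstacle — really the only subtle point — is verifying that $g$ (equivalently $\hat g$) is genuinely $\Pi^1_1$, and in particular that the bounded quantifier "$\forall \beta < \alpha$" does not push the complexity past $\Pi^1_1$; this is handled by the fact recorded earlier in the excerpt that $\Ocal_{<\alpha}$ is $\Delta^1_1$ (indeed $\Sigma^0_{\alpha+1}$) uniformly in $\alpha$, so that quantifying over ordinal codes below a fixed $\alpha$ is a $\Delta^1_1$ operation. Everything else is bookkeeping with the coding conventions for effective Borel sets set up in this section, plus a direct appeal to the $\Sigma^1_1$-boundedness corollary.
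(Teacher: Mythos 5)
Your proof starts from the wrong unpacking of $\Sigma^0_{\wck+1}$. In this paper's coding conventions a $\Sigma^0_{\wck+1}$ set is a \emph{countable} union $B=\bigcup_{n\in\omega}\Bcal_n$ of $\Pi^0_{\wck}$ sets $\Bcal_n=\bigcap_{\alpha<\wck}B_{n,\alpha}$: the outer union is indexed by $\omega$, exactly as at every successor level. What you describe --- a union $\bigcup_{\alpha<\wck}B_\alpha$ taken along the computable ordinals and coded by a $\Pi^1_1$ set of codes --- is the paper's definition of $\Sigma^0_{\wck}$, i.e.\ of a $\Pi^1_1$ set of integers. Under your reading the lemma would assert that every $\Pi^1_1$ set of integers is $\Pi^0_{\wck}$, hence $\Sigma^1_1$, hence $\Delta^1_1$, which is false (take $B=\Ocal$). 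The actual content of the lemma, as the sentence preceding it announces, is that prefixing an $\exists n\in\omega$ to a $\Pi^0_{\wck}$ predicate does not raise the complexity; the intended argument is the quantifier swap $\bigcup_{n}\bigcap_{\alpha}B_{n,\alpha}=\bigcap_{\alpha}\bigcup_{n}\bigcap_{\beta<\alpha}B_{n,\beta}$, with $\Sigma^1_1$-boundedness applied to the total $\Pi^1_1$ map sending $n$ to the least $\alpha$ with $m\notin B_{n,\alpha}$ (for a fixed $m$ outside the left-hand set) to verify the nontrivial inclusion.

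Even for the statement you are actually proving, the boundedness step fails, and must fail. Your $\hat g$ is not a total $\Pi^1_1$ function: its graph contains the clause ``$n\notin B$'', which is the complement of a $\Pi^1_1$ set and hence only $\Sigma^1_1$, so the corollary of $\Sigma^1_1$-boundedness does not apply. This cannot be repaired: a total function with $\Pi^1_1$ graph is automatically $\Delta^1_1$ (as the paper notes when deriving the corollary), and if $\hat g$ were such a function then $B$ itself would be $\Delta^1_1$, contradicting the choice of a $\Pi^1_1$-complete $B$. Concretely, for $B=\Ocal$ with $B_\alpha=\Ocal_{<\alpha}$ one has $\sup_{n\in B}g(n)=\wck$, so your conclusion $B=B_\gamma$ for some $\gamma<\wck$ is false. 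The reason the paper's use of boundedness is legitimate while yours is not is that its bounding function is defined from the \emph{negative} information ``$m$ is outside each $\Pi^0_{\wck}$ piece'', which is witnessed by ordinals and is genuinely $\Pi^1_1$ and total, whereas yours tries to bound the enumeration ordinals of a $\Pi^1_1$ set, which is precisely what $\Sigma^1_1$-boundedness forbids.
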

\begin{proof}
Let $B$ be $\Sigma^0_{\wck + 1}$, that is, $B = \bigcup_{n \in \omega} \bigcap_{\alpha \in \wck} B_{n, \alpha}$ where each $B_{n, \alpha}$ is $\Sigma^0_{\alpha}$ uniformly in $\alpha$. Then $B$ is $\Pi^0_{\wck}$ via the following equality : $\bigcup_{n \in \omega} \bigcap_{\alpha \in \wck} B_{n, \alpha} = \bigcap_{\alpha \in \wck} \bigcup_{n \in \omega} \bigcap_{\beta \in \alpha} B_{n, \beta}$.
\end{proof}

It is clear that if $m$ is in the leftmost set it is also in the rightmost set. The reader should have no trouble to apply $\Sigma^1_1$-boundedness to show that if $m$ is not in the leftmost set, then it is not in the rightmost one.

\subsubsection{$\Pi^1_1$ and $\Sigma^1_1$ sets of reals}

Given $X \in 2^{\omega}$ we let $\Ocal^X$ be the set of $X$-constructible codes for $X$-computable ordinals. We let $\omega_1^X \geq \wck$ be the smallest non $X$-computable ordinal. For $\alpha < \omega_1^X$, we let $\Ocal^X_{<\alpha}$ be the elements of $\Ocal^X$ coding for an ordinal strictly smaller than $\alpha$.

One can show that a set $\Bcal \subseteq 2^{\omega}$ is $\Pi^1_1$ iff there exists some $e \in \omega$ such that $\Bcal = \{X\ :\ e \in \Ocal^X\}$, that is, $\Bcal$ is the set of elements relative to which $e$ codes for an $X$-computable ordinal. In particular, $\Bcal = \bigcup_{\alpha < \omega_1} \{X\ :\ e \in \Ocal_{<\alpha}^X\}$. Note that the union may go up to $\omega_1$, indeed, $\Pi^1_1$ sets of reals are not necessarily Borel.

A $\Pi^1_1$ set of particular interest is the set of element $X$ such that $\omega_1^X > \wck$. The set is Borel, but not effectively. One can even prove that it contains no non-empty $\Sigma^1_1$ subset : this is known as the Gandy Basis theorem (see Sacks \cite[III.1.5]{Sacks1990Higher}):

\begin{theorem}[Gandy Basis theorem]
Let $\Bcal \subseteq 2^{\omega}$ be a non-empty $\Sigma^1_1$ set. Then there exists $X \in \Bcal$ such that $\omega_1^X = \wck$.
\end{theorem}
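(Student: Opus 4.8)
The plan is to prove the Gandy basis theorem by the classical route (cf.\ Sacks~\cite{Sacks1990Higher}): extract from $\Bcal$ a member that is computable from the $\Pi^1_1$-complete set $\Ocal$ by a basis construction, and then invoke $\Sigma^1_1$-boundedness to see that this member computes no well-ordering of length $\wck$ or beyond. First I would pass to the $\Sigma^1_1$ normal form: there is a computable tree $T$ on $2 \times \omega$ with $X \in \Bcal \iff [T_X] \neq \emptyset$, where $T_X = \{\tau \in \omega^{<\omega} : (X\uhr|\tau|, \tau) \in T\}$ is $X$-computable uniformly in $X$. It then suffices to produce a branch $(X, Y) \in [T]$ with $\omega_1^{X \oplus Y} = \wck$, since then $X \in \Bcal$ and $\omega_1^X \leq \omega_1^{X \oplus Y} = \wck$, hence $\omega_1^X = \wck$. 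So from now on I would work with the non-empty closed set $[T] \subseteq 2^\omega \times \omega^\omega$.

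Next I would run the basis construction, building $(X, Y)$ by finite extensions while maintaining at each stage a node $(\sigma, \tau) \in T$ with the property $\pi$: some branch of $[T]$ extends $(\sigma,\tau)$. Viewed as a predicate of the finite datum $(\sigma, \tau)$, $\pi$ asserts that a computable tree has an infinite branch, so it is $\Sigma^1_1$, hence uniformly decidable from $\Ocal$ by $\Pi^1_1$-completeness of $\Ocal$. Starting from $(\epsilon, \epsilon)$, which has $\pi$ because $\Bcal \neq \emptyset$, at each step I pass to the lexicographically least one-step extension inside $T$ that preserves $\pi$ (one exists, witnessed by any branch of $[T]$ above the current node), and at stages reserved for it I also act on the requirements described below. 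The resulting $(X, Y)$ is a branch of $[T]$ and is computable from $\Ocal$.

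The delicate point is to force $\omega_1^{X \oplus Y} = \wck$, i.e.\ that $X \oplus Y$ computes no well-ordering of order type $\geq \wck$ (equivalently, that $\Ocal$ is not hyperarithmetical in $X \oplus Y$, since $\omega_1^Z > \wck$ is equivalent to $\Ocal \leq_{\mathbf h} Z$). I would interleave into the construction, for each index $e$, the requirement that $\Phi_e^{X \oplus Y}$ not be the characteristic function of a well-ordering of type $\geq \wck$. The key observation is that, below a finite condition, the ``threat'' that $e$ will produce such an object is governed by a $\Sigma^1_1$ predicate of the condition — embeddability of each recursive ordinal into $\Phi_e$ is arithmetical, and the $\omega$-conjunction over $\Ocal$ of these statements is again $\Sigma^1_1$ — hence $\Ocal$-decidable; so $\Ocal$ can decide whether the threat can be killed and, if so, find the killing extension. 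If a threat can never be killed below the current condition, then along the generic branch $\Phi_e$ codes a linear ordering embedding every recursive ordinal, and $\Sigma^1_1$-boundedness forces that ordering to be ill-founded, so the requirement is met for free. I expect this last step to be the main obstacle: membership in $\Bcal$ is $\Sigma^1_1$ while the bad event $\omega_1^X > \wck$ is $\Pi^1_1$, dual complexities with no a priori reason to be jointly realizable the way we want; the whole force of the theorem is that the $\Pi^1_1$-complete oracle $\Ocal$ is exactly strong enough to drive the basis construction yet, by $\Sigma^1_1$-boundedness, too weak to see any ordinal past $\wck$. The normal-form reduction and the bare basis construction are routine.
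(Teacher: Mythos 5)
The paper does not actually prove this theorem; it quotes it from Sacks (III.1.5), so there is no in-paper argument to compare against, and the only question is whether your sketch stands on its own. Your normal-form reduction, the $\Ocal$-driven extendibility construction, and the identification of $\Sigma^1_1$-boundedness as the engine are all correct and are indeed the standard ingredients. The gap is exactly where you suspected it: the treatment of the requirements with \emph{finite} conditions does not close. The sound half of the dichotomy is this: if at some condition $p=(\sigma,\tau)$ \emph{every} branch of $[T]$ extending $p$ makes $\Phi_e$ a well-ordering, then $\{\Phi_e^{X\oplus Y} : (X,Y)\in[T],\ (X,Y)\succ p\}$ is a $\Sigma^1_1$ set of well-order codes, boundedness gives a uniform bound $\alpha<\wck$, and the requirement is met for free with no action taken. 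The problem is the other half. If some branch above $p$ makes $\Phi_e$ fail to be a well-ordering, no finite extension can commit you to that outcome, and your fallback --- ``if the threat can never be killed, the generic's $\Phi_e$ embeds every recursive ordinal, and boundedness forces it to be ill-founded'' --- fails twice over. First, ``every extendible extension of $p$ has \emph{some} branch above it with property $P$'' does not imply that the branch you actually build has property $P$: in the full binary tree every node has a branch above it coding a well-order of type $\geq\wck$, yet most branches (all hyperarithmetic ones, for instance) do not. Second, even if your branch's $\Phi_e$ did embed every recursive ordinal, $\Sigma^1_1$-boundedness would not show it is ill-founded; boundedness applies to $\Sigma^1_1$ \emph{sets} of well-order codes, whereas ``this particular linear order embedding all recursive ordinals is ill-founded'' is precisely the assertion $\omega_1^Z=\wck$ you are trying to prove, not a consequence of boundedness.

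The standard repair is to abandon finite conditions for exactly this step and force with nonempty $\Sigma^1_1$ sets (Gandy--Harrington forcing): in the bad case one shrinks the current condition to the nonempty $\Sigma^1_1$ set of branches $(X,Y)\succ p$ in $[T]$ with $\Phi_e^{X\oplus Y}$ not a well-ordering, thereby permanently committing the generic to that outcome, and the good case is handled by boundedness as above. One must then separately verify that a sufficiently generic decreasing sequence of nonempty $\Sigma^1_1$ conditions has a real in its intersection, which is the other nontrivial lemma of that proof. As written, your handling of the second case is a genuine gap rather than a routine detail.
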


\subsubsection{The general strategy to show hyperarithmetic cone avoidance} \label{sec_strategy_coneavoid}

Let $Z$ be non $\Delta^1_1$. Our goal is to build a generic $G \subseteq A$ or $G \subseteq \omega - A$ such that $Z$ is not $\Delta^1_1(G)$. This is done in two steps: first show that $Z$ is not $G^{(\alpha)}$-computable for any $\alpha < \wck$ and second show that $\omega_1^{G} = \wck$, so in particular we cannot have that $Z$ is $G^{(\alpha)}$-computable for $\wck \leq \alpha < \omega_1^{G}$.

The first part is simply an iteration of the forcing through the computable ordinals, and raises no particular issue. This is done in \Cref{sec-hyp-theforcing}.

The second part is a little bit trickier but still follows a canonical technique, which has often been used, up to some cosmetic changes in its presentation, to show this kind of preservation theorem (see for instance \cite{greenberg2017higher}, \cite{sacks1969measure} or \cite{tanaka1967basis}) : Suppose $\omega_1^G > \wck$, in particular there is an element $e \in \Ocal^G$ which codes for $\wck$, that is $e$ is the code of a functional with $\forall n\ \Phi_e(G, n) \downarrow \in \Ocal_{<\wck}^G$ with $\sup_n |\Phi_e(G,n)| = \wck$ where $|\Phi_e(G,n)|$ is the ordinal coded by $\Phi_e(G,n)$. All we have to do is to show that such a code $e$ does not exist. Given $e$ we show that one of the following holds:
\begin{enumerate}
\item $\exists n\ \forall \alpha < \wck\ \Phi_e(G,n) \notin \Ocal^G_{<\alpha}$
\item $\exists \alpha < \wck\ \forall n\ \Phi_e(G,n) \in \Ocal^G_{<\alpha}$
\end{enumerate}
Each set $\{X\ :\ \Phi_e(X,n) \notin \Ocal^X_{<\alpha}\}$ is $\Delta^1_1$ uniformly in $\alpha$. It follows that the set $\{X\ :\ \exists n\ \forall \alpha < \wck\ \Phi_e(X,n) \notin \Ocal^X_{<\alpha}\}$ is a $\Sigma^0_{\wck + 1}$ set of reals. Contrary to $\Sigma^0_{\wck + 1}$ sets of integers, such sets cannot be simplified. We are then required to extend our forcing questions in order to control the truth of $\Sigma^0_{\wck + 1}$-statements. This is what will be done in \Cref{sec-hyp-preserv}.


\subsection{Preliminaries}

We now design a notion of forcing for controlling the $\alpha$-jump of solutions
to the pigeonhole principle. Unlike the notion of forcing for controlling finite iterations of the jump, this notion is non-disjunctive and initially fixes the side of the instance $A$ from which we will construct a solution. This is at the cost of a forcing question whose definitional complexity is higher than the question it asks.

\begin{proposition} \label{prop-hyp-scott2}
There is a sequence of sets $\{M_\alpha\}_{\alpha < \wck}$ such that:
\begin{enumerate}
\item $M_\alpha$ codes for a countable Scott set $\Mcal_\alpha$
\item $\halt^{(\alpha)}$ is uniformly coded by an element of $\Mcal_\alpha$
\item Each $M_\alpha'$ is uniformly computable in $\halt^{(\alpha+1)}$
\end{enumerate}
\end{proposition}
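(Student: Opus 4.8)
The plan is to reuse, essentially verbatim, the functional produced in the proof of \Cref{prop-hyp-scott}. Recall that there we built a uniform $\Pi^0_1(X)$ class $\Dcal_X$ consisting of codes $\bigoplus_n Y_n$ of Scott sets with $Y_0 = X$, together with a Turing functional $\Phi$ such that, for \emph{every} oracle $X$, $\Phi(X')$ is the jump of some member of $\Dcal_X$. Nothing in that argument --- an application of the relativized Low basis theorem to $\Dcal_X$ --- uses that $X$ is a finite iterate of the jump; it applies to an arbitrary oracle $X$, which is precisely what makes the transfinite version essentially free.

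First I would fix, for each $\alpha < \wck$, the set $\halt^{(\alpha)}$ as defined in the subsection on iterated jumps, manipulated through its canonical $\Ocal_1$-code, so that $\halt^{(\alpha)}$ and $\halt^{(\alpha+1)} = (\halt^{(\alpha)})'$ are well-defined and the whole passage is uniform in the code for $\alpha$. Then I would set $X = \halt^{(\alpha)}$ and let $M_\alpha$ be the member of $\Dcal_{\halt^{(\alpha)}}$ whose jump is $\Phi((\halt^{(\alpha)})') = \Phi(\halt^{(\alpha+1)})$. The three clauses are then checked directly: (1) $M_\alpha \in \Dcal_{\halt^{(\alpha)}}$, so $M_\alpha$ codes a countable Scott set $\Mcal_\alpha$; (2) by construction of $\Dcal_{\halt^{(\alpha)}}$ the first element of $M_\alpha$ is $\halt^{(\alpha)}$, so $\halt^{(\alpha)}$ carries the fixed $M_\alpha$-index $0$, uniformly in $\alpha$; (3) $M_\alpha' = \Phi(\halt^{(\alpha+1)})$, hence $M_\alpha$ itself, is computable from $\halt^{(\alpha+1)}$ through the single functional $\Phi$, which does not depend on $\alpha$ --- exactly the required uniformity.

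I do not expect a genuine obstacle here: this is the transfinite transcription of the proof of \Cref{prop-hyp-scott}, and the only point meriting a word is uniformity across all computable ordinals simultaneously. That is immediate, because $\Phi$ is one fixed functional and $\alpha \mapsto \halt^{(\alpha)} \mapsto \halt^{(\alpha+1)}$ is precisely the uniform construction of the iterated jump recalled in the preliminaries. In particular, unlike the forcing iteration or the transfinite analogue of \Cref{prop-hyp-cohesiveclassa}, limit stages introduce nothing new, since each $M_\alpha$ is defined outright from $\halt^{(\alpha+1)}$ rather than recursively from the earlier $M_\beta$.
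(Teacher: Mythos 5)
Your proposal is correct and coincides with the paper's own proof, which likewise just reapplies the functional $\Phi$ from \Cref{prop-hyp-scott} to the oracles $\halt^{(\alpha+1)}$ for $\alpha < \wck$. Your write-up merely spells out the verification of the three clauses and the uniformity in the $\Ocal_1$-code for $\alpha$, which the paper leaves implicit.
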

\begin{proof}
In the proof of \Cref{prop-hyp-scott2} we show how to build a functional $\Phi : 2^\omega \rightarrow 2^\omega$ such that for any oracle $X$, we have that $M' = \Phi(X')$ is such that $M = \oplus_{n \in \omega} X_n$ codes for a Scott set $\Mcal$ with $X_0 = X$.

We simply use here this functionnal with any $\halt^{(\alpha+1)}$ for $\alpha < \wck$.
\end{proof}

Note $\halt^{(\beta)}$ is computable in $\halt^{(\alpha)}$ for $\beta < \alpha$ in a uniform way : there is a unique computable function $f(\halt^{(\alpha)}, \alpha, \beta)$ which outputs $\halt^{(\beta)}$ for every $\beta < \alpha$. Also \Cref{prop-hyp-scott2} implies that $M_\beta$ is computable in $\halt^{(\alpha)}$ for $\beta < \alpha$ and similarly, the computation is uniform in $\beta, \alpha$.

We now turn to an extention of \cref{prop-hyp-cohesiveclassa} to the computable ordinals, for which we reuse \cref{lem-hyp-cohesiveclass1} and \cref{lem-hyp-cohesiveclass2}.

\begin{proposition} \label{prop-hyp-cohesiveclass}
There is a sequence of sets $\{C_\alpha\}_{\alpha < \wck}$ such that:
\begin{enumerate}
\item $\Ucal_{C_\alpha}^{\Mcal_{\alpha}}$ is an $\Mcal_\alpha$-cohesive largeness class
\item $\beta < \alpha$ implies $\Ucal_{C_\alpha}^{\Mcal_{\alpha}} \subseteq \langle \Ucal_{C_\beta}^{\Mcal_{\beta}} \rangle$
\item Each $C_\alpha$ is coded by an element of $\Mcal_{\alpha + 1}$ uniformly in $\alpha$ and $M_{\alpha + 1}$.
\end{enumerate}
\end{proposition}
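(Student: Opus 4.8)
The plan is to build the sequence $\{C_\alpha\}_{\alpha < \wck}$ by transfinite recursion on $\alpha$, mirroring the successor-step argument of \Cref{prop-hyp-cohesiveclassa} and adding a genuinely new argument at limit stages. Throughout, we maintain as an invariant not only (1)--(3) but also that $\langle \Ucal_{C_\alpha}^{\Mcal_\alpha} \rangle$ is partition regular (by \Cref{lem:minimal-is-partition-regular}, since an $\Mcal_\alpha$-cohesive largeness class contains a unique $\Mcal_\alpha$-minimal largeness class, which is partition regular), and that the whole construction is carried out uniformly: at stage $\alpha$ we produce an $M_{\alpha+1}$-index for $C_\alpha$ uniformly in $\alpha$ and in $M_{\alpha+1}$, using that $M_\beta \in \Mcal_{\alpha+1}$ and $C_\beta \in \Mcal_{\beta+1} \subseteq \Mcal_{\alpha+1}$ for all $\beta < \alpha$ (these inclusions hold uniformly by \Cref{prop-hyp-scott2} and the fact that $\halt^{(\gamma)}$ is computable in $\halt^{(\delta)}$ for $\gamma < \delta$).

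\textbf{Base case and successor step.} For $\alpha = 0$ we take $C_0$ so that $\Ucal_{C_0}^{\Mcal_0}$ is the largeness class of all infinite sets (which is trivially $\Mcal_0$-cohesive, in fact $\Mcal_0$-minimal and partition regular). The successor step $\alpha \to \alpha+1$ is essentially identical to the successor step in the proof of \Cref{prop-hyp-cohesiveclassa}: since $C_\alpha$ is coded in $\Mcal_{\alpha+1}$, it is computable in $\halt^{(\alpha+2)}$ and hence in $M_\alpha''$; apply \Cref{lem-hyp-cohesiveclass1} to obtain $D_\alpha \supseteq C_\alpha$ with $\Ucal_{D_\alpha}^{\Mcal_\alpha} = \langle \Ucal_{C_\alpha}^{\Mcal_\alpha} \rangle$, uniformly $M_\alpha''$-computable; transfer the $M_\alpha$-indices of $D_\alpha$ into $M_{\alpha+1}$-indices to get $E_{\alpha+1}$ with $\Ucal_{E_{\alpha+1}}^{\Mcal_{\alpha+1}} = \langle \Ucal_{C_\alpha}^{\Mcal_\alpha} \rangle$, which is partition regular and coded in $\Mcal_{\alpha+2}$; then apply \Cref{lem-hyp-cohesiveclass2} to obtain an $\Mcal_{\alpha+2}$-index for $C_{\alpha+1} \supseteq E_{\alpha+1}$ with $\Ucal_{C_{\alpha+1}}^{\Mcal_{\alpha+1}}$ an $\Mcal_{\alpha+1}$-cohesive largeness class. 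Property (2) at $\alpha+1$ follows from (2) at $\alpha$ together with $\Ucal_{C_{\alpha+1}}^{\Mcal_{\alpha+1}} \subseteq \Ucal_{E_{\alpha+1}}^{\Mcal_{\alpha+1}} = \langle \Ucal_{C_\alpha}^{\Mcal_\alpha}\rangle \subseteq \langle \Ucal_{C_\beta}^{\Mcal_\beta}\rangle$ for $\beta \le \alpha$; uniformity is tracked as above.

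\textbf{Limit step.} Let $\alpha = \sup_n \beta_n$ with $\beta_n < \alpha$; using the standard coding of $\Ocal_1$ we may computably produce from (a code for) $\alpha$ a sequence of codes for a cofinal $\{\beta_n\}_{n}$. Form the intersection $\Acal = \bigcap_n \langle \Ucal_{C_{\beta_n}}^{\Mcal_{\beta_n}} \rangle$. By (2) at the stages below $\alpha$, the classes $\langle \Ucal_{C_{\beta_n}}^{\Mcal_{\beta_n}}\rangle$ form a decreasing sequence of partition regular classes, so by \Cref{lem:decreasing-largeness-yields-pr}, $\Acal$ is partition regular, in particular a largeness class. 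Moreover each $C_{\beta_n} \in \Mcal_{\beta_n + 1} \subseteq \Mcal_\alpha$, so $\Acal$ can be written as $\Ucal_F^{\Mcal_\alpha}$ for some $F$ computable from $\bigoplus_n C_{\beta_n}$ together with $M_\alpha$ (transfer all relevant indices into $\Mcal_\alpha$; here $F$ sits in $\Mcal_{\alpha+1}$ since $\bigoplus_n C_{\beta_n} \le_T \halt^{(\alpha+1)}$, using $C_{\beta_n} \le_T \halt^{(\beta_n+2)} \le_T \halt^{(\alpha)}$ uniformly). Now $\Ucal_F^{\Mcal_\alpha}$ is a partition regular largeness class coded in $\Mcal_{\alpha+1}$, so we are exactly in the hypothesis of \Cref{lem-hyp-cohesiveclass2}: applying it (with the Scott set $\Mcal_{\alpha+1}$ in the role of $\Ncal$ and $M_\alpha'$ located inside $\Mcal_{\alpha+1}$) yields an $\Mcal_{\alpha+1}$-index for $C_\alpha \supseteq F$ such that $\Ucal_{C_\alpha}^{\Mcal_\alpha}$ is an $\Mcal_\alpha$-cohesive largeness class, giving (1) and (3). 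For (2): if $\beta < \alpha$, fix $n$ with $\beta \le \beta_n$; then $\Ucal_{C_\alpha}^{\Mcal_\alpha} \subseteq \Ucal_F^{\Mcal_\alpha} = \Acal \subseteq \langle \Ucal_{C_{\beta_n}}^{\Mcal_{\beta_n}}\rangle \subseteq \langle \Ucal_{C_\beta}^{\Mcal_\beta}\rangle$, the last inclusion by (2) at stage $\beta_n$.

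\textbf{Main obstacle.} The routine part is the successor step; the genuinely new point, and the one to be careful about, is the limit step---specifically checking that the intersection $\Acal = \bigcap_n \langle \Ucal_{C_{\beta_n}}^{\Mcal_{\beta_n}}\rangle$ is again of the form $\Ucal_F^{\Mcal_\alpha}$ with $F$ of controlled complexity (coded in $\Mcal_{\alpha+1}$), and that partition regularity is preserved so that \Cref{lem-hyp-cohesiveclass2} can be reapplied. The other delicate bookkeeping is that everything must be uniform in $\alpha$ (including across the case distinction $\alpha = 0$ / successor / limit, which is legitimate since given a notation in $\Ocal_1$ one can computably recognize which case holds and, at limits, extract a cofinal sequence of notations); this uniformity is what later allows the construction to feed into the forcing of \Cref{sec-hyp-theforcing}.
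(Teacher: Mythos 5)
Your proof is correct and matches the paper's argument essentially step for step: the same index-transfer between the Scott sets $\Mcal_\beta$ and $\Mcal_\alpha$, the same use of \Cref{lem-hyp-cohesiveclass1} followed by \Cref{lem-hyp-cohesiveclass2} at successor stages, and at limit stages the same reduction to a partition regular intersection coded in $\Mcal_{\alpha+1}$ before reapplying \Cref{lem-hyp-cohesiveclass2}. The only cosmetic difference is at limits: the paper intersects the classes $\Ucal_{C_{\beta_n}}^{\Mcal_{\beta_n}}$ themselves and notes this equals $\bigcap_n \langle \Ucal_{C_{\beta_n}}^{\Mcal_{\beta_n}} \rangle$ by invariant (2), whereas you compute the minimal classes directly via \Cref{lem-hyp-cohesiveclass1} (using the $M_{\beta_n}''$, all still below $\halt^{(\alpha)}$), so both routes give an index set of the required complexity.
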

\begin{proof}
Let $X_i^\alpha$ be the element of $\Mcal_\alpha$ of code $i$, so that each $M_\alpha = \oplus_{i} X_i^\alpha$. Let us argue that there is a computable function $f:\wck \times \wck \times \omega$ such that whenever $\beta < \alpha$, then $X_{i}^\beta = X_{f(\alpha, \beta, i)}^\alpha$: Given an ordinal $\alpha$ the function $f$ considers the $M_\alpha$-code of $\halt^{(\alpha)}$ (which is uniformly coded in $\Mcal_{\alpha}$) and uses it produce an $M_\alpha$-code of $M_\beta = \oplus_{i} X_i^\beta$ (as $M_\beta$ is computable in $\halt^{(\alpha)}$, uniformly in $\beta,\alpha$) and then returns an $M_\alpha$-code of $X_i^\beta$. Given $\alpha < \beta$ and $C \subseteq \omega^2$, we then let $g(\alpha, \beta, C) = \{\langle e, f(\alpha, \beta, i)\rangle : \langle e, i \rangle \in C \}$. In particular, $\Ucal^{\Mcal_{\alpha}}_{g(\alpha, \beta, C)} = \Ucal^{\Mcal_\beta}_C$.

Suppose that stage $\alpha$ we have defined by induction sets $C_\beta$ for each $\beta < \alpha$, verifying $(1) (2)$ and $(3)$. Let us proceed and define $C_\alpha$.

Suppose first that $\alpha = \beta + 1$ is successor. Note that the set $C_{\beta}$ is coded by an element of $\Mcal_{\beta + 1}$ uniformly in $\beta$, and thus that $C_{\beta}$ is uniformly computable in $\halt^{(\beta+2)}$ and then uniformly computable in $M_\beta''$. Using \Cref{lem-hyp-cohesiveclass1} we define $D_\beta \supseteq C_\beta$ to be such that $\Ucal_{D_\beta}^{\Mcal_\beta} = \langle \Ucal_{C_\beta}^{\Mcal_\beta} \rangle$ and such that $D_\beta$ is uniformly $M_{\beta}''$-computable. We define $E_\alpha$ to be $g(\alpha, \beta, D_{\beta})$, so that $\Ucal_{E_\alpha}^{\Mcal_\alpha} = \Ucal_{D_\beta}^{\Mcal_\beta}$. Note that as $E_\alpha$ is uniformly computable in $M_{\beta}''$ and thus in $\halt^{(\alpha+1)}$, it is uniformly coded by an element of $\Mcal_{\alpha+1}$. Note also that $\Ucal_{E_\alpha}^{\Mcal_\alpha}$ is partition regular as it equals $\langle \Ucal_{C_\beta}^{\Mcal_\beta} \rangle$. Using \Cref{lem-hyp-cohesiveclass2} we uniformly find an $\Mcal_{\alpha+1}$-index of $C_\alpha \supseteq E_\alpha$ to be such that $\Ucal_{C_\alpha}^{\Mcal_{\alpha}}$ is an $\Mcal_\alpha$-cohesive largeness class.

At limit stage $\alpha = \sup_n \beta_n$, each set $C_{\beta_n}$ is coded by an element of $\Mcal_{\beta_n + 1}$ uniformly in $\beta_n$ and that $\Mcal_{\beta_n + 1}$ is uniformly computable in $\halt^{(\alpha)}$. It follows that $\bigcup_{n} C_{\beta_n}$ is uniformly computable in $\halt^{(\alpha)}$. We define $D_\alpha$ to be $\bigcup_n g(\alpha, \beta_n, C_{\beta_n})$. Note that $D_\alpha$ is uniformly computable in $\halt^{(\alpha)}$ and thus coded by an element of $\Mcal_\alpha$ uniformly in $\alpha$. Note also that $\Ucal_{D_\alpha}^{M_\alpha} = \bigcap_{n \in \omega} \Ucal_{C_{\beta_n}}^{M_{\beta_n}} = \bigcap_{n \in \omega} \langle \Ucal_{C_{\beta_n}}^{M_{\beta_n}} \rangle$. As an intersection of partition regular class, $\Ucal_{D_\alpha}^{M_\alpha}$ is partition regular. Using \Cref{lem-hyp-cohesiveclass2} there is a set $C_\alpha \supseteq D_\alpha$ such that $\Ucal_{C_\alpha}^{\Mcal_\alpha}$ is $\Mcal_\alpha$-cohesive and such that $C_\alpha$ is uniformly coded by an element of $\Mcal_{\alpha+1}$.
\end{proof}

\subsection{The forcing} \label{sec-hyp-theforcing}

From now on, fix sequences $\{\Mcal_\alpha\}_{\alpha < \wck}$ and $\{C_\alpha\}_{\alpha < \wck}$ which verify \Cref{prop-hyp-scott2} and \Cref{prop-hyp-cohesiveclass}, respectively. Assume also that we have a class $\Scal \subseteq \bigcap_{\beta < \wck} \Ucal_{C_\beta}^{\Mcal_{\beta}}$ which is partition regular and that will be detailed later.


Let $A^0 \cup A^1 = \omega$. Note that there must be $i< 2$ such that $A^i \in \Scal$. Let then $A = A^i$ for some $i$ such that $A^i \in \Scal$.

\begin{definition}
Let $\Pb_{\wck}$ be the set of conditions $(\sigma, X)$ such that:
\begin{enumerate}
\item $\sigma \subseteq A$
\item $X \subseteq A$
\item $X \cap \{0, \dots, |\sigma|\} = \emptyset$.
\item $X \in \Scal$
\end{enumerate}
Given two conditions $(\sigma, X), (\tau, Y) \in \Pb_{\wck}$ we let $(\sigma, X) \leq (\tau, Y)$ be the usual Mathias extension, that is, $\sigma \succeq \tau$, $X \subseteq Y$ and $\sigma - \tau \subseteq Y$.
\end{definition}

We now define an abstract forcing question for $\Sigma^0_\alpha$ sets, which is merely an extension of the forcing question of the $\Pb_n$ forcing for $\Sigma^0_{n+1}$ sets : when $\alpha < \omega$, the definition below is merely a reformulation of \Cref{def-hyp-forcingqua} with the use of effective Borel sets instead of formulas.

\begin{definition} \label{def-hyp-forcingqu}
Let $\sigma \in 2^{<\omega}$. Given a $\Sigma^0_1$ class $\Ucal$, let $\sigma \qvdash \Ucal$ hold if
$$\{Y\ :\ \exists \tau \subseteq Y - \{0, \dots, |\sigma|\}\ [\sigma \cup \tau] \subseteq \Ucal\} \cap \Ucal_{C_{0}}^{\Mcal_{0}}$$
is a largeness class. Then inductively, given a $\Sigma^0_m$ class $\Bcal = \bigcup_{n < \omega} \Bcal_{n}$ with $1 < m < \omega$, we let $\sigma \qvdash \Bcal$ hold if
$$\{Y\ :\ \exists \tau \subseteq Y - \{0, \dots, |\sigma|\}\ \exists n\ \sigma \cup \tau \nqvdash 2^\omega - \Bcal_{n}\} \cap \Ucal_{C_{m-1}}^{\Mcal_{m-1}}$$
is a largeness class. Then inductively, given a $\Sigma^0_\alpha$ class $\Bcal = \bigcup_{n < \omega} \Bcal_{\beta_n}$ with $\omega \leq \alpha < \wck$, we define $\sigma \qvdash \Bcal$ if
$$\{Y\ :\ \exists \tau \subseteq Y - \{0, \dots, |\sigma|\}\ \exists n\ \sigma \cup \tau \nqvdash 2^\omega - \Bcal_{\beta_n}\} \cap \Ucal_{C_{\alpha}}^{\Mcal_{\alpha}}$$
is a largeness class.

For a condition $p = (\sigma, X) \in \Pb_{\wck}$ and an effectively Borel set $\Bcal$, we write $p \qvdash \Bcal$ if $\sigma \qvdash \Bcal$.
\end{definition}

We shall now study the effectivity of the relation $\qvdash$. To do so we introduce the following notation.

\begin{definition}
Let $\sigma \in 2^{<\omega}$. Given a $\Sigma^0_1$ class $\Bcal$, we write $\Ucal(\Bcal, \sigma)$ for the open set:
$$\{Y\ :\ \exists \tau \subseteq Y - \{0, \dots, |\sigma|\}\ [\sigma \cup \tau] \subseteq \Bcal\}$$
Given a $\Sigma^0_\alpha$ class $\Bcal = \bigcup_{n < \omega} \Bcal_{\beta_n}$ for $1 < \alpha < \wck$ we write $\Ucal(\Bcal, \sigma)$ for the open set:
$$\{Y\ :\ \exists \tau \subseteq Y - \{0, \dots, |\sigma|\}\ \exists n\ \sigma \cup \tau \nqvdash 2^\omega - \Bcal_{\beta_n}\}$$
\end{definition}

\Cref{prop-hyp-effectivalla} settled the complexity of the relation $\qvdash$ by showing that it is $\Pi^0_{1}(C_{m-1} \oplus \halt^{(m)})$ for a $\Sigma^0_m$ class. We extend here the proposition for $\Sigma^0_\alpha$ classes. Note that in the following one might have the false impression that we loose one jump compare to \cref{prop-hyp-effectivalla}. This is due to the fact that for $\alpha \geq \omega$ the $\Sigma^0_\alpha$-complete set is $\halt^{(\alpha+1)}$ and not $\halt^{(\alpha)}$.

\begin{proposition} \label{prop-hyp-effectivall}
Let $\sigma \in 2^{<\omega}$.
\begin{enumerate}
\item Let $\Bcal$ be a $\Sigma^0_m$ class for $0 < m < \omega$
\begin{enumerate}
\item The set $\Ucal(\Bcal, \sigma)$ is an upward-closed $\Sigma^0_1(C_{m-2} \oplus \halt^{(m-1)})$ open set if $m>1$ and an upward-closed $\Sigma^0_1$ open set if $m = 1$.
\item The relation $\sigma \qvdash \Bcal$ is $\Pi^0_1(C_{m-1} \oplus \halt^{(m)})$.
\end{enumerate}

\item Let $\Bcal$ be a $\Sigma^0_\alpha$ class for $\alpha \geq \omega$.
\begin{enumerate}
\item The set $\Ucal(\Bcal, \sigma)$ is an upward closed $\Sigma^0_1(C_{\alpha-1} \oplus \halt^{(\alpha)})$ open set if $\alpha$ is successor and an upward closed $\Sigma^0_1(\halt^{(\alpha)})$ open set if $\alpha$ is limit.
\item The relation $\sigma \qvdash \Bcal$ is $\Pi^0_1(C_\alpha \oplus \halt^{(\alpha+1)})$.
\end{enumerate}
\end{enumerate}
This is uniform in $\sigma$ and a code for the class $\Bcal$.
\end{proposition}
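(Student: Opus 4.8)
The plan is to prove items (1) and (2) simultaneously by transfinite induction on the level $\alpha$ of the effective Borel class $\Bcal$, mimicking \Cref{prop-hyp-effectivalla} but distinguishing four cases, all recognizable computably from a code for $\Bcal$: $\alpha=1$; $\alpha=m$ a finite successor $>1$; $\alpha$ a limit ordinal; and $\alpha=\beta+1$ with $\beta\geq\omega$. In each case I would first pin down the complexity of the open set $\Ucal(\Bcal,\sigma)$ (item (a)), then deduce the complexity of $\sigma\qvdash\Bcal$ (item (b)) by a uniform scheme that is the same in all cases: by \Cref{def-hyp-forcingqu}, $\sigma\qvdash\Bcal$ holds iff $\Ucal(\Bcal,\sigma)\cap\Ucal^{\Mcal_\gamma}_{C_\gamma}$ is a largeness class for the relevant index $\gamma$ ($\gamma=m-1$ when $\Bcal$ is $\Sigma^0_m$ finite, $\gamma=\alpha$ when $\Bcal$ is $\Sigma^0_\alpha$ with $\alpha\geq\omega$), equivalently iff for every finite $F\subseteq C_\gamma$ the class $\Ucal(\Bcal,\sigma)\cap\Ucal^{\Mcal_\gamma}_F$ is a largeness class; once one knows $\Ucal(\Bcal,\sigma)$ is $\Sigma^0_1(Y)$ for some $Y\in\Mcal_\gamma$, each such intersection is $\Sigma^0_1(M_\gamma)$ uniformly in $F$, so by \Cref{lem:largeness-class-complexity} the largeness predicate is $\Pi^0_2(M_\gamma)$, hence $\Pi^0_1(M_\gamma')$, hence $\Pi^0_1(\halt^{(\gamma+1)})$ uniformly in $F$ (using $M_\gamma'\leq_T\halt^{(\gamma+1)}$ from \Cref{prop-hyp-scott2}), and quantifying over finite $F\subseteq C_\gamma$ gives $\Pi^0_1(C_\gamma\oplus\halt^{(\gamma+1)})$.

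For the base case $\alpha=1$, I would observe that $\Ucal(\Bcal,\sigma)=\{Y:\exists\tau\subseteq Y-\{0,\dots,|\sigma|\}\ [\sigma\cup\tau]\subseteq\Bcal\}$ is visibly an upward-closed $\Sigma^0_1$ open set, and apply the scheme with $\gamma=0$, recovering $\Pi^0_1(C_0\oplus\halt')$ as in \Cref{prop-hyp-effectivalla}. For $\alpha=m>1$ finite, write $\Bcal=\bigcup_n\Bcal_n$ with each $\Bcal_n$ a $\Pi^0_{m-1}$ class, so each $2^\omega-\Bcal_n$ is $\Sigma^0_{m-1}$; the induction hypothesis makes each $\sigma\cup\tau\qvdash 2^\omega-\Bcal_n$ a $\Pi^0_1(C_{m-2}\oplus\halt^{(m-1)})$ relation, its negation $\Sigma^0_1(C_{m-2}\oplus\halt^{(m-1)})$, hence $\Ucal(\Bcal,\sigma)$ an upward-closed $\Sigma^0_1(C_{m-2}\oplus\halt^{(m-1)})$ open set; since $C_{m-2}$ and $\halt^{(m-1)}$ are coded in $\Mcal_{m-1}$ this is $\Sigma^0_1(Y)$ for some $Y\in\Mcal_{m-1}$, and the scheme with $\gamma=m-1$ yields the claimed $\Pi^0_1(C_{m-1}\oplus\halt^{(m)})$.

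For $\alpha=\sup_n\beta_n$ limit, write $\Bcal=\bigcup_n\Bcal_{\beta_n}$ with $\Bcal_{\beta_n}$ a $\Pi^0_{\beta_n}$ class, so each $2^\omega-\Bcal_{\beta_n}$ is $\Sigma^0_{\beta_n}$; the induction hypothesis makes each $\sigma\cup\tau\qvdash 2^\omega-\Bcal_{\beta_n}$ a relation that is $\Pi^0_1$ in an oracle which --- using the uniformity clauses of \Cref{prop-hyp-scott2} and \Cref{prop-hyp-cohesiveclass} together with the fact that $\beta_n+k<\alpha$ for every finite $k$ --- is uniformly $\halt^{(\alpha)}$-computable, so $\Ucal(\Bcal,\sigma)=\{Y:\exists\tau\ \exists n\ \sigma\cup\tau\nqvdash 2^\omega-\Bcal_{\beta_n}\}$ is an upward-closed $\Sigma^0_1(\halt^{(\alpha)})$ open set; since $\halt^{(\alpha)}$ is coded in $\Mcal_\alpha$ it is $\Sigma^0_1(Y)$ for some $Y\in\Mcal_\alpha$, and the scheme with $\gamma=\alpha$ gives $\Pi^0_1(C_\alpha\oplus\halt^{(\alpha+1)})$. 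For $\alpha=\beta+1$ with $\beta\geq\omega$, write $\Bcal=\bigcup_n\Bcal_n$ with $\Bcal_n$ a $\Pi^0_\beta$ class; the induction hypothesis at level $\beta$ makes each $\sigma\cup\tau\qvdash 2^\omega-\Bcal_n$ a $\Pi^0_1(C_\beta\oplus\halt^{(\beta+1)})=\Pi^0_1(C_{\alpha-1}\oplus\halt^{(\alpha)})$ relation, so $\Ucal(\Bcal,\sigma)$ is an upward-closed $\Sigma^0_1(C_{\alpha-1}\oplus\halt^{(\alpha)})$ open set, hence $\Sigma^0_1(Y)$ for some $Y\in\Mcal_\alpha$, and the scheme with $\gamma=\alpha$ gives $\Pi^0_1(C_\alpha\oplus\halt^{(\alpha+1)})$. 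Uniformity in $\sigma$ and in a code for $\Bcal$ is preserved throughout because recognizing the type of $\alpha$, extracting the component codes $\Bcal_n$, and all the reductions above are uniform.

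I expect the main obstacle to be the ordinal bookkeeping at limit stages: one must verify that the oracles returned by the induction hypothesis for the components $2^\omega-\Bcal_{\beta_n}$ assemble into something uniformly $\halt^{(\alpha)}$-computable, not merely $\halt^{(\alpha+1)}$-computable, since this is precisely what keeps $\Ucal(\Bcal,\sigma)$ at level $\Sigma^0_1(\halt^{(\alpha)})$ and the forcing question at $\Pi^0_1(C_\alpha\oplus\halt^{(\alpha+1)})$ rather than one jump higher --- the apparent loss of a jump relative to \Cref{prop-hyp-effectivalla} being only a consequence of $\halt^{(\alpha+1)}$, rather than $\halt^{(\alpha)}$, being $\Sigma^0_\alpha$-complete for $\alpha\geq\omega$. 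The fact one needs is exactly that finite successors of ordinals below a limit $\alpha$ remain below $\alpha$, combined with the uniform computations guaranteed by \Cref{prop-hyp-scott2} and \Cref{prop-hyp-cohesiveclass}.
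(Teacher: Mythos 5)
Your proposal is correct and follows essentially the same route as the paper: the paper simply cites \Cref{prop-hyp-effectivalla} for part (1) and then runs exactly your induction for part (2), with the same case split (limit vs.\ successor $\geq\omega$), the same absorption of the oracles $C_{\beta_n}\oplus\halt^{(\beta_n+1)}$ into $\halt^{(\alpha)}$ at limit stages via $\Sigma^1_1$-free ordinal bookkeeping ($\beta_n+k<\alpha$) and the uniformity of \Cref{prop-hyp-scott2} and \Cref{prop-hyp-cohesiveclass}, and the same reduction of the largeness predicate through finite $F\subseteq C_\gamma$, \Cref{lem:largeness-class-complexity}, and $\Pi^0_2(M_\gamma)\to\Pi^0_1(M_\gamma')\to\Pi^0_1(\halt^{(\gamma+1)})$. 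The only cosmetic difference is that you re-prove part (1) rather than quoting the earlier proposition.
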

\begin{proof}
(1) was already proved in \Cref{prop-hyp-effectivalla}. We then only prove (2). This is done by induction on the effective Borel codes. Let $\omega \leq \alpha < \wck$. Suppose (a) and (b) are true for any $\omega \leq \beta < \alpha$. Let $\sigma \in 2^{<\omega}$ and let $\Bcal = \bigcup_{n < \omega} \Bcal_{\beta_n}$ be a $\Sigma^0_\alpha$ class. Let
$$\Ucal(\Bcal, \sigma) = \{Y\ :\ \exists \tau \subseteq Y - \{0, \dots, |\sigma|\}\ \exists n\ \sigma \cup \tau \nqvdash 2^\omega - \Bcal_{\beta_n}\}$$

Let us show (a). Suppose first $\alpha$ is limit. For each $n \in \omega$, the class $2^\omega - \Bcal_{\beta_n}$ is a $\Sigma^0_{\beta_n}$ class uniformly in $\sigma \cup \tau$ and in a code for $\Bcal_{\beta_n}$. By induction hypothesis, or by \cref{prop-hyp-effectivalla} in case $\alpha = \omega$, the relation $\sigma \cup \tau \nqvdash 2^\omega - \Bcal_{\beta_n}$ is, in any case, $\Sigma^0_1(\halt^{(\beta_n+2)})$ and thus $\Sigma^0_1(\halt^{(\alpha)})$. It follows that $\Ucal(\Bcal, \sigma)$ is an upward-closed $\Sigma^0_1(\halt^{(\alpha)})$ open set.

Suppose now $\alpha \geq \omega$ with $\alpha = \beta+1$. For each $n$ we have that $2^\omega - \Bcal_{\beta_n}$ is a $\Sigma^0_{\beta}$ class uniformly in $n$. By induction hypothesis, the relation $\sigma \cup \tau \nqvdash 2^\omega - \Bcal_{\beta_n}$ is $\Sigma^0_1(C_\beta \oplus \halt^{(\beta+1)})$. It follows that $\Ucal(\Bcal, \sigma)$ is an upward closed $\Sigma^0_1(C_{\alpha-1} \oplus \halt^{(\alpha)})$ class.

Let us now show (b). Suppose $\alpha \geq \omega$ successor or limit. Then $\Ucal(\Bcal, \sigma) \cap \Ucal_{C_{\alpha}}^{\Mcal_{\alpha}}$ is a largeness class if for all $F \subseteq C_{\alpha}$, the class $\Ucal(\Bcal, \sigma) \cap \Ucal_{F}^{\Mcal_{\alpha}}$ is a largeness class. It is a $\Pi^0_2(M_\alpha)$ statement uniformly in $F$ and then a $\Pi^0_1(M_\alpha')$ statement uniformly in $F$ and then a $\Pi^0_1(\halt^{(\alpha+1)})$ statement uniformly in $F$. It follows that the statement $\Ucal(\Bcal, \sigma) \cap \Ucal_{C_{\alpha}}^{\Mcal_{\alpha}}$ is a largeness class is $\Pi^0_1(C_{\alpha} \oplus \halt^{(\alpha+1)})$.
\end{proof}

We finally extend the forcing relation of \Cref{def:qb2-forcing-relation} to the transfinite.

\begin{definition}
Let $(\sigma, X) \in \Pb_{\wck}$. Let $\Ucal$ be a $\Sigma^0_1$ class. We define
$$
\begin{array}{rcccl}
(\sigma, X)&\Vdash&\Ucal&\leftrightarrow&[\sigma] \subseteq \Ucal\\
(\sigma, X)&\Vdash&2^\omega - \Ucal&\leftrightarrow&\forall \tau \subseteq X\ [\sigma \cup \tau] \nsubseteq \Ucal\\
\end{array}
$$
Then inductively for $\Sigma^0_\alpha$ classes $\Bcal = \bigcup_{n < \omega} \Bcal_{\beta_n}$, we define:
$$
\begin{array}{rcccl}
(\sigma, X)&\Vdash&\Bcal&\leftrightarrow&\exists n\ (\sigma, X) \Vdash \Bcal_{\beta_n}\\
(\sigma, X)&\Vdash&2^\omega - \Bcal&\leftrightarrow&\forall n\ \forall \tau \subseteq X\ \sigma \cup \tau \qvdash 2^\omega - \Bcal_{\beta_n}\\
\end{array}
$$
\end{definition}

Note that the relation $\Vdash$ does not change compare to the arithmetical case : the definition goes through exactly the same way in the transfinite. It is the same for the relation $\qvdash$. For these reasons the following lemmas and propositions and theorems are all proved exactly the same way as for the arithmetical case, only now our set $\Scal$ is included in $\bigcap_{\beta < \wck} \Ucal_{C_\beta}^{\Mcal_{\beta}}$ and not just in $\bigcap_{m < \omega} \Ucal_{C_m}^{\Mcal_{m}}$.

\begin{lemma} \label{lem-hyp-forcepi}
Let $p \in \Pb_{\wck}$. Let $\Bcal = \bigcap_{n < \omega} \Bcal_{\beta_n}$ be a $\Pi^0_\alpha$ class. Then $p \Vdash \bigcap_{n < \omega} \Bcal_{\beta_n}$ iff for every $n \in \omega$ and every $q \leq p$, $q \qvdash \Bcal_{\beta_n}$.
\end{lemma}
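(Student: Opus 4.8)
The plan is to follow, essentially verbatim, the proof of \Cref{lem-hyp-forcepia}, now phrased with effective Borel codes in place of arithmetical formulas. No transfinite induction is actually needed here: the inductive content already lives inside the definition of the forcing relation, since for a $\Pi^0_\alpha$ class $\Bcal = \bigcap_{n<\omega}\Bcal_{\beta_n} = 2^\omega - \bigcup_{n<\omega}(2^\omega - \Bcal_{\beta_n})$, unfolding the defining clause of $\Vdash$ on the complement of a $\Sigma^0_\alpha$ class gives at once that $p = (\sigma, X) \Vdash \Bcal$ if and only if $\sigma \cup \tau \qvdash \Bcal_{\beta_n}$ for every $n \in \omega$ and every $\tau \subseteq X$. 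So the whole task is to match this reformulation with the statement ``$q \qvdash \Bcal_{\beta_n}$ for every $n$ and every $q \leq p$'', and the only mechanism at play is the observation, immediate from \Cref{def-hyp-forcingqu}, that $q \qvdash \Bcal_{\beta_n}$ depends on $q$ only through its stem.

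For the implication from $p \Vdash \Bcal$ to the right-hand side, I would fix $n$ and a condition $q = (\tau', Y) \leq p$; from the ordering on $\Pb_{\wck}$ one has $\sigma \preceq \tau'$ and $\tau' - \sigma \subseteq X$. Letting $\tau$ be the length-$|\tau'|$ string that is $0$ below $|\sigma|$ and agrees with $\tau'$ above, one gets $F_\tau = F_{\tau'} - F_\sigma \subseteq X$, i.e.\ $\tau \subseteq X$, and $\sigma \cup \tau = \tau'$ as strings (here one uses clause (3) for $p$, which keeps $F_\tau$ disjoint from $[0,|\sigma|]$); hence $q \qvdash \Bcal_{\beta_n}$ is literally the relation $\sigma \cup \tau \qvdash \Bcal_{\beta_n}$, which holds by the reformulation. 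Conversely, assuming $q \qvdash \Bcal_{\beta_n}$ for all $n$ and all $q \leq p$, I would fix $n$ and $\tau \subseteq X$ and check that $q_\tau := (\sigma \cup \tau,\ X - \{0, \dots, |\sigma \cup \tau|\})$ is a legitimate $\Pb_{\wck}$-condition below $p$ — the only nontrivial clause being that the reservoir remains in $\Scal$, which follows because $\Scal$ contains only infinite sets and is partition regular, so applying clause (b) of partition regularity to the $2$-cover $X = \bigl(X \cap \{0,\dots,|\sigma\cup\tau|\}\bigr) \cup \bigl(X - \{0,\dots,|\sigma\cup\tau|\}\bigr)$ of $X \in \Scal$ and discarding the finite part forces the infinite part into $\Scal$. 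Then $q_\tau \leq p$ yields $q_\tau \qvdash \Bcal_{\beta_n}$, i.e.\ $\sigma \cup \tau \qvdash \Bcal_{\beta_n}$, and since $n$ and $\tau \subseteq X$ were arbitrary, the reformulation gives $p \Vdash \Bcal$.

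I do not expect any genuine obstacle here: the argument is pure bookkeeping, strictly parallel to \Cref{lem-hyp-forcepia}, and the single point needing care is verifying that the canonical extension $q_\tau$ is again a condition — which is precisely where closure of the partition-regular class $\Scal$ under deletion of a finite initial segment is invoked, exactly as in the proof of \Cref{prop-hyp-forcexta}. For the bottom case $\alpha = 1$ the same reasoning applies, reading off the first clauses of the definitions of $\Vdash$ and $\qvdash$.
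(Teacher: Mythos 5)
Your proof is correct and follows exactly the route the paper intends: the paper's own proof of \Cref{lem-hyp-forcepi} is literally ``Same as \Cref{lem-hyp-forcepia}'', and your argument is precisely that argument transplanted to the transfinite setting (unfold the definition of $\Vdash$ on the complemented $\Sigma^0_\alpha$ class, note that $\qvdash$ depends only on the stem, and check that the canonical extension $(\sigma\cup\tau,\ X-\{0,\dots,|\sigma\cup\tau|\})$ remains a condition via partition regularity of $\Scal$). No gaps.
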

\begin{proof}
Same as \Cref{lem-hyp-forcepia}.
\end{proof}

\begin{proposition}
Let $p \in \Pb_{\wck}$. Let $\Bcal$ be an effectively Borel set. If $p \Vdash \Bcal$ and $q \leq p$ then $q \Vdash \Bcal$.
\end{proposition}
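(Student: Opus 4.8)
The plan is to prove the statement by induction on the effective Borel code of $\Bcal$. This recursion is well-founded, since codes for effectively Borel sets are built over notations for computable ordinals below $\wck$, and the argument is the verbatim transfinite analogue of \Cref{lem:qb2-forcing-closed-under-extension}. Throughout, write $q = (\tau, Y) \leq p = (\sigma, X)$, so that $\sigma \preceq \tau$, $Y \subseteq X$ and $\tau - \sigma \subseteq X$.

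First I would handle the two base cases. If $\Bcal = \Ucal$ is a $\Sigma^0_1$ class and $p \Vdash \Ucal$, then $[\sigma] \subseteq \Ucal$; since $\sigma \preceq \tau$ we have $[\tau] \subseteq [\sigma] \subseteq \Ucal$, hence $q \Vdash \Ucal$. If $\Bcal = 2^\omega - \Ucal$ with $\Ucal$ a $\Sigma^0_1$ class and $p \Vdash 2^\omega - \Ucal$, i.e. $[\sigma \cup \rho] \nsubseteq \Ucal$ for every $\rho \subseteq X$, then given any $\rho'' \subseteq Y$ one sets $\rho = (\tau - \sigma) \cup \rho''$, which is $\subseteq X$ because $\tau - \sigma \subseteq X$ and $\rho'' \subseteq Y \subseteq X$; since $\sigma \cup \rho = \tau \cup \rho''$ this gives $[\tau \cup \rho''] \nsubseteq \Ucal$, and as $\rho''$ was arbitrary, $q \Vdash 2^\omega - \Ucal$. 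The only content here is the set-theoretic bookkeeping that every string $\tau \cup \rho''$ reachable from $q$ is of the form $\sigma \cup \rho$ for some $\rho \subseteq X$ reachable from $p$.

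For the inductive steps: if $\Bcal = \bigcup_{n < \omega} \Bcal_{\beta_n}$ is a $\Sigma^0_\alpha$ class, so each $\Bcal_{\beta_n}$ is a $\Pi$-class of strictly smaller code, then $p \Vdash \Bcal$ yields $p \Vdash \Bcal_{\beta_n}$ for some $n$, the induction hypothesis gives $q \Vdash \Bcal_{\beta_n}$, and hence $q \Vdash \Bcal$. If $\Bcal = 2^\omega - \Dcal$ is a $\Pi^0_\alpha$ class with $\Dcal = \bigcup_{n < \omega} \Dcal_{\beta_n}$ a $\Sigma^0_\alpha$ class, so $\Bcal = \bigcap_n (2^\omega - \Dcal_{\beta_n})$ with each $2^\omega - \Dcal_{\beta_n}$ a $\Sigma^0_{\beta_n}$ class, then by \Cref{lem-hyp-forcepi} we have $p \Vdash \Bcal$ iff $r \qvdash 2^\omega - \Dcal_{\beta_n}$ for every $n$ and every $r \leq p$; since every $r \leq q$ is also $\leq p$ by transitivity of the order on $\Pb_{\wck}$, the same condition holds for all $r \leq q$, and \Cref{lem-hyp-forcepi} again gives $q \Vdash \Bcal$. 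This exhausts all shapes of effectively Borel set appearing in the definition of $\Vdash$, completing the induction. I do not expect a genuine obstacle here; the one point to watch is keeping the $\Sigma/\Pi$ duality of the codes straight — so that in the $\Sigma$-step the constituents really are $\Pi$-classes of smaller rank, and in the $\Pi$-step \Cref{lem-hyp-forcepi} applies — and observing that the recursion terminates by well-foundedness of the Borel codes.
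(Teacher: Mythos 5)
Your proof is correct and is exactly the paper's argument: the paper proves this proposition by declaring it "same as" \Cref{lem:qb2-forcing-closed-under-extension}, whose proof is precisely your induction — the $\Sigma$-step via the induction hypothesis on the $\Pi$-constituents, and the $\Pi$-step via the characterization of \Cref{lem-hyp-forcepi} (the transfinite analogue of \Cref{lem-hyp-forcepia}) together with transitivity of the extension relation. No discrepancies.
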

\begin{proof}
Same as \Cref{lem:qb2-forcing-closed-under-extension}.
\end{proof}

\begin{proposition} \label{prop-hyp-forcext}
Let $p \in \Pb_{\wck}$. Let $\Bcal = \bigcup_{n < \omega} \Bcal_{\beta_n}$ be a $\Sigma^0_\alpha$ class for $0 < \alpha < \wck$.
\begin{enumerate}
\item Suppose $p \qvdash \Bcal$. Then there exists $q \leq p$ such that $q \Vdash \Bcal$.
\item Suppose $p \nqvdash \Bcal$. Then there exists $q \leq p$ such that $q \Vdash 2^\omega - \Bcal$.
\end{enumerate}
\end{proposition}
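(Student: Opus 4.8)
The plan is to run the proof of \Cref{prop-hyp-forcexta} essentially verbatim, replacing the induction on $m$ by a transfinite induction on the effective Borel code of $\Bcal$, the pair $(C_m,\Mcal_m)$ by $(C_\alpha,\Mcal_\alpha)$, and the partition regular class $\langle\Ucal^{\Mcal_n}_{C_n}\rangle$ by $\Scal$. The cases $0<\alpha<\omega$ are \Cref{prop-hyp-forcexta} transcribed to the $\Pb_{\wck}$ forcing and to effective Borel codes (which changes nothing, since $\Scal\subseteq\bigcap_{\beta<\wck}\Ucal^{\Mcal_\beta}_{C_\beta}$ plays exactly the role of $\langle\Ucal^{\Mcal_n}_{C_n}\rangle$), so I may assume $\omega\le\alpha<\wck$ and that both items hold for every class of strictly smaller code; in particular the induction hypothesis applies to each $2^\omega-\Bcal_{\beta_n}$, which is a $\Sigma^0_\gamma$ class with $\gamma<\alpha$.

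For~(1), write $p=(\sigma,X)$ and suppose $p\qvdash\Bcal$, i.e.\ $\Ucal(\Bcal,\sigma)\cap\Ucal^{\Mcal_\alpha}_{C_\alpha}$ is a largeness class. By \Cref{prop-hyp-effectivall}(2) the open set $\Ucal(\Bcal,\sigma)$ is $\Sigma^0_1(Y)$ for some $Y\in\Mcal_\alpha$: its oracle is $\halt^{(\alpha)}$ when $\alpha$ is a limit and $C_{\alpha-1}\oplus\halt^{(\alpha)}$ when $\alpha$ is a successor, and in both cases this set is computable from members of $\Mcal_\alpha$ by \Cref{prop-hyp-scott2}(2), \Cref{prop-hyp-cohesiveclass}(3), and the fact that $\Mcal_\alpha$ is a Turing ideal. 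Since $\Ucal^{\Mcal_\alpha}_{C_\alpha}$ is $\Mcal_\alpha$-cohesive, $\langle\Ucal^{\Mcal_\alpha}_{C_\alpha}\rangle\subseteq\Ucal(\Bcal,\sigma)$, and since $X\in\Scal\subseteq\Ucal^{\Mcal_{\alpha+1}}_{C_{\alpha+1}}\subseteq\langle\Ucal^{\Mcal_\alpha}_{C_\alpha}\rangle$ (the last inclusion by \Cref{prop-hyp-cohesiveclass}(2)), we get $X\in\Ucal(\Bcal,\sigma)$. Hence there are $\tau\subseteq X$ and $n$ with $\sigma\cup\tau\nqvdash 2^\omega-\Bcal_{\beta_n}$. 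As $\Scal$ is partition regular and contains only infinite sets (being contained in the $\Mcal_0$-cohesive class $\Ucal^{\Mcal_0}_{C_0}$), we have $X-\{0,\dots,|\sigma\cup\tau|\}\in\Scal$, so $q_0=(\sigma\cup\tau,\,X-\{0,\dots,|\sigma\cup\tau|\})$ is a valid condition $\le p$ with $q_0\nqvdash 2^\omega-\Bcal_{\beta_n}$. Applying the induction hypothesis~(2) to $q_0$ and $2^\omega-\Bcal_{\beta_n}$ yields $q\le q_0$ with $q\Vdash\Bcal_{\beta_n}$, hence $q\Vdash\Bcal$.

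For~(2), suppose $p\nqvdash\Bcal$, so $\Ucal(\Bcal,\sigma)\cap\Ucal^{\Mcal_\alpha}_{C_\alpha}$ is not a largeness class and there is a finite $F\subseteq C_\alpha$ with $\Ucal(\Bcal,\sigma)\cap\Ucal^{\Mcal_\alpha}_F$ not a largeness class. For each $k$ let $\Pcal_k$ be the $\Pi^0_1(Z)$ class, with $Z\in\Mcal_\alpha$, of covers $Y_0\cup\dots\cup Y_k\supseteq\omega$ such that $Y_i\notin\Ucal(\Bcal,\sigma)\cap\Ucal^{\Mcal_\alpha}_F$ for all $i\le k$; by the failure of largeness some $\Pcal_k$ is non-empty, and as $\Mcal_\alpha$ is a Scott set we may pick $Y_0\oplus\dots\oplus Y_k\in\Mcal_\alpha\cap\Pcal_k$. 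Since $\Scal$ is partition regular and $X\in\Scal$, there is $i\le k$ with $Y_i\cap X\in\Scal\subseteq\Ucal^{\Mcal_\alpha}_{C_\alpha}\subseteq\Ucal^{\Mcal_\alpha}_F$; by upward closure of $\Ucal^{\Mcal_\alpha}_F$ this forces $Y_i\in\Ucal^{\Mcal_\alpha}_F$, hence $Y_i\notin\Ucal(\Bcal,\sigma)$, and then $Y_i\cap X\notin\Ucal(\Bcal,\sigma)$ by upward closure of $\Ucal(\Bcal,\sigma)$. Unfolding the definition, $\sigma\cup\tau\qvdash 2^\omega-\Bcal_{\beta_n}$ for every $\tau\subseteq Y_i\cap X$ and every $n$, so $q=(\sigma,Y_i\cap X)\le p$ is a valid condition with $q\Vdash 2^\omega-\Bcal$.

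No new idea is needed beyond \Cref{prop-hyp-forcexta}; the only step demanding attention — and the one I expect to be the main obstacle — is the oracle bookkeeping via \Cref{prop-hyp-effectivall}, i.e.\ confirming that $\Ucal(\Bcal,\sigma)$ is $\Sigma^0_1$ over an element of $\Mcal_\alpha$, keeping the successor and limit cases straight (and remembering the one-jump shift in the complete oracle once $\alpha\ge\omega$), so that $\Mcal_\alpha$-cohesiveness of $\Ucal^{\Mcal_\alpha}_{C_\alpha}$ and the Scott-set property of $\Mcal_\alpha$ can legitimately be invoked.
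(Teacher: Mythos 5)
Your proposal is correct and is exactly the paper's approach: the paper's proof of this proposition is literally ``Same as \Cref{prop-hyp-forcexta}'', relying on the preceding remark that $\Scal\subseteq\bigcap_{\beta<\wck}\Ucal^{\Mcal_\beta}_{C_\beta}$ plays the role of $\langle\Ucal^{\Mcal_n}_{C_n}\rangle$, and you have simply written out that adaptation (transfinite induction on the Borel code, $\Mcal_\alpha$-cohesiveness plus \Cref{prop-hyp-effectivall} for the positive case, the Scott-set/partition-regularity argument for the negative case) in full detail.
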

\begin{proof}
Same as \Cref{prop-hyp-forcexta}.
\end{proof}

\begin{definition}
Let $\Fcal \subseteq \Pb_{\wck}$ be a sufficiently generic filter. Then there is a unique set $G_\Fcal \in 2^{\omega}$ such that for every $(\sigma, X) \in \Fcal$ we have $\sigma \prec G_\Fcal$.
\end{definition}

\begin{theorem} \label{th-hyp-forcingimpliestruth}
Let $\Fcal \subseteq \Pb_{\wck}$ be a generic enough filter. Let $p \in \Fcal$. Let $\Bcal_{\alpha} = \bigcup_{n < \omega} \Bcal_{\beta_n}$ be a $\Sigma^0_\alpha$ class for $0 < \alpha < \wck$. Suppose $p \Vdash \Bcal_{\alpha}$. Then $G_\Fcal \in \Bcal_{\alpha}$. Suppose $p \Vdash 2^\omega - \Bcal_{\alpha}$. Then $G_\Fcal \in 2^\omega - \Bcal_{\alpha}$.
\end{theorem}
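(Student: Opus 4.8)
The plan is to prove both halves by transfinite induction on the ordinal rank $\alpha$ of the effective Borel code of $\Bcal_\alpha$, establishing simultaneously: (a) if $p \Vdash \Bcal_\alpha$ then $G_\Fcal \in \Bcal_\alpha$, and (b) if $p \Vdash 2^\omega - \Bcal_\alpha$ then $G_\Fcal \in 2^\omega - \Bcal_\alpha$, for every $\Sigma^0_\alpha$ class $\Bcal_\alpha$. Since a $\Pi^0_\beta$ class is handled in this framework as the complement $2^\omega - \Dcal$ of a $\Sigma^0_\beta$ class $\Dcal$, these two statements at all levels $\beta \leq \alpha$ also cover $\Pi^0$ classes at those levels. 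The base case $\alpha = 1$ (plain $\Sigma^0_1$ classes over Mathias forcing) is standard; it is exactly the case $m = 0$ of \Cref{lem:forcing-implytruth}, whose argument is unchanged here.

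For the inductive step, fix $\Bcal_\alpha = \bigcup_{n < \omega} \Bcal_{\beta_n}$ with each $\Bcal_{\beta_n}$ a $\Pi^0_{\beta_n}$ class and $\beta_n < \alpha$ (all $\beta_n = \gamma$ if $\alpha = \gamma+1$, and $\sup_n \beta_n = \alpha$ if $\alpha$ is limit), and write $\Dcal_n = 2^\omega - \Bcal_{\beta_n}$, a $\Sigma^0_{\beta_n}$ class. For (a): by the definition of $\Vdash$, $p \Vdash \Bcal_\alpha$ yields some $n$ with $p \Vdash \Bcal_{\beta_n}$, i.e.\ $p \Vdash 2^\omega - \Dcal_n$; part (b) of the induction hypothesis at level $\beta_n$, applied to the $\Sigma^0_{\beta_n}$ class $\Dcal_n$, gives $G_\Fcal \in 2^\omega - \Dcal_n = \Bcal_{\beta_n} \subseteq \Bcal_\alpha$. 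For (b): the definition of $\Vdash$ says $p \Vdash 2^\omega - \Bcal_\alpha$ iff $\sigma \cup \tau \qvdash \Dcal_n$ for every $n$ and every $\tau \subseteq X$, which by \Cref{lem-hyp-forcepi} is equivalent to $q \qvdash \Dcal_n$ for every $n$ and every $q \leq p$. Fix $n$; for any $q \leq p$, since $q \qvdash \Dcal_n$, the first item of \Cref{prop-hyp-forcext} yields $r \leq q$ with $r \Vdash \Dcal_n$, so $D_n = \{ r \in \Pb_{\wck} : r \Vdash \Dcal_n \}$ is dense below $p$. As $\Fcal$ is generic enough and $p \in \Fcal$, for each $n$ there is $q_n \in \Fcal \cap D_n$, and part (a) of the induction hypothesis at level $\beta_n$, applied to $\Dcal_n$, gives $G_\Fcal \in \Dcal_n = 2^\omega - \Bcal_{\beta_n}$. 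Since this holds for all $n$, $G_\Fcal \in \bigcap_{n} (2^\omega - \Bcal_{\beta_n}) = 2^\omega - \Bcal_\alpha$.

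As the excerpt already warns, there is essentially no new obstacle compared with the arithmetical \Cref{lem:forcing-implytruth}: the only points that need care are that the recursion is genuinely transfinite — run uniformly over the effective Borel codes, with the successor and limit stages both absorbed into the single inductive step above — and that the filter must be generic enough to meet the countably many dense sets $D_n$ simultaneously below the fixed condition $p$, which is precisely what ``generic enough'' is taken to mean. One should also note that $\Sigma^1_1$-boundedness plays no role here, since $\alpha$ is a fixed computable ordinal and every union or intersection involved ranges over $\omega$; and that the auxiliary results \Cref{prop-hyp-forcext} and \Cref{lem-hyp-forcepi}, whose proofs carry over verbatim from the arithmetical case, are invoked only at the strictly smaller levels $\beta_n < \alpha$, keeping the induction well-founded.
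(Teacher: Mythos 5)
Your proof is correct and follows essentially the same route as the paper, whose proof of this theorem is literally ``Same as \Cref{lem:forcing-implytruth}'': your transfinite induction on the Borel code, using \Cref{lem-hyp-forcepi} plus \Cref{prop-hyp-forcext} to get density of the conditions forcing each $2^\omega - \Bcal_{\beta_n}$ and then genericity to conclude, is exactly the arithmetical argument carried through the ordinals.
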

\begin{proof}
Same as \Cref{lem:forcing-implytruth}.
%
%
\end{proof}

We now have all the necessary parts to show arithmetic strong cone avoidance, and more generally $\alpha$ cone avoidance for a limit ordinal $\alpha$.

\begin{theorem} \label{th-alphaconeavoid}
Let $\alpha \leq \wck$ be a limit ordinal. Suppose $Z$ is not $\Delta^0_1(\halt^{(\beta)})$ for every $\beta < \alpha$. Let $\Fcal$ be a sufficiently generic filter. Then for every $\beta < \alpha$, $Z$ is not $\Delta^0_1(G_{\Fcal}^{(\beta)})$.
\end{theorem}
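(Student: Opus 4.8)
The plan is to run a Dzhafarov--Jockusch style diagonalization inside the forcing $\Pb_{\wck}$. Fix $\beta < \alpha$ and a Turing functional $\Phi_e$; for $n \in \omega$ and $v \in \{0,1\}$ set
$$\Ucal_{n,v} = \{Y\ :\ \exists t\ \Phi_e(Y^{(\beta)}, n)[t]\downarrow = v\}.$$
By \Cref{lemma-hyp-eff4} and \Cref{lemma-hyp-eff3}, this is a $\Sigma^0_\gamma$ class with an explicitly computable code, uniformly in $n$ and $v$, where $\gamma = \beta+1$ if $\beta < \omega$ and $\gamma = \beta$ if $\beta \geq \omega$; in either case $0 < \gamma < \wck$ and $\gamma < \alpha$, and since $\alpha$ is a limit ordinal we also have $\gamma + k < \alpha$ for every $k < \omega$. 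I would declare a condition $q \in \Pb_{\wck}$ to lie in $\Dcal_{\beta,e}$ if there is some $n$ with $q \Vdash \Ucal_{n,v}$ for some $v \neq Z(n)$, or with $q \Vdash 2^\omega - \Ucal_{n, Z(n)}$. The goal is to show every $\Dcal_{\beta,e}$ is dense in $\Pb_{\wck}$; since $\beta < \alpha \leq \wck$ there are only countably many pairs $(\beta,e)$, so a sufficiently generic $\Fcal$ meets all of them.

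The payoff of meeting $\Dcal_{\beta,e}$ is immediate from forcing implies truth: if $q \in \Fcal \cap \Dcal_{\beta,e}$ is witnessed by $n$, then by \Cref{th-hyp-forcingimpliestruth} either $G_\Fcal \in \Ucal_{n,v}$, so $\Phi_e(G_\Fcal^{(\beta)}, n){\downarrow} = v \neq Z(n)$, or $G_\Fcal \in 2^\omega - \Ucal_{n,Z(n)}$, so $\Phi_e(G_\Fcal^{(\beta)}, n)$ does not halt with value $Z(n)$. Either way $\Phi_e(G_\Fcal^{(\beta)}, \cdot)$ is not the characteristic function of $Z$. Since this holds for all $e$, we obtain $Z \not\leq_T G_\Fcal^{(\beta)}$, i.e. $Z$ is not $\Delta^0_1(G_\Fcal^{(\beta)})$, for every $\beta < \alpha$.

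For the density of $\Dcal_{\beta,e}$, I would fix $p = (\sigma, X) \in \Pb_{\wck}$, assume toward a contradiction that no $q \leq p$ lies in $\Dcal_{\beta,e}$, and read off $Z$ from the forcing question: put $B = \{n\ :\ p \qvdash \Ucal_{n,1}\}$. If $n \in B$, then \Cref{prop-hyp-forcext}(1) yields $q \leq p$ with $q \Vdash \Ucal_{n,1}$, which would lie in $\Dcal_{\beta,e}$ if $Z(n) = 0$; hence $Z(n) = 1$. If $n \notin B$, then $p \nqvdash \Ucal_{n,1}$, so \Cref{prop-hyp-forcext}(2) yields $q \leq p$ with $q \Vdash 2^\omega - \Ucal_{n,1}$, which would lie in $\Dcal_{\beta,e}$ if $Z(n) = 1$; hence $Z(n) = 0$. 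Thus $B = Z$. Now by \Cref{prop-hyp-effectivall} the relation $p \qvdash \Ucal_{n,1}$ is $\Pi^0_1(C_{\gamma-1} \oplus \halt^{(\gamma)})$ when $\gamma < \omega$ and $\Pi^0_1(C_\gamma \oplus \halt^{(\gamma+1)})$ when $\gamma \geq \omega$, uniformly in $n$; and by \Cref{prop-hyp-cohesiveclass}(3) and \Cref{prop-hyp-scott2}(3) each $C_\delta$ is computable from $\halt^{(\delta+2)}$. Therefore $p \qvdash \Ucal_{n,1}$ is decidable from $\halt^{(\gamma')}$ uniformly in $n$ for some fixed $\gamma' < \alpha$, whence $Z = B$ is $\Delta^0_1(\halt^{(\gamma')})$ with $\gamma' < \alpha$, contradicting the hypothesis. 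This gives density, and hence the theorem.

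The main obstacle is the complexity accounting in the last step: one must check that the forcing question $p \qvdash \Ucal_{n,1}$, whose face-value complexity involves both the auxiliary oracle $C_\gamma$ and the jump $\halt^{(\gamma+1)}$, nevertheless decides only $\halt^{(\gamma')}$-computable information for some $\gamma'$ still strictly below $\alpha$. This rests on the bounds $C_\delta \leq_T \halt^{(\delta+2)}$ coming from \Cref{prop-hyp-cohesiveclass} and \Cref{prop-hyp-scott2}, and crucially on $\alpha$ being a limit ordinal, so that adding finitely many jumps keeps us below $\alpha$. Everything else --- density, the passage from forcing to truth, and the counting of the dense sets --- is a routine transfinite rerun of the finite-level cone avoidance arguments.
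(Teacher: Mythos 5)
Your proposal is correct and takes essentially the same route as the paper's proof: both apply the forcing question to the $\Sigma^0_\gamma$ convergence classes of \Cref{lemma-hyp-eff3}, use its $\Pi^0_1(\halt^{(\gamma'')})$ complexity from \Cref{prop-hyp-effectivall} (with $\gamma'' < \alpha$ precisely because $\alpha$ is a limit) to locate a diagonalization point, and conclude via \Cref{prop-hyp-forcext} and \Cref{th-hyp-forcingimpliestruth}. The only cosmetic difference is that the paper diagonalizes against $Z$ being the domain of $\Phi(G_\Fcal^{(\beta)},\cdot)$, i.e.\ against $Z$ being $\Sigma^0_1(G_\Fcal^{(\beta)})$, whereas you diagonalize against $\Phi_e(G_\Fcal^{(\beta)},\cdot)$ being the characteristic function of $Z$.
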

\begin{proof}
Let $\Phi$ be a functional and $\beta < \alpha$. Let $\Bcal^{n} = \{X\ : \Phi(X^{(\beta)}, n) \downarrow\}$. We want to show that $Z \neq \{n\ : G_\Fcal^{(\beta)} \in \Bcal^{n}\}$. From \Cref{lemma-hyp-eff3}, $\Bcal^{n}$ is a $\Sigma^0_{\beta+1}$ set for each $n \in \omega$  ($\Sigma^0_{\beta}$ if $\beta \geq \omega$ and  $\Sigma^0_{\beta+1}$ if $\beta < \omega$).

Let $p \in \Pb_{\wck}$ be a condition. From \Cref{prop-hyp-effectivall}, the set $\{n\ :\ p \qvdash \Bcal^{n}\}$ is $\Pi^0_{1}(\halt^{(\beta+3)})$. As $Z$ is not $\Pi^0_{1}(\halt^{(\beta+3)})$, then there is some $n \in Z$ such that $p \nqvdash \Bcal^{n}$ or some $n \notin Z$ such that $p \qvdash \Bcal^{n}$. In the first case, there is an extension $q \leq p$ such that $q \Vdash 2^\omega - \Bcal^{n}$ for some $n \in Z$. In the second case, there is an extension $q \leq p$ such that $q \Vdash \Bcal^{n}$ for some $n \notin Z$. By \Cref{th-hyp-forcingimpliestruth}, in the first case $\Phi(G_{\Fcal}^{(\beta)}, n) \uparrow$ holds for some $n \in Z$, and in the second case, $\Phi(G_{\Fcal}^{(\beta)}, n) \downarrow$ holds for some $n \notin Z$.

If $\Fcal$ is sufficiently generic, this is true for any $\beta < \alpha$ and any functional $\Phi$. It follows that for any ordinal $\beta$ the set $Z$ is not $\Sigma^0_1(G_{\Fcal}^{(\beta)})$ and thus not $\Delta^0_1(G_{\Fcal}^{(\beta)})$.
\end{proof}

This shows in particular cone avoidance for arithmetic degrees. 

\begin{theoremnonumber}[Main theorem 2 (Theorem \ref{maintheorem2})]
Let $B$ be non arithmetical. Every set $A$ has an infinite subset $H \subseteq A$ or $H \subseteq \overline{A}$ such that $B$ is not arithmetical in $H$.
\end{theoremnonumber}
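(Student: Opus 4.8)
The plan is to read off \Cref{maintheorem2} from \Cref{th-alphaconeavoid} instantiated at $\alpha = \omega$. The first step is purely bookkeeping: a set is arithmetical precisely when it is computable from $\halt^{(n)}$ for some $n < \omega$, that is $\Delta^0_1(\halt^{(n)})$ for some $n$, and this collection is closed under complementation. So ``$B$ non arithmetical'' unwinds to ``$B$ is not $\Delta^0_1(\halt^{(n)})$ for every $n < \omega$'', and --- unlike in \Cref{thm:rt12-preservation-non-sigma2} --- there is no need to split on $B$ versus $\overline B$.

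Next I would set up the forcing $\Pb_{\wck}$ of \Cref{sec-hyp-theforcing} for the given $A$, taking $A^0 = \overline A$ and $A^1 = A$. The parameter class $\Scal$ is left open in \Cref{sec-hyp-theforcing} because the hyperarithmetic theorem needs a refined choice to secure $\omega_1^H = \wck$; the arithmetic statement needs no such control, so the crudest legitimate choice works: $\Scal := \bigcap_{\beta < \wck} \Ucal_{C_\beta}^{\Mcal_{\beta}}$. This class is non-empty (it contains $\omega$), contains only infinite sets (as $\emptyset \in \Mcal_0$ and $\Ucal_{C_0}^{\Mcal_0}$ is $\Mcal_0$-cohesive, whence $\Scal \subseteq \Lcal_\omega$), and is partition regular: the classes $\Ucal_{C_\beta}^{\Mcal_{\beta}}$ decrease with $\beta$ since $\Ucal_{C_\alpha}^{\Mcal_{\alpha}} \subseteq \langle \Ucal_{C_\beta}^{\Mcal_{\beta}} \rangle \subseteq \Ucal_{C_\beta}^{\Mcal_{\beta}}$ for $\beta < \alpha$, so restricting the intersection to a cofinal $\omega$-sequence of ordinals below $\wck$ and applying \Cref{lem:decreasing-largeness-yields-pr} yields partition regularity. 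As $\Scal$ is a largeness class and $A^0 \cup A^1 = \omega$, some $A^i$ lies in $\Scal$; I fix such an $i$ and work with $\Pb_{\wck}$ built on the side $A := A^i$.

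Then I would take a sufficiently generic filter $\Fcal \subseteq \Pb_{\wck}$ and set $H := G_\Fcal$. Every $\Pb_{\wck}$-condition has an infinite reservoir in $\Scal$, so $H$ is an infinite subset of $A^i$, hence $H \subseteq A$ or $H \subseteq \overline A$. Now \Cref{th-alphaconeavoid} with $Z = B$ and $\alpha = \omega$ applies --- its hypothesis is exactly the reformulation of non arithmeticity above --- and gives that $B$ is not $\Delta^0_1(H^{(n)})$ for any $n < \omega$; that is, $B \not\leq_T H^{(n)}$ for all $n$, so $B$ is not arithmetical in $H$. This is the conclusion.

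Since \Cref{th-alphaconeavoid} has already been proved, there is no genuine obstacle; the only things to get right are the elementary recasting of ``arithmetical'' as ``$\Delta^0_1(\halt^{(n)})$ for some $n$'', the selection of the side $i$ so that the single-sided forcing produces a solution inside $A$ or $\overline A$, and the check that the naive $\Scal = \bigcap_{\beta<\wck}\Ucal_{C_\beta}^{\Mcal_\beta}$ is an admissible parameter (partition regular, infinite sets only) --- the subtler construction of $\Scal$ being reserved for the full hyperarithmetic result in the following subsection.
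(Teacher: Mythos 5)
Your proposal is correct and is essentially the paper's own argument: the paper proves Main theorem 2 precisely as a direct corollary of \Cref{th-alphaconeavoid} with $\alpha=\omega$, and your additional steps (recasting non-arithmeticity as failure of $\Delta^0_1(\halt^{(n)})$ for all $n$, picking the side $i$ with $A^i\in\Scal$, and checking genericity gives an infinite $G_\Fcal\subseteq A^i$) are exactly the implicit bookkeeping. Your substitution of the simpler parameter $\Scal=\bigcap_{\beta<\wck}\Ucal_{C_\beta}^{\Mcal_\beta}$ for the paper's more refined one is legitimate here, since the proof of \Cref{th-alphaconeavoid} only uses that $\Scal$ is partition regular and contained in that intersection, and your verification of partition regularity (via the sandwich $\Ucal_{C_\alpha}^{\Mcal_\alpha}\subseteq\langle\Ucal_{C_\beta}^{\Mcal_\beta}\rangle\subseteq\Ucal_{C_\beta}^{\Mcal_\beta}$ and \Cref{lem:decreasing-largeness-yields-pr} applied to the minimal classes along a cofinal $\omega$-sequence) is sound.
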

\begin{proof}
A direct corollary of the above theorem with $\alpha = \omega$.
\end{proof}

In order to show cone avoidance for hyperarithmetic degrees, one should additionally argue that if $\Fcal$ is sufficiently generic, then $\omega_1^{G_{\Fcal}} = \wck$. The remainder of this section is devoted to the proof of this fact.

\subsection{Preservation of hyperarithmetic reductions} \label{sec-hyp-preserv}

We now prove that the infinite pigeonhole principle admits strong cone avoidance for hyperarithmetic reductions.

\begin{definition}
A largeness class $\Acal$ is $\Gamma$-minimal, where $\Gamma$ is a class of complexity, if for every $\Gamma$-open set $\Ucal$ we have $\Acal \cap \Ucal$ large implies $\Acal \subseteq \Ucal$.
\end{definition}

\begin{proposition} \label{prop-hyp-delta-min}
The class $\bigcap_{\alpha < \wck} \Ucal_{C_\alpha}^{\Mcal_{\alpha}}$ is $\Delta^1_1$-minimal.
\end{proposition}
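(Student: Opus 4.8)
The plan is to show that if $\Ucal$ is a $\Delta^1_1$ open set such that $\Scal_0 := \bigcap_{\alpha < \wck} \Ucal_{C_\alpha}^{\Mcal_{\alpha}}$ intersected with $\Ucal$ is large, then $\Scal_0 \subseteq \Ucal$. The key point is that a $\Delta^1_1$ open set is, by the discussion of effective Borel codes, a $\Sigma^0_{\alpha}$ open set for some fixed $\alpha < \wck$ (every $\Delta^1_1$ set is $\Sigma^0_\beta$ for some $\beta < \wck$, and an open such set can be taken $\Sigma^0_1(\halt^{(\alpha)})$ for a suitable $\alpha$; alternatively it is a union of basic clopen sets enumerated by some $\halt^{(\alpha)}$). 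So fix $\alpha < \wck$ with $\Ucal$ a $\Sigma^0_1(\halt^{(\alpha)})$ open set, hence of the form $\Ucal_e^{X}$ for some $X \in \Mcal_\alpha$ (using that $\halt^{(\alpha)}$ is coded in $\Mcal_\alpha$ by Proposition \ref{prop-hyp-scott2}, and that $\Mcal_\alpha$ is a Turing ideal). Since $\Scal_0 \cap \Ucal \supseteq \Ucal_{C_\alpha}^{\Mcal_\alpha} \cap \Ucal$... wait — that inclusion goes the wrong way; rather $\Scal_0 \subseteq \Ucal_{C_\alpha}^{\Mcal_\alpha}$, so $\Scal_0 \cap \Ucal \subseteq \Ucal_{C_\alpha}^{\Mcal_\alpha} \cap \Ucal$, and from largeness of the left side we cannot immediately conclude largeness of the right. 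The fix is to go one level up: first show $\Ucal_{C_\alpha}^{\Mcal_\alpha} \cap \Ucal$ is large whenever $\Scal_0 \cap \Ucal$ is large.

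So the first real step is: \emph{if $\Scal_0 \cap \Ucal$ is large then $\Ucal_{C_\alpha}^{\Mcal_\alpha} \cap \Ucal$ is large}. Suppose not; then by compactness (Lemma \ref{lem:decreasing-largeness-yields-largeness}) there is a finite $F \subseteq C_\alpha$ with $\Ucal_F^{\Mcal_\alpha} \cap \Ucal$ not large, witnessed by a cover $Y_0 \sqcup \dots \sqcup Y_{k-1} = \omega$ none of whose pieces lies in $\Ucal_F^{\Mcal_\alpha} \cap \Ucal$, and this cover can be taken inside $\Mcal_\alpha$ by the Scott property (as in the proof of Lemma \ref{lem:cohesive-largeness-compatibility}). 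Now $\Scal_0$ is partition regular, so some $Y_j \cap S \in \Scal_0$ for... more precisely, since $\omega \in \Scal_0$ and $\Scal_0$ is partition regular, some $Y_j \in \Scal_0 \subseteq \Ucal_{C_\alpha}^{\Mcal_\alpha} \subseteq \Ucal_F^{\Mcal_\alpha}$; as $Y_j \notin \Ucal_F^{\Mcal_\alpha} \cap \Ucal$ we get $Y_j \notin \Ucal$, and the same $Y_j$ being in $\Scal_0$ together with the other $Y_{j'} \cap Y_j = \emptyset$ arguments gives a cover witnessing $\Scal_0 \cap \Ucal$ not large — contradiction. (This is exactly the shape of the argument in Lemma \ref{lem:cohesive-largeness-compatibility}; I expect to reuse it almost verbatim.)

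The second step invokes $\Mcal_\alpha$-minimality. By Proposition \ref{prop-hyp-cohesiveclass}(2), $\Scal \subseteq \Ucal_{C_{\alpha+1}}^{\Mcal_{\alpha+1}} \subseteq \langle \Ucal_{C_\alpha}^{\Mcal_\alpha}\rangle$, and more to the point $\langle \Ucal_{C_\alpha}^{\Mcal_\alpha}\rangle$ is $\Mcal_\alpha$-minimal (by the Lemma following Definition \ref{def_minmalclasses}). Since $\Ucal = \Ucal_e^X$ with $X \in \Mcal_\alpha$ and $\Ucal_{C_\alpha}^{\Mcal_\alpha} \cap \Ucal_e^X$ is large (step one) — we need instead $\langle \Ucal_{C_\alpha}^{\Mcal_\alpha}\rangle \cap \Ucal_e^X$ large, which follows since $\langle \Ucal_{C_\alpha}^{\Mcal_\alpha}\rangle \subseteq \Ucal_{C_\alpha}^{\Mcal_\alpha}$ is itself a large subclass and one reruns step one with $\langle \Ucal_{C_\alpha}^{\Mcal_\alpha}\rangle$ in place of $\Ucal_{C_\alpha}^{\Mcal_\alpha}$, using that it too is $\Mcal_\alpha$-cohesive and partition regular. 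Then $\Mcal_\alpha$-minimality gives $\langle \Ucal_{C_\alpha}^{\Mcal_\alpha}\rangle \subseteq \Ucal_e^X = \Ucal$, hence $\Scal_0 \subseteq \bigcap_\beta \Ucal_{C_\beta}^{\Mcal_\beta} \subseteq \langle \Ucal_{C_\alpha}^{\Mcal_\alpha}\rangle \subseteq \Ucal$, as desired.

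\textbf{Main obstacle.} The delicate point is getting the quantifier over $\alpha$ right: a $\Delta^1_1$ open set $\Ucal$ is $\Sigma^0_1(\halt^{(\alpha)})$ only for \emph{some} $\alpha$, and one must be sure this $\alpha$ is genuinely below $\wck$ — this is where $\Sigma^1_1$-boundedness is doing silent work, since a $\Delta^1_1$ code pins down an ordinal level $< \wck$. Once $\alpha$ is fixed, everything reduces to the single-level minimality argument already developed for $\Mcal_\alpha$-cohesive classes, so the remaining work is bookkeeping: checking that the covers produced live in $\Mcal_\alpha$ (Scott property), that $\langle \Ucal_{C_\alpha}^{\Mcal_\alpha}\rangle$ inherits cohesiveness and partition regularity, and that $\Ucal_e^X$ ranges over \emph{all} $\Sigma^0_1(\halt^{(\alpha)})$ open sets as $X$ ranges over $\Mcal_\alpha$ and $e$ over $\omega$.
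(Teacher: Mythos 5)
Your proposal is correct and follows essentially the same route as the paper: reduce to the $\Mcal_\alpha$-minimality of $\langle \Ucal_{C_\alpha}^{\Mcal_\alpha}\rangle$ for the $\alpha < \wck$ at which the given $\Delta^1_1$ open set becomes $\Sigma^0_1(\halt^{(\alpha)})$, using that $\halt^{(\alpha)} \in \Mcal_\alpha$ and that $\bigcap_{\beta < \wck} \Ucal_{C_\beta}^{\Mcal_\beta} \subseteq \langle \Ucal_{C_\alpha}^{\Mcal_\alpha}\rangle$. Your ``first real step'' is an unnecessary detour, though: since $\bigcap_{\beta<\wck}\Ucal_{C_\beta}^{\Mcal_\beta} \cap \Ucal \subseteq \langle\Ucal_{C_\alpha}^{\Mcal_\alpha}\rangle \cap \Ucal$ and any superclass of a largeness class is again a largeness class, largeness passes immediately to the right-hand side, so the compactness-and-cover contradiction argument can be dropped.
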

\begin{proof}
For every $\alpha < \wck$ we have that $\halt^{(\alpha)} \in \Mcal_{\alpha}$ and $\bigcap_{\alpha < \wck} \Ucal_{C_\alpha}^{\Mcal_{\alpha}} \subseteq \langle \Mcal_{\alpha} \rangle$ where $\langle \Mcal_{\alpha} \rangle$ is $\Mcal_\alpha$-minimal. As $\halt^{(\alpha)} \in \Mcal_{\alpha}$ we also have that $\langle \Mcal_{\alpha} \rangle$ is minimal for $\Sigma^0_1(\halt^{(\alpha)})$ open sets. It follows that $\bigcap_{\alpha < \wck} \Ucal_{C_\alpha}^{\Mcal_{\alpha}}$ is $\Delta^1_1$-minimal.
\end{proof}

\begin{proposition}
There is a set $C \in \bigcap_{\alpha < \wck} \Ucal_{C_\alpha}^{\Mcal_{\alpha}}$ such that $C$ is $\Delta^1_1$-cohesive and $\omega_1^C = \wck$
\end{proposition}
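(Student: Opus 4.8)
The plan is to obtain $C$ as a sufficiently generic object for the Mathias-type forcing $\Pb_{\wck}$ of \cref{sec-hyp-theforcing}, taking the class $\Scal_0 := \bigcap_{\alpha < \wck}\Ucal^{\Mcal_\alpha}_{C_\alpha}$ itself in the role of $\Scal$. First I would record the closure properties of $\Scal_0$: by \cref{prop-hyp-cohesiveclass}(2) we have $\Scal_0 = \bigcap_{\alpha<\wck}\langle\Ucal^{\Mcal_\alpha}_{C_\alpha}\rangle$, each $\langle\Ucal^{\Mcal_\alpha}_{C_\alpha}\rangle$ is partition regular by \cref{lem:minimal-is-partition-regular} and contains only infinite sets, and a decreasing $\wck$-sequence of such classes has a partition regular intersection (given a finite cover of $X \in \Scal_0$, each level selects a piece in $\langle\Ucal^{\Mcal_\alpha}_{C_\alpha}\rangle$; there are finitely many pieces and the classes decrease, so one piece works cofinally and hence everywhere, just as in \cref{lem:decreasing-largeness-yields-pr}); in particular $\Scal_0$ is closed under removing a finite initial segment. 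Conditions are then pairs $(\sigma,X)$ with $F_\sigma$ finite, $X \in \Scal_0$, $X \cap \{0,\dots,|\sigma|\}=\emptyset$, and we set $C = G_\Fcal$.

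Next I would isolate three families of dense sets. Infinitude: for each $n$, $\{(\sigma,X): \#\sigma \geq n\}$ is dense since reservoirs are infinite. Membership in $\Scal_0$: for each $\alpha<\wck$ and each component $\Ucal^{X^\alpha_i}_e$ with $\langle e,i\rangle \in C_\alpha$, given $(\sigma,X)$ we have $X \in \Scal_0 \subseteq \Ucal^{X^\alpha_i}_e$, so some finite $\rho \subseteq X$ witnesses this, and $(\sigma\cup\rho,\,X - \{0,\dots,|\sigma\cup\rho|\})$ is a valid extension placing $C$ permanently into $\Ucal^{X^\alpha_i}_e$; meeting all of these forces $C \in \bigcap_{\alpha<\wck}\Ucal^{\Mcal_\alpha}_{C_\alpha}=\Scal_0$. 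Cohesiveness: for each $\alpha<\wck$ and $Z \in \Mcal_\alpha$, given $(\sigma,X)$ the sets $X\cap Z$ and $X\setminus Z$ form a $2$-cover of $X$, so by partition regularity one of them lies in $\Scal_0$; since $\Scal_0 \subseteq \langle\Ucal^{\Mcal_\alpha}_{C_\alpha}\rangle$, which is $\Mcal_\alpha$-cohesive (being a subclass of the $\Mcal_\alpha$-cohesive $\Ucal^{\Mcal_\alpha}_{C_\alpha}$), we have $\Scal_0 \subseteq \Lcal_Z$ or $\Scal_0 \subseteq \Lcal_{\overline Z}$, and since a subset of $Z$ (resp.\ $\overline Z$) cannot meet $\overline Z$ (resp.\ $Z$) infinitely, the side lying in $\Scal_0$ is the matching one; restricting the reservoir to it forces $C \subseteq^{*} Z$ or $C \subseteq^{*}\overline Z$. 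A filter meeting these countably many dense sets produces an infinite $C \in \Scal_0$ cohesive for every member of $\bigcup_{\alpha<\wck}\Mcal_\alpha$; as every $\Delta^1_1$ set is $\halt^{(\alpha)}$-computable for some $\alpha<\wck$ and $\halt^{(\alpha)} \in \Mcal_\alpha$, which is a Turing ideal, $C$ is $\Delta^1_1$-cohesive.

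The main obstacle is to secure in addition $\omega_1^C = \wck$. Following the strategy of \cref{sec_strategy_coneavoid}, it suffices to ensure, for every functional index $e$, that either (1) $\exists n\,\forall\alpha<\wck\ \Phi_e(C,n)\notin\Ocal^C_{<\alpha}$, or (2) $\exists\alpha<\wck\,\forall n\ \Phi_e(C,n)\in\Ocal^C_{<\alpha}$, since in either case $e$ fails to be a $C$-notation for $\wck$. The class described by (1) is a $\Sigma^0_{\wck+1}$ set of reals, so to make these requirements dense I would first extend the forcing question of \cref{def-hyp-forcingqu} and the forcing-extension lemma \cref{prop-hyp-forcext} to $\Sigma^0_{\wck+1}$ classes. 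This extension proceeds exactly as the successor and limit steps below $\wck$, except that at the top level the shrinking class $\Ucal^{\Mcal_\alpha}_{C_\alpha}$ is replaced by $\Scal_0$ itself: the forcing question for a $\Sigma^0_{\wck+1}$ class is defined by asking that a certain $\Delta^1_1$-open set meet $\Scal_0$ in a largeness class, and the proof of the analogue of \cref{prop-hyp-forcext} then uses precisely the $\Delta^1_1$-minimality of $\Scal_0$ (\cref{prop-hyp-delta-min}) to pass from largeness to inclusion on the positive side, and the partition regularity of $\Scal_0$ on the negative side, while $\Sigma^1_1$-boundedness keeps the relevant suprema of ordinals below $\wck$. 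Granting this extended forcing question, for each $e$ one asks below the current condition whether the $\Sigma^0_{\wck+1}$ class (1) is forced: if so, one extends to force (1); if not, the extended forcing-extension lemma, again together with the $\Delta^1_1$-minimality of $\Scal_0$, yields an extension forcing (2). Adjoining these requirements to the previous three families and taking a filter meeting them all gives a $C$ that lies in $\Scal_0$, is $\Delta^1_1$-cohesive, and satisfies $\omega_1^C = \wck$. The hard step, and the reason $\Scal_0$ was arranged to be $\Delta^1_1$-minimal, is exactly this control of $\Sigma^0_{\wck+1}$ truth.
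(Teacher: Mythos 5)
Your construction of a $\Delta^1_1$-cohesive $C$ inside $\bigcap_{\alpha<\wck}\Ucal^{\Mcal_\alpha}_{C_\alpha}$ is essentially the paper's: a Mathias forcing whose reservoirs live in this partition regular class, with dense sets for infinitude, for landing in each $\Ucal^{X_i^\alpha}_e$, and for deciding each $\Delta^1_1$ set by partition regularity of the reservoir class. That part is fine.

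The gap is in how you secure $\omega_1^C=\wck$. You propose to force the dichotomy (1)/(2) by extending the $\Sigma^0_{\wck+1}$ forcing question of \cref{sec-hyp-preserv}, with $\Scal_0=\bigcap_{\alpha<\wck}\Ucal^{\Mcal_\alpha}_{C_\alpha}$ playing the role that $\Lcal_C$ plays there, and you assert this \qt{proceeds exactly as} the steps below $\wck$. It does not. The entire $\wck$-level machinery of the paper is erected \emph{on top of} a cohesive $C$ with $\omega_1^C=\wck$, for a reason: relative to $\Lcal_C$, the statement \qt{$\Vcal\cap\Lcal_C$ is a largeness class} is arithmetic in $C$ (hence $\Delta^1_1(C)$ uniformly), so the relevant ordinal-valued functions are total $\Pi^1_1(C)$ and $\Sigma^1_1$-boundedness relative to $C$ — which needs $\omega_1^C=\wck$ — caps their suprema below $\wck$ (see \cref{hyp-lem-sigma1} and the proposition preceding \cref{cor-hyp-forcepii}). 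If you replace $\Lcal_C$ by $\Scal_0$, then \qt{$\Vcal\cap\Scal_0$ is a largeness class} unwinds to a universal quantifier over all $\alpha<\wck$ of uniformly $\Delta^1_1$ statements, i.e.\ it is genuinely $\Pi^1_1$ and its negation $\Sigma^1_1$; the sets $B_\alpha$ in the analogue of \cref{hyp-lem-sigma1} are then no longer $\Delta^1_1$, the functions you want to bound are no longer total $\Delta^1_1$ selectors, and the boundedness step is exactly where the argument breaks (one would at least need a careful $\Pi^1_1$-uniformization detour, which you do not supply). There is also a structural circularity: the point of the proposition is to produce the $C$ that makes the $\wck$-level forcing question tractable, so one cannot invoke that machinery to build $C$. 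The paper avoids all of this with a one-line observation: the collection of $\Delta^1_1$-cohesive members of $\bigcap_{\alpha<\wck}\Ucal^{\Mcal_\alpha}_{C_\alpha}$ is a non-empty $\Sigma^1_1$ class (non-empty by your first part), so the Gandy basis theorem directly yields such a $C$ with $\omega_1^C=\wck$. No forcing over $\Sigma^0_{\wck+1}$ classes is needed at this stage.
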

\begin{proof}
Let us argue that for any upward closed partition regular class $\bigcap_{n < \omega} \Ucal_n$ where each $\Ucal_n$ is open, not necessarily effectively of uniformly, there is a $\Delta^1_1$-cohesive $C$ in $\bigcap_{n < \omega} \Ucal_n$. This is done by Mathias forcing with conditions $(\sigma, X)$ such that $X \cap \{0, \dots, |\sigma|\} = \emptyset$ and such that $X$ is $\Delta^1_1$ with $X \in \bigcap_{n < \omega} \Ucal_n$. Given a condition $(\sigma, X)$ and $n$ we can force the generic to be in $\Ucal_n$ as follows : As $X \in \Ucal_{n}$ we must have that $\sigma \cup X \in \Ucal_{n}$ because $\Ucal_n$ is upward closed. Thus there must be $\tau \subseteq X \cap \{0, \dots, |\sigma|\}$ such that $[\sigma \cup \tau] \subseteq \Ucal_{n}$. As $\bigcap_{n < \omega} \Ucal_n$ contains only infinite set we must have $X - \{0, \dots, \sigma \cup \tau\} \in \bigcap_{n < \omega} \Ucal_n$. Thus $(\sigma \cup \tau, X - \{0, \dots, \sigma \cup \tau\})$ is a valid extension. Let now $Y$ be $\Delta^1_1$. We can force the generic to be included in $Y$ or $\omega - Y$ up to finitely many elements as follow : We have $X \cap Y \in \bigcap_{n < \omega} \Ucal_n$ or $X \cap (\omega - Y) \in \bigcap_{n < \omega} \Ucal_n$. Then $(\sigma, X \cap Y)$ or $(\sigma, X \cap (\omega - Y))$ is a valid extension.

We have that the set $\bigcap_{\alpha < \wck} \Ucal_{C_\alpha}^{\Mcal_{\alpha}}$ is a $\Sigma^1_1$ class which is also upward closed and partition regular. We also have that the class of $\Delta^1_1$-cohesive sets is a $\Sigma^1_1$ class. By the previous argument their intersection is non-empty. By the $\Sigma^1_1$-basis theorem it must contains $C$ with $\omega_1^C = \wck$.
\end{proof}

\begin{lemma} \label{hyp-lemma-minC}
Suppose $C$ is $\Delta^1_1$-cohesive with $C \in \bigcap_{\alpha < \wck} \Ucal_{C_\alpha}^{\Mcal_{\alpha}}$. Let $\Ucal$ be a $\Delta^1_1$ open set. If $\Lcal_C \cap \Ucal$ is a largeness class, then $\bigcap_{\alpha < \wck} \Ucal_{C_\alpha}^{\Mcal_{\alpha}} \subseteq \Ucal$
\end{lemma}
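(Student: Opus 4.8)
The plan is to transfer the minimality of the class $\Lcal:=\bigcap_{\alpha<\wck}\Ucal_{C_\alpha}^{\Mcal_\alpha}$ from the level $\wck$, where it is only $\Sigma^1_1$, down to a single computable level $\alpha$, at which everything in sight becomes hyperarithmetic and we may appeal to a Scott set. Since $\Ucal$ is a $\Delta^1_1$ (upward-closed) open set, it is $\Sigma^0_1(\halt^{(\delta)})$ for some $\delta<\wck$; as $\halt^{(\delta)}$ is coded in $\Mcal_\delta$ by \Cref{prop-hyp-scott2}, we fix $\alpha=\delta$ and write $\Ucal=\Ucal^{\halt^{(\alpha)}}_e$ for an appropriate index $e$. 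By \Cref{prop-hyp-cohesiveclass} we have $\Lcal\subseteq\Ucal_{C_{\alpha+1}}^{\Mcal_{\alpha+1}}\subseteq\langle\Ucal_{C_\alpha}^{\Mcal_\alpha}\rangle$, so it suffices to prove $\langle\Ucal_{C_\alpha}^{\Mcal_\alpha}\rangle\subseteq\Ucal$. Recalling that $\langle\Ucal_{C_\alpha}^{\Mcal_\alpha}\rangle$ is $\Mcal_\alpha$-minimal, \Cref{def_minmalclasses} applied to $\halt^{(\alpha)}\in\Mcal_\alpha$ and $e$ gives two cases: either $\langle\Ucal_{C_\alpha}^{\Mcal_\alpha}\rangle\subseteq\Ucal$, in which case we are done, or $\langle\Ucal_{C_\alpha}^{\Mcal_\alpha}\rangle\cap\Ucal$ is not a largeness class, which I will show is impossible.

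So assume $\langle\Ucal_{C_\alpha}^{\Mcal_\alpha}\rangle\cap\Ucal$ is not a largeness class. Writing $\langle\Ucal_{C_\alpha}^{\Mcal_\alpha}\rangle=\Ucal_{D_\alpha}^{\Mcal_\alpha}$ as a decreasing $\omega$-intersection $\Acal_0\supseteq\Acal_1\supseteq\cdots$ of $\Sigma^0_1(M_\alpha)$ classes (finite subintersections over an enumeration of $D_\alpha$), \Cref{lem:decreasing-largeness-yields-largeness} yields $s$ with $\Vcal:=\Acal_s\cap\Ucal$ not a largeness class; note $\Vcal$ is $\Sigma^0_1(M_\alpha)$, upward closed, and $\langle\Ucal_{C_\alpha}^{\Mcal_\alpha}\rangle\subseteq\Acal_s$. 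Exactly as in the proof of \Cref{lem:cohesive-largeness-compatibility}, the collection of partitions $Y_0\sqcup\cdots\sqcup Y_{k-1}=\omega$ (for a suitable fixed $k$) with $Y_i\notin\Vcal$ for all $i<k$ is a non-empty $\Pi^0_1(M_\alpha)$ class; since $\Mcal_\alpha$ is a Scott set, I choose such a partition with $Y_0\oplus\cdots\oplus Y_{k-1}\in\Mcal_\alpha$, and since $M_\alpha$ is hyperarithmetic each $Y_i$ is $\Delta^1_1$. Because $C$ is $\Delta^1_1$-cohesive and the $Y_i$ are $\Delta^1_1$ and cover $\omega$, successively applying cohesiveness to $Y_0,Y_1,\dots$ (and their finite Boolean combinations) produces $i^*<k$ with $C\subseteq^{*}Y_{i^*}$.

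Finally I show $Y_{i^*}\in\Vcal$, contradicting the choice of the $Y_i$. On one hand, $C\in\Lcal\subseteq\langle\Ucal_{C_\alpha}^{\Mcal_\alpha}\rangle$, and $\langle\Ucal_{C_\alpha}^{\Mcal_\alpha}\rangle$ is partition regular by \Cref{lem:minimal-is-partition-regular} and contains only infinite sets, hence is closed under removing finitely many elements; so $C\cap Y_{i^*}\in\langle\Ucal_{C_\alpha}^{\Mcal_\alpha}\rangle$, and by upward closure $Y_{i^*}\in\langle\Ucal_{C_\alpha}^{\Mcal_\alpha}\rangle\subseteq\Acal_s$. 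On the other hand, $C\subseteq^{*}Y_{i^*}$ gives $\Lcal_C\subseteq\Lcal_{Y_{i^*}}$, so $\Lcal_{Y_{i^*}}\cap\Ucal\supseteq\Lcal_C\cap\Ucal$ is a largeness class; evaluating it on the $2$-cover $(Y_{i^*},\overline{Y_{i^*}})$ of $\omega$ and using $\overline{Y_{i^*}}\notin\Lcal_{Y_{i^*}}$ (as $Y_{i^*}$ is infinite and disjoint from its complement) forces $Y_{i^*}\in\Lcal_{Y_{i^*}}\cap\Ucal\subseteq\Ucal$. Therefore $Y_{i^*}\in\Acal_s\cap\Ucal=\Vcal$, the desired contradiction; hence $\langle\Ucal_{C_\alpha}^{\Mcal_\alpha}\rangle\subseteq\Ucal$ and so $\Lcal\subseteq\Ucal$.

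The main obstacle is exactly the production of a \emph{hyperarithmetic} witnessing cover: since $\Lcal$ is only $\Sigma^1_1$, the bare statement "$\Lcal\cap\Ucal$ is not a largeness class" does not supply a $\Delta^1_1$ bad cover, whereas the $\Delta^1_1$-cohesiveness of $C$ can only be exploited against $\Delta^1_1$ partitions. Descending to the fixed level $\alpha$ — replacing $\Lcal$ by the $\Sigma^0_1(M_\alpha)$ class $\Acal_s$ with $M_\alpha$ hyperarithmetic, and then using the Scott set $\Mcal_\alpha$ — is what makes the cover $\Delta^1_1$; everything else is routine bookkeeping with partition regularity, upward closure, and the definitions, in the spirit of \Cref{lem:cohesive-largeness-compatibility} and \Cref{prop-hyp-delta-min}.
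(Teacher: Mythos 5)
Your proof is correct and follows essentially the same route as the paper's: descend to a hyperarithmetic level, extract a $\Delta^1_1$ bad partition via the Scott set, use the $\Delta^1_1$-cohesiveness of $C$ to locate the part $Y_{i^*}$ with $C \subseteq^* Y_{i^*}$, and contradict the largeness of $\Lcal_C \cap \Ucal$. The only differences are cosmetic repackaging — you inline the content of \Cref{prop-hyp-delta-min} by invoking $\Mcal_\alpha$-minimality at a fixed level, and you certify $Y_{i^*}\in\Acal_s$ and $Y_{i^*}\in\Ucal$ by partition regularity and a $2$-cover rather than by applying largeness of $\Lcal_C\cap\Vcal$ and $\Lcal_C\cap\Ucal$ to the original $k$-cover, as the paper does.
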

\begin{proof}
Suppose $\Lcal_C \cap \Ucal$ is a largeness class. Let us show that $\Ucal \cap \bigcap_{\alpha < \wck} \Ucal_{C_\alpha}^{\Mcal_{\alpha}}$ is a largeness class. Suppose first for contradiction that it is not. Then there is a $\Delta^1_1$ cover $Y_0 \cup \dots \cup Y_{k-1} \supseteq \omega$ together with a $\Delta^1_1$ open largeness class $\Vcal \supseteq \bigcap_{\alpha < \wck} \Ucal_{C_\alpha}^{\Mcal_{\alpha}}$ such that $Y_i \notin \Ucal \cap \Vcal$ for every $i < k$. As each $Y_i$ is $\Delta^1_1$, there is some $i < k$ such that $C \subseteq^* Y_i$. Note also that since $C \in \bigcap_{\alpha < \wck} \Ucal_{C_\alpha}^{\Mcal_{\alpha}}$,  then $C \in \Lcal(\Vcal)$ and thus $\Lcal_C \cap \Vcal$ is a largeness class. It follows that $Y_j \in \Lcal_C \cap \Vcal$ for some $j < k$. As $j \neq i$ implies $|Y_j \cap C| < \infty$, then $Y_i \in \Lcal_C \cap \Vcal$ and thus $Y_i \in \Vcal$. As $\Lcal_C \cap \Ucal$ is a largeness class then by a similar argument, $Y_i \in \Lcal_C \cap \Ucal$ and thus $Y_i \in \Ucal$. It follows that $Y_i \in \Ucal \cap \Vcal$, contradicting our hypothesis. Thus $\Ucal \cap \bigcap_{\alpha < \wck} \Ucal_{C_\alpha}^{\Mcal_{\alpha}}$ is a largeness class.

Now from \Cref{prop-hyp-delta-min} we have that $\bigcap_{\alpha < \wck} \Ucal_{C_\alpha}^{\Mcal_{\alpha}}$ is minimal for $\Delta^1_1$ open sets, then $\bigcap_{\alpha < \wck} \Ucal_{C_\alpha}^{\Mcal_{\alpha}} \subseteq \Ucal$.
\end{proof}

\begin{definition}
Let $\Bcal = \bigcup_{\alpha < \wck} \Bcal_{\alpha}$ be a $\Sigma^0_{\wck}$ class. Let $p = (\sigma, X) \in \Pb_{\wck}$. We define $p \qvdash \Bcal$ if the set
$$\{Y\ :\ \exists \tau \subseteq Y - \{0, \dots, |\sigma|\}\ \exists \alpha < \wck\ \sigma \cup \tau \nqvdash 2^\omega - \Bcal_{\alpha} \} \cap \Lcal_C$$
is a largeness class.
\end{definition}

Given a $\Sigma^0_{\wck}$ class $\Bcal = \bigcup_{\alpha < \wck} \Bcal_{\alpha}$ the following set
$$\{Y\ :\ \exists \tau \subseteq Y - \{0, \dots, |\sigma|\}\ \exists \alpha < \wck\ \sigma \cup \tau \nqvdash 2^\omega - \Bcal_{\alpha} \}$$
is a $\Pi^1_1$ open set, that is an open set $\bigcup_{\sigma \in B} [\sigma]$ where $B = \bigcup_{\alpha < \wck} B_\alpha$ is a $\Pi^1_1$ set of strings. We also suppose that each $B_\alpha$ is $\halt^{(\alpha)}$-computable and that $\{B_\alpha\}_{\alpha <\wck}$ is increasing. Given such sets we write $\Ucal_\alpha$ for the $\Delta^1_1$ open set $\bigcup_{\sigma \in B_\alpha} [\sigma]$.

\begin{proposition}
Let $\Ucal$ be an upward-closed $\Pi^1_1$ open set. The class $\Ucal \cap \Lcal_C$ is a largeness class iff there exists some $\alpha < \wck$ such that $\Ucal_\alpha \cap \Lcal_C$ is a largeness class.
\end{proposition}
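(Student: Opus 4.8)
The right-to-left implication is immediate: if $\Ucal_\alpha\cap\Lcal_C$ is a largeness class for some $\alpha<\wck$, then $\Ucal\cap\Lcal_C$ is an upward-closed superclass of it, hence a largeness class. For the converse, the plan is to reduce ``largeness'' to finite combinatorial data by compactness, and then apply $\Sigma^1_1$-boundedness to localize the construction below $\wck$.

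First I would write $\Lcal_C=\bigcap_n\Lcal_C^{(n)}$, where $\Lcal_C^{(n)}=\{Y:|Y\cap C|\geq n\}$. Since $C$ is infinite, each $\Lcal_C^{(n)}$ is an upward-closed $\Sigma^0_1(C)$ largeness class and $\Lcal_C^{(0)}\supseteq\Lcal_C^{(1)}\supseteq\cdots$. By \Cref{lem:decreasing-largeness-yields-largeness} (together with the fact that an upward-closed superclass of a largeness class is again one), for any open upward-closed $\Vcal$ the class $\Vcal\cap\Lcal_C$ is a largeness class iff $\Vcal\cap\Lcal_C^{(n)}$ is for every $n$; and, $\Vcal\cap\Lcal_C^{(n)}$ being an upward-closed open set, by compactness (as in \Cref{lem:largeness-class-complexity}) this holds iff for every $k$ there is an $m$ such that every partition $\{0,\dots,m\}=F_{\tau_0}\sqcup\dots\sqcup F_{\tau_{k-1}}$ has a block $\tau_i$ with $[\tau_i]\subseteq\Vcal$ and $|F_{\tau_i}\cap C|\geq n$. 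I would use this for $\Vcal=\Ucal$ and for $\Vcal=\Ucal_\beta$, $\beta<\wck$.

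So assume $\Ucal\cap\Lcal_C$ is a largeness class. For each $n,k$, fix a witnessing $m$ and, for each of the finitely many $k$-partitions of $\{0,\dots,m\}$, a good block $\tau$ with $[\tau]\subseteq\Ucal=\bigcup_{\beta<\wck}\Ucal_\beta$ and $|F_\tau\cap C|\geq n$; then $[\tau]\subseteq\Ucal_{\beta_\tau}$ for some $\beta_\tau<\wck$, so that $\alpha(n,k):=\max_\tau\beta_\tau<\wck$ and the same finite witness works with $\Ucal_{\alpha(n,k)}$ in place of $\Ucal$. The heart of the proof is to show $\alpha^*:=\sup_{n,k}\alpha(n,k)<\wck$. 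For this I would use the effective presentation recalled just before the statement: each $B_\beta$ is $\halt^{(\beta)}$-computable and the sequence is increasing, so ``$[\tau]\subseteq\Ucal_\beta$'' is $\Delta^1_1$ uniformly in $\tau$ and in an ordinal notation for $\beta$; hence, since $\Sigma^1_1$ is closed under number quantifiers, ``$\Ucal_\beta\cap\Lcal_C^{(n)}$ satisfies the $k$-th largeness requirement'' is $\Sigma^1_1(C)$ uniformly in $n,k$ and a notation for $\beta$. This lets one package the ordinals $\alpha(n,k)$ by a $\Sigma^1_1(C)$ set of notations; since $C$ was chosen (via the Gandy basis theorem) with $\omega_1^C=\wck$, the $\Sigma^1_1$-boundedness theorem then yields a single $\alpha^*<\wck$ with $\alpha(n,k)\leq\alpha^*$ for all $n,k$. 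Consequently $\Ucal_{\alpha^*}\cap\Lcal_C^{(n)}$ satisfies every largeness requirement, hence is a largeness class, for every $n$, and therefore $\Ucal_{\alpha^*}\cap\Lcal_C=\bigcap_n(\Ucal_{\alpha^*}\cap\Lcal_C^{(n)})$ is a largeness class by \Cref{lem:decreasing-largeness-yields-largeness}; so $\alpha=\alpha^*$ works.

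The main difficulty will be exactly this boundedness step: ``$[\tau]\subseteq\Ucal$'' is only $\Pi^1_1$, so one must be careful to exhibit the collection of witnessing stages $\beta_\tau$ genuinely as a $\Sigma^1_1(C)$ set of notations. The resolution is to lean on the uniform $\halt^{(\beta)}$-computability of the $B_\beta$ (which makes the relevant predicates $\Delta^1_1$ \emph{on} ordinal notations, and keeps them $\Sigma^1_1(C)$ after the number quantifiers over $n,k$) and on the choice $\omega_1^C=\wck$, so that $\Sigma^1_1$-boundedness produces a bound strictly below $\wck$. An equivalent formulation one could use instead is via \Cref{hyp-lemma-minC}: for a $\Delta^1_1$ open $\Vcal$ one has $\Vcal\cap\Lcal_C$ large iff $\bigcap_{\beta<\wck}\Ucal_{C_\beta}^{\Mcal_\beta}\subseteq\Vcal$, which reduces the statement to the fact that the $\Sigma^1_1$ class $\bigcap_{\beta<\wck}\Ucal_{C_\beta}^{\Mcal_\beta}$, once contained in $\Ucal=\bigcup_\alpha\Ucal_\alpha$, is already contained in some $\Ucal_\alpha$ --- again an instance of $\Sigma^1_1$-boundedness, now for the $\Pi^1_1$-norm on $\Ucal$.
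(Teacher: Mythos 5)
Your proof is correct and follows essentially the same route as the paper: decompose $\Lcal_C$ as a decreasing intersection of $\Sigma^0_1(C)$ largeness classes, express each largeness requirement as a finitary (hence $\Delta^1_1$ in a notation for $\beta$) statement via compactness, and apply $\Sigma^1_1$-boundedness relative to $C$ (using $\omega_1^C=\wck$) to obtain a single bound $\beta<\wck$ with $\Ucal_\beta\cap\Lcal_C$ large. The paper packages the boundedness step as a total $\Pi^1_1(C)$ function $f(n,k)=$ the least $\alpha$ witnessing the $(n,k)$-th requirement, which is just a slightly more compact phrasing of your argument.
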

\begin{proof}
Suppose $\Ucal_\alpha \cap \Lcal_C$ is a largeness class. Then clearly $\Ucal \cap \Lcal_C$ is a largeness class. Suppose now that $\Ucal \cap \Lcal_C$ is a largeness class. For each $n$ let $\Ucal_n^C$ be the $\Sigma^0_1(C)$ open set such that $\Lcal_C = \bigcap_{n} \Ucal_n^C$. We have
$$\forall n\ \forall k\ \exists \alpha\ \forall Y_0 \cup \dots \cup Y_{k-1}\ \exists i < k\ \exists \sigma \subseteq Y_i\ [\sigma] \subseteq \Ucal_\alpha \cap \Ucal_n^C$$

Note that given $k$ and $\alpha$ the predicate $P^{n, k}_\alpha \equiv \forall Y_0 \cup \dots \cup Y_{k-1}\ \exists i < k\ \exists \sigma \subseteq Y_i\ [\sigma] \subseteq \Ucal_\alpha \cap \Ucal_n^C$ is $\Sigma^0_1(C \oplus \halt^{(\alpha+1)})$ uniformly in $n,k$ and $\alpha$. Thus the function $f:\omega^2 \rightarrow \wck$ which to $n,k$ associates the smallest $\alpha$ such that $P^{n,k}_\alpha$ is true is a total $\Pi^1_1(C)$ function. By $\Sigma^1_1$-boundedness we have $\beta = \sup_{n,k} f(n,k) < \omega_1^C = \wck$. It follows that
$$\forall n\ \forall k\ \forall Y_0 \cup \dots \cup Y_{k-1}\ \exists i < k\ \exists \sigma \subseteq Y_i\ [\sigma] \subseteq \Ucal_\beta \cap \Ucal_n^C$$
Also $\Ucal_\beta \subseteq \Ucal$ is such that $\Ucal_\beta \cap \Lcal_C$ is a largeness class.
\end{proof}

\begin{corollary} \label{cor-hyp-forcepii}
Let $\Bcal = \bigcup_{\alpha < \wck} \Bcal_{\alpha}$ be a $\Sigma^0_{\wck}$ class. Let $(\sigma, X) \in \Pb_{\wck}$. The relation $p \qvdash \Bcal$ is $\Sigma^0_{\wck}(C)$
\end{corollary}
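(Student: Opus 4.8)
The plan is to combine the Proposition just proved (which trades the $\Pi^1_1$ union $\exists \alpha < \wck$ for an honest union of $\Delta^1_1$ layers, using $\Sigma^1_1$-boundedness and $\omega_1^C = \wck$) with the largeness-complexity estimate of \Cref{lem:largeness-class-complexity}. Recall that, by definition, $p \qvdash \Bcal$ holds iff $\Ucal \cap \Lcal_C$ is a largeness class, where $\Ucal = \{Y : \exists \tau \subseteq Y - \{0,\dots,|\sigma|\}\ \exists \alpha < \wck\ \sigma \cup \tau \nqvdash 2^\omega - \Bcal_\alpha\}$ is the upward-closed $\Pi^1_1$ open set $\bigcup_{\alpha < \wck}\Ucal_\alpha$ with $\Ucal_\alpha = \bigcup_{\sigma \in B_\alpha}[\sigma]$ a $\Delta^1_1$ open set, $B_\alpha$ being $\halt^{(\alpha)}$-computable uniformly in $\alpha$ (and $\{B_\alpha\}$ increasing), as arranged right before the statement. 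Note this depends only on $\sigma$ and a code for $\Bcal$. By the preceding Proposition, $\Ucal \cap \Lcal_C$ is a largeness class iff there exists $\alpha < \wck$ such that $\Ucal_\alpha \cap \Lcal_C$ is a largeness class. Hence it suffices to check that the predicate ``$\Ucal_\alpha \cap \Lcal_C$ is a largeness class'' is $\Delta^1_1(C)$ uniformly in $\alpha$: then $p \qvdash \Bcal$ is exhibited as a $\wck$-union of uniformly $\Delta^1_1(C)$ predicates, i.e. a $\Sigma^0_{\wck}(C)$ relation.

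To verify this uniform complexity, write $\Lcal_C = \bigcap_n \Ucal_n^C$ as a decreasing intersection of $\Sigma^0_1(C)$ open sets $\Ucal_n^C$ (uniformly in $n$), exactly as in the Proposition's proof. Then $\Ucal_\alpha \cap \Lcal_C = \bigcap_n (\Ucal_\alpha \cap \Ucal_n^C)$ is a decreasing intersection of sets that are $\Sigma^0_1(\halt^{(\alpha)} \oplus C)$ uniformly in $\alpha$ and $n$, since $\Ucal_\alpha$ is $\Sigma^0_1(\halt^{(\alpha)})$ uniformly. By \Cref{lem:decreasing-largeness-yields-largeness} (together with the fact that any superclass of a largeness class is a largeness class), $\Ucal_\alpha \cap \Lcal_C$ is a largeness class iff $\Ucal_\alpha \cap \Ucal_n^C$ is a largeness class for every $n$. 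By the relativization of \Cref{lem:largeness-class-complexity}, ``$\Ucal_\alpha \cap \Ucal_n^C$ is a largeness class'' is a $\Pi^0_2(\halt^{(\alpha)} \oplus C)$ statement uniformly in $\alpha, n$, and adding the quantifier over $n$ keeps it $\Pi^0_2(\halt^{(\alpha)} \oplus C)$ uniformly in $\alpha$. Since $\halt^{(\alpha)}$ is $\Delta^1_1$ uniformly in (a code for) $\alpha$, this predicate is $\Delta^1_1(C)$ uniformly in $\alpha$, as required; and if one wants the output in the normal form of the paper, one applies the usual ``little hack'' so that at most one new string enters at each level $\alpha$.

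I expect no serious obstacle here, since the real work — converting the external $\Pi^1_1$ quantifier into a union indexed by $\alpha < \wck$ of $\Delta^1_1(C)$ stages — has already been handled in the preceding Proposition. The only point requiring care is bookkeeping on uniformity: one must be sure that the $\alpha$-th stage $\Ucal_\alpha$ costs only $\halt^{(\alpha)}$, that $\Lcal_C$ contributes the single fixed oracle $C$ (with $\omega_1^C = \wck$), and hence that the stage predicate never rises above $\Delta^1_1(C)$, so that the whole relation $p \qvdash \Bcal$ lands precisely at $\Sigma^0_{\wck}(C)$ rather than at some higher level.
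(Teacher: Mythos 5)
Your proof is correct and follows essentially the same route as the paper: the paper's own argument is just the one-line observation that, by the preceding Proposition, $p \qvdash \Bcal$ is equivalent to the existence of some $\alpha < \wck$ for which the $\alpha$-th layer intersected with $\Lcal_C$ is a largeness class, with the uniform $\Delta^1_1(C)$ complexity of that inner predicate left implicit. Your additional bookkeeping (decomposing $\Lcal_C$ as a decreasing intersection of $\Sigma^0_1(C)$ open sets and invoking the relativized \Cref{lem:largeness-class-complexity}) correctly fills in exactly the detail the paper omits.
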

\begin{proof}
The relation $p \qvdash \Bcal$ is equivalent to
$$\exists \alpha < \wck\ \{Y\ :\ \exists \tau \subseteq Y - \{0, \dots, |\sigma|\}\ \sigma \cup \tau \nqvdash 2^\omega - \Bcal_{\alpha} \} \cap \Lcal_C$$
is a largeness class
\end{proof}

\begin{corollary} \label{hyp-corpi11accclass}
The class $\bigcap_{\alpha < \wck} \Ucal_{C_\alpha}^{\Mcal_\alpha}$ is minimal for $\Pi^1_1$ open sets $\Ucal$ such that $\Ucal \cap \Lcal_C$ is a largeness class.
\end{corollary}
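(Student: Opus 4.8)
The plan is to obtain this corollary by combining the two results immediately above it: the proposition characterizing when $\Ucal\cap\Lcal_C$ is a largeness class for an upward-closed $\Pi^1_1$ open set, and \Cref{hyp-lemma-minC}, which already establishes the $\Delta^1_1$ case of exactly this kind of minimality. In other words, the strategy is to use the $\Pi^1_1$-to-$\Delta^1_1$ collapse provided by that proposition (which, via $\omega_1^C=\wck$ and $\Sigma^1_1$-boundedness, is where the genuine work sits) so that \Cref{hyp-lemma-minC} and \Cref{prop-hyp-delta-min} can finish the job.

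Concretely, I would proceed as follows. Fix an upward-closed $\Pi^1_1$ open set $\Ucal$ with $\Ucal\cap\Lcal_C$ a largeness class; the goal is $\bigcap_{\alpha<\wck}\Ucal_{C_\alpha}^{\Mcal_\alpha}\subseteq\Ucal$. First, write $\Ucal=\bigcup_{\alpha<\wck}\Ucal_\alpha$ as in the paragraph preceding the relevant proposition, so that $\Ucal_\alpha=\bigcup_{\sigma\in B_\alpha}[\sigma]$ with $\{B_\alpha\}_{\alpha<\wck}$ increasing and each $B_\alpha$ being $\halt^{(\alpha)}$-computable; in particular each $\Ucal_\alpha$ is a $\Delta^1_1$ open set. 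Second, apply the proposition stating that an upward-closed $\Pi^1_1$ open set $\Ucal$ has $\Ucal\cap\Lcal_C$ large iff some $\Ucal_\alpha\cap\Lcal_C$ is large, to extract a single ordinal $\alpha<\wck$ for which $\Ucal_\alpha\cap\Lcal_C$ is already a largeness class. Third, since $C$ is $\Delta^1_1$-cohesive with $C\in\bigcap_{\beta<\wck}\Ucal_{C_\beta}^{\Mcal_\beta}$ and $\Ucal_\alpha$ is $\Delta^1_1$, invoke \Cref{hyp-lemma-minC} to get $\bigcap_{\beta<\wck}\Ucal_{C_\beta}^{\Mcal_\beta}\subseteq\Ucal_\alpha\subseteq\Ucal$, which is exactly minimality for $\Ucal$.

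I do not expect a genuine obstacle in this argument: the substantive content has already been discharged in \Cref{prop-hyp-delta-min}, \Cref{hyp-lemma-minC}, and the preceding proposition, and what remains is bookkeeping. The only point deserving a word of care is the restriction to \emph{upward-closed} $\Ucal$: the preceding proposition is stated for upward-closed open sets, whereas the corollary writes only "$\Pi^1_1$ open sets". This is harmless, since largeness classes are themselves upward-closed and every open set arising from the forcing questions here is of the form $\{Y:\exists\tau\subseteq Y-\{0,\dots,|\sigma|\}\ \dots\}$, hence upward-closed; so the statement as it will be applied (to show $\omega_1^{G_\Fcal}=\wck$) concerns precisely upward-closed $\Ucal$, and one may either add that hypothesis explicitly or note that passing to the upward closure changes nothing.
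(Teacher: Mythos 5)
Your proposal is correct and is essentially identical to the paper's own proof: the paper likewise extracts an $\alpha<\wck$ with $\Ucal_\alpha\cap\Lcal_C$ large from the preceding proposition and then concludes via \Cref{hyp-lemma-minC} that $\bigcap_{\beta<\wck}\Ucal_{C_\beta}^{\Mcal_\beta}\subseteq\Ucal_\alpha\subseteq\Ucal$. Your side remark about the upward-closedness hypothesis is a fair observation (the paper applies the proposition silently), and your resolution of it is the right one.
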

\begin{proof}
Given a $\Pi^1_1$-open set $\Ucal$ such that $\Ucal \cap \Lcal_C$, there must be $\alpha < \wck$ such that $\Ucal_\alpha \cap \Lcal_C$ is a largeness class. By \Cref{hyp-lemma-minC} it must be that $\bigcap_{\alpha < \wck} \Ucal_{C_\alpha}^{\Mcal_{\alpha}} \subseteq \Ucal_\alpha$.
\end{proof}

\begin{definition}
Let $\Bcal = \bigcap_{\alpha < \wck} \Bcal_{\alpha}$ be a $\Pi^0_{\wck}$ class. Let $p = (\sigma, X) \in \Pb_{\wck}$. We define $p \Vdash \Bcal$ if for every $\tau \subseteq X - \{0, \dots, |\sigma|\}$ and for every $\alpha < \wck$ we have $\sigma \cup \tau \qvdash \Bcal_{\alpha}$
\end{definition}

\begin{proposition} \label{hyp-sideprop-truth}
Let $\Bcal = \bigcap_{\alpha < \wck} \Bcal_{\alpha}$ be a $\Pi^0_{\wck}$ class. Let $\Fcal$ be sufficiently generic with $p \in \Fcal$. If $p \Vdash \Bcal$, then $G_{\Fcal} \in \Bcal$.
\end{proposition}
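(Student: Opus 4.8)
The plan is to reproduce the ``forcing implies truth'' argument of \Cref{lem:forcing-implytruth} and \Cref{th-hyp-forcingimpliestruth}, the only new ingredient being that the conjunction defining $\Bcal$ now ranges over all $\alpha < \wck$. Since $\Bcal = \bigcap_{\alpha < \wck}\Bcal_\alpha$, it suffices to prove $G_\Fcal \in \Bcal_\alpha$ for every $\alpha < \wck$. Fix such an $\alpha$. As $\Bcal_\alpha$ is the complement of a $\Pi^0$-component of the $\Sigma^0_{\wck}$ class $2^\omega - \Bcal$, it is a $\Sigma^0_\gamma$ class for some computable ordinal $\gamma < \wck$; in particular the forcing question $\qvdash$ of \Cref{def-hyp-forcingqu}, the forcing relation $\Vdash$, and \Cref{prop-hyp-forcext} and \Cref{th-hyp-forcingimpliestruth} all apply to $\Bcal_\alpha$.

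Write $p = (\sigma, X)$. The key step is to check that $D_\alpha := \{ r \in \Pb_{\wck} : r \Vdash \Bcal_\alpha \}$ is dense below $p$. Let $q = (\tau, Y) \leq p$. By definition of the extension relation, $\sigma \preceq \tau$ and $\tau - \sigma \subseteq X$; since $X \cap \{0, \dots, |\sigma|\} = \emptyset$ for a $\Pb_{\wck}$-condition, we may write $\tau = \sigma \cup \rho$ for a string $\rho$ with $\rho \subseteq X - \{0, \dots, |\sigma|\}$. The hypothesis $p \Vdash \Bcal$ then gives $\sigma \cup \rho \qvdash \Bcal_\alpha$, that is $q \qvdash \Bcal_\alpha$. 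Applying \Cref{prop-hyp-forcext}(1) to $q$ and the $\Sigma^0_\gamma$ class $\Bcal_\alpha$ yields $r \leq q$ with $r \Vdash \Bcal_\alpha$, so $r \in D_\alpha$. Hence $D_\alpha$ is dense below $p$.

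Since $p \in \Fcal$ and $\Fcal$ is sufficiently generic --- the requirements $\{D_\alpha : \alpha < \wck\}$ form a countable family of dense sets determined by the fixed class $\Bcal$ --- there is $r \in \Fcal$ with $r \Vdash \Bcal_\alpha$. By \Cref{th-hyp-forcingimpliestruth}, $G_\Fcal \in \Bcal_\alpha$. As $\alpha < \wck$ was arbitrary, $G_\Fcal \in \Bcal$.

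The argument is routine once the earlier machinery is in place; the only points worth flagging are that the collection of dense requirements is indexed by the countable ordinal $\wck$ (so a ``sufficiently generic'' filter still meets all of them), and that each $\Bcal_\alpha$ genuinely has effective Borel rank below $\wck$, so that the finite-level forcing question and \Cref{prop-hyp-forcext}, \Cref{th-hyp-forcingimpliestruth} apply to it directly.
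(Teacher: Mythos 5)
Your proof is correct and follows essentially the same route as the paper: the paper likewise observes that the definition of $p \Vdash \Bcal$ makes $q \qvdash \Bcal_\alpha$ hold for every $q \leq p$ and every $\alpha$, applies \Cref{prop-hyp-forcext} to conclude that $\{r : r \Vdash \Bcal_\alpha\}$ is dense below $p$, and then invokes \Cref{th-hyp-forcingimpliestruth} for each $\alpha < \wck$. Your write-up merely fills in the unwinding of the extension relation and the countability of the family of dense sets, both of which are implicit in the paper's terser argument.
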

\begin{proof}
Using \Cref{prop-hyp-forcext}, for every $\alpha$ and every $q \leq p$, there is  some $r \leq q$ such that $r \Vdash \Bcal_{\alpha}$. Thus for every $\alpha$ the set $\{r\ :\ r \Vdash \Bcal_{\alpha}\}$ is dense below $p$. It follows from \Cref{th-hyp-forcingimpliestruth} that if $\Fcal$ is sufficiently generic, $G_{\Fcal} \in \Bcal$.
\end{proof}

\begin{definition}
Let $\Bcal = \bigcup_{n \in \omega} \Bcal_{n}$ be a $\Sigma^0_{\wck+1}$ class where each $\Pi^0_{\wck}$ set $\Bcal_n = \bigcap_{\alpha < \wck} \Bcal_{n, \alpha}$. We define
$p \qvdash \Bcal$ if the set
$$\{Y\ :\ \exists \tau \subseteq Y - \{0, \dots, |\sigma|\}\ \exists n\ \sigma \cup \tau \nqvdash 2^\omega - \Bcal_{n} \} \cap \Lcal_C$$
is a largeness class.
\end{definition}

Given a $\Sigma^0_{\wck+1}$ class $\Bcal = \bigcup_{n \in \omega} \Bcal_{n}$ with $\Bcal_n = \bigcap_{\alpha < \wck} \Bcal_{n, \alpha}$, the following set
$$\Ucal = \{Y\ :\ \exists \tau \subseteq Y - \{0, \dots, |\sigma|\}\ \exists n\ \sigma \cup \tau \nqvdash 2^\omega - \Bcal_{n} \}$$
is a $\Sigma^1_1(C)$ open set, that is an open set $\Ucal = \bigcup_{\sigma \in B} [\sigma]$ where $B = \bigcap_{\alpha < \wck} B_\alpha$ is a $\Sigma^1_1(C)$ set of strings. We furthermore assume that $\{B_\alpha\}_{\alpha < \wck}$ is decreasing. We then write $\Ucal_\alpha$ for the $\Delta^1_1(C)$-open set $\bigcup_{\sigma \in B_\alpha} [\sigma]$.

Computability theorists have a strong habits of working with enumerable open sets. With that respect, $\Sigma^1_1$-open sets, that is, co-enumerable along the computable ordinals, are strange objects to consider. Note that given such an open set we have $\Ucal \subseteq \bigcap_{\alpha < \wck} \Ucal_\alpha$, but not necessarily equality. However the elements $X$ of $\bigcap_{\alpha < \wck} \Ucal_\alpha - \Ucal$ are all such that $\omega_1^X > \wck$. It is in particular a meager and nullset.

Let us detail a little bit the set $B = \bigcap_{\alpha < \wck} B_\alpha$ that we can consider so that $\Ucal = \bigcup_{\sigma \in B} [\sigma]$. To ease the notation we introduce the following definition, in the same spirit as $\Ucal(\Bcal, \sigma)$ defined above:

\begin{definition} \label{def-hyp-mainstuff}
Let $\Bcal$ be a $\Sigma^0_{\alpha}$ class. We define
$\Vcal(\Bcal, \sigma)$ to be the set
$$\{Y\ :\ \exists \tau \subseteq Y - \{0, \dots, |\sigma|\}\ \sigma \cup \tau \nqvdash \Bcal \}$$
\end{definition}

Given a $\Sigma^0_{\wck+1}$ class $\Bcal = \bigcup_{n \in \omega} \Bcal_{n}$ with $\Bcal_n = \bigcap_{\alpha < \wck} \Bcal_{n, \alpha}$, given
$$\Ucal = \{Y\ :\ \exists \tau \subseteq Y - \{0, \dots, |\sigma|\}\ \exists n\ \sigma \cup \tau \nqvdash 2^\omega - \Bcal_{n} \}$$
we have by \Cref{cor-hyp-forcepii} that $\Ucal$ equals:
$$\{Y\ :\ \exists \tau \subseteq Y - \{0, \dots, |\sigma|\}\ \exists n\ \forall \alpha < \wck\ \Vcal(2^\omega - \Bcal_{n, \alpha}, \sigma \cup \tau) \cap \Lcal_C \text{ is not a largeness class}\}$$
Let
$$B = \{\tau\ :\ \exists n\ \forall \alpha < \wck\ \Vcal(2^\omega - \Bcal_{n, \alpha}, \sigma \cup \tau) \cap \Lcal_C \text{ is not a largeness class} \}$$
Let
$$B_\alpha = \{\tau\ :\ \exists n\ \forall \beta < \alpha\ \Vcal(2^\omega - \Bcal_{n, \beta}, \sigma \cup \tau) \cap \Lcal_C \text{ is not a largeness class} \}$$
By $\Sigma^1_1$-boundedness we have that $B = \bigcap_{\alpha} B_\alpha$. We also have $\Ucal = \bigcup_{\sigma \in B} [\sigma]$.

We now show the core lemma that will be used to show $\omega_1^{G_{\Fcal}} = \wck$ for $\Fcal$ a sufficiently generic filter:

\begin{lemma} \label{hyp-lem-sigma1}
Let $B = \bigcap_{\alpha < \wck} B_\alpha$ be a $\Sigma^1_1(C)$ set of strings where each $B_\alpha$ is $\Delta^1_1(C)$ uniformly in $\alpha$ and where $\beta < \alpha$ implies $B_\alpha \subseteq B_\beta$. Let $\Ucal = \bigcup_{\sigma \in B} [\sigma]$ be a $\Sigma^1_1(C)$ upward closed open set with $\Ucal_\alpha = \bigcup_{\sigma \in B_\alpha} [\sigma]$ be a $\Delta^1_1(C)$ upward closed open set. We have $\Ucal \subseteq \bigcap_{\alpha < \wck} \Ucal_\alpha$. Furthermore, $\Ucal \cap \Lcal_C$ is a largeness class iff for every $\alpha < \wck$ , $\Ucal_\alpha \cap \Lcal_C$ is a largeness class.
\end{lemma}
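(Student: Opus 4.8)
The plan is to follow the proof of the analogous statement for $\Pi^1_1$ open sets proved above, but with the quantifier over the levels $\alpha<\wck$ reversed: there a single finitary largeness witness localizes at some level, and $\Sigma^1_1$-boundedness serves to collect these levels, whereas here we are handed largeness at \emph{every} level and must push it through the $\wck$-indexed intersection, which is again exactly what $\Sigma^1_1$-boundedness will provide. First the trivial parts: since $B=\bigcap_{\alpha<\wck}B_\alpha\subseteq B_\alpha$ we get $\Ucal=\bigcup_{\sigma\in B}[\sigma]\subseteq\bigcup_{\sigma\in B_\alpha}[\sigma]=\Ucal_\alpha$ for every $\alpha$, hence $\Ucal\subseteq\bigcap_{\alpha<\wck}\Ucal_\alpha$; and if $\Ucal\cap\Lcal_C$ is a largeness class then each $\Ucal_\alpha\cap\Lcal_C\supseteq\Ucal\cap\Lcal_C$ is a superclass of a largeness class, hence a largeness class. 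The content is the converse, so assume $\Ucal_\alpha\cap\Lcal_C$ is a largeness class for every $\alpha<\wck$.

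Write $\Lcal_C=\bigcap_n\Ucal^C_n$ with each $\Ucal^C_n$ an upward-closed $\Sigma^0_1(C)$ open set, chosen decreasing in $n$ (e.g.\ $\Ucal^C_n=\{X:(\exists y>n)\,y\in C\cap X\}$). By \Cref{lem:decreasing-largeness-yields-largeness} it suffices to show that $\Ucal\cap\Ucal^C_n$ is a largeness class for each $n$; since this class is open and upward-closed, by the compactness argument of \Cref{lem:largeness-class-complexity} this amounts to: for every $k$ there is $m$ such that every partition of $\{0,\dots,m\}$ into $k$ finite pieces has a piece $F$ with $F\in\Ucal\cap\Ucal^C_n$. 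A point used repeatedly is that for a \emph{finite} set $F$ one has $F\in\Ucal$ iff $F\in\Ucal_\alpha$ for every $\alpha<\wck$: the set $S_F$ of strings $\tau$ with $F\in[\tau]$ is finite, so the sets $S_F\cap B_\alpha$ form a decreasing family of nonempty finite sets, hence stabilize to $S_F\cap B\neq\emptyset$. In particular, for finite $F$, membership $F\in\Ucal_\alpha$ is $\Delta^1_1(C)$ uniformly in (the $\Ocal^C$-code of) $\alpha$ and in $F$, because each $B_\alpha$ is, while $F\in\Ucal^C_n$ is $\Delta^0_1(C)$.

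Now fix $n$ and $k$. For each $\alpha<\wck$, since $\Ucal_\alpha\cap\Ucal^C_n\supseteq\Ucal_\alpha\cap\Lcal_C$ is a largeness class, there is a least $m_\alpha\in\omega$ such that every $k$-partition of $\{0,\dots,m_\alpha\}$ has a piece in $\Ucal_\alpha\cap\Ucal^C_n$. The crux is $\sup_{\alpha<\wck}m_\alpha<\omega$. Suppose not, and let $g(M)$ be the least $\alpha<\wck$ such that some $k$-partition of $\{0,\dots,M\}$ has no piece in $\Ucal_\alpha\cap\Ucal^C_n$ (equivalently $m_\alpha>M$). Under the failure of boundedness $g$ is total, and since ``$m_\alpha>M$'' is uniformly $\Delta^1_1(C)$ in $\alpha$ and ``$g(M)=\alpha$'' only adds a bounded universal quantifier $\forall\beta<\alpha$ over a $\Delta^1_1(C)$ predicate, $g$ is a total $\Pi^1_1(C)$ (hence $\Delta^1_1(C)$) function into $\Ocal^C$. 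By $\Sigma^1_1$-boundedness relativized to $C$, and since $\omega_1^C=\wck$, we get $\gamma:=\sup_M g(M)<\wck$. Apply the hypothesis at level $\gamma$: $\Ucal_\gamma\cap\Ucal^C_n$ is a largeness class, so $m_\gamma<\omega$; put $M:=m_\gamma$. Then $g(M)\le\gamma$, so some $k$-partition of $\{0,\dots,M\}$ has no piece in $\Ucal_{g(M)}\cap\Ucal^C_n$; as $g(M)\le\gamma$ and the $\Ucal_\alpha$ decrease, upward-closedness of $\Ucal^C_n$ and of each $\Ucal_\alpha$ gives that this same partition has no piece in $\Ucal_\gamma\cap\Ucal^C_n$, contradicting $m_\gamma=M$. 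Hence $m^\ast:=\sup_{\alpha<\wck}m_\alpha<\omega$, and $m^\ast$ is the sought witness: given a $k$-partition of $\{0,\dots,m^\ast\}$, for each $\alpha$ the restriction to $\{0,\dots,m_\alpha\}$ has a piece in $\Ucal_\alpha\cap\Ucal^C_n$, hence by upward-closedness so does the original partition; there being finitely many pieces, one fixed piece $F^\ast$ lies in $\Ucal_\alpha$ for cofinally many $\alpha$, hence for all $\alpha<\wck$, hence $F^\ast\in\Ucal$ by the finite-set observation, and $F^\ast\in\Ucal^C_n$. Varying $k$, then $n$, and invoking \Cref{lem:decreasing-largeness-yields-largeness} once more yields that $\Ucal\cap\Lcal_C$ is a largeness class.

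I expect the only genuine obstacle to be the definability bookkeeping that makes $g$ a bona fide total $\Pi^1_1(C)$ function feeding $\Sigma^1_1$-boundedness — concretely, checking that ``$m_\alpha>M$'' is uniformly $\Delta^1_1(C)$ in the $\Ocal^C$-code of $\alpha$, which rests squarely on the hypothesis that the $B_\alpha$ are uniformly $\Delta^1_1(C)$ and that $\{B_\alpha\}$ is decreasing; the remaining ingredients are only the compactness characterization of largeness and finite pigeonhole.
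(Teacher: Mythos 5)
Your proof is correct in substance and uses the same engine as the paper's --- a single application of $\Sigma^1_1$-boundedness (relativized to $C$, using $\omega_1^C=\wck$) to a total $\Pi^1_1(C)$ function that selects, for each piece of finitary combinatorial data, the least level $\alpha$ at which largeness fails --- but you run it in the direct direction rather than the contrapositive. The paper assumes $\Ucal\cap\Lcal_C$ is \emph{not} large, extracts a fixed bad cover and a single $\Sigma^0_1(C)$ open $\Vcal\supseteq\Lcal_C$, bounds the function $n\mapsto$ (least level witnessing badness on $\{0,\dots,n\}$) by some $\beta<\wck$, and concludes via K\H{o}nig's lemma that $\Ucal_\beta\cap\Lcal_C$ is not large; you instead assume largeness at every level, introduce the moduli $m_\alpha$, and bound the function $M\mapsto g(M)$. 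The two are near-mirror images, but your direction costs you an extra step the paper's avoids: you must pass from ``$F^\ast\in\Ucal_\alpha$ for all $\alpha$'' back to ``$F^\ast\in\Ucal$'', and your justification of that step is imprecise as written. For a finite set $F$ viewed as a point of $2^\omega$, the set of strings $\tau$ with $F\in[\tau]$ is the set of \emph{all} initial segments of its characteristic sequence, which is infinite, so the ``decreasing family of nonempty finite sets'' argument does not literally apply; indeed the paper explicitly warns that $\bigcap_{\alpha<\wck}\Ucal_\alpha$ may strictly contain $\Ucal$. The step is nevertheless salvageable and essentially what you intended: in the compactness characterization of largeness the pieces are strings $\sigma$ of length $m^\ast+1$ and ``$\sigma$ is in $\Ucal_\alpha$'' means $(\exists\tau\preceq\sigma)\,\tau\in B_\alpha$; the set of prefixes of a fixed string \emph{is} finite, so the stabilization argument on $\{\tau\preceq\sigma\}\cap B_\alpha$ goes through and yields a $\tau\preceq\sigma$ lying in $B=\bigcap_\alpha B_\alpha$. (Alternatively one can rerun $\Sigma^1_1$-boundedness on $n\mapsto$ least $\alpha$ with $1_{F^\ast}\uhr n\notin B_\alpha$.) With that repair, and the standard definability bookkeeping making $g$ a total $\Pi^1_1(C)$ function into $\Ocal^C$, your argument is sound.
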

\begin{proof}
It is clear that $\Ucal \subseteq \bigcap_{\alpha < \wck} \Ucal_\alpha$. Also it is clear that if $\Ucal \cap \Lcal_C$ is a largeness class, then also $\bigcap_{\alpha < \wck} \Ucal_{\alpha} \cap \Lcal_C$ is a largeness class.

Suppose $\Ucal \cap \Lcal_C$ is not a largeness class. Then there is a cover $Y_0 \cup \dots \cup Y_{k - 1} \supseteq \omega$ with $Y_i \notin \Ucal \cap \Lcal_C$ for every $i < k$. There must be a $\Sigma^0_1(C)$ open set $\Vcal$ such that $Y_i \notin \Ucal \cap \Vcal$ for every $i \leq k$.

Let $f:\omega \rightarrow \wck$ be the function which on $n$ finds a cover $\sigma_0 \cup \dots \cup \sigma_k \supseteq \{0, \dots, n\}$ and $\alpha$ such that for $i < k$ and every $\tau \preceq \sigma_i$ we have $[\tau] \subseteq \Vcal$ implies $\tau \notin B_\alpha$. As $\Ucal \cap \Vcal$ is not a largeness class, $f$ is a total $\Pi^1_1(C)$ function. By $\Sigma^1_1$-boundedness, $\beta = \sup_n f(n) < \omega_1^C = \wck$. By compactness, there is a cover $Y_0 \cup \dots \cup Y_{k-1}$ such that for every $i < k$ if $Y_i \in \Vcal$ then for every $\tau \prec Y_i$, $\tau \notin B_\beta$ and thus $Y_i \notin \Ucal_\beta$.

It follows that $\Ucal_\beta \cap \Lcal_C$ is not a largeness class.
\end{proof}

\begin{corollary}
$\Lcal_C$ contains a unique largeness subclass, which is minimal for both $\Pi^1_1$ and $\Sigma^1_1(C)$-open sets $\Ucal$.

%
\end{corollary}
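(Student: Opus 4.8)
The plan is to show that the class
$$
\Acal \;:=\; \Lcal_C \;\cap\; \bigcap_{\alpha < \wck} \Ucal_{C_\alpha}^{\Mcal_\alpha}
$$
is the desired one. First I would check that $\Acal$ is a largeness class contained in $\Lcal_C$. Containment is immediate. For largeness, note that since $\Ucal_{C_{\alpha+1}}^{\Mcal_{\alpha+1}} \subseteq \langle \Ucal_{C_\alpha}^{\Mcal_\alpha}\rangle$ we have $\bigcap_{\alpha<\wck}\Ucal_{C_\alpha}^{\Mcal_\alpha} = \bigcap_{\alpha<\wck}\langle\Ucal_{C_\alpha}^{\Mcal_\alpha}\rangle$, a $\subseteq$-decreasing intersection of partition regular classes (each $\langle\Ucal_{C_\alpha}^{\Mcal_\alpha}\rangle$ being partition regular by \Cref{lem:minimal-is-partition-regular}), hence partition regular by \Cref{lem:decreasing-largeness-yields-pr} applied along a cofinal $\omega$-sequence in $\wck$; this class contains $C$ and contains only infinite sets. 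Consequently, for any $k$-cover $Y_0, \dots, Y_{k-1}$ of $\omega$ (which covers $C$) partition regularity yields $j$ with $Y_j \cap C \in \bigcap_{\alpha<\wck}\Ucal_{C_\alpha}^{\Mcal_\alpha}$; this set is infinite, so $Y_j \in \Lcal_C$, and by upward closure $Y_j \in \bigcap_{\alpha<\wck}\Ucal_{C_\alpha}^{\Mcal_\alpha}$, i.e. $Y_j \in \Acal$. As $\Acal$ is upward closed, it is a largeness class.

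For $\Pi^1_1$-minimality, suppose $\Ucal$ is a $\Pi^1_1$ (upward-closed) open set with $\Acal \cap \Ucal$ large; then $\Lcal_C \cap \Ucal \supseteq \Acal \cap \Ucal$ is large, so by \Cref{hyp-corpi11accclass} there is $\alpha < \wck$ with $\Ucal_\alpha \cap \Lcal_C$ large and $\bigcap_{\beta<\wck}\Ucal_{C_\beta}^{\Mcal_\beta} \subseteq \Ucal_\alpha \subseteq \Ucal$, whence $\Acal \subseteq \Ucal$. The $\Sigma^1_1(C)$ case is the main obstacle. If $\Ucal$ is a $\Sigma^1_1(C)$ open set with $\Acal \cap \Ucal$ large, then $\Lcal_C \cap \Ucal$ is large, so by \Cref{hyp-lem-sigma1} the $\Delta^1_1(C)$ approximations $\Ucal_\alpha$ all satisfy $\Ucal_\alpha \cap \Lcal_C$ large, and $\Ucal \subseteq \bigcap_{\alpha<\wck}\Ucal_\alpha$. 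For each $\alpha$ I would deduce $\bigcap_{\beta<\wck}\Ucal_{C_\beta}^{\Mcal_\beta} \subseteq \Ucal_\alpha$ by re-running the argument of \Cref{hyp-lemma-minC}: the delicate point is that a cover witnessing a hypothetical failure of largeness is now only $\Delta^1_1(C)$ rather than $\Delta^1_1$, but it is still bounded below $\wck$ by $\Sigma^1_1$-boundedness (using $\omega_1^C = \wck$), and the decisive step — selecting a part of the cover whose intersection with $C$ is infinite — uses only that $C$ is infinite and covered, after which one finishes with the $\Delta^1_1$-minimality of $\bigcap_{\beta<\wck}\Ucal_{C_\beta}^{\Mcal_\beta}$ (\Cref{prop-hyp-delta-min}) against a $\Delta^1_1$ over-approximation, as in \Cref{hyp-lemma-minC}. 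This gives $\Acal \subseteq \bigcap_{\alpha<\wck}\Ucal_\alpha$; to pass to $\Acal \subseteq \Ucal$ one uses that $\bigcap_{\alpha<\wck}\Ucal_\alpha$ and $\Ucal$ differ only on reals $X$ with $\omega_1^X > \wck$, and that for $X \in \Acal$ the map sending each $\sigma \prec X$ outside $B$ to the least $\alpha$ with $\sigma \notin B_\alpha$ is a total $\Pi^1_1(C \oplus X)$ function, so — since the reservoir class $\Scal \subseteq \Acal$ used in the forcing is cut down, via the Gandy basis theorem, to reals $X$ with $\omega_1^{C \oplus X} = \wck$ — $\Sigma^1_1$-boundedness produces a single $\gamma < \wck$ with $X \notin \Ucal_\gamma$, contradicting $X \in \bigcap_{\alpha<\wck}\Ucal_\alpha$; hence $X \in \Ucal$.

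Finally, uniqueness would follow exactly as the uniqueness of the $\Mcal$-minimal largeness subclass of an $\Mcal$-cohesive class follows from \Cref{lem:cohesive-largeness-compatibility}. The same reductions to the $\Delta^1_1$ and $\Delta^1_1(C)$ levels, together with the partition-regularity argument above, yield a compatibility lemma: if $\Ucal$ and $\Vcal$ are $\Pi^1_1$ or $\Sigma^1_1(C)$ open sets with $\Lcal_C \cap \Ucal$ and $\Lcal_C \cap \Vcal$ both large, then $\Lcal_C \cap \Ucal \cap \Vcal$ is large. From this it follows that $\Acal = \Lcal_C \cap \bigcap_{\alpha<\wck}\Ucal_{C_\alpha}^{\Mcal_\alpha}$ is contained in every $\Pi^1_1$ or $\Sigma^1_1(C)$ open set that is large over $\Lcal_C$, and hence that any $\Pi^1_1$- and $\Sigma^1_1(C)$-minimal largeness subclass of $\Lcal_C$ must coincide with it.
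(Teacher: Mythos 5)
Your candidate class $\Acal = \Lcal_C \cap \bigcap_{\alpha<\wck}\Ucal_{C_\alpha}^{\Mcal_\alpha}$ is not the right one, and this is where the proof breaks. The paper instead takes $\Ical$ to be the intersection of \emph{all} $\Sigma^1_1(C)$-open sets $\Ucal$ with $\Ucal \cap \Lcal_C$ large; minimality for $\Sigma^1_1(C)$-open sets is then automatic by definition, and the whole content of the proof is the compatibility argument (via \Cref{hyp-lem-sigma1} and \Cref{hyp-lemma-minC}) showing that this intersection is still a largeness class. You try to go the other way: prove that the larger class $\Acal$ is contained in each such $\Ucal$. Your own argument only yields $\Acal \subseteq \bigcap_{\alpha<\wck}\Ucal_\alpha$, and the passage from $\bigcap_{\alpha<\wck}\Ucal_\alpha$ to $\Ucal$ genuinely fails: $\Acal$ is upward closed, hence contains reals $X$ with $\omega_1^{X\oplus C} > \wck$, and for such $X$ one can have a prefix in each $B_\alpha$ without any single prefix in $B = \bigcap_\alpha B_\alpha$, i.e.\ $X \in \bigcap_\alpha \Ucal_\alpha \setminus \Ucal$. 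So $\Acal$ is in general strictly larger than the minimal class and is \emph{not} $\Sigma^1_1(C)$-minimal. Your patch via the Gandy basis theorem does not repair this: Gandy produces a single element of a $\Sigma^1_1$ class with $\omega_1^X = \wck$, not the statement that every member of $\Acal$ has this property; and invoking ``the reservoir class $\Scal$'' is circular, since $\Scal$ is only defined \emph{after} (and by means of) this corollary as the unique minimal subclass.

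Two smaller points. First, you correctly notice that \Cref{hyp-lemma-minC} is stated for $\Delta^1_1$ open sets while the approximations $\Ucal_\alpha$ are only $\Delta^1_1(C)$; but your proposed fix (``selecting a part of the cover whose intersection with $C$ is infinite uses only that $C$ is infinite'') loses exactly what the cohesiveness of $C$ buys in that proof, namely that a \emph{single} part $Y_i$ with $C \subseteq^* Y_i$ must simultaneously witness largeness for both open sets under consideration; mere infinitude of some intersection does not pin down one index. Second, your closing paragraph gestures at the correct compatibility lemma — which is essentially the paper's proof — but then again identifies the resulting intersection with $\Acal$, which is the same unproved (and generally false) equality.
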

\begin{proof}
Suppose $\Ucal_0, \Ucal_1$ are two $\Sigma^1_1(C)$ open sets with $\Ucal_i = \bigcup_{\sigma \in B_i} [\sigma]$ and $\Ucal_{i, \alpha} = \bigcup_{\sigma \in B_{i, \alpha}} [\sigma]$. for $i < 2$. Suppose also $\Ucal_0 \cap \Lcal_{C}$ and $\Ucal_1 \cap \Lcal_{C}$ are largeness classes. By \Cref{hyp-lem-sigma1} it follows that $\bigcap_{\alpha < \wck} \Ucal_{0, \alpha} \cap \Lcal_C$ and $\bigcap_{\alpha < \wck} \Ucal_{1, \alpha} \cap \Lcal_C$ are largeness classes. By \Cref{hyp-lemma-minC} it follows that $\bigcap_{\alpha < \wck} \Ucal_{C_\alpha}^{\Mcal_{\alpha}} \subseteq \bigcap_{\alpha < \wck} \Ucal_{0, \alpha}$ and $\bigcap_{\alpha < \wck} \Ucal_{C_\alpha}^{\Mcal_{\alpha}} \subseteq \bigcap_{\alpha < \wck} \Ucal_{1, \alpha}$.

Thus $\bigcap_{\alpha < \wck} \Ucal_{0, \alpha} \cap \bigcap_{\alpha < \wck} \Ucal_{1, \alpha} = \bigcap_{\alpha < \wck} (\Ucal_{0, \alpha} \cap \Ucal_{1, \alpha})$ is a largeness class and thus by \Cref{hyp-lem-sigma1} we have that $\Ucal_0 \cap \Ucal_1$ is a largeness class.

It follow that the intersection $\Ical$ of every $\Sigma^1_1(C)$ open set $\Ucal$ such that $\Ucal \cap \Lcal_C$ is a largeness class, is a largeness class. Furthermore as $\Ucal_{C_\alpha}^{\Mcal_\alpha} \cap \Lcal_C$ is a largeness class for every $\alpha$, the class $\Ical$ must be included in $\bigcap_{\alpha < \wck} \Ucal_{C_\alpha}^{\Mcal_\alpha}$. Also from \Cref{hyp-corpi11accclass} the class $\bigcap_{\alpha < \wck} \Ucal_{C_\alpha}^{\Mcal_\alpha}$ is minimal for $\Pi^1_1$-open sets $\Ucal$ such that $\Ucal \cap \Lcal_C$ is a largeness class. It follows that the class $\Ical \cap \Lcal_C$ is minimal for $\Sigma^1_1(C)$ and $\Pi^1_1$ open sets.
\end{proof}

We can now detail the class $\Scal$ involved in the definition of $\Pb_{\wck}$ : Let $\Scal$ be the unique largeness class included in $\Lcal_C$ which is minimal for $\Sigma^1_1(C)$ and $\Pi^1_1$ open sets. Note that $\Scal$ must be partition regular.

\begin{lemma} \label{hyp-collapse1}
Consider a $\Sigma^0_{\wck+1}$ class $\Bcal = \bigcup_{n \in \omega} \Bcal_{n}$ with $\Pi^0_{\wck}$ set $\Bcal_n = \bigcap_{\alpha \in \wck} \Bcal_{n, \alpha}$. Let $p = (\sigma, X) \in \Pb_{\wck}$.
Suppose $\sigma \qvdash \Bcal$. Then there is a condition $q \leq p$ together with some $n$ such that $q \Vdash \bigcap_{\alpha < \wck} \Bcal_{n, \alpha}$
\end{lemma}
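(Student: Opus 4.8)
The statement to prove (\Cref{hyp-collapse1}) says: if $\sigma \qvdash \Bcal$ for a $\Sigma^0_{\wck+1}$ class $\Bcal = \bigcup_n \Bcal_n$ with $\Bcal_n = \bigcap_{\alpha < \wck}\Bcal_{n,\alpha}$, then some extension $q \leq p$ forces $\Bcal_n$ for some $n$. I would follow the same template as the proof of \Cref{prop-hyp-forcexta}/\Cref{prop-hyp-forcext}, but now pushing through the $\Sigma^1_1$-level machinery developed just above the statement. First I would unfold the definition of $p \qvdash \Bcal$: it says that the set
$$
\Ucal \;=\; \{Y : \exists \tau \subseteq Y - \{0,\dots,|\sigma|\}\ \exists n\ \sigma\cup\tau \nqvdash 2^\omega - \Bcal_n\}
$$
intersected with $\Lcal_C$ is a largeness class. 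As observed right after the definition of $p \qvdash \Bcal$, this $\Ucal$ is a $\Sigma^1_1(C)$ upward-closed open set, written $\Ucal = \bigcup_{\sigma \in B}[\sigma]$ with $B = \bigcap_{\alpha<\wck} B_\alpha$, $B_\alpha$ the $\Delta^1_1(C)$ sets described there, and $\Ucal_\alpha = \bigcup_{\sigma\in B_\alpha}[\sigma]$.

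**Main step.** By \Cref{hyp-lem-sigma1}, since $\Ucal \cap \Lcal_C$ is a largeness class, for every $\alpha < \wck$ the class $\Ucal_\alpha \cap \Lcal_C$ is a largeness class; in particular $\Ucal \cap \Lcal_C$ large implies, via \Cref{hyp-lemma-minC} (applied to the $\Delta^1_1$ open set $\Ucal_\alpha$, using that each $\Ucal_\alpha \cap \Lcal_C$ is large), that $\Scal \subseteq \bigcap_{\alpha<\wck}\Ucal_{C_\alpha}^{\Mcal_\alpha} \subseteq \Ucal_\alpha$ for every $\alpha$, hence $\Scal \subseteq \bigcap_{\alpha<\wck}\Ucal_\alpha$. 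Now $X \in \Scal$ by the definition of $\Pb_{\wck}$, so $X \in \bigcap_{\alpha<\wck}\Ucal_\alpha$. The delicate point is that $\Ucal$ itself may be strictly smaller than $\bigcap_\alpha \Ucal_\alpha$ — the difference consists only of reals $X'$ with $\omega_1^{X'} > \wck$. But we know more: $\Scal$ is included in $\Lcal_C$, $C$ is $\Delta^1_1$-cohesive with $\omega_1^C = \wck$, and we may use that $\Scal$ is minimal for $\Sigma^1_1(C)$ open sets such that $\Ucal \cap \Lcal_C$ is large. Applying that minimality directly to $\Ucal$ (a $\Sigma^1_1(C)$ open set with $\Ucal \cap \Lcal_C$ large) gives $\Scal \subseteq \Ucal$, so indeed $X \in \Ucal$. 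Therefore there is some $n$ and some $\tau \subseteq X - \{0,\dots,|\sigma|\}$ with $\sigma \cup \tau \nqvdash 2^\omega - \Bcal_n$, i.e. $\sigma \cup \tau \nqvdash 2^\omega - \bigcap_{\alpha<\wck}\Bcal_{n,\alpha}$.

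**Finishing.** Unfolding the forcing question $\sigma\cup\tau \nqvdash 2^\omega - \Bcal_n = \bigcup_{\alpha} (2^\omega - \Bcal_{n,\alpha})$: by \Cref{cor-hyp-forcepii}-style reasoning this means that for \emph{every} $\alpha < \wck$, $\Vcal(2^\omega - \Bcal_{n,\alpha}, \sigma\cup\tau) \cap \Lcal_C$ is \emph{not} a largeness class. For each fixed $\alpha$ this is exactly the failure clause of $\sigma \cup \tau \nqvdash 2^\omega - \Bcal_{n,\alpha}$ at the $\Sigma^0_{\alpha}$ level handled by \Cref{prop-hyp-forcext}(2), so we can extend $(\sigma\cup\tau, \cdot)$ to force $\Bcal_{n,\alpha}$. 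More precisely: set $\tau' = \sigma \cup \tau$ and note $(\tau', X - \{0,\dots,|\tau'|\}) \leq p$ since $\Scal$ is partition regular and contains only infinite sets (so $X - \{0,\dots,|\tau'|\} \in \Scal$); and for every $\alpha < \wck$ and every $\rho \subseteq X - \{0,\dots,|\tau'|\}$, the argument just given shows $\tau' \cup \rho \qvdash \Bcal_{n,\alpha}$. By the definition of $p \Vdash \bigcap_{\alpha<\wck}\Bcal_{n,\alpha}$ (the $\Pi^0_{\wck}$ forcing relation defined above), this is precisely $q \Vdash \bigcap_{\alpha<\wck}\Bcal_{n,\alpha}$ for $q = (\tau', X - \{0,\dots,|\tau'|\})$.

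**Expected obstacle.** The real subtlety is the passage $X \in \bigcap_\alpha \Ucal_\alpha \Rightarrow X \in \Ucal$ — i.e.\ ruling out that $X$ lands in the exceptional meager/null set $\bigcap_\alpha \Ucal_\alpha \setminus \Ucal$ of reals with $\omega_1^X > \wck$. This is exactly what the minimality of $\Scal$ for $\Sigma^1_1(C)$ open sets (established in the corollary just before the statement) is designed to handle, so the proof must invoke that corollary rather than only \Cref{hyp-lem-sigma1}; getting this invocation right, and checking that $\Ucal$ genuinely has the form required (upward-closed $\Sigma^1_1(C)$, with the decreasing $\Delta^1_1(C)$ approximations $B_\alpha$ as spelled out), is where care is needed. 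The remaining steps are routine transcriptions of \Cref{prop-hyp-forcext} and the $\Pi^0_{\wck}$ forcing definitions.
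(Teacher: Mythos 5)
The first half of your argument is essentially the paper's: unfold $\sigma \qvdash \Bcal$ to get that the $\Sigma^1_1(C)$ upward-closed open set $\Ucal$ meets $\Lcal_C$ in a largeness class, invoke the minimality of $\Scal$ for $\Sigma^1_1(C)$ open sets to conclude $\Scal \subseteq \Ucal$, and use $X \in \Scal$ to extract $\tau \subseteq X$ and $n$ with $\sigma \cup \tau \nqvdash 2^\omega - \Bcal_n$. (Your detour through $\bigcap_\alpha \Ucal_\alpha$ via \Cref{hyp-lem-sigma1} and \Cref{hyp-lemma-minC} is unnecessary once you apply the minimality corollary directly to $\Ucal$, but it is harmless.)

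The finishing step, however, has a genuine gap. You take $q = (\sigma\cup\tau,\; X - \{0,\dots,|\sigma\cup\tau|\})$ with the \emph{full} remaining reservoir and assert that for every $\rho \subseteq X - \{0,\dots,|\sigma\cup\tau|\}$ and every $\alpha$ one has $\sigma\cup\tau\cup\rho \qvdash \Bcal_{n,\alpha}$, which is what $q \Vdash \bigcap_\alpha \Bcal_{n,\alpha}$ requires. But what you actually know is only that the class
$$\Vcal = \{Y : \exists \rho \subseteq Y - \{0,\dots,|\sigma\cup\tau|\}\ \exists \alpha\ \sigma\cup\tau\cup\rho \nqvdash \Bcal_{n,\alpha}\}$$
satisfies: $\Vcal \cap \Lcal_C$ is \emph{not a largeness class} (this is literally the definition of $\sigma\cup\tau \nqvdash 2^\omega - \Bcal_n$). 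Non-largeness of $\Vcal \cap \Lcal_C$ does not exclude $X$ from $\Vcal$; it only yields a finite cover $Y_0 \cup \dots \cup Y_{k-1} = \omega$ with no $Y_i \in \Vcal \cap \Lcal_C$. The paper's proof then uses upward closure of $\Vcal \cap \Lcal_C$ and partition regularity of $\Scal$ to pick $i$ with $X \cap Y_i \in \Scal$ and hence $X \cap Y_i \notin \Vcal$, and takes the \emph{shrunk} reservoir $X \cap Y_i$; only then does the universal statement over all $\rho$ in the reservoir hold. Your appeal to \Cref{prop-hyp-forcext}(2) ``for each fixed $\alpha$'' cannot replace this: that lemma produces, for each $\alpha$, some extension with its own shrunk reservoir, and there is no way to intersect $\wck$-many reservoirs. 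The whole point of bundling the existential over $\alpha$ inside the single open set $\Vcal$ is that one cover, and one shrinking of the reservoir, handles all $\alpha$ simultaneously. As written, your condition $q$ need not force $\bigcap_{\alpha<\wck}\Bcal_{n,\alpha}$.
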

\begin{proof}
Let
$$\Ucal = \{Y\ :\ \exists \tau \subseteq Y - \{0, \dots, |\sigma|\}\ \exists n\ \sigma \cup \tau \nqvdash 2^\omega -  \Bcal_{n} \}$$
The class $\Ucal$ is a $\Sigma^1_1(C)$-open set and $\Ucal \cap \Lcal_C$ is a largeness class. As $\Scal$ is minimal for $\Sigma^1_1(C)$-open sets, $\Scal \subseteq \Ucal$. As $X \in \Scal \subseteq \Ucal$. Then there is some $\tau \subseteq X - \{0, \dots, |\sigma|\}$ and some $n$ such that
$\sigma \cup \tau \nqvdash 2^\omega - \Bcal_{n}$.
Let now
$$\Vcal = \{Y\ :\ \exists \rho \subseteq Y - \{0, \dots, |\sigma \cup \tau|\}\ \exists \alpha\ \sigma \cup \tau \cup \rho \nqvdash \Bcal_{n, \alpha}\}$$
As $\sigma \cup \tau \nqvdash \bigcup_{\alpha \in \wck} 2^\omega -  \Bcal_{n, \alpha}$ then $\Vcal \cap \Lcal_C$ is not a largeness class. Thus there is a cover $Y_0 \cup \dots \cup Y_{k-1} = \omega$ such that $Y_i \notin \Vcal \cap \Lcal_C$ for every $i < k$. As $\Vcal \cap \Lcal_C$ is upward-closed, $X \cap Y_i \notin \Vcal \cap \Lcal_C$ for every $i < k$. As $\Scal \subseteq \Lcal_C$ is partition regular, there is some $i < k$ such that $X \cap Y_i \in \Scal \subseteq \Lcal_C$. Therefore we must have $X \cap Y_i \notin \Vcal$ and thus
$$\forall \rho \subseteq X \cap Y_i - \{0, \dots, |\sigma \cup \tau|\}\ \forall \alpha\ \sigma \cup \tau \cup \rho \qvdash \Bcal_{n, \alpha}$$
Thus $(\sigma \cup \tau, X \cap Y_i)$ is an extension of $(\sigma, X)$ such that:
$$(\sigma \cup \tau, X \cap Y_i) \Vdash \bigcap_{\alpha < \wck} \Bcal_{n, \alpha}$$
\end{proof}

\begin{lemma} \label{hyp-collapse2}
Consider a $\Sigma^0_{\wck+1}$ class $\Bcal = \bigcup_{n \in \omega} \Bcal_{n}$ with $\Pi^0_{\wck}$ set $\Bcal_n = \bigcap_{\alpha < \wck} \Bcal_{n, \alpha}$. Let $p = (\sigma, X) \in \Pb_{\wck}$.
Suppose $\sigma \nqvdash \Bcal$. Then there is a condition $q \leq p$ together with some $\beta < \wck$ such that $q \Vdash \bigcap_{n \in \omega} \bigcup_{\alpha < \beta} 2^\omega - \Bcal_{n, \alpha}$
\end{lemma}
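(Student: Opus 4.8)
The plan is to mirror the proof of \Cref{hyp-collapse1}, the difference being that here one exploits the \emph{failure} of $\sigma \qvdash \Bcal$ and turns it, via $\Sigma^1_1$-boundedness (in the form of \Cref{hyp-lem-sigma1}), into a single ordinal $\beta < \wck$ that uniformly bounds the level needed to escape every $\Bcal_n$. Recall the $\Sigma^1_1(C)$ upward-closed open set
$$
\Ucal = \{Y\ :\ \exists \tau \subseteq Y - \{0, \dots, |\sigma|\}\ \exists n\ \sigma \cup \tau \nqvdash 2^\omega - \Bcal_{n} \}
$$
together with the $\subseteq$-decreasing $\Delta^1_1(C)$ families $B_\alpha$ and the $\Delta^1_1(C)$ open sets $\Ucal_\alpha$ generated by the strings $\sigma \cup \tau$ with $\tau \in B_\alpha$, as set up just before the lemma, so that $\tau \in B_\alpha$ says that for some $n$, for every $\beta' < \alpha$ the class $\Vcal(\Bcal_{n,\beta'}, \sigma \cup \tau) \cap \Lcal_C$ is not a largeness class. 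By construction $\sigma \nqvdash \Bcal$ is exactly the statement that $\Ucal \cap \Lcal_C$ is not a largeness class, so \Cref{hyp-lem-sigma1} provides one $\beta < \wck$ with $\Ucal_\beta \cap \Lcal_C$ not a largeness class.

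I would then extract the extension exactly as in \Cref{hyp-collapse1}. Fix a finite cover $Y_0 \cup \dots \cup Y_{k-1} = \omega$ with $Y_j \notin \Ucal_\beta \cap \Lcal_C$ for each $j < k$; since that class is upward closed, $X \cap Y_j \notin \Ucal_\beta \cap \Lcal_C$ as well. As $\Scal$ is partition regular and $X \in \Scal$, there is $j < k$ with $X \cap Y_j \in \Scal \subseteq \Lcal_C$, hence $X \cap Y_j \notin \Ucal_\beta$. Unwinding this last membership: for every $\tau \subseteq (X \cap Y_j) - \{0, \dots, |\sigma|\}$ and every $n$ there is an ordinal $\beta_n < \beta$ such that $\Vcal(\Bcal_{n,\beta_n}, \sigma \cup \tau) \cap \Lcal_C$ \emph{is} a largeness class. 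Each $\Vcal(\Bcal_{n,\beta_n}, \sigma \cup \tau)$ is a $\Delta^1_1$ open set — it only refers to the relation $\qvdash$ at level $\beta_n$, which by \Cref{prop-hyp-effectivall} is $\Pi^0_1(C_{\beta_n} \oplus \halt^{(\beta_n+1)})$ and hence hyperarithmetical — so \Cref{hyp-lemma-minC} gives $\bigcap_{\gamma < \wck} \Ucal^{\Mcal_\gamma}_{C_\gamma} \subseteq \Vcal(\Bcal_{n,\beta_n}, \sigma \cup \tau)$. Since $\bigcap_{\gamma < \wck} \Ucal^{\Mcal_\gamma}_{C_\gamma}$ contains the largeness class $\Scal$ and is contained in $\Ucal^{\Mcal_\beta}_{C_\beta}$, it follows that $\Vcal(\Bcal_{n,\beta_n}, \sigma \cup \tau) \cap \Ucal^{\Mcal_\beta}_{C_\beta}$ is a largeness class; as $\beta_n < \beta$, this is precisely $\sigma \cup \tau \qvdash \bigcup_{\alpha < \beta}(2^\omega - \Bcal_{n,\alpha})$. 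As this holds for all $n$ and all $\tau$ below $X \cap Y_j$, the definition of the forcing relation yields $q := (\sigma, X \cap Y_j) \Vdash \bigcap_{n \in \omega}\bigcup_{\alpha < \beta}(2^\omega - \Bcal_{n,\alpha})$, and $q \leq p$ is a valid $\Pb_{\wck}$-condition, as required.

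The genuinely delicate step is the first one: passing from $\sigma \nqvdash \Bcal$ to a \emph{single} $\beta < \wck$. A priori the ordinal witnessing $G \notin \Bcal_n$ could depend on $n$ and on the chosen finite initial segment, with no reason for these ordinals to stay cofinally below $\wck$; it is exactly \Cref{hyp-lem-sigma1}, resting on $\Sigma^1_1$-boundedness and on $\omega_1^C = \wck$, that makes this uniformization possible. The other point to watch is the switch from $\Lcal_C$-largeness, which is what the combinatorics of $\Ucal$ hands us, to $\Ucal^{\Mcal_\beta}_{C_\beta}$-largeness, which is what the level-$\beta$ forcing question requires; this is mediated by the $\Delta^1_1$-minimality of $\bigcap_{\gamma < \wck} \Ucal^{\Mcal_\gamma}_{C_\gamma}$ (\Cref{prop-hyp-delta-min}, \Cref{hyp-lemma-minC}).
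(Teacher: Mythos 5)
Your proof is correct and follows essentially the same route as the paper's: apply \Cref{hyp-lem-sigma1} to extract a single $\beta < \wck$ with $\Ucal_\beta \cap \Lcal_C$ not large, use a cover together with the partition regularity of $\Scal$ to find $Y_j$ with $X \cap Y_j \in \Scal$ and $X \cap Y_j \notin \Ucal_\beta$, and then unwind to obtain the level-$\beta$ forcing question for every $n$ and $\tau$. The only (welcome) difference is that you make explicit, via \Cref{hyp-lemma-minC} and the inclusion $\Scal \subseteq \bigcap_{\gamma < \wck} \Ucal^{\Mcal_\gamma}_{C_\gamma}$, the passage from $\Lcal_C$-largeness of $\Vcal(\Bcal_{n,\beta_n},\sigma\cup\tau)$ to $\Ucal^{\Mcal_\beta}_{C_\beta}$-largeness, a step the paper leaves implicit.
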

\begin{proof}
Let
$$\Ucal = \{Y\ :\ \exists \tau \subseteq Y - \{0, \dots, |\sigma|\}\ \exists n\ \sigma \cup \tau \nqvdash 2^\omega -  \Bcal_{n} \}$$
The class $\Ucal$ is a $\Sigma^1_1(C)$-open set and $\Ucal \cap \Lcal_C$ is not a largeness class. Let us recall \Cref{def-hyp-mainstuff} together with the notation coming after it: $\Vcal(\Bcal, \sigma)$ is the set
$$\{Y\ :\ \exists \tau \subseteq Y - \{0, \dots, |\sigma|\}\ \sigma \cup \tau \nqvdash \Bcal \}$$
Together with
$$B = \{\tau\ :\ \exists n\ \forall \alpha < \wck\ \Vcal(\Bcal_{n, \alpha}, \sigma \cup \tau) \cap \Lcal_C \text{ is not a largeness class} \}$$
with $B = \bigcap_{\alpha < \wck} B_\alpha$ such that
$$B_\alpha = \{\tau\ :\ \exists n\ \forall \beta < \alpha\ \Vcal(\Bcal_{n, \beta}, \sigma \cup \tau) \cap \Lcal_C \text{ is not a largeness class} \}$$
and with $\Ucal = \bigcup_{\sigma \in B} [\sigma]$.

Using \Cref{hyp-lem-sigma1}, there is some $\alpha < \wck$ such that the set
$$\Ucal_\alpha = \{Y\ :\ \exists \tau \subseteq Y - \{0, \dots, |\sigma|\}\ \exists n\ \forall \beta < \alpha\ \Vcal(\Bcal_{n, \beta}, \sigma \cup \tau) \cap \Lcal_C \text{ is not a largeness class}\}$$

%

is such that $\Ucal_\alpha \cap \Lcal_C$ is not a largeness class. Thus there is a cover $Y_0 \cup \dots \cup Y_{k-1} \supseteq \omega$ such that $Y_i \notin \Ucal_\alpha \cap \Lcal_C$ for every $i < k$. As $\Ucal_\alpha \cap \Lcal_C$ is upward-closed, then also $X \cap Y_i \notin \Ucal_\alpha \cap \Lcal_C$ for every $i < k$. As $X \in \Scal \subseteq \Lcal_C$ and as $\Scal$ is partition regular, there is some $i < k$ such that $X \cap Y_i \in \Scal \subseteq \Lcal_C$. It follows that $X \cap Y_i \notin \Ucal_\alpha$ and thus that:
$$\forall \tau \subseteq X \cap Y_i - \{0, \dots, |\sigma|\}\ \forall n\ \exists \beta < \alpha\ \Vcal(\Bcal_{n, \beta}, \sigma \cup \tau) \cap \Lcal_C \text{ is a largeness class}$$
Let $\{\beta_m\}_{m \in \omega}$ be such that $\sup_m \beta_m = \alpha$. Let $\tau \subseteq Y - \{0, \dots, |\sigma|\}$ and $n \in \omega$. We have for some $m$ that $\Vcal(\Bcal_{n, \beta_m}, \sigma \cup \tau) \cap \Lcal_C$ is a largeness class. Then the set
$$\{Y\ :\ \exists \rho \subseteq Y - \{0, \dots, |\sigma|\}\ \exists m\ \sigma \cup \tau \cup \rho \nqvdash \Bcal_{n, \beta_m} \} \cap \Lcal_C$$
is a largeness class and then
$$\{Y\ :\ \exists \rho \subseteq Y - \{0, \dots, |\sigma|\}\ \exists m\ \sigma \cup \tau \cup \rho \nqvdash \Bcal_{n, \beta_m}\} \cap \Ucal_{C_\alpha}^{\Mcal_\alpha}$$
is a largeness class and thus that $\sigma \cup \tau \qvdash \bigcup_{m} 2^\omega - \Bcal_{n, \beta_m}$. As this is true for every $n$ and every $\tau \subseteq Y - \{0, \dots, |\sigma|\}$ it follows that $(\sigma, X \cap Y_i)$ is an extension of $(\sigma, X)$ such that
$$(\sigma, X \cap Y_i) \Vdash \bigcap_{n \in \omega} \bigcup_{\beta < \alpha} 2^\omega - \Bcal_{n, \beta}$$
\end{proof}

We now show that if $\Fcal \subseteq \Pb_{\wck}$ is sufficiently generic, then $\omega_1^{G_\Fcal} = \wck$. We use the following fact : If $\omega_1^G > \wck$, then in particular some $G$-computable ordinal must code for $\wck$, that is, there must be a $G$-computable function $\Phi$ such that for every $n$, $\Phi(G, n)$ codes, relative to $G$, for an ordinal smaller than $\wck$ and with $\sup_n |\Phi(G, n)| = \wck$. We show that this never happens by forcing that for every functional $\Phi$ either for some $n$, $\Phi(G, n)$ does not code for an ordinal smaller than $\wck$, or there is an ordinal $\alpha < \wck$ such that $\Phi(G, n)$ always codes for some ordinal smaller than $\alpha$.

Given $G$ and $\alpha$ let $\Ocal^G_\alpha$ be the set of $G$-codes for ordinals smaller than $\alpha$. For $\alpha < \wck$, the class $\{G\ :\ n \in \Ocal^G_\alpha\}$ is $\Delta^1_1$ uniformly in $\alpha$ and $n$.

\begin{theorem} \label{th-wck}
Suppose $\Fcal \subseteq \Pb_{\wck}$ is sufficiently generic. Then $\omega_1^{G_\Fcal} = \wck$
\end{theorem}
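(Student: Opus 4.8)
The plan is this. The inequality $\omega_1^{G_\Fcal} \geq \wck$ is automatic, since every computable ordinal is $G_\Fcal$-computable, so the whole content is $\omega_1^{G_\Fcal} \leq \wck$. Following the strategy of \Cref{sec_strategy_coneavoid}, I would reduce this to showing that for a sufficiently generic $\Fcal$ and every functional $\Phi_e$, the map $n \mapsto \Phi_e(G_\Fcal, n)$ is not a $G_\Fcal$-notation for $\wck$: either $\Phi_e(G_\Fcal, n)$ fails to code an ordinal $< \wck$ relative to $G_\Fcal$ for some $n$, or $\sup_n |\Phi_e(G_\Fcal, n)| < \wck$. Since there are only countably many functionals, a sufficiently generic filter can meet one dense set per $e$, and the two alternatives together say exactly that $G_\Fcal$ computes no notation for $\wck$.

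Fix $e$. First I would introduce, for $n \in \omega$ and $\alpha < \wck$, the effective Borel class $\Bcal_{n,\alpha} = \{X : \neg\exists m\,(\Phi_e(X, n)\downarrow = m \ \wedge\ m \in \Ocal^X_\alpha)\}$, and check that $\Bcal_{n,\alpha}$ is $\Delta^1_1$ uniformly in $n$ and $\alpha$: its complement is a countable union over $m$ of $\{X : \Phi_e(X, n)\downarrow = m\} \cap \{X : m \in \Ocal^X_\alpha\}$, and the fact recorded before the theorem gives that $\{X : m \in \Ocal^X_\alpha\}$ is $\Delta^1_1$ uniformly in $\alpha, m$. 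Consequently $\Bcal_n = \bigcap_{\alpha < \wck} \Bcal_{n, \alpha}$ is a $\Pi^0_\wck$ class and $\Bcal = \bigcup_n \Bcal_n$ is a $\Sigma^0_{\wck+1}$ class, with $X \in \Bcal_n$ iff $\Phi_e(X, n)$ is not an $X$-notation for an ordinal $< \wck$, and $X \in \Bcal$ iff this holds for some $n$.

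Next, given an arbitrary condition $p = (\sigma, X) \in \Pb_\wck$, I would ask whether $\sigma \qvdash \Bcal$. If yes, \Cref{hyp-collapse1} produces $q \leq p$ and some $n$ with $q \Vdash \bigcap_{\alpha < \wck} \Bcal_{n, \alpha}$, i.e. $q \Vdash \Bcal_n$; then by \Cref{hyp-sideprop-truth}, if $\Fcal$ is sufficiently generic with $q \in \Fcal$, we get $G_\Fcal \in \Bcal_n$ --- the first alternative. If no, \Cref{hyp-collapse2} produces $q \leq p$ and $\beta < \wck$ with $q \Vdash \bigcap_n \bigcup_{\alpha < \beta}(2^\omega - \Bcal_{n,\alpha})$; since $\bigcup_{\alpha < \beta}(2^\omega - \Bcal_{n,\alpha}) = \{X : \Phi_e(X,n) \in \Ocal^X_{\beta'}\}$ for a suitable $\beta' \leq \beta$ (taking $\beta' = \beta$ when $\beta$ is limit and $\beta' = \delta$ when $\beta = \delta+1$), this is a $\Pi^0_\wck$ class, so \Cref{hyp-sideprop-truth} gives that $\Phi_e(G_\Fcal, n)$ codes an ordinal $< \beta'$ for every $n$, whence $\sup_n |\Phi_e(G_\Fcal, n)| \leq \beta' < \wck$ --- the second alternative. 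In both cases the set of conditions forcing one of these alternatives is dense below $p$, hence dense in $\Pb_\wck$; a sufficiently generic $\Fcal$ meets it, so $\Phi_e$ is not a $G_\Fcal$-notation for $\wck$. Running this over all $e$ yields that $G_\Fcal$ computes no notation for $\wck$, i.e. $\omega_1^{G_\Fcal} \leq \wck$, and together with the trivial inequality, $\omega_1^{G_\Fcal} = \wck$.

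The forcing-theoretic heavy lifting is entirely packaged in \Cref{hyp-collapse1}, \Cref{hyp-collapse2} and \Cref{hyp-sideprop-truth}, so the only real care needed is the descriptive-complexity bookkeeping: confirming $\Bcal_{n,\alpha} \in \Delta^1_1$ uniformly, that $\Bcal$ is genuinely $\Sigma^0_{\wck+1}$ and $\Bcal_n$ genuinely $\Pi^0_\wck$ so that the three lemmas apply verbatim, and the small limit-versus-successor point in rewriting $\bigcup_{\alpha<\beta}(2^\omega-\Bcal_{n,\alpha})$ as a single $\Ocal^X_{\beta'}$-membership class. I expect this accounting to be the main, though essentially routine, obstacle.
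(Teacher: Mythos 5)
Your proposal is correct and follows essentially the same route as the paper: define $\Bcal = \{X : \exists n\, \forall \alpha < \wck\ \Phi_e(X,n) \notin \Ocal^X_\alpha\}$ as a $\Sigma^0_{\wck+1}$ class, split on whether $p \qvdash \Bcal$, and invoke \Cref{hyp-collapse1} or \Cref{hyp-collapse2} followed by forcing-implies-truth. The only (harmless) deviation is that in the negative case the paper concludes via \Cref{th-hyp-forcingimpliestruth}, since the class $\{X : \forall n\ \Phi_e(X,n) \in \Ocal^X_{\beta'}\}$ is already $\Pi^0_\gamma$ for some $\gamma < \wck$ rather than genuinely $\Pi^0_{\wck}$.
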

\begin{proof}
Let $p \in \Pb_{\wck}$ be a condition. Given a functional $\Phi : 2^\omega \times \omega \rightarrow \omega$, let
$$\Bcal = \{X\ :\ \exists n\ \forall \alpha < \wck\ \Phi(X, n) \notin \Ocal_\alpha^X\}$$
Suppose $p \qvdash \Bcal$. Then from \Cref{hyp-collapse1}, there is an extension $q \leq p$ and some $n$ such that
$$q \Vdash \{X\ :\ \forall \alpha < \wck\ \Phi(X, n) \notin \Ocal_\alpha^X\}$$
It follows from \Cref{hyp-sideprop-truth} that if $\Fcal$ is sufficiently generic for every $\alpha < \wck$, $\Phi(G_{\Fcal}, n) \notin \Ocal_\alpha^{G_{\Fcal}}$.
Suppose now $p \nqvdash \Bcal$. Then from \Cref{hyp-collapse2}, there is an extension $q \leq p$ and some $\alpha < \wck$ such that
$$q \Vdash \{X\ :\ \forall n\ \Phi(X, n) \in \Ocal_\alpha^X\}$$
It follows from \Cref{th-hyp-forcingimpliestruth} that if $\Fcal$ is sufficiently generic, $\sup_n \Phi(G_{\Fcal}, n) \leq \alpha$.
\end{proof}

We can finally deduce our final theorem

\begin{theoremnonumber}[Main theorem 4 (Theorem \ref{maintheorem4})]
Let $B$ be non hyperarithmetical. Every set $A$ has an infinite subset $H \subseteq A$ or $H \subseteq \overline{A}$ such that $B$ is not hyperarithmetical in $H$, in particular with $\omega_1^H = \wck$.
\end{theoremnonumber}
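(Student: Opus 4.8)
The plan is to combine the two main technical results of this section --- \Cref{th-alphaconeavoid} and \Cref{th-wck} --- together with the construction of the partition regular class $\Scal$ carried out just above \Cref{hyp-collapse1}.

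First, since $B$ is non-hyperarithmetical, $B$ is not $\Delta^1_1$, and in particular $B$ is not $\Delta^0_1(\halt^{(\beta)})$ for any $\beta < \wck$, since every such set is hyperarithmetical. Let $A^0 \cup A^1 = \omega$ be the given partition, say $A^0 = \overline{A}$ and $A^1 = A$. Since $\Scal$ is partition regular and contains $\omega$, there is some $i < 2$ with $A^i \in \Scal$; fix such an $i$ and work with the forcing $\Pb_{\wck}$ built over this choice of side, as in \Cref{sec-hyp-theforcing}.

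Now let $\Fcal \subseteq \Pb_{\wck}$ be a filter generic enough to meet every dense set used in the proof of \Cref{th-alphaconeavoid} (applied with $\alpha = \wck$ and $Z = B$), every dense set used in the proof of \Cref{th-wck}, and, for each $n$, the dense set of conditions $(\sigma, X)$ with $|\sigma| > n$ (density here uses that $\Scal$ is partition regular and contains only infinite sets). This is a countable family of requirements, so such an $\Fcal$ exists. Put $H = G_\Fcal$. By the last family of requirements $H$ is infinite, and by clause~(1) in the definition of $\Pb_{\wck}$ we have $H \subseteq A^i$, so $H$ is an infinite subset of $A$ or of $\overline{A}$. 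By \Cref{th-alphaconeavoid}, $B$ is not $\Delta^0_1(H^{(\beta)})$ for every $\beta < \wck$; in particular $B$ is not $H^{(\beta)}$-computable for any $\beta < \wck$. By \Cref{th-wck}, $\omega_1^H = \wck$. If $B$ were hyperarithmetical in $H$, then $B \leq_T H^{(\beta)}$ for some $\beta < \omega_1^H = \wck$, contradicting the previous sentence. Hence $B$ is not hyperarithmetical in $H$, and $\omega_1^H = \wck$, which is precisely the statement.

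The only delicate point is that a single filter $\Fcal$ can simultaneously be made generic enough for \Cref{th-alphaconeavoid}, for \Cref{th-wck}, and for infinitude; but each of these amounts to meeting countably many dense sets, so their union is still countable and the usual construction of a filter meeting a countable list of dense sets applies. All the substantive difficulty has already been discharged in \Cref{th-alphaconeavoid}, \Cref{th-wck}, and the construction of $\Scal$.
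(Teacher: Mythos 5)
Your proof is correct and takes essentially the same route as the paper, which simply cites \Cref{th-alphaconeavoid} and \Cref{th-wck}; you have merely spelled out the (correct) glue: choosing the side with $A^i \in \Scal$, building one filter generic enough for both theorems and for infinitude, and using that $B$ hyperarithmetical in $H$ would give $B \leq_T H^{(\beta)}$ for some $\beta < \omega_1^H = \wck$.
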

\begin{proof}
By combining \Cref{th-wck} together with \Cref{th-alphaconeavoid}
\end{proof}

\vspace{0.5cm}

\bibliographystyle{plain}
\bibliography{bibliography}

\end{document}